\newcommand{\QQ}{\mathbb{Q}}
\newcommand{\ZZ}{\mathbb{Z}}
\newcommand{\defeq}{\colonequals}
\newcommand{\Iw}{\mathrm{Iw}}
\newcommand{\ch}{\mathrm{ch}}   
\newcommand{\Vol}{\mathrm{Vol}}   
\newcommand{\delT}{\mathbb{S}}   
\newcommand{\CC}{\mathbb{C}}
\newcommand{\GG}{\mathbb{G}}
\newcommand{\Sh}{\mathrm{Sh}}    
\newcommand{\Ab}{\mathbb{A}}   
\newcommand{\pr}{\mathrm{pr}}
\renewcommand{\tilde}{\widetilde}
\DeclareMathOperator{\Gal}{Gal}
\numberwithin{equation}{section}
\newtheorem{prop}[equation]{Proposition}
\newtheorem{theorem}[equation]{Theorem}
\newtheorem{proposition}[equation]{Proposition}
\newtheorem{lemma}[equation]{Lemma}
\newtheorem{corollary}[equation]{Corollary}
\newtheorem*{corollary*}{Corollary}
\newtheorem{assumption}[equation]{Assumption}
\newtheorem{theoremx}{Theorem}
\theoremstyle{definition}
\newtheorem{definition}[equation]{Definition}
\newtheorem{notation}[equation]{Notation}
\newtheorem*{convention*}{Convention}
\theoremstyle{remark}
\newtheorem{remark}[equation]{Remark}
\newtheorem{example}[equation]{Example}
\DeclareFontFamily{U}{wncy}{}
\DeclareFontShape{U}{wncy}{m}{n}{<->wncyr10}{}
\DeclareSymbolFont{mcy}{U}{wncy}{m}{n}
\DeclareMathSymbol{\sha}{\mathord}{mcy}{"58}
\newcommand{\ide}[1]{\mathfrak{#1}}
\newcommand{\mbb}[1]{\mathbb{#1}}
\newcommand{\cohom}[2]{\mathrm{H}^{#1}_{#2}}
\newcommand{\ordd}{\mathcal{O}}
\newcommand{\invs}[1]{\mathcal{#1}}
\newcommand{\opn}[1]{\operatorname{#1}}
\newcommand{\dcris}[1]{\mathbf{D}_{\mathrm{cris}}(#1)}
\newcommand{\mbf}[1]{\mathbf{#1}}
\newcommand{\Qbar}{\overline{\mbb{Q}}}
\newcommand{\Gt}{\widetilde{\mbf{G}}}
\newcommand{\gt}{\widetilde{G}}           
\newcommand{\Gb}{\mathbf{G}}   
\newcommand{\Tb}{\mathbf{T}}    
\newcommand{\Zb}{\mathbf{Z}}
\newcommand{\Hb}{\mathbf{H}}
\newcommand{\GU}{\mathrm{GU}}   
\newcommand{\U}{\mathrm{U}}           
\newcommand{\Art}{\mathrm{Art}}
\newcommand{\Res}{\mathrm{Res}}   
\newcommand{\Nm}{\mathscr{N}}
\newcommand{\tbyt}[4]{\left( \begin{array}{cc} #1 & #2 \\ #3 & #4 \end{array} \right)}
\newcommand{\Addresses}{{%
  \bigskip
  \footnotesize

  (Graham) \textsc{Department of Mathematics, South Kensington Campus, Imperial College London, London SW7 2AZ, UK}\par\nopagebreak
  \textit{E-mail address}: \texttt{a.graham17@imperial.ac.uk}

  \medskip

  (Shah) \textsc{Department of Mathematics, Harvard University, 1 Oxford Street, Cambridge, MA 02138, U.S.A.}\par\nopagebreak
  \textit{E-mail address}: \texttt{swshah@math.harvard.edu}

}}
\DeclareMathOperator{\Hom}{Hom}
\newcommand{\OO}{\mathcal{O}}
\newcommand{\GL}{\mathrm{GL}}
\newcommand{\et}{\mathrm{\acute{e}t}}
\newcommand{\Qpb}{\overline{\QQ}_p}
\newcommand{\RR}{\mathbb{R}}
\DeclareMathOperator{\Spec}{Spec}    
\newcommand{\bincoeff}[2]{\genfrac(){0pt}{0}{#1}{#2}}
\title{Anticyclotomic Euler systems for unitary groups}
\author{Andrew Graham and Syed Waqar Ali Shah}
\date{}
\begin{document}
\begin{abstract}
    Let $n \geq 1$ be an odd integer. We construct an anticyclotomic Euler system for certain cuspidal automorphic representations of unitary groups with signature $(1, 2n-1)$.
\end{abstract}

\maketitle

\tableofcontents


\section{Introduction}

In \cite{KolyvaginHP}, Kolyvagin constructs an \emph{anticyclotomic} Euler system for (modular) elliptic curves over $\mbb{Q}$ which satisfy the so-called ``Heegner hypothesis''. One can view the classes in this construction as images under the modular parameterisation of certain divisors on modular curves arising from the embedding of reductive groups
\begin{equation} \label{HeegnerPair}
\opn{Res}_{E/\mbb{Q}}\mbb{G}_m \hookrightarrow \opn{GL}_2
\end{equation}
where $E$ is an imaginary quadratic number field. These divisors (Heegner points) are defined over ring class fields of $E$ and satisfy trace compatibility relations as one varies the field of definition. In particular, Kolyvagin shows that if the bottom Heegner point is non-torsion, then the group of $E$-rational points has rank equal to one. Combining this so-called ``Kolyvagin step'' with the Gross--Zagier formula that relates the height of this point to the derivative of the $L$-function at the central value, one obtains instances of the Birch--Swinnerton-Dyer conjecture in the analytic rank one case. The above construction has been generalised to higher weight modular forms and to situations where a more general hypothesis is placed on the modular form (see \cite{schoen1986}, \cite{Nekovar1992}, \cite{bertolini2013} and \cite{SZhang}).

In this paper, we consider a possible generalisation of this setting; namely,  we construct an anticyclotomic Euler system for the $ p $-adic Galois representations attached to certain regular algebraic conjugate self-dual cuspidal automorphic representations of $\opn{GL}_{2n}/E$. More precisely, we consider the following ``symmetric pair''
\begin{equation} \label{SymPair}
\opn{U}(1, n-1) \times \opn{U}(0, n) \hookrightarrow \opn{U}(1, 2n-1).
\end{equation}
Note that both groups are outer forms of the groups appearing in (\ref{HeegnerPair}) when $ n = 1 $. Let $\pi_0$ be a cuspidal automorphic representation of $\opn{U}(1, 2n-1)$ and let $\pi$ denote a lift to the group of unitary similitudes. Under certain reasonable assumptions on $\pi_0$ (for example, non-endoscopic and tempered with stable $L$-packet), there exists a cuspidal automorphic representation $\Pi$ of $\opn{GL}_{2n}(\mbb{A}_E) \times \opn{GL}_1(\mbb{A}_E)$ which is locally isomorphic to the base-change of $\pi$ at all but finitely many primes. Let $\rho_{\pi}$ denote the representation of the absolute Galois group of $ E $  attached to $\Pi$, as constructed by Chenevier and Harris \cite{ChenevierHarris}.

We impose the following assumptions. We require that the lift $ \pi $ of $\pi_{0} $ as above is  cohomological and the Galois-automorphic piece $\rho_{\pi} \otimes \pi_f$ appears in the middle degree geometric \'{e}tale cohomology of the Shimura variety for $\opn{GU}(1, 2n-1)$ -- see Assumption \ref{KeyAssumption}. If this assumption is satisfied, we say that $\rho_{\pi} \otimes \pi_f$ admits a ``modular parameterisation''. Our second assumption is an analogue of the Heegner hypothesis: we require that $\pi_{0,f}$ admits a $\mbf{H}_0$-linear model, i.e. one has 
\[
\opn{Hom}_{\mbf{H}_0(\mbb{A}_f)}\left( \pi_{0,f}, \mbb{C} \right) \neq 0
\]
where $\mbf{H}_0$ is the subgroup $\opn{U}(1, n-1) \times \opn{U}(0, n)$.  Finally, we require that $ p $ is split in the quadratic extension $ E $, and $ \pi_{0} $ (or equivalently, $ \pi $) has a fixed eigenvector with respect to a certain Hecke operator associated with the Siegel parahoric level at the prime $ p $. We  refer to this assumption as ``Siegel ordinarity".

We now state our main result. Let $S$ be a finite set of primes containing all primes where $\pi_0$ is ramified, and all primes that ramify in $E/\mbb{Q}$. For $p \notin S$, let $\invs{R}$ denote the set of \emph{square-free} positive integers divisible only by primes that lie outside $S \cup \{p\}$ and split in $E/\mbb{Q}$. For $m \in \invs{R}$ and an integer $r \geq 0$, we let $E[mp^r]$ denote the ring class field of $E$ of conductor $mp^r$.
\begin{theoremx} \label{TheoremA}
Let $n \geq 1$ be an odd integer and $p \notin S$ an odd prime that splits in $E/\mbb{Q}$. Let $\pi_0$ be a cuspidal automorphic representation of $\opn{U}(1, 2n-1)$ as above. We impose the following conditions:
\begin{itemize}
    \item $\rho_{\pi}$ is absolutely irreducible and $\rho_{\pi} \otimes \pi_f$ admits a ``modular parameterisation'' (see Assumption \ref{KeyAssumption}).
    \item $\pi_{0, f}$ admits a $\mbf{H}_0$-linear model.
    \item $\pi_0$ is ``Siegel ordinary'' (see Assumption \ref{MainAssumption}).
\end{itemize}
Let $T_{\pi}$ be a Galois stable lattice inside $\rho_{\pi}$. Then there exists a split anticyclotomic Euler system for $T_{\pi}^* (1 - n)$, i.e. for any collection of primes of $E$ lying above primes in $\invs{R}$, and for $m \in \invs{R}$, $r \geq 0$, there exist classes $c_{mp^r} \in \opn{H}^1\left(E[mp^r], T_{\pi}^*(1-n) \right)$ satisfying
\[
\opn{cores}^{E[\ell m p^r]}_{E[mp^r]} c_{\ell m p^r} = \left\{ \begin{array}{cc} P_\lambda \left( \opn{Fr}_\lambda^{-1} \right) \cdot c_{m p^r} & \text{ if } \ell \neq p \text{ and } \ell m \in \invs{R} \\ c_{m p^r} & \text{ if } \ell = p  \end{array} \right.
\]
where $\lambda$ is the fixed prime of $E$ lying above $\ell$, $P_\lambda (X) = \opn{det}\left(1 - \opn{Frob}_\lambda^{-1}X | T_{\pi}(n) \right)$ is the reverse characteristic polynomial of a choice of geometric Frobenius element $ \opn{Frob}_{\lambda}^{-1} \in \Gal(\overline{E}/E) $, and $\opn{Fr}_{\lambda} \in \Gal\left(E[m p^r]/E \right)$ denotes the arithmetic Frobenius at $\lambda$. 
\end{theoremx}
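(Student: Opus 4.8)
The plan is to realise each class $c_{mp^r}$ as the image, under the $p$-adic \'etale Abel--Jacobi map, of a special cycle on the Shimura variety of $\mbf{G} = \opn{GU}(1,2n-1)$ coming from the symmetric pair \eqref{SymPair}, and to deduce the two families of norm relations from a local zeta-integral computation at the split prime $\ell$ and from the Siegel-ordinarity hypothesis at $p$. More precisely, passing to similitude groups, let $\mbf{H} \subset \mbf{G}$ be the subgroup with $\mbf{H} \cap \opn{U}(1,2n-1) = \mbf{H}_0 = \opn{U}(1,n-1) \times \opn{U}(0,n)$; it carries a Shimura datum together with a finite unramified morphism $\iota \colon \Sh_{\mbf{H}} \to \Sh_{\mbf{G}}$. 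As $\opn{U}(0,n)$ is anisotropic, $\dim \Sh_{\mbf{H}} = n-1$ while $\dim \Sh_{\mbf{G}} = 2n-1$, so $\iota$ has codimension $n$ and $\iota_*[\Sh_{\mbf{H}}]$ is a codimension-$n$ cycle on $\Sh_{\mbf{G}}$ (for a nontrivial coefficient system $\mbf{V}$ one pushes forward instead the canonical class in the appropriate branching summand of $\iota^*\mbf{V}$, using the $\mbf{H}_0$-linear model to pin down compatible level groups and test vectors at the primes of $S$). Analysing the components of $\Sh_{\mbf{H}}$ via the reciprocity law and translating $[\Sh_{\mbf{H}}]$ by elements of $\mbf{H}(\mbb{A}_f)$ while varying the level at the primes dividing $mp^r$ produces cycles $Z_{mp^r}$ of codimension $n$ on $\Sh_{\mbf{G}}$ defined over $K[mp^r]$. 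Since $\pi$ is tempered with stable $L$-packet and, by Assumption \ref{KeyAssumption}, contributes to middle-degree cohomology, the $\pi$-isotypic Hecke projector $e_\pi$ kills $H^i_{\et}(\overline{\Sh}_{\mbf{G}}, \mbb{Q}_p)$ for $i \neq 2n-1$; hence $e_\pi H^{2n}_{\et}(\overline{\Sh}_{\mbf{G}}, \mbb{Q}_p(n)) = 0$, and the Hochschild--Serre spectral sequence identifies $e_\pi$ of the \'etale cycle class of $Z_{mp^r}$ with a class
\[
c_{mp^r} \in H^1\!\bigl(K[mp^r],\, e_\pi H^{2n-1}_{\et}(\overline{\Sh}_{\mbf{G}},\mbb{Q}_p)(n)\bigr) = H^1\!\bigl(K[mp^r],\, T_\pi^*(1-n)\bigr),
\]
where the last identification uses the modular parameterisation and the autoduality of $\rho_\pi$ for a suitable lattice $T_\pi$.

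The heart of the argument is the tame norm relation. Fix $\ell \notin S \cup \{p\}$ split in $K$ with $\ell m \in \invs{R}$, and $\lambda \mid \ell$ a prime of $K$. The cycles underlying $c_{\ell m p^r}$ and $c_{mp^r}$ differ only in the level at $\ell$, and geometrically $\opn{cores}^{K[\ell m p^r]}_{K[m p^r]}$ is implemented by composing the two degeneracy maps relating the hyperspecial and the conductor-raising level at $\ell$. Because $\ell$ splits in $K$, over $\mbb{Q}_\ell$ the pair \eqref{SymPair} becomes the Friedberg--Jacquet linear-model pair $\opn{GL}_n \times \opn{GL}_n \hookrightarrow \opn{GL}_{2n}$, so the global identity reduces to a purely local statement about the (finitely many) $\mbf{H}(\mbb{Q}_\ell)$-orbits in $\mbf{G}(\mbb{Q}_\ell)/U_\ell$ and the induced map between the associated Hecke modules of the corresponding linear period; the plan is to evaluate this via the local linear-period zeta integral, which recovers the standard degree-$2n$ Euler factor of $\Pi$ at $\lambda$. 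Combining the resulting Hecke-algebra identity with the Eichler--Shimura congruence relation on $H^{2n-1}_{\et}$ -- and with the way $\opn{Fr}_\lambda \in \Gal(K[mp^r]/K)$ permutes the translated cycles -- converts the Hecke operator into $P_\lambda(\opn{Fr}_\lambda^{-1})$, $P_\lambda(X) = \opn{det}(1 - \opn{Frob}_\lambda^{-1}X \mid T_\pi(n))$, which is exactly the claimed relation for $\ell \neq p$.

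For the prime $p$, the split hypothesis and Siegel ordinarity (Assumption \ref{MainAssumption}) provide a distinguished eigenvector for the Siegel-parahoric Hecke operator $U_p$ with $p$-adic unit eigenvalue; building the classes $c_{mp^r}$ with respect to it and renormalising by the appropriate power of this eigenvalue, the $U_p$-operator that composes the degeneracy maps at $p$ now acts invertibly, so the Euler factor disappears and one gets exact norm compatibility $\opn{cores}^{K[mp^{r+1}]}_{K[mp^r]} c_{mp^{r+1}} = c_{mp^r}$ along the anticyclotomic $p$-tower. Collecting the $c_{mp^r}$ over $m \in \invs{R}$ and $r \geq 0$ then yields the asserted split anticyclotomic Euler system for $T_\pi^*(1-n)$.

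The main obstacle is the local computation in the second paragraph: one must describe the $\mbf{H}(\mbb{Q}_\ell)$-orbits on the relevant deeper-level coset space for $\mbf{G}(\mbb{Q}_\ell)$, choose test vectors so that the local linear-period integral produces the \emph{full} degree-$2n$ factor $P_\lambda$ rather than a proper divisor of it, and track how this Hecke-algebra identity interacts with the Galois action on middle-degree cohomology. By contrast, the vanishing of $e_\pi$ on $H^{2n}_{\et}$ (hence the homological triviality needed to apply Abel--Jacobi), the twist bookkeeping identifying the coefficient module with $T_\pi^*(1-n)$, and the $p$-direction norm compatibility are comparatively routine once the $\ell$-local relation is established.
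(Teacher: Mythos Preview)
Your overall architecture is right---cycles from the symmetric pair, Abel--Jacobi, local zeta integrals for the tame relation, ordinarity for the $p$-tower---but two steps diverge from the paper in ways that create genuine gaps.

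The main one is the passage from the local Hecke identity to the Galois relation. You propose to combine the zeta-integral computation ``with the Eichler--Shimura congruence relation on $H^{2n-1}_{\et}$'' to convert Hecke operators into $P_\lambda(\opn{Fr}_\lambda^{-1})$. The paper does \emph{not} use Eichler--Shimura, which for these unitary Shimura varieties is not available in the form you would need. Instead it works throughout with the extended group $\Gt = \mbf{G} \times \mbf{T}$, $\mbf{T} = \opn{U}(1)$, embedding $\mbf{H} \hookrightarrow \Gt$ via $(h_1,h_2) \mapsto (\iota(h_1,h_2),\, \det h_2/\det h_1)$. The zero-dimensional variety $\Sh_{\mbf{T}}$ has its components permuted by $\Gal(E[mp^r]/E)$ through class field theory (Lemma~\ref{ShapiroLemma}), so $\opn{Fr}_\lambda$ already acts on the classes via the $\mbf{T}(\mbb{A}_f)$-factor. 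Packaging the construction as an $\mbf{H}(\mbb{A}_f)\times\Gt(\mbb{A}_f)$-equivariant map (Proposition~\ref{LemmEin}) and taking a $\chi$-isotypic component, the norm relation becomes a statement in the one-dimensional space $\opn{Hom}_{\mbf{H}_0(\mbb{Q}_\ell)}(\pi_{0,\ell},\chi^{-1}\boxtimes\chi)$; the zeta integral attached to a Shalika model (Proposition~\ref{DistImpliesShalika}, Theorem~\ref{MainTheorem}) then produces the Euler factor directly, with no separate congruence step. Without the $\Gt$-device your route would require substantial unproven input.

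Secondly, your cohomological-triviality argument assumes that $e_\pi$ kills $H^{2n}_{\et}$ because $\pi$ is tempered, but temperedness is not among the hypotheses---it appears only in a remark as an expected sufficient condition for Assumption~\ref{KeyAssumption}, which itself asserts only a surjection out of $H^{2n-1}$. The paper obtains triviality instead as a byproduct of the vertical construction: the ordinary projector makes the classes into universal norms along the anticyclotomic $\mbb{Z}_p$-tower, and for a positive-dimensional $p$-adic Lie extension $\varprojlim H^0$ vanishes, so the Iwasawa Abel--Jacobi map of Proposition~\ref{AJIWprop} is defined on the whole module. This is why, in the paper, the $p$-direction construction logically \emph{precedes} the Abel--Jacobi step rather than being an independent ingredient.
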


\begin{remark}
Evidently, we can take $c_{mp^r} = 0$ for all $m, r$, and the above theorem is vacuous. However, the Euler system classes we construct arise from special cycles on Shimura varieties, and the non-vanishing of these classes is expected to be related to the behaviour of the $L$-function attached to $\pi_0$ (more precisely, the non-vanishing of the derivative of the $L$-function at its central value). Proving such a relation constitutes an \emph{explicit reciprocity law}, and we will pursue this in future work.
\end{remark}

The above definition of a split anticyclotomic Euler system originates from the forthcoming work of Jetchev, Nekov\'{a}\v{r} and Skinner, in which a general machinery for bounding Selmer groups attached to conjugate self-dual representations is developed. An interesting feature of this work is that one only needs norm relations for primes that split in the CM extension, rather than a ``full'' anticyclotomic Euler system. As a consequence of this, we expect to obtain the following corollary:
\begin{corollary*}
Let $\pi_0$ be as in Theorem \ref{TheoremA}, and suppose that the Galois representation $\rho_{\pi}$ satisfies the following hypotheses:
\begin{itemize}
    \item The primes for which $\rho_{\pi}$ is ramified lie above primes that split in $E/\mbb{Q}$.
    \item There exists an element $\sigma \in \Gal\left(\overline{E}/E[1](\mu_{p^{\infty}})\right)$ such that 
    \[
    \opn{rank} \rho_{\pi}/(\sigma - 1)\rho_{\pi} = 1.
    \]
\end{itemize}
Then, if $c \defeq \opn{cores}^{E[1]}_E c_1$ is non-torsion, the Bloch--Kato Selmer group $\opn{H}^1_f\left(E, \rho_{\pi}(n) \right)$ is one-dimensional. 
\end{corollary*}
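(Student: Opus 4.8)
The plan is to feed the split anticyclotomic Euler system $\{c_{mp^r}\}$ of Theorem \ref{TheoremA} into the Selmer-bounding machinery of Jetchev--Nekov\'{a}\v{r}--Skinner and to verify that the three bullet points supply exactly its hypotheses. First I would record the conjugate self-duality of $\Pi$, which gives $\rho_\pi^{\vee} \cong \rho_\pi^{c}(2n-1)$ for the appropriate normalisation of Hodge--Tate weights; twisting, the pair $(T_\pi^*(1-n), \{K[mp^r]\})$ is then essentially self-dual for the involution induced by $\Gal(K/\mbb{Q})$, which is the input format of [JNS] and which identifies the Bloch--Kato Selmer group $\opn{H}^1_f(K, \rho_\pi(n))$ with the Selmer group their argument controls. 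At this point one fixes the compatible global local conditions --- unramified outside $S \cup \{p\}$, and at $p$ (split in $K$ by hypothesis) the Panchishkin/ordinary sub-object supplied by Siegel ordinarity, Assumption \ref{MainAssumption} --- and checks, as part of the definition of a split anticyclotomic Euler system, that the classes $c_{mp^r}$ indeed lie in the corresponding Selmer groups; one should also confirm the ordinary condition at $p$ agrees with the crystalline condition defining $\opn{H}^1_f$ up to controlled error, which is standard for ordinary representations when $p$ is unramified in the coefficients.

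Next I would match the remaining axioms of the machinery with the remaining hypotheses. The assumption that the bad primes of $\rho_\pi$ lie above primes split in $E/\mbb{Q}$ ensures that the norm relations we prove --- which concern only split primes --- together with the triviality of the local conditions there, are enough to carry out the Kolyvagin-derivative construction; absolute irreducibility of $\rho_\pi$ and the existence of $\sigma \in \Gal(\overline{K}/K[1](\mu_{p^{\infty}}))$ with $\opn{rank}\,\rho_\pi/(\sigma-1)\rho_\pi = 1$ are precisely the irreducibility and non-degeneracy conditions powering the Chebotarev argument that produces derived classes annihilating the dual Selmer group. With these in place, the formal output is the Euler system bound: if the system is nontrivial --- concretely, if $c = \opn{cores}^{K[1]}_K c_1$, equivalently its image in $\opn{H}^1_f(K, \rho_\pi(n))$ under the duality above, is non-torsion --- then the Kolyvagin derivative classes built from $\{c_{mp^r}\}$ bound the dual Selmer group and force $\dim_{\mbb{Q}_p}\opn{H}^1_f(K, \rho_\pi(n)) \leq 1$; since the non-torsion class $c$ already forces $\dim \geq 1$, we obtain equality. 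This corollary is thus the analogue of the ``Kolyvagin step'', with the non-vanishing of $c$ playing the role that non-vanishing of the Heegner point plays via Gross--Zagier.

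The main obstacle is that the machinery of Jetchev--Nekov\'{a}\v{r}--Skinner is still forthcoming, so the genuine content is to confirm that the data we construct meet its axioms verbatim: that split-prime norm relations are sufficient, that our local condition at $p$ is the one appearing in their formalism, and that our non-degeneracy hypothesis on $\sigma$ is exactly theirs. Once [JNS] is available this should be a matter of checking rather than of new argument; the one technical point entirely within our control is the bookkeeping of the first paragraph --- the precise twists, the choice of Galois-stable lattice $T_\pi$, and torsion at small primes --- which is harmless since $p$ is odd and $\rho_\pi$ is absolutely irreducible.
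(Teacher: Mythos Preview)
Your proposal is correct and matches the paper's treatment: the paper does not actually prove this corollary but presents it as an \emph{expected} consequence of the forthcoming Jetchev--Nekov\'{a}\v{r}--Skinner machinery, exactly as you describe. Your identification of the conjugate self-duality input, the sufficiency of split-prime norm relations, and the role of the three bulleted hypotheses as the JNS axioms is precisely what the paper has in mind, and your candid acknowledgment that the obstacle is the unavailability of [JNS] is the reason the paper writes ``we expect to obtain'' rather than claiming a proof.
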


\begin{remark}
The existence of such an element $\sigma$ in the above corollary is expected to follow if the image of the Galois representation $\rho_{\pi} \colon \Gal(\overline{E}/E) \to \opn{GL}_{2n}(\Qpb)$ is sufficiently large. In particular, this will exclude automorphic representations of ``CM type''. 
\end{remark}

\begin{remark}
In this paper, we have chosen to focus on the case where $E$ is an imaginary quadratic number field -- however,  we expect the results to also hold for general CM fields. Moreover, it should be possible to construct similar classes for certain inner forms of the groups appearing in (\ref{SymPair}). Another direction worth exploring is the case of inert primes which is closer in spirit to Kolyvagin's original construction -- these questions will also be investigated further in future work.  
\end{remark}

\subsection{Outline of the paper}

Let $\pi_0$ be a cuspidal automorphic representation for the unitary group $\mbf{G}_0$ of signature $(1, 2n-1)$ (defined in section \ref{TheGroups}), such that $\pi_{0, \infty }$ lies in the discrete series. Discrete series $L$-packets of $\mbf{G}_0$ are parameterised by irreducible algebraic representations of $\opn{GL}_{2n}/E$; we let $V_0$ denote the algebraic representation corresponding to the $L$-packet containing $\pi_{0, \infty}$. As explained in the previous section, we assume that $\pi_{0,f}$ admits a $\mbf{H}_0$-linear model, i.e. we have 
\[
\opn{Hom}_{\mbf{H}_0(\mbb{A}_f)}(\pi_{0,f}, \mbb{C}) \neq 0 .
\]
In particular, this implies that $V_0$ is self-dual and the central character of $\pi_0$ is trivial. Let $\pi$ denote a lift of $\pi_0$ to the unitary similitude group $\mbf{G} = \GU(1,2n-1) $, and let $\Pi = \omega \boxtimes \Pi_0$ denote a weak base-change to $\opn{GL}_1(\mbb{A}_E) \times \opn{GL}_{2n}(\mbb{A}_E)$ as explained in Proposition \ref{WeakBCProp}. We assume that $\Pi$ is cuspidal and, since the central character of $\pi_0$ is trivial, we can choose $\pi$ such that $\omega$ is equal to the trivial character (Lemma \ref{TrivialCentralLiftLemma}). Furthermore, $\pi_{\infty}$ lies in the discrete series $L$-packet corresponding to the trivial extension of $V_0$ to an algebraic representation of $\opn{GL}_1/E \times \opn{GL}_{2n}/E$, which we denote by $V$ (so in particular, $V$ is also self-dual). 

The representation $\Pi$ decomposes as a restricted tensor product $\otimes_{v}' \Pi_v$ over the places of $E$, and for each (finite) prime $\lambda$ where $\Pi_{\lambda}$ is unramified, we let $L(\Pi_{\lambda}, s)$ denote the standard local $L$-factor attached to $\Pi_{\lambda}$ (see Definition \ref{StandardLFactorDef}). We use similar notation for the representation $\Pi_0$. Combining the results of several people (see Theorem \ref{ExistenceOfGaloisRep}), Chenevier and Harris have shown that there exists a semisimple continuous Galois representation $\rho_{\pi} \colon \Gal(\overline{E}/E)  \to \opn{GL}_{2n}(\Qpb)$ satisfying the following unramified local--global compatibility: for all but finitely many rational primes $\ell$ and for all $\lambda | \ell$
\[
\opn{det}\left(1 - \opn{Frob}_{\lambda}^{-1} (\opn{Nm} \lambda)^{-s} | \rho_{\pi} \right) = L(\Pi_\lambda, s + 1/2 - n)^{-1} = L(\Pi_{0,\lambda}, s + 1/2 - n)^{-1} 
\]
where $\opn{Frob}_{\lambda}$ is an arithmetic Frobenius at $\lambda$, $\opn{Nm}\lambda$ is the absolute norm of $\lambda$, and the last equality follows because $\omega = 1$. We assume that $\rho_{\pi} \otimes \pi_f$ appears in the middle degree geometric \'{e}tale cohomology of the Shimura variety $\opn{Sh}_{\mbf{G}}(K)$ associated with $\mbf{G}$ (with appropriate level $K \subset \mbf{G}(\mbb{A}_f)$). More precisely, we assume that we have a ``modular parameterisation''
\[
\opn{H}^{2n-1}_{\et}\left( \opn{Sh}_{\mbf{G}, \overline{\mbb{Q}}}, \mathscr{V}(n) \right) \defeq \varinjlim_L \opn{H}^{2n-1}_{\et}\left( \opn{Sh}_{\mbf{G}}(L)_{\overline{\mbb{Q}}}, \mathscr{V}(n) \right)  \twoheadrightarrow \pi_f^{\vee} \otimes \rho_{\pi}^*(1-n).
\]
where the limit is over all sufficiently small compact open subgroups $L \subset \mbf{G}(\mbb{A}_f)$, and $\mathscr{V}$ denotes the $p$-adic sheaf associated with the representation $V$. In analogy with the Heegner point case, the Euler system will be obtained as the images of certain classes under this modular parameterisation. We also require that $\pi = \otimes'_{\ell} \pi_\ell$ satisfies a Siegel ordinarity condition at $p$. Briefly, this means that the representation $\pi_p$ contains a vector $\varphi_p$ that is an eigenvector for the Hecke operator $\invs{U}_S \defeq {\mu(\tau)}[J \tau J]$, where 
\[
\tau = 1 \times \tbyt{p}{}{}{1} \in \opn{GL}_1(\mbb{Q}_p) \times \opn{GL}_{2n}(\mbb{Q}_p) = \mbf{G}(\mbb{Q}_p)
\] 
and $J$ is the parahoric subgroup associated with the upper-triangular Siegel parabolic subgroup (i.e. the standard parabolic subgroup corresponding to the partition $(n, n)$). Here $\mu \colon \left(\mbb{Q}_p^{\times}\right)^{1+2n} \to \mbb{Q}_p^{\times}$ is the highest weight of $V$ viewed as a character of the standard torus of $\mbf{G}(\mbb{Q}_p) = \opn{GL}_1(\mbb{Q}_p) \times \opn{GL}_{2n}(\mbb{Q}_p)$. 

We consider the following extended group $\Gt \defeq \mbf{G} \times \mbf{T}$ where $\mbf{T} = \opn{U}(1)$ is the torus controlling the variation in the anti-cyclotomic tower (see Lemma \ref{TorusVariationLemma}) -- the group $\mbf{H} = \opn{G}\left( \opn{U}(1, n-1) \times \opn{U}(0, n) \right)$ then lifts to a subgroup of $\Gt$ via the map
\[
(h_1, h_2) \mapsto \left( \tbyt{h_1}{}{}{h_2}, \frac{\opn{det}h_2}{\opn{det}h_1} \right) .
\]
In particular, the dimensions of the associated Shimura varieties\footnote{Strictly speaking, the group $\mbf{H}$ does not give rise to a Shimura variety but rather what we call a \emph{Shimura--Deligne variety} (the axiom -- typically denoted (SV3) -- is not satisfied). In practice this does not affect the arguments in the paper, so the reader can safely pretend that it gives rise to a Shimura variety in the usual sense. We provide justifications for this viewpoint in Appendix \ref{PureAppendix}.} are $ \dim \Sh_{\Hb} = n - 1$ and $ \dim \Sh_{\Gt} = 2n-1$ respectively, and the inclusion  $ \Hb \hookrightarrow  \tilde{\Gb} $ provides us with a rich supply of codimension $ n $ cycles on the target variety, via Gysin morphisms. These cycles carry an action of both $ \Gt(\Ab_{f}) $ and $ \Gal(\bar{E}/E)$, and via Shimura reciprocity, the Galois action can be translated into an action of $ \Tb(\Ab_{f}) $. This will allow us to freely switch between the automorphic and Galois viewpoints.

By considering the images under the cycle class map, we obtain classes in the degree $ 2n $ absolute \'{e}tale cohomology of $ \Sh_{\tilde{\Gb}} $. A consequence of the Siegel ordinarity condition is that we can modify these classes so that they are universal norms in the anticyclotomic $\mbb{Z}_p$-extension, and since this tower is a $p$-adic Lie extension of positive dimension, this will force our classes to be cohomologically trivial. We then obtain classes in the first group cohomology of the \'{e}tale cohomology of $\opn{Sh}_{\mbf{G}}$ by applying the Abel--Jacobi map (an edge map in the Hochschild--Serre spectral sequence). The reason that these classes are universal norms will follow from the results of \cite{Loeffler19}, using the fact that the pair of subgroups in (\ref{SymPair}) gives rise to a spherical variety at primes which split in the imaginary quadratic extension. This norm-compatibility will also imply the Euler system relations in the $p$-direction. We discuss these relations in section \ref{VerticalNormRelations}. For the convenience of the reader, we have provided the constructions of the Gysin morphisms and Abel--Jacobi maps for continuous \'{e}tale cohomology in Appendix \ref{TheAppendix}. 

The key technique introduced in \cite{LSZ17} is to rephrase the horizontal Euler system relations as a statement in local representation theory. The underlying idea is to consider the above construction for varying levels of the source and target Shimura varieties, and then  package this data into a map of $ \Hb(\Ab_{f}^{p}) \times \tilde{\Gb}(\Ab_{f}^{p}) $ representations. In section \ref{TheInterludeSection}, we construct this map in an abstract setting using the language of cohomology functors. The ideas used here are essentially the same as those that appear in \emph{op.cit.} but we hope that this formalism will be useful in proving Euler system relations for general pairs of reductive groups. We also note that these abstract cohomology functors appear in certain areas of representation theory, under the name ``Mackey functors'' (see \cite{Dress} or \cite{WebbGuideToMackey} for example), so these techniques can be considered as an application of Mackey theory.

Recall that $E[mp^r]$ denotes the ring class field of $E$ corresponding to the order $\mbb{Z} + mp^r \ordd_E$ and let $D_{mp^r} \subset \mbf{T}(\Ab_{f})$ denote the compact open subgroup corresponding to this ring class field under the Artin reciprocity map. By composing the Gysin morphisms, the Abel--Jacobi map, the modular parameterisation above and passing to the ``completion" (see Proposition \ref{LemmEin}) for varying levels, we construct a collection of maps
\[
\invs{H}\left( \Gt(\mbb{A}_f) \right)^{1 \times D_{mp^r}} \xrightarrow{\mathcal{L}_{mp^{r}}} \pi_f^\vee \otimes \opn{H}^1\left(E[mp^r], \rho_{\pi}^*(1-n) \right)
\]
which satisfy a certain equivariance property under the action of $\Gt(\mbb{A}_f) \times \mbf{H}(\mbb{A}_f)$. Here $\invs{H}(-)^C$ denotes the elements of the associated Hecke algebra with rational coefficients which are invariant under right-translation by elements in $C$ (and the convolution product is with respect to the fixed Haar measure in \S \ref{TheGroups}).

Let $\varphi$ be an element of $\pi_f$ which is fixed by the Siegel parahoric level at $p$. We define our Euler system to be 
\[
c_{mp^r} \approx \left(v_{\varphi} \circ \; \mathcal{L}_{mp^{r}} \right) (\phi^{(m)})
\]
for suitable test data $\phi^{(m)}$, where $v_{\varphi}$ denotes the evaluation map at the the vector $\varphi$. Here,  by approximately we mean up to some suitable volume factors which ensure that the classes $c_{mp^r}$ land in the cohomology of a Galois stable lattice. To prove the tame norm relation at $\ell \nmid m$, we choose a character $\chi \colon \Gal(E[mp^r]/E) \to \mbb{C}^{\times}$, which can naturally be viewed as a character of $\mbf{T}(\mbb{A}_f)$ via the Artin reciprocity map, and consider the $\chi$-component $\mathcal{L}_{mp^{r}}^{\chi}$ of the map $\mathcal{L}_{mp^{r}}$. It is enough to prove the tame norm relation locally at $\ell$ and, by applying Frobenius reciprocity, the map can naturally be viewed as an element 
\[
\left(\mathcal{L}_{mp^{r}}^{\chi}\right)_{\ell} \in \opn{Hom}_{\mbf{H}_0(\mbb{Q}_\ell)}(\pi_{0, \ell}, \chi^{-1} \boxtimes \chi ).
\]
This reduces the tame norm relation to a calculation in local representation theory, which constitutes the main part of this paper. Once this relation is established, we then  sum over all $\chi$-components to obtain the full norm relation. 

We now describe the ingredients that go into the local computation described above, all of which take place in section \ref{TheHorizontalRelations} of the paper. If $\ell$ is a rational prime that splits in $E/\mbb{Q}$, then there exists a prime $\lambda | \ell$ and an isomorphism $\pi_{0, \ell} \cong \Pi_{0,\lambda}$ compatible with the identification $\mbf{G}_0(\mbb{Q}_\ell) \cong \opn{GL}_{2n}(\mbb{Q}_\ell)$. Furthermore, since $\pi_{0, f}$ admits a $\mbf{H}_0$-linear model, we also have 
\[
\opn{Hom}_{\mbf{H}_0(\mbb{Q}_\ell)}(\pi_{0, \ell}, \mbb{C}) \neq 0 .
\]
The work of Matringe (Proposition \ref{DistImpliesShalika}) implies that $\pi_{0, \ell}$ has a local Shalika model and as a consequence of this, we prove the following theorem:
\begin{theoremx} \label{TheoremB}
Let $\chi \colon \mbb{Q}_\ell^{\times} \to \mbb{C}^\times$ be an unramified finite-order character. Then
\begin{enumerate}
    \item $\opn{dim}_{\mbb{C}} \opn{Hom}_{\mbf{H}_0(\mbb{Q}_\ell)}(\pi_{0, \ell}, \chi^{-1} \boxtimes \chi) = 1$, where $\chi^{-1} \boxtimes \chi$ denotes the character of $\mbf{H}_0(\mbb{Q}_\ell)$ given by sending a pair of matrices $(h_1, h_2)$ to $\chi(\opn{det}h_2/\opn{det}h_1)$. 
    \item There exists a compactly-supported locally constant function $\phi \colon \mbf{G}_0(\mbb{Q}_\ell) \to \mbb{Z}$ (independent of $\chi$) such that, for all $\ide{z} \in \opn{Hom}_{\mbf{H}_0(\mbb{Q}_\ell)}(\pi_{0, \ell}, \chi^{-1} \boxtimes \chi)$ and all spherical vectors $\varphi_0 \in \pi_{0, \ell}$
    \begin{equation} \label{FundamentalRel}
    \ide{z}(\phi \cdot \varphi_0 ) = \frac{\ell^{n^2}}{\ell - 1} L(\pi_{0, \ell} \otimes \chi, 1/2)^{-1} \ide{z}(\varphi_0) .
    \end{equation}
    \item The function $ \phi $ satisfies the following integrality condition: for all $ g \in \Gb_{0}(\QQ_{\ell})$,
    \begin{equation} \label{FundamentalRelintegrality} (\ell-1) \cdot \phi(g) \cdot [\mathbf{H}_{0}(\ZZ_{\ell}) : V_{1,g}]^{-1} \in \ZZ 
    \end{equation}
    where $ V_{1,g} = g \, \mbf{G}_0(\mbb{Z}_\ell) g^{-1} \cap \mathbf{H}_{0}(\mbb{Z}_{\ell}) \cap \nu^{-1} ( 1  + \ell \ZZ_{\ell})$ and $\nu \colon \mbf{H}_0(\mbb{Q}_\ell) \to \mbf{T}(\mbb{Q}_\ell)$ is the map given by $(h_1, h_2) \mapsto \frac{\opn{det}h_2}{\opn{det}h_1}$.    
\end{enumerate}
\end{theoremx}
The first part of this theorem is a result of Jacquet and Rallis \cite{ULP} in the case that $\chi$ is trivial, and a result of Chen and Sun \cite{Chen2019} in the general case. Since $\pi_{0, \ell}$ admits a local Shalika model, one obtains an explicit basis of the hom-space by considering an associated zeta integral, and the proof of part (2) then involves manipulating this zeta integral by applying several $U_\ell$-operators to its input. By taking a suitable linear combination of these operators, we are able to produce the relation appearing in (\ref{FundamentalRel}) and the integrality condition (\ref{FundamentalRelintegrality}). The latter of these ensures the integrality of the resulting    Euler system classes. Such a linear combination arises naturally from an inclusion-exclusion principle on flag varieties associated with $\opn{GL}_m$, for $m=1, \dots, n$.

\subsection{Notation and conventions} \label{NotationSection}

We fix the following notation and conventions throughout the paper:
\begin{itemize}
    \item An embedding $\overline{\mbb{Q}} \hookrightarrow \mbb{C}$ and an imaginary quadratic number field $E \subset \overline{\QQ}$. We assume that the discriminant of $E$ satisfies $\opn{disc}(E) \neq -3, -4$. In particular, this implies that $\ordd_E^{\times} = \{\pm 1 \}$.
    \item For any rational prime $\ell$, we fix an embedding $\overline{\mbb{Q}} \hookrightarrow \overline{\mbb{Q}}_\ell$ and an isomorphism
    \[
    \overline{\mbb{Q}}_{\ell} \cong \mbb{C}
    \]
    compatible with the fixed embeddings $\left(\overline{\mbb{Q}} \hookrightarrow \mbb{C}, \overline{\mbb{Q}} \hookrightarrow \overline{\mbb{Q}}_\ell\right)$. In particular, for each $\ell$, we have a choice of $ \lambda $ above $ \ell $. 
    \item A rational prime $p > 2$ that splits in $E$.
    \item $\mbb{A}_F$ will denote the adeles of a number field $F$ and $\mbb{A}_{F, f}$ the finite adeles; if $F = \mbb{Q}$ we simply write $\mbb{A}$ and $\mbb{A}_f$ respectively. For any integer (or finite set of primes) $S$, we write $\mbb{A}^S$ (resp. $\mbb{A}_f^S$) to mean the adeles (resp. finite adeles) away from $S$ (i.e. at all primes not dividing $S$/not in $S$), and $\mbb{A}_S = \prod_{\ell \in S} \mbb{Q}_{\ell}$ for the adeles at $S$. We also set $\mbb{Z}_S = \prod_{\ell \in S} \mbb{Z}_\ell$. 
    \item For a ring $R$ and a connected reductive group $\mbf{G}$ over $\mbb{Q}$ with a fixed Haar measure on $\mbf{G}(\mbb{A}_f)$, we let
    \[
    \invs{H}\left( \mbf{G}(\mbb{A}_f), R \right) \defeq \left\{ \phi \colon \mbf{G}(\mbb{A}_f) \to R : \begin{array}{c} \phi \text{ is locally constant } \\ \text{ and compactly-supported } \end{array} \right\}
    \]
    denote the Hecke algebra, which carries an action of $\mbf{G}(\mbb{A}_f)$ given by right-translation of the argument. We will omit the ring $R$ from the notation when it is clear from the context. Additionally, if $K \subset \mbf{G}(\mbb{A}_f)$ is a compact open subgroup, then $\invs{H}\left( K \backslash \mbf{G}(\mbb{A}_f) / K \right)$ will denote the subset of $K$-biinvariant functions. We also have similar notation for the $\mbb{Q}_\ell$-points of $\mbf{G}$. The Haar measures for the specific groups we work with throughout the article are fixed in Notation \ref{NotationForFixedHaarMeasures}.
    \item If $X \subset \mbf{G}(\mbb{A}_f)$ is a compact open subset, then we write $\opn{ch}(X) \in \invs{H}(\mbf{G}(\mbb{A}_f), \mbb{Z})$ for the indicator function of $X$.
    \item For a number field $F$, we let $G_F$ denote the absolute Galois group of $F$. Furthermore, the global Artin reciprocity map 
    \[
    \opn{Art}_F \colon \mbb{A}_F^{\times} \to G_F^{\mathrm{ab}}
    \]
    is defined geometrically, i.e. it takes a uniformiser to the associated geometric Frobenius.
    \item For a prime $v$ of $F$, we let $\opn{Frob}_v$ denote an arithmetic Frobenius in $G_F$. If $L/F$ is an abelian extension of number fields that is unramified at $v$, we will sometimes write $\opn{Fr}_v$ for the arithmetic Frobenius in $\Gal(L/F)$ associated with the prime $v$.
    \item For an integer $m \geq 1$, we let $E[m]$ denote the ring class field of conductor $m$, i.e. the finite abelian extension with norm subgroup given by $E^{\times} \cdot \widehat{\ordd}_m^{\times}$ where
    \[
    \ordd_m = \mbb{Z} + m \ordd_E .
    \]
    \item The smooth dual of a representation will be denoted by $(-)^\vee$ and the linear dual by $(-)^*$.
    \item For a smooth quasi-projective scheme $X$ over a characteristic zero field, and a number field $F$, we let $\opn{CHM}(X)_F$ denote the category of relative Chow motives over $X$ with an $F$-structure, as defined in \cite[Definition 2.4]{Torzewski2019}. Let $\opn{DM}_{B, c}(X)$ denote the category of \emph{constructible Beilinson motives}, as in \cite[Definition 15.1.1]{CDMixed}. Then, for any Chow motive $(Y, e, n) \in \opn{CHM}(X)_{\mbb{Q}}$, one can associate a constructible Beilinson motive $eM_X(Y)(n)$ as the mapping fibre
    \[
    eM_X(Y)(n) \defeq \opn{Cone}\left( M_X(Y)(n) \xrightarrow{1 - e} M_X(Y)(n) \right) [-1]
    \]
    where $M_X(Y)$ is as in \cite[\S 11.1.2]{CDMixed}. This association assembles into a functor $\opn{CHM}(X)_{\mbb{Q}} \to \opn{DM}_{B, c}(X)$, so we can naturally view a Chow motive as a constructible Beilinson motive. If $(Y, e, n)$ is equipped with an $F$-structure, then so is $eM_X(Y)(n)$ (i.e. one has a $\mbb{Q}$-linear homomorphism $F \hookrightarrow \opn{End}_{\opn{DM}_{B, c}(X)}\left(eM_X(Y)(n)\right)$ ). 
    \item The motivic cohomology of $X$ with values in a relative Chow motive $\mathscr{F} \in \opn{CHM}(X)_F$ is the $F$-vector space given by
    \[
    \opn{H}^\bullet_{\mathrm{mot}}\left(X, \mathscr{F} \right) \defeq \opn{Hom}_{\opn{DM}_{B, c}(X)}\left( \mbf{1}_X, \mathscr{F}[\bullet] \right)
    \]
    where $\mbf{1}_X$ denotes the trivial motive. This definition is compatible with the one arising from algebraic $K$-theory by \cite[Corollary 14.2.14]{CDMixed}, and since the categories $\opn{DM}_{B, c}(-)$ satisfy the six functor formalism, we can define pushforwards/pullbacks between these cohomology groups (see \cite[\S 4.1]{Lemma17} and the references therein).
    \item All \'{e}tale cohomology groups refer to continuous \'{e}tale cohomology in the sense of Jannsen (\cite{Jannsen1988}). A subscript $ c $ will denote the compactly supported version.   
    \item If $H$ and $G$ are locally profinite groups, and $\sigma$, $\pi$ smooth representations of $H$ and $G$ respectively, then $\sigma \boxtimes \pi$ will denote the tensor product representation of $H \times G$. 
    \item If $\mbf{G}$ is an algebraic group over $\mbb{Q}$ and $F$ is a characteristic zero field, then we write $\opn{Rep}_F(\mbf{G})$ for the category of algebraic representations of $\mbf{G}_F$.  
    \item Unless specified otherwise, all automorphic representations are assumed to be unitary. 
    \item Unless specified otherwise, all modules over a group ring will be \emph{left} modules.
    \item For $\Phi = \Qpb$ or a finite extension of $\mbb{Q}_p$, with ring of integers $\ordd$, and $T$ a finite-free $\ordd$-module with a continuous action of $\Gal(\overline{\mbb{Q}}/E)$, we set
    \[
    \opn{H}^1_{\opn{Iw}}\left( E[mp^{\infty}], T \right) \defeq \varprojlim_{r \geq 1} \opn{H}^1\left(E[mp^r], T \right)
    \]
    to be the Iwasawa cohomology of $T$ (in the anticyclotomic tower), where the transitions maps are given by corestriction.
    \item Let $\ell$ be a rational prime. For a positive integer $n$ we write $[n]_\ell \defeq (\ell^n - 1)(\ell - 1)^{-1}$. This definition extends to $[n]_\ell ! \defeq [n]_\ell \cdot [n-1]_\ell \cdots [1]_\ell$ and 
    \[
    \genfrac[]{0pt}{0}{n}{m}_\ell \defeq \frac{[n]_\ell !}{[m]_\ell !\cdot [(n-m)]_\ell !}
    \]
    with the obvious convention that $[0]_\ell ! = 1$.
\end{itemize}

\subsection{Acknowledgements} 

We are greatly indebted to David Loeffler, Chris Skinner and Sarah Zerbes -- a large amount of this paper owes its existence to the ideas introduced in \cite{LSZ17} on constructing Euler systems in automorphic settings. We are also very grateful to Wei Zhang for suggesting this problem to us. The second named author would like to extend his gratitude to Barry Mazur for his constant guidance and to Sasha Petrov, Ziquan Yang and David Yang for many enlightening discussions. The authors would like to thank the anonymous referees for their feedback, which has led to significant improvements in the presentation of this article. Finally, we would like to thank the organizers of the conference ``Automorphic $p$-adic $L$-functions and regulators'' held in Lille, France in October 2019. This conference provided a pleasant opportunity for the authors to work together in person.

The first named author was supported by the Engineering and Physical Sciences Research Council [EP/L015234/1], the EPSRC Centre for Doctoral Training in Geometry and Number Theory (The London School of Geometry and Number Theory), University College London and Imperial College London.     


\section{Preliminaries I: Cohomology functors} \label{TheInterludeSection}

In \cite{Loeffler19}, a class of functors on subgroups of locally profinite groups is defined in an abstract setting that captures  the key properties of the cohomology of locally symmetric spaces. In this section, we study these functors in more detail, and develop some machinery that will be useful later on. 

\subsection{Generalities} \label{GeneralitiesSection}

Let $G$ be a locally profinite topological group, $\Sigma \subset G$ an open submonoid and let $\Upsilon$ be a non-empty collection of compact open subgroups of $G$ contained in $ \Sigma $. We will impose the following conditions on $(G, \Sigma, \Upsilon) $    
\begin{itemize} 
\item[(T1)] For all $g \in \Sigma \cup \Sigma^{-1}$ and $K \in \Upsilon$, $gKg^{-1} \subset \Sigma$ implies $gKg^{-1} \in \Upsilon$.
\item[(T2)] For all $K \in \Upsilon$, $\Upsilon$ contains a topological basis of open normal subgroups of $K$.
\item [(T3)] For all $K, L \in \Upsilon$ and $g \in \Sigma \cup \Sigma^{-1}$, we have $K \cap gLg^{-1} \in \Upsilon$.
\end{itemize} 
To such a triplet,  we associate a \emph{category of compact opens} $\mathcal{P}(G, \Sigma, \Upsilon)$ whose objects are elements of $\Upsilon$ and whose morphisms are given by 
\[
\Hom_{\mathcal{P}(G, \Sigma, \Upsilon)}(L, K) \defeq \left\{g \in \Sigma | \, g^{-1} L g \subset K\right\} 
\]
for $L , K \in \Upsilon $, with compositions given by $( L \xrightarrow{g} K ) \circ ( L'  \xrightarrow { h }  L ) = ( L' \xrightarrow {hg} K )$.   We will denote a morphism $ (L \xrightarrow{g} K  )   $ by $[g]_{L,K}$; if we have $L \subset K$ and $g = 1$, we shall also denote the corresponding morphism by $\pr_{L, K}$. If no confusion can arise, we will suppress the subscripts from these morphisms.  Omission of $\Upsilon$ from the triplet $ (G, \Sigma , \Upsilon) $ will mean that we take all compact open subgroups of $\Sigma$, and omission of $\Sigma$ means $\Sigma = G$.

\begin{definition} \label{CohoFunc} A \emph{cohomology functor} $ M $ on a triplet $ (G,\Sigma , \Upsilon) $ as above and valued in $ R $\textbf{-Mod} for a commutative ring $ R $ is a pair of covariant functors 
\[
M^{*} \colon \mathcal{P}(G, \Sigma, \Upsilon)^{\text{op}} \to R\textbf{-Mod} \qquad \quad M_{*} \colon \mathcal{P}(G, \Sigma^{-1}, \Upsilon) \to R\textbf{-Mod} 
\]
satisfying the following:
\begin{enumerate}
 \item [(C1)] $M_{*}(K) = M^{*}(K)$ for all $K \in \Upsilon$. We denote this common value by $M(K)$.
 \item [(C2)] If $\phi$ is a morphism then we set $\phi_* = M_*(\phi)$ and $\phi^* = M^*(\phi)$. We require that
 \[
 [g]^{*}_{L,K} = [g^{-1}]_{K,L, *} \in \Hom_R(M(K), M(L)) 
 \]
 for all $g \in \Sigma$ and $L, K \in \Upsilon$ satisfying $g^{-1}Lg = K$. 
 \item [(C3)]   $[\gamma]_{K, K, *} = \mathrm{id}$ for all $\gamma \in K$ and $K \in \Upsilon$. 
\end{enumerate}    
We will denote the cohomology functor as $  M : (G, \Sigma, \Upsilon) \to R $\textbf{-Mod}, and  sometimes abusively as $ M : \mathcal{P}_{G} \to  R$\textbf{-Mod} if the categories of compact opens are clear from the context.   
\end{definition}

Our cohomology functors will often be enhanced with the following additional  axioms. 

\begin{definition}  \label{DefOfAdditionalProperties}
Let $ M : (G, \Sigma,  \Upsilon) \to  R$-\textbf{Mod} be a  cohomology functor.      
\begin{enumerate}
\item[(G)]We say $ M $ is \emph{Galois} if for all $ K , L  \in \Upsilon   $ with $ L \triangleleft K $, $$   \pr_{L,K } ^ { * } : M(K) \xrightarrow {\sim} M(L) ^ { K/L}   $$
where the (left) action of $ \gamma \in K/L $ on $  x \in M(L) $ is via $ x \mapsto [\gamma]_{L, L}^{*} (x)  $.  
\item[(Co)]   We say that $ M $ is a \emph{covering functor}  if for all $  L , K  \in \Upsilon $, $  L \subset K $,   
\[
( L  \xrightarrow {\pr}  K )_{*} \circ ( L \xrightarrow{\pr} K )^{*} =   ( M(K) \xrightarrow{[ K: L ]}  M(K) )      
\]
i.e. the composition is multiplication by the index $ [K:L] $ on  $ M(K) $.     
  \item[(M)]
We say $ M $ is \emph{Cartesian}   
if for all $K, L, L' \in \Upsilon$ with $L ,L' \subset K$, we have a commutative diagram 
\begin{center}
 \begin{tikzcd} 
 \bigoplus_{\gamma} M(L_{\gamma}) \arrow[r,"\sum\pr_{*}"] & M(L) \\
 M(L') \arrow[r,"\pr_{*}",swap] \arrow[u,"\sum\gamma^{*}"] & M(K) \arrow[u,"\pr^{*}",swap] 
\end{tikzcd}
\end{center} 
where the direct sum in the top left corner is over a fixed choice of coset representatives $\gamma \in L \backslash K / L'$ and $L_{\gamma} = \gamma L' \gamma^{-1} \cap L   \in    \Upsilon       $. This   condition is independent of the representatives, since if $ \gamma $ are replaced with $ \gamma ' =  \delta  \gamma \delta ' $ for $ \delta \in  L $, $ \delta ' \in  L' $,  $  L_{\gamma } $ is replaced with $  L_{\gamma ' } =  \delta L _ { \gamma } \delta ^{-1}$, and $$ (L _ { \gamma }  \xrightarrow { \pr } L  ) _ { * }    \circ   (L   _ { \gamma    }   \xrightarrow{ \gamma } L)^{*}   = (  L _ { \gamma '}   \xrightarrow  { \pr} L )_{* }  \circ  (L_{\gamma' }   \xrightarrow  { \gamma' }  L ) ^{*}   $$
\end{enumerate}   
If $ M $ satisfies both (M) and (Co), we will say that $M  $ is \emph{CoMack}. It is easily seen that if $ M $ is CoMack and $ R $ is a $ \QQ $-algebra, then $ M $ is Galois.    
\end{definition}

\begin{remark} 
If one takes $\Upsilon$ to be all compact opens in $\Sigma$, then one obtains the definition of a cohomology functor given in \cite{Loeffler19}. The addition of $\Upsilon$ is made since we only want to vary over sufficiently small compact open subgroups of $\Gb(\Ab_{f})$ for a reductive group $\Gb$ (Definition \ref{DefinitionOfSufficientlySmall}), and the conditions (T1)--(T3)  on $ \Upsilon $ are included so that taking products is well-behaved.  Axiom (G) is inspired by Galois descent in \'{e}tale cohomology and  axiom (Co) reflects the corresponding property for covering maps in \'{e}tale cohomology  \cite[Tome 3, Expose IX, \S 5]{SGA4}.  Axiom (M) is based on Mackey's \emph{decomposition formula}, e.g. see \cite{Dress}. In \cite{Loeffler19}, this property was named \emph{Cartesian} after the Cartesian condition for proper/smooth base change in \'{e}tale cohomology. The terminology `CoMack' is standard, e.g.  see  \cite{Webb}.
\end{remark}    

\begin{example} 
Let $X$ be a set with a right action of $\Sigma$, and $\Upsilon$ any collection as above. Let $M(K) = C^{\infty}_{c}( X / K ;  R )$ be the set of all functions $\zeta \colon X  \to  R $ which factor through $ X/K $ and are non-zero only on finitely many orbits. For $ g \in \Sigma $, we denote by  $g \star \zeta$ the composition $ X  \xrightarrow {g} X \xrightarrow{\zeta}  R $, which gives a \emph{left} action on the set of functions from $X$ to $R$. Then, for elements $\sigma , \tau^{-1} \in \Sigma$ and morphisms $\sigma \colon L \to K$, $\tau \colon L' \to K  '  $, we set
\begin{align*} 
[\sigma]^{*} & \colon M(K) \to M(L) \quad \quad \quad \zeta \mapsto \sigma \star \zeta  \\ 
[\tau]_{*} & \colon M(L') \to M(K') \quad \quad  \, \, \,   \zeta  \mapsto  \sum _ { \gamma }   \left( \gamma \tau ^ { - 1 } \right) \star  \zeta 
\end{align*}
where the sum runs over representatives in $K'$ of the coset space $K'/(\tau ^{-1} L ' \tau)$. Then $M$ is CoMack.   
\end{example}

\begin{example}   \label{shimuracohomology}     
The  prototypical example of a cohomology functor for us will be the cohomology of Shimura--Deligne varieties with coefficients in an equivariant \'{e}tale sheaf (c.f. Propositions \ref{PropRationalCohFunctor} and \ref{PropIntegralCohFunc}).
\end{example}

\subsection{Completions}   Let $ M  :  (G,  \Sigma,  \Upsilon)  \to  R $\textbf{-Mod} be a cohomology functor.    
For any compact subgroup $C \subset \Sigma$ of $G$, we can define two completions 
\[
\overline{M}(C) = \varinjlim_{K\supset C} M(K), \quad M_{\Iw}(C) = \varprojlim_{K \supset C} M(K)
\]
where the limits are taken with respect to the morphisms $\pr^{*}$ and $\pr_{*}$ respectively over all $K \in \Upsilon$ containing $C$.   We  call  these the  \emph{inductive} and \emph{Iwasawa} completions  respectively. When $ C = \left \{1 \right \}$, we  also  denote the inductive completion by $ \widehat{M} $. We denote by $
j_K \colon M(K) \to \overline {M}(C) $ the natural map. 
If $C$ is central (i.e. contained in the center of $\Sigma$), the space $\overline{M}(C)$ naturally carries a smooth action of $\Sigma$ as follows. For an element $g \in \Sigma$ and $x \in \overline{M}(C)$, choose a compact open $K \in \Upsilon$ such that there exists $x_K \in M(K)$ with $j_{K}(x_{K}) = x$. By replacing $K$ with a subgroup of $K \cap g^{-1} K g$ contained in $\Upsilon$, we may assume that $g K g^{-1} \in \Upsilon$. We then define $g \cdot x$ to be the image of $x_K$ under the composition 
\[
M(K) \xrightarrow {[g]^{*}} M(gKg^{-1}) \to \overline{M}(C).
\]
It is a routine check  that this is well-defined, and the action is smooth by property (C3) in Definition \ref{CohoFunc}.

\begin{lemma} 
Suppose $M$ is a Galois functor, $C$ is central. Then $ j_{K} : M(K)  \xrightarrow{\sim} \overline{M}(C)^{K} $ for any $K \supset C$ and $K \in \Upsilon$. In particular, if we choose a left invariant  Haar measure on $ G $ giving $ K $ measure one and $ \Sigma = G $, then  $M(K)$ is a  (left)   module  over the  Hecke algebra  $\mathcal{H}(K \backslash G / K )$ with  the action of the Hecke operator  $   \mathrm{ch}( K \sigma K)  $ on $ x \in M(K)  \hookrightarrow  \overline{M}(C)  $  given  by   $$  \ch  (  K\sigma K ) \cdot x   =  \sum _ { \alpha \in K  \sigma K /  K  }  \alpha \cdot j_{K}(x) $$
\end{lemma} 
\begin{proof}
Let $x \in \overline{M}(C)^{K}$. Then $x$ is in the image of the map $j_L$ for some $L \in \Upsilon$ such that $L \triangleleft K$ and $L \supset C$. This implies that 
\[
x \in M(L)^{K/L} \xleftarrow{\sim}  M(K)
\]
by (G).   The  explicit  action of the  Hecke operator can then be read off via the integral   in   \cite[Eqn 4.2.2]{Bushnell}.   
\end{proof}

\subsection{Hecke correspondences} \label{HeckeCorrs}
Let $M \colon   (G, \Sigma, \Upsilon) \to R\textbf{-Mod}$ be a cohomology functor. For every $K, K' \in \Upsilon$ and $\sigma \in \Sigma$, we have a diagram
\begin{center}
 \begin{tikzcd}
 & K \cap \sigma^{-1} K' \sigma \arrow[dl, "\pr" , swap ] & \sigma K \sigma^{-1} \cap K' \arrow[l,"{[\sigma]}",swap] \arrow[dr, "\pr" ] & \\ 
 K & & & K'
 \end{tikzcd} 
\end{center} 
This induces a map 
\[
[K' \sigma K] \colon M(K) \xrightarrow {\pr^{*}} M(K \cap \sigma^{-1} K' \sigma) \xrightarrow {[\sigma]^{*}} M(\sigma K \sigma^{-1} \cap K') \xrightarrow {\pr_{*}} M(K')
\]
which we refer to as the \emph{Hecke correspondence} induced by $\sigma$. It is straightforward to verify that $[K' \sigma K] $ only depends on the double coset $K ' \sigma K$. 

\begin{lemma} 
Suppose that $ M \colon   (G,\Sigma, \Upsilon) \to R$\textbf{-Mod} is Cartesian. Let $ K , K' , K'' \in \Upsilon $ and $ \sigma, \tau \in  \Sigma   $. 
\begin{enumerate}[(a)]
 \item If we write $K' \sigma K = \sqcup \; \alpha K$, then we have
 \[
 j_{K'} \circ [K' \sigma K] = \sum \alpha \cdot j_{K}. 
 \]
 \item The composition $ j_{K''} \circ [K'' \tau K'] \circ [K' \sigma K] $ is given by the convolution product of double cosets, i.e. if we write $K' \sigma K = \sqcup \; \alpha K$ and $K'' \tau K' = \sqcup \; \beta K'$ then
 \[
 j_{K''} \circ [K'' \tau K'] \circ [K' \sigma K] = \sum (\beta \alpha) \cdot j_K .
 \]
 \item If $ M$ is moreover Galois,   $  \Sigma =  G   $      and a left invariant   Haar measure on $ G $ is chosen giving $ K $ measure $ 1 $, then the actions on $ M(K) $  of the correspondence $ [K \sigma K] $ and the operator $ \mathrm{ch}(K\sigma K) $ defined in the previous subsection   agree. 
\end{enumerate}
\end{lemma} 
\begin{proof} 
Let $L' \defeq \sigma K \sigma^{-1} \cap K'$ which is an element of $\Upsilon$. We can (and do) choose a compact open subgroup $L \in \Upsilon$ satisfying $L \triangleleft K'$ and $L \subset L'$, and let $ \{\gamma\}_{\gamma \in I}$ denote any set of representatives in $K'$ of the coset space $K' / L' = L \backslash K' / L'$; for any such $\gamma$ we set $L_{\gamma} = \gamma L' \gamma^{-1} \cap L = L $. By the Cartesian property, we obtain the following commutative diagram
\begin{center}
\begin{tikzcd}
 & \bigoplus_{\gamma} M(L) \arrow[r, "\sum \mathrm{pr} _ { * }"] & M(L) \\
M(K) \arrow[r, "{[\sigma]^{*}}"] \arrow[rr, "{[K' \sigma K]}"', bend right, swap] & M(L') \arrow[r, "\mathrm{pr}_{*}"] \arrow[u, "{\sum [\gamma]^{*}}"] & M(K') \arrow[u, "\mathrm{pr}^{*}"']
\end{tikzcd}
\end{center} 
which implies that $\pr_{L,K'}^{*} \circ [K' \sigma K] = \sum_{\gamma} [\gamma \sigma]^{*}$. It is then easily verified that $ \gamma \sigma $ are distinct representatives of $ K ' \sigma K / K $, and that the action so defined is  independent of the choice  of  representatives. This completes the proof of part (a).  For (b), we note that the composition of the Hecke correspondences in the limit is given by
\[
\begin{tikzcd}
M(K) \arrow[r, "{\sum [\alpha]^{*}}"] & M(L) \arrow[r, "{\sum [\beta] ^ { * } }"]& M (J ) 
\end{tikzcd} 
\]
where $ J \in \Upsilon $ is such that $ J \triangleleft K'' $ and $ J \subset \tau L \tau^{-1} $. Part (c) is  immediate.  
\end{proof}

The next lemma is a useful criterion for when two Hecke correspondences commute with each other.

\begin{lemma} \label{Heckecommutes} 
For $i=1, 2$ set $ \mathcal{P}_{i} =  \mathcal{P}(G_{i}, \Sigma_{i} ,  \Upsilon_{i}) $ and $ \mathcal{P} = \mathcal{P}(G,\Sigma, \Upsilon) = \mathcal{P}_{1} \times \mathcal{P}_{2} $. Let $ M \colon \mathcal{P} \to R \textbf{-Mod} $ be a Cartesian cohomology functor, and take
\begin{itemize} 
\item $ \sigma \in \Sigma_{2} $
\item $ J \in \Upsilon_{2} $
\item $ K = K_{1} \times J $ and $L = L_{1} \times  J$ both elements in $\Upsilon$
\item $ g = (g_{1}, 1) \colon L \to K $ a morphism in $ \mathcal{P} $. 
\end{itemize}
Then we have 
\[
[ g ] ^ { *} \circ [ K \sigma K ] =  [L \sigma L ] \circ [ g ] ^{* } . 
\]
A similar result holds for pushforwards. 
\end{lemma}    

\begin{proof}
Let $ K^{g} =  g K g^{-1} $. Then we have $ g (\sigma K \sigma^{-1} \cap K ) g^{-1} = \sigma K^{g} \sigma^{-1} \cap K^{g}  $. It suffices to prove that 
\begin{center}
    \begin{tikzcd}  M( \sigma L \sigma ^{-1}  \cap L ) \arrow[r, "\pr_{*}"]  &  M ( L )  \\             
    M ( \sigma K^{g}  \sigma ^{-1}  \cap  K^{g}) \arrow[u,"\pr^{*}"]        \arrow[r, "\pr_{*}"]  &  M( K^{g})    \arrow[u,"\pr^{*}",swap]
    \end{tikzcd}
\end{center}
But this is immediate from the Cartesian property since 
\[
L \backslash  K^{g} / ( \sigma K^{g} \sigma^{-1} \cap K^{g} )  = ( L_ {1} \times J  ) \backslash  ( K_{1}^{g_{1}} \times J ) /   K_{1}^{g_{1}} \times (\sigma J \sigma^{-1} \cap J)  =  \left \{ \mathrm{id}_{K} \right \}   
\]
and $ L \cap ( \sigma K^{g} \sigma^{-1} \cap K^{g} ) = \sigma L \sigma ^{-1} \cap  L $.
\end{proof}

\subsection{Pushforwards between cohomology functors} 

Let $ \iota \colon H \hookrightarrow G $ be a closed subgroup, and suppose that $ M _ { H } , M_{G} $ are cohomology functors for the triplets $(H, \Sigma_H, \Upsilon _ { H } )$ and $(G, \Sigma_G , \Upsilon _ { G })$ respectively. We  require  $ \iota (  \Sigma _ { H }  )  \subset  \Sigma _{G} $ and  that   for all $ U \in  \Upsilon _ { H } $, $ K \in  \Upsilon _ { G } $,  we have   $ U  \cap K \in \Upsilon _ { H }  $.  We say that  an   element $ (U,K) \in \Upsilon _ { H } \times \Upsilon _ { G } $ forms a \emph{compatible} pair if $ U \subset K $.  A     \emph{morphism} of  compatible pairs $ (V,L) \to (U,K) $ is a pair of morphisms $ [h] \colon V \to U $, $ [h] \colon L \to K $ for $ h \in \Sigma_H $. 

\begin{definition} \label{push}  A \emph{pushforward} $ \iota _ { * } \colon M_{H } \to M_{G } $ is a collection of maps $ M_{H}(U ) \to M_{G}(K) $ for all compatible pairs $ (U,K) $, which commute with the pushforward maps induced by morphisms  of compatible pairs coming  from  $ \Sigma _  {  H  }  ^  { - 1   }  $.     A \emph{Cartesian pushforward} $ \iota _ { * } \colon M_{H} \to M_{G } $ is a pushforward such that for all $ U \in \Upsilon _ { H } $, $ L , K \in \Upsilon _ { G } $ and $ U , L \subset K $, we have a commutative diagram 
\begin{center}
 \begin{tikzcd} 
 \bigoplus _ { \gamma } M_H ( U _ { \gamma } ) \arrow[r, "{\sum [\gamma]_{*}}"]& M_G ( L ) \\
 M_H ( U ) \arrow[u, " \sum \pr^{ * } " ] \arrow [ r , "\iota_ { * } " ] &    M_G( K ) \arrow[ u,"\pr ^ { * } ",swap] 
\end{tikzcd} 
\end{center} 
where $ \gamma \in U \backslash K / L $ is a fixed set of representatives, $ U _ { \gamma } = \gamma L \gamma ^ { - 1 } \cap U     $  and $ [ \gamma ] _  { * } \colon M_H(U_{\gamma}   ) \to M_G(L) $ is the composition $ M_H(U_{\gamma} ) \to M_G(\gamma L \gamma^{-1} )   \xrightarrow{} M_G(L)  $.
\end{definition}    

\begin{remark} 
When $ G = H $   and $ \iota _ {*} = \pr_{*} $, one  recovers the conditions for $ M_G $ to be Cartesian.  We  point out that what we refer to as a Cartesian pushforward here is  simply called a pushforward in \cite{Loeffler19}.     
\end{remark}

Assume for  the rest of this section that $ H$ is unimodular, $ \mathcal{P}(H, \Sigma_{H},  \Upsilon_{H}) $, $ \mathcal{P} (G , \Sigma_{G}, \Upsilon_{G} )  $ are  categories of compact opens  with $ \Sigma_{H} = H $, $ \Sigma _{G}  = G  $  and  $ R $ is a $ \QQ $-algebra. We  fix  left  invariant    Haar measures $dh $, $dg $ on $ H $, $ G  $ respectively that take values in $ \QQ $ on the  respective  compact opens, and  denote by  $ \Vol( -  ) $ the volume   of the corresponding  compact  opens.  We consider $ \mathcal{H}(G, R) $ as an algebra under the usual convolution 
product $ * $ (see \cite[\S 1.4.1]{Bushnell}) and a  left representation of $ G $ with $ g \in G $ acting on $  \xi \in \mathcal{H}(G, R) $ via $ \rho_{g}(\xi) : x \mapsto  \xi ( x g) $. The action $ \rho $ of $ G $ induces an action of the Hecke algebra on itself  \cite[1.4.2]{Bushnell}, which we denote by $ \xi * _{\rho } \zeta $ for $  \xi \in \mathcal{H}(G,R) $ (considered as a ring element) and $ \zeta \in \mathcal{H}(G,R) $ (considered as a module element).      Finally, for $ \xi \in  \mathcal{H}(G, R) $, we let $ \xi ^{t} $ denote its transpose $ x  \mapsto    \xi (  x^{-1} )  $.  

\begin{definition}  \label{DefinitionOfIntertwiningMap}
Given  smooth  representations $ \tau $ of $ H $, $ \sigma $ of $ G $ and $ C $ a central compact subgroup of $ G $ which acts trivially on $ \sigma  $, we consider  $ \tau \otimes \mathcal{H}(G,R) ^{C}  $ and $ \sigma $ smooth representations  of $ H \times G $ by the following  \emph{extended action}. 
\begin{itemize} 
\item $ (h, g) \in H \times G $ acts on $ x \otimes \xi \in \tau \otimes     \mathcal{H}(G, R ) ^{C} $ via $ x \otimes \xi \mapsto  h x  \otimes  \xi ( h  ^ { - 1  } ( - )  g )   $.   
\item $ H \times  G  $   acts  on $ \sigma $  via projection to $ G $ which acts via its natural action.   
\end{itemize}          
An    \emph{intertwining map} $ \mathfrak{Z} : \tau  \otimes \mathcal{H}(G,R)^{C} \to \sigma $   is just a morphism of $ H \times G $ representations. 
\end{definition}

\begin{lemma}   \label{Frobenius}         Let $ \tau, \sigma $ and $ \mathfrak{Z} $ be as above.     
\begin{enumerate}[(a)]
    \item   
    For  all $  x \in  \tau   $, $ \xi_{1} , \xi_{2}  \in  \mathcal{H}(G,R) ^ { C } $,  $$   \mathfrak{Z}( x \otimes \xi_{1} * \xi_{2} )  =   \xi_{2} ^ { t}         \cdot           \mathfrak{Z} ( x \otimes \xi_{1} )  . $$    
\item  (Frobenius  Reciprocity)  Let $ \sigma^\vee $ denote the smooth dual of $ \sigma $  and  denote by $ \langle \cdot , \cdot  \rangle : \sigma^\vee \times \sigma \to  R $ the induced pairing. Consider $  \tau  \otimes  \sigma  ^  {  \vee    } $ as a smooth $ H $-representation via $  h \cdot ( x \otimes \varphi )  =   h \cdot x  \otimes    \iota ( h ) \varphi $ for $ h \in  H $, $ x \in \tau $, $ \varphi \in  \sigma^\vee  $. There is a unique  element  $  \mathfrak{z}  \in   \Hom _ { H  }   (  \tau \otimes \sigma^\vee ,  R )  $ (depending on the choice of $ dg $) such that $$  \langle   \varphi ,  \mathfrak{Z}  ( x \otimes \xi )  \rangle   =  \mathfrak{z} ( x  \otimes  \xi  \cdot \varphi ) $$
for all $ \varphi \in \sigma^\vee $, $ x \in \tau $, $ \xi  \in  \mathcal{H}(G, R) ^{C} $. 
\end{enumerate}    
\end{lemma}   
\begin{proof} For (a),  note that the $ G $-equivariance  of  $ \mathfrak{Z} $ tells us that $   \mathfrak{Z}  ( x \otimes  ( \xi_{2} ^{t}   * _{\rho} \xi_1 )  )  =   \xi_{2} ^{t}   \cdot  \mathfrak{Z }    ( x  \otimes  \xi_{1} )  $  \cite[Proposition 1, p.35]{Bushnell}. Since $ \xi_{1}  *   \xi_{2} = \xi_{2} ^{t}    * _ { \rho }  \xi_{1}   $, we have the first claim.  \\

\noindent (b)        Given  $  x \in  \tau   $,   $  \varphi \in  \sigma^\vee    $, choose  a compact open $ K  \supset  C $  such that the idempotent $ e_{K} =  \Vol(K;dg) ^{-1}  \mathrm{ch}(K)   \in  \mathcal{H}(G, R) ^ { C } $ fixes $ \varphi $,  and    set $  \mathfrak{z} (  x  \otimes  \varphi  )   :  =   \langle   \varphi ,   \mathfrak{Z}    ( x \otimes e_{K} )  \rangle  $.  It is easily verified that $ \mathfrak{z}  $ is  well-defined, satisfies the  property  claimed  above and is uniquely  determined by its  properties.                            
\end{proof}    
We now construct a ``completed pushforward"  from a given pushforward $ M _ { H } \to M_{G} $ i.e. an intertwining  map of  $ H \times  G  $ representations  as    above. This is a straightforward generalisation of the ``Lemma--Eisenstein'' map in \cite[Lemma 8.2.1]{LSZ17}.

\begin{proposition}[Completed Pushforward] \label{LemmEin} 
Suppose  $ M_ { H } $, $ M_{G} $ are  cohomology functors  with $ M_{H} $ CoMack and $ M_{G} $ satisfying (M).  Consider $ \widehat{M}_{H}   \otimes  \mathcal{H}(G, R ) ^ { C }   $ and $  \overline{M}_{G}(C) $  as smooth $ H \times G $ representations via the extended action.  Then for any pushforward $  \iota_{*}  : M_{H}  \to  M_{G} $, there is a unique  (depending on the choice of $ dh $)    intertwining map of $ H  \times  G  $ representations 
\begin{align*} 
\hat{\iota} _ { * } : \widehat{M}_{H} \otimes \mathcal{H}(G  ,  R ) ^ {  C  }  \to   \overline { M } _{G} ( C )  
\end{align*} satisfying the following  compatibility    condition:  for all compatible pairs $ (U,K)  \in  \Upsilon _{H} \times \Upsilon_{G}  $ with  $ K \supset C $,  $   x \in M_{H}(U) $, $ y \in M_{G}(K) $ such that $ \iota _{ *}  ( x  ) = y  $,  we  have $  \hat { \iota } _{*} ( j_{U}(x) \otimes \ch(K) ) =  \Vol(U)  \cdot  j_{K} (   y    ) $.  
\end{proposition} 
\begin{proof}
We  first  assume   that $ C = \left \{  1  \right  \}   $.        Elements of $ \widehat{M}_{H} \otimes \mathcal{H}(G,R)   $ are spanned by pure tensors of the form $ x \otimes \ch ( g K ) $ for $ x \in \widehat{M}_{ H } $, $ g \in G $ and $ K \in \Upsilon _ { G } $. Indeed, for any compact open subgroup $K$ of $G$, there exists a compact open subgroup such that $K' \subset K$ and $K' \in \Upsilon$, because we require that $\Upsilon$ contains a basis of the identity. We will define the map on these pure tensors and extend linearly.

Recall that we are assuming $R$ is a $\mbb{Q}$-algebra, hence $M_H$ is also Galois.  Choose a compact open subgroup $ U \in \Upsilon _ { H } $ such that $ U $ fixes $ x $, $ U \subset g K g ^{-1} \cap H $, and let $ x_{U} \in M _{ H } (U) $ be an  element that maps to $ x $ under $ j_{U} $. Then, we define $ \hat{\iota} _ { * } \left( x \otimes \opn{ch}(gK)\right) $ to be the image of $ \Vol(U) x_{U} $ under the composition
\[
M _ { H } ( U ) \xrightarrow { \iota_ { * } } M _ { G } ( g K g ^{-1} ) \xrightarrow { [ g ] _ { * } } M _ { G } ( K ) \xrightarrow { j _{ K } } \widehat{M}_{G } 
\]
To see that this is independent of choice of $ x_{U} $ (and $ U $),   assume that $ U $ is replaced by another open  compact subgroup $ V $ and $ x_{U} $ by $ x_{V} \in M_H ( V )   $. We can then choose an even smaller compact open $ V ' \in \Upsilon_{H} $, $ V'  \subset U \cap V $ such that $ x_{U} , x_{V} \mapsto x_{V'} $ under pullbacks,   and we may therefore assume that $ V \subset U $, and  $ x_{U} \mapsto x_{V} $ under $ \pr ^ { * } _  { V , U } $. Since $ M_{H} $ is a covering functor, $ x_{V } \mapsto [U:V] x _ { U } $ under  $ \pr _ { V,U , *} $.    
Hence both $ \Vol(U) x_{U} $ and $ \Vol(V) x_{V} $ map to the same element in $ \widehat{M}_{G} $. 

It remains to show that that the various relations among elements of $ \mathcal{H} ( G,  R) $ do not give conflicting images on the sums of these simple tensors. To this end, let $ L, K \in \Upsilon _ { G } $ such that $ L $ is a normal subgroup of $ K $. We want to verify that
\[
\hat{\iota}_ {* } ( x \otimes \ch ( K ) ) = \sum _ { \gamma \in K / L } \hat{\iota}_ { * } ( x \otimes \ch ( \gamma L ) ) .
\]
The general case for two different representations of $ \xi \in \mathcal{H}( G,  R) $ as sums of characteristic functions of left cosets can be reduced to this case by successively choosing normal subgroup for pairs and twisting by the action of $ G $, by establishing $ G $-equivariance of such a (a priori hypothetical) map first.

Choose $ U \in \Upsilon _ { H } $ such that $ U $ fixes $ x $ and $ U \subset L \cap H $. Note that as $ L \triangleleft K $, $ \gamma L \gamma ^ { - 1 } \cap H = L \cap H $ for any $ \gamma \in K $. As before, let $ x_{U } \in M_{H}(U) $ be an element mapping to $ x $. By definition
\[
\hat { \iota } _ { * } ( x \otimes \ch ( K ) ) = j _{ K } \circ \iota_{ U , K , * } ( \Vol ( U ) x_{U } ) , \quad \quad \hat { \iota } _ { * } ( x \otimes \ch ( \gamma L ) ) = j_{ L } \circ [\gamma ] _{ * } \circ \iota_{ U , L , * } ( \Vol ( U ) x _ { U } ) . 
\]
As $ j _ { K } = j _ { L } \circ \pr _ { L , K } ^ { *} $, it suffices to prove that 
\[
\pr ^ { * } _ { L , K } \circ \iota _ { U , K , * } = \sum _ { \gamma } [ \gamma ] _ { * } \circ \iota _ { U , L , * } = \sum _ { \gamma } [ \gamma ] ^ { * } \circ \iota _ { U , L , * } . 
\]
as maps $ M _{ H } ( U ) \to M_ { G } ( L ) $, where the last equality follows from the fact that $L$ is normal in $K$, so we can replace the set of representatives by their inverses. This equality is then justified by replacing $ \iota _ { U , K , * } = \pr _ { L , K , * } \circ \iota _ { U , L , * } $ and invoking the axiom (M) for $ M _ { G } $. Therefore, $ \hat { \iota } _ {* } $ is well-defined.

We now check that $ \hat { \iota } _ { * } $ is $ H \times G $ equivariant with the said actions: let 
\[
(h,g) \in H \times G , \quad \quad v = x \otimes \ch(g_{1}K) \in \widehat { M } _{H } \otimes \mathcal{H}(G,  R) . 
\]
Then, 
\[
(h,g) \cdot v = h x \otimes \ch ( h g ' K ' ) 
\]
where $ g ' = g_{1} g ^{-1} $, $ K ' = g K g ^ { - 1 } $. Choose $ U \in \Upsilon _ { H } $ such that $ U $ fixes $ x $, $ U \subset g_{1} K g _{1}^{-1} \cap H $ and $ x_{U} \in M_{H} ( U ) $ that maps to $ x $. Then, $ h U h ^{-1} \subset h g_{1} K (h g_{1}) ^{ -1 } \cap H = hg ' K ' ( hg ' ) ^{-1} \cap H $, and $ [ h ] ^ { * } x _ { U } \in M_{H}( h U h ^{-1} ) $ maps to $ h x $. Since $[h]^* = [h^{-1}]_*$, we obtain a commutative diagram 
\begin{center}
\begin{tikzcd} 
M_{H}( U ) \arrow[r , "\iota _ { * }" ] \arrow[d, "{[h]^{*}}", swap] & M_{G}( g_{1} K g_{1} ^ { - 1 } ) \arrow[d, "{[h]^{*}}"] \\
M_{H} ( h U h ^ { - 1 } ) \arrow[r, "\iota _ { * }" ] & M_{G} ( h g ' K ' ( h g ' ) ^ { - 1 } ) 
\end{tikzcd}
\end{center}
Now, 
\begin{align*} (h,g) \cdot \hat { \iota } _{*} ( v ) & = \Vol ( U ) \cdot \left[ g \cdot \left ( j_{ K } \circ [ g_{1} ] _ { * } \circ \iota_{*}( x_{U} ) \right ) \right] \\ \hat {\iota } _ { * } ( ( h , g ) \cdot v ) & = \Vol(hUh^{-1} ) \cdot \left ( j_{K'} \circ [ h g ' ]_{ * } \circ \iota _ { * } ( [ h ] ^ { * } x_{U} ) \right ) . 
\end{align*} 
As $ H $ is  unimodular,  $ \Vol ( U ) = \Vol ( h U h ^ { - 1 } ) $ and it therefore suffices to verify that $ [ g ^{-1} ] _ { * } \circ [ g_{1} ] _{*} \circ \iota _ { * } ( x_{U } ) = [ h g' ] _ { * } \circ \iota _ { * } ( [ h ] ^ { * } x_{ U } ) $ 
as elements of $ M _ { G} ( K ' ) $. But this follows immediately from the commutativity of the above diagram and the composition law $ [ h g ] _ { * } = [ g ]_{ * } [ h ] _ { * } $. This finishes the checking of equivariance. The statement  involving $ C $ is obtained by taking $ C $ invariants on both sides, and noticing that the map factors through $ \overline { M } ( C ) \subset \widehat { M } ^ { C } $. It is easily seen that this map is uniquely  determined with the prescribed actions of $ H \times G $ and the compatibility condition with $ \iota _ { * }  $.
\end{proof}   

Ideally one would like an integral version of the above proposition but such a map doesn't exist in general due to the presence of the volume factors in the definition. It is however possible to relate the pushforward map on finite level to an integral version, as follows. 

Let $\ordd$ be an integral domain with field of fractions $\Phi$, and let $M_{\ordd}$ and $M_{\Phi}$ be cohomology functors for $(G, \Sigma, \Upsilon)$ valued in $\ordd$ and $\Phi$ respectively. We say that $M_{\ordd}$ and $M_{\Phi}$ are \emph{compatible under base change} if there exists a collection of $\ordd$-linear maps $M_{\ordd}(K) \to M_{\Phi}(K)$ which are natural in $K$ and commute with pullbacks and pushforwards.

Let $\iota \colon H \hookrightarrow G$ be as above, and let $(M_{H, \ordd}, M_{H, \Phi})$ and $(M_{G, \ordd}, M_{G, \Phi})$ be pairs of compatible cohomology functors. Suppose that we have pushforwards $\iota_* \colon M_{H, \Phi} \to M_{G, \Phi}$ and $\iota_* \colon M_{H, \mathcal{O}} \to M_{G, \mathcal{O}}$ which are compatible with each other via the $\mathcal{O}$-linear maps $M_{H, \mathcal{O}}(-) \to M_{H, \Phi}(-)$ and $M_{G, \mathcal{O}}(-) \to M_{G, \Phi}(-)$. Then we obtain the following corollary.
\begin{corollary} \label{CommDiagramCor}
Let $\Sigma_G = G$ and $\Sigma_H = H$. Keeping with the same notation as above, let $U \in \Upsilon_H$ and $K \in \Upsilon_G$ be compact open subgroups, and suppose that we have an element $g \in G$ such that $U \subset g K g^{-1} \cap H$. Then we have a commutative diagram:
\[
\begin{tikzcd}
{\widehat{M}_{H, \Phi}} \arrow[rr, "x \mapsto \hat{\iota}_{*} \left( x \otimes \opn{ch}(gK) \right)"] & & {\widehat{M}_{G, \Phi}} \\
{M_{H, \ordd}(U)} \arrow[u, "j_U"] \arrow[r, "\iota_{*}"] & {M_{G, \ordd}(gKg^{-1})} \arrow[r, "{[g]_{*}}"] & {M_{G, \ordd}(K)} \arrow[u, "\Vol(U) \cdot j_K"']
\end{tikzcd}
\]
\end{corollary}


\section{Preliminaries II: Unitary groups}

\subsection{The groups} \label{TheGroups}

In this section, we will define the algebraic groups and their associated Shimura data that will appear throughout the paper. All groups and morphisms will be defined over $ \QQ $ unless specified otherwise, and maps of algebraic groups will be defined on their $R$-points for $ \QQ $-algebras $ R  $. For an arbitrary algebraic group $  \mathbf{K} $, we denote by $ \Tb_{\mathbf{K}} $ its maximal abelian quotient, and $ \mathbf{Z}_{\mathbf{K}} $ its centre.  

Let $J_{r, s}$ denote the $(r+s) \times (r+s)$ diagonal matrix $\opn{diag}(1, \dots, 1, -1, \dots, -1)$ comprising of $r$ copies of $1$ and $s$ copies of $-1$. We define $\opn{U}(p, q)$ (resp. $\opn{GU}(p, q)$) to be the unitary (resp. unitary similitude) groups whose $ R $-points for a $ \QQ $-algebra $ R $ are given by
\begin{eqnarray}
\opn{U}(p,q)(R) & \defeq & \left\{ g \in \opn{GL}_{p+q}(E\otimes_{\QQ} R) \colon {^t \bar{g}} J_{p, q} g = J_{p, q} \right\} \nonumber \\
\opn{GU}(p, q)(R) & \defeq & \left\{ g \in \opn{GL}_{p+q, E}(E \otimes_{\QQ} R) \colon \text{ there exists } \lambda \in \mbb{G}_m(R) \text{ s.t. } {^t \bar{g}} J_{p, q} g = \lambda J_{p, q} \right\} \nonumber. 
\end{eqnarray} The group $  \GU(p,q) $ comes with two morphisms 
\[
c  \colon  \GU(p,q) \to \mbb{G}_{m} \quad \quad  \det \colon \GU(p,q) \to \Res_{E/\QQ} \mbb{G}_{m} 
\]
where $ c(g) = \lambda $ is the similitude factor and $ \det(g) $ is the determinant, satisfying the relation $\overline{\opn{det}(g)} \opn{det}(g) = c(g)^{p+q}$. To ease notation, we will omit $q$ if it is zero. 
 
\begin{remark} These groups arise as the generic fibre of group schemes over $ \ZZ $ by replacing $ E $ with $ \OO_{E} $ in the definition. If $ \ell $ is unramified in $ E $, the pullback to $ \QQ_{\ell} $ is unramified and the $ \ZZ_{\ell} $-points form a hyperspecial maximal compact subgroup. 
\end{remark} 

Let $n \geq 1$ be an integer. We define:
\begin{itemize}
    \item $\mbf{G}_0 \defeq \opn{U}(1, 2n-1)$.
    \item $\mbf{G} \defeq \opn{GU}(1, 2n-1)$.
    \item $\mbf{H}_0 \defeq \opn{U}(1, n-1) \times \opn{U}(0, n)$.
    \item $\mbf{H} \defeq \opn{GU}(1, n-1) \times_c \opn{GU}(0, n)$ where the product is fibred over the similitude map. 
\end{itemize}
We have natural inclusions $\iota \colon \mbf{H}_0 \hookrightarrow \mbf{G}_0$ and $\iota \colon \mbf{H} \hookrightarrow \mbf{G}$, both given by sending an element $(h_1, h_2)$ to the $2n \times 2n$ block matrix 
\[
\iota(h_1, h_2) = \tbyt{h_1}{}{}{h_2}. 
\]
Throughout, we will let $\mbf{T}$ denote the algebraic torus $\opn{U}(1) =\{x \in  \mathrm{Res}_{E/\QQ} \mbb{G}_{m, E} : \bar{x} x = 1 \} $ and we note that
\begin{itemize} 
\item $ \mathbf{Z}_{\Gb_{0}} = \Tb, \,    \mathbf{Z}_{\Hb_{0}} =  \Tb \times \Tb $
\item $ \mathbf{Z}_{\Gb} =  \GU(1), \,  \mathbf{Z}_{\Hb} \cong  \Tb \times  \GU(1)    $
\item $   \Tb_{\Hb} \cong \Tb  \times \opn{GU}(1)$ if $ n $ is odd, and $  \Tb_{\Hb} \cong \Tb \times \Tb \times \mathbb{G}_{m} $ if $ n $ is even. 
\item $\opn{Res}_{E/\mbb{Q}}\mbb{G}_{m, E} \cong \opn{GU}(1)$.
\end{itemize}
When $ n $ is odd, the quotient map is given explicitly by  
\[
\Hb \to \Tb_{\Hb} , \quad \quad  (h_{1}, h_{2})  \mapsto  ( \det h_{2} / \det h_{1}  , c(h_{1}) ^ { (n+1)/2}   \det ( h_{1}  ^{-1}  )   ).
\]
\begin{remark} 
This quotient in fact contains more information than we are interested in -- to control the anticyclotomic variation of our Euler system we only need the fact that $\mbf{T}$ is a quotient of $\mbf{H}$. 
\end{remark} 

We consider the group $ \tilde{\Gb} = \Gb \times \Tb $. The group $\mbf{H}$ extends to a subgroup of $\Gt$ via the map $(h_1, h_2) \mapsto (\iota(h_1, h_2) , \nu(h_1, h_2) )$, where $\nu(h_1, h_2) = \opn{det}h_2/\opn{det}h_1$.

\begin{remark} \label{RemIdentifyGroups}
If $R$ is an $E$-algebra, then one has an identification $\opn{U}(p, q)_R = \opn{GL}_{p+q, R}$ given by sending a matrix $g \in \opn{U}(p, q)(R)$ to the matrix $g' \in \opn{GL}_{p+q}(R)$ obtained from projecting the entries of $g$ to the first component in the identification
\begin{align*}
    E \otimes_{\mbb{Q}} R &= R \oplus R \\
    \lambda \otimes x &\mapsto (\lambda x, \overline{\lambda} x) .
\end{align*}
Similarly we have an identification $\opn{GU}(p, q)_R = \opn{GL}_{1, R} \times \opn{GL}_{p+q, R}$ given by sending a matrix $g \in \opn{GU}(p, q)(R)$ to the pair $(c(g); g')$. We will frequently use these identifications in the cases when $R = E$ or $R = \mbb{Q}_p$ for a prime $p$ which splits in $E/\mbb{Q}$, with the $E$-algebra structure on $\mbb{Q}_p$ given by the fixed embedding $\overline{\mbb{Q}} \hookrightarrow \Qpb$. In this case, the groups $\mbf{H}_{0, R}$, $\mbf{G}_{0, R}$, $\mbf{H}_R$, $\mbf{G}_R$ and $\Gt_R$ are identified with $\opn{GL}_{n, R} \times \opn{GL}_{n, R}$, $\opn{GL}_{2n, R}$, $\opn{GL}_{1, R} \times \opn{GL}_{n, R} \times \opn{GL}_{n, R}$, $\opn{GL}_{1, R} \times \opn{GL}_{2n, R}$ and $\opn{GL}_{1, R} \times \opn{GL}_{2n, R} \times \opn{GL}_{1, R}$ respectively. 
\end{remark}

We will need the following two lemmas later on:     
\begin{lemma}  \label{ACCFT}    
The map 
\begin{eqnarray} 
\Nm \colon \mbb{A}_{E, f}^{\times} & \to &  \Tb (\Ab_{f}) \nonumber \\
 z & \mapsto & \bar{z}/z \nonumber 
\end{eqnarray}
is open and surjective. 
\end{lemma}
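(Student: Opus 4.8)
The plan is to argue place by place, using that both sides are restricted direct products over the rational primes $\ell$: we have $\mathbb{A}_{E,f}^{\times} = {\prod_\ell}'\,(E\otimes_{\QQ}\QQ_\ell)^{\times}$ and $\Tb(\Ab_f) = {\prod_\ell}'\,\Tb(\QQ_\ell)$, where $\Tb(\QQ_\ell) = \{x \in (E\otimes_{\QQ}\QQ_\ell)^{\times} : \bar x x = 1\}$, and $\Nm$ is the restricted product of the local maps $\Nm_\ell \colon (E\otimes\QQ_\ell)^{\times} \to \Tb(\QQ_\ell)$, $z \mapsto \bar z/z$ (these land in $\Tb(\QQ_\ell)$ since $\overline{(\bar z/z)}\cdot(\bar z/z) = 1$). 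By the standard criterion for a map of restricted products, it then suffices to show (i) each $\Nm_\ell$ is open and surjective, and (ii) for all but finitely many $\ell$, $\Nm_\ell$ carries the maximal compact $(\OO_E\otimes\ZZ_\ell)^{\times}$ onto the maximal compact $\Tb(\ZZ_\ell)$.

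For (i) I would first dispose of the split primes. If $\ell$ splits in $E$, then $E\otimes\QQ_\ell \cong \QQ_\ell\times\QQ_\ell$ with conjugation swapping the two factors, so $(E\otimes\QQ_\ell)^{\times} = \QQ_\ell^{\times}\times\QQ_\ell^{\times}$ and $\Tb(\QQ_\ell) = \{(a,a^{-1})\} \cong \QQ_\ell^{\times}$; under this identification $\Nm_\ell$ becomes $(x,y)\mapsto y/x$, which is visibly a continuous open surjection and restricts to a surjection $\ZZ_\ell^{\times}\times\ZZ_\ell^{\times}\twoheadrightarrow\ZZ_\ell^{\times}$. For a non-split prime $\ell$, $E_\ell \defeq E\otimes\QQ_\ell$ is the quadratic field extension of $\QQ_\ell$, and $\Nm_\ell$ fits in the Hilbert 90 sequence $1\to\QQ_\ell^{\times}\to E_\ell^{\times}\xrightarrow{\Nm_\ell}\Tb(\QQ_\ell)\to H^1(\QQ_\ell,\mathbb{G}_m)=0$, giving surjectivity and identifying $\Tb(\QQ_\ell)$ as a topological group with $E_\ell^{\times}/\QQ_\ell^{\times}$. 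Since $\QQ_\ell^{\times}$ is closed in $E_\ell^{\times}$, both groups are locally compact and second countable, so the continuous bijection $E_\ell^{\times}/\QQ_\ell^{\times}\to\Tb(\QQ_\ell)$ is a homeomorphism by the open mapping theorem for topological groups; hence $\Nm_\ell$ is open. (Alternatively, openness of $\Nm_\ell$ is immediate from the fact that $z\mapsto\bar z/z$ is a surjective homomorphism of smooth $\QQ_\ell$-group schemes, hence a submersion of $\QQ_\ell$-analytic manifolds.)

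For (ii), note that at every $\ell$ unramified in $E$ the exact sequence of tori $1\to\mathbb{G}_m\to\Res_{E/\QQ}\mathbb{G}_m\to\Tb\to 1$ spreads out to an exact sequence of smooth group schemes over $\ZZ_\ell$ (with $\ZZ_\ell$-models obtained by replacing $E$ with $\OO_E$), and $H^1_{\et}(\ZZ_\ell,\mathbb{G}_m)=\opn{Pic}(\ZZ_\ell)=0$ (equivalently, one may use that the units of the unramified extension $\OO_{E_\ell}/\ZZ_\ell$ have trivial Tate cohomology, or Lang's theorem on the special fibre combined with Hensel's lemma), whence $(\OO_E\otimes\ZZ_\ell)^{\times}\twoheadrightarrow\Tb(\ZZ_\ell)$; since only finitely many primes ramify in $E$, this is exactly the compatibility needed. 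Assembling (i) and (ii) via the restricted-product criterion gives that $\Nm$ is open and surjective. The only step that is not purely formal is the openness of $\Nm_\ell$ at the non-split primes; the cleanest route is the open mapping theorem (applicable because the groups in question are $\sigma$-compact), and everything else reduces either to the explicit split-prime computation or to Hilbert 90 and its integral (unramified) refinement.
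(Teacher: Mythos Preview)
Your proof is correct and follows essentially the same route as the paper's: local surjectivity via Hilbert~90, integral surjectivity at unramified primes via Lang's theorem (plus smoothness/Hensel), and the open mapping theorem for $\sigma$-compact locally compact groups. The only organizational difference is that the paper invokes the open mapping theorem once globally at the outset (reducing everything to surjectivity), whereas you establish openness place by place and then assemble via the restricted-product criterion; your explicit treatment of the split primes is a special case the paper handles uniformly.
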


\begin{proof}   
A  continuous  surjective  homomorphism of $ \sigma $-countable locally  compact  groups is open, and so it suffices to prove that the map is surjective. We have an exact sequence $ 1 \to \mbb{G}_{m}  \to \opn{GU}(1)  \xrightarrow{\Nm}  \Tb \to  1   $
of algebraic tori over $ \QQ $, which gives an exact sequence 
\[
1 \to   \Ab_{f}   ^{\times}  \to  \Ab_{E,f}^{\times}  \to    \Tb (  \Ab_{f} ) . 
\]
For each finite place $ \ell $, the map $ (E \otimes \mbb{Q}_\ell)^{\times}  \to \Tb(\QQ_{\ell}) $ is surjective, since $ \opn{H}^{1}(\QQ_{\ell}, \mbb{G}_{m}) = 0 $ by Hilbert's theorem 90. For $ \ell \nmid  \mathrm{disc}(E) $, both $ \opn{GU}(1) $ and $ \Tb $ have smooth models over $ \ZZ_{\ell}  $ and the map extends to these models. This induces a map $ \opn{GU}(1)(\mathbb{F}_{\ell})  \to \Tb  (\mathbb{F}_{\ell}) $. Since $ \opn{H}^{1}(\mathbb{F}_{\ell},\mbb{G}_{m}) = 0 $ by Lang's lemma, this map is surjective and a lifting argument shows that $ \opn{GU}(1) (\ZZ_{\ell} )  \to  \Tb (\ZZ_{\ell}) $ is also surjective. The claim for surjectivity on adelic points now follows.   
\end{proof}

\begin{lemma} \label{center} Consider the following groups
\[Z_{0} \defeq Z_{\Gb_{0}}(\Ab) \; \; \; \text{ and } \; \; \;  Z_{1} \defeq Z_{\Gb}(\Ab) \Gb(\QQ) \cap \Gb_{0}(\Ab) \]
Then $Z_{1} = Z_{0} \Gb_{0}(\QQ)$.  
\end{lemma}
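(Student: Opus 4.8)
The plan is to analyze the group $Z_1 = Z_{\mathbf G}(\mathbb A)\,\mathbf G(\mathbb Q) \cap \mathbf G_0(\mathbb A)$ by studying the similitude character. Recall $\mathbf G = \GU(1,2n-1)$ and $\mathbf G_0 = \U(1,2n-1)$ is precisely the kernel of the similitude map $c\colon \mathbf G \to \mathbb G_m$. The centre $Z_{\mathbf G} = \GU(1)$, and $c$ restricted to $Z_{\mathbf G} = \GU(1) = \{x \in \operatorname{Res}_{E/\mathbb Q}\mathbb G_m\}$ is $x \mapsto \bar x x$, i.e. the norm. Since the containment $Z_0\,\mathbf G_0(\mathbb Q) \subseteq Z_1$ is clear (as $Z_0 = Z_{\mathbf G_0}(\mathbb A) \subseteq Z_{\mathbf G}(\mathbb A)$ and $\mathbf G_0(\mathbb Q) \subseteq \mathbf G(\mathbb Q) \cap \mathbf G_0(\mathbb A)$), the work is the reverse inclusion.

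First I would take an arbitrary element $x = z\gamma \in Z_1$ with $z \in Z_{\mathbf G}(\mathbb A)$, $\gamma \in \mathbf G(\mathbb Q)$, and $z\gamma \in \mathbf G_0(\mathbb A)$. Applying the similitude character gives $c(z)c(\gamma) = 1$ in $\mathbb A^\times$, so $c(\gamma) = c(z)^{-1} \in \mathbb Q^\times \cap \operatorname{Nm}_{\mathbb A_E/\mathbb A}(\mathbb A_E^\times)$. The key step is then to produce an element $\zeta \in Z_{\mathbf G}(\mathbb Q) = \GU(1)(\mathbb Q)$ with $c(\zeta) = c(\gamma)$; granting this, $\gamma' := \zeta^{-1}\gamma \in \mathbf G_0(\mathbb Q)$, and writing $z\gamma = (z\zeta)(\zeta^{-1}\gamma) = (z\zeta)\gamma'$ we observe that $z\zeta \in Z_{\mathbf G}(\mathbb A)$ and $z\zeta = (z\gamma)(\gamma')^{-1} \in \mathbf G_0(\mathbb A)$, hence $z\zeta \in Z_{\mathbf G}(\mathbb A) \cap \mathbf G_0(\mathbb A) = Z_{\mathbf G_0}(\mathbb A) = Z_0$ (using $Z_{\mathbf G_0} = \mathbf T = \ker(c|_{Z_{\mathbf G}})$). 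This shows $x \in Z_0\,\mathbf G_0(\mathbb Q)$, as desired.

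So the crux reduces to a Hasse-principle-type statement: an element of $\mathbb Q^\times$ that is everywhere locally a norm from $E$ is globally a norm from $E$. The hard part will be this surjectivity onto global norms, but it follows from the Hasse norm theorem for the cyclic (degree $2$) extension $E/\mathbb Q$: the norm map $\operatorname{Nm}\colon E^\times \to \mathbb Q^\times$ has image exactly $\mathbb Q^\times \cap \operatorname{Nm}_{\mathbb A_E/\mathbb A}(\mathbb A_E^\times)$. Since $c(\gamma) \in \mathbb Q^\times$ is a local norm everywhere — being equal to $c(z)^{-1}$ with $z \in Z_{\mathbf G}(\mathbb A) = \GU(1)(\mathbb A) = \mathbb A_E^\times$ — we can write $c(\gamma) = \bar\zeta\zeta$ for some $\zeta \in E^\times = \GU(1)(\mathbb Q)$, which produces the required $\zeta$ with $c(\zeta) = \operatorname{Nm}(\zeta) = c(\gamma)$. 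This completes the argument; the only substantive input is the Hasse norm theorem, and the rest is bookkeeping with the exact sequence $1 \to \mathbf T \to \GU(1) \xrightarrow{c} \mathbb G_m \to 1$ already recorded in the excerpt.
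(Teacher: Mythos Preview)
Your proof is correct and follows essentially the same approach as the paper: both reduce to showing that $c(\gamma)$, being a local norm everywhere, is a global norm via the Hasse norm theorem, and then adjust $z$ and $\gamma$ by the resulting $\zeta \in E^\times = Z_{\mathbf G}(\mathbb Q)$. Your write-up is in fact slightly more detailed in verifying that $z\zeta \in Z_0$.
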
    
\begin{proof} 
The inclusion $Z_{0} \mbf{G}_0(\QQ) \subset Z_{1}$ is obvious. Let $z \gamma \in Z_{1}$ where $z \in Z_{\Gb}(\Ab)$ and $\gamma \in \Gb (\QQ)$. Then we have
\[
c(z \gamma) = z \overline {z} c(\gamma) = 1 
\]
which implies that $c(\gamma)$ is a norm locally everywhere. By the Hasse norm theorem, we must have that $c(\gamma)$ is a global norm, i.e. there exists an element $\zeta \in E^{\times } = Z_{\Gb}(\QQ)$ such that $c(\gamma) = c(\zeta)$. Therefore, we can replace $z$ with $z \zeta \in Z_{0}$ and $\gamma$ with $\zeta ^{-1} \gamma \in \Gb_{0}(\QQ)$.  
\end{proof}                 

\begin{notation} \label{NotationForFixedHaarMeasures}
We fix left Haar measures $dg$, $dh$ and $dt$ on $\mbf{G}(\mbb{Q}_\ell)$, $\mbf{H}(\mbb{Q}_\ell)$ and $\mbf{T}(\mbb{Q}_\ell)$ respectively. As these groups are unimodular \cite[p. 58]{Renard}, these are also right Haar measures. We normalise them so that the volume of any compact open subgroup lies in $\mbb{Q}$ and the volume of any hyperspecial subgroup is $1$ (when applicable). In particular, the products of these measures induce Haar measures on the $\mbb{A}_f$-points of the corresponding groups, which we may also denote by the same letters.
\end{notation}

\begin{remark} 
If $n$ is odd then the above groups are quasi-split at all finite places. Indeed, the groups $\mbf{H}_0$ and $\mbf{H}$ are automatically quasi-split at all finite places because there is only one unitary group (up to isomorphism) attached to Hermitian spaces of fixed odd dimension over a non-archimedean local field (see \cite[p. 152]{Morel}). For $\mbf{G}_0$ and $\mbf{G}$, we follow the argument in \emph{loc.cit.}, where the local cohomological invariant is calculated (the group is quasi-split when the invariant is $0$). 

We note that these groups are automatically quasi-split at primes $\ell$ which do not divide the discriminant of $E$, because the group is unramified. If $\ell$ divides the discriminant of $E$, then the cohomological invariant is $0$ if $-1$ is a norm in $\mbb{Q}_\ell$. Otherwise, it is equal to $2n-1 + n = 3n-1$ modulo $2$. But this quantity is even, under the assumption that $n$ is odd.  

Quasi-splitness at all finite places is used in the results invoked on weak base-change in section \ref{CSV_section}.   However, we expect such results to hold without this hypothesis.
\end{remark}

\subsection{The Shimura--Deligne data} \label{TheShimuraData}

We now define the Shimura--Deligne data associated with $\mbf{H}$ and $\mbf{G}$. Since the datum attached to the group $\mbf{H}$ doesn't satisfy all the usual axioms of a Shimura datum, we have extended the definition to that of a Shimura--Deligne datum (Appendix \ref{PureAppendix}). In this section, we will freely use notation and results from this appendix. 

Let $\mbb{S} \defeq \opn{Res}_{\mbb{C}/\mbb{R}}\mbb{G}_m$ denote the Deligne torus and let $h_{\mbf{G}}$ and $h_{\mbf{H}}$ be the following algebraic homomorphisms:
\begin{eqnarray}
h_{\mbf{G}} \colon \mbb{S} & \rightarrow & \mbf{G}_{\mbb{R}} \nonumber \\
z & \mapsto & \left( \opn{diag}(z, \bar{z}, \dots, \bar{z}) \right) \nonumber \\
h_{\mbf{H}} \colon \mbb{S} & \rightarrow & \mbf{H}_{\mbb{R}} \nonumber \\
z & \mapsto & \left( \opn{diag}(z, \bar{z}, \dots, \bar{z}), \opn{diag}(\bar{z}, \dots, \bar{z}) \right). \nonumber 
\end{eqnarray}
Let $X_{\mbf{G}}$ (resp. $X_{\mbf{H}}$) denote the $\mbf{G}(\mbb{R})$ (resp. $\mbf{H}(\mbb{R})$) conjugacy class of homomorphisms containing $h_{\mbf{G}}$ (resp. $h_{\mbf{H}}$). Then the pairs $(\mbf{G}, X_{\mbf{G}})$ and $(\mbf{H}, X_{\mbf{H}})$ both satisfy the axioms of a Shimura--Deligne datum as in Definition \ref{SD} (axioms \cite[2.1.1.1-2.1.1.2]{DeligneSVs}). The datum $(\mbf{G}, X_{\mbf{G}})$ also satisfies $2.1.1.3$ in \emph{loc.cit.},  so it is a Shimura datum in the usual sense. In particular, if $K \subset \mbf{G}(\mbb{A}_f)$ is a sufficiently small compact open subgroup\footnote{In this paper, the phrases ``sufficiently small compact open subgroup'' and ``neat compact open subgroup'' are synonymous.} (Definition \ref{DefinitionOfSufficientlySmall}), we let $\opn{Sh}_{\mbf{G}}(K)$ denote the associated Shimura--Deligne variety whose complex points equal 
\[
\opn{Sh}_{\mbf{G}}(K)(\mbb{C}) = \mbf{G}(\mbb{Q}) \backslash [X_{\mbf{G}} \times \mbf{G}(\mbb{A}_f)/K ]
\]
and similarly for $\mbf{H}$. The reflex field for the datum $(\mbf{G}, X_{\mbf{G}})$ is the imaginary quadratic number field $E$ fixed in the previous section for $n > 1$, and $\mbb{Q}$ otherwise (see Example \ref{ReflexFieldUnitaryExample}). However, we will always consider the associated Shimura--Deligne varieties as schemes over $E$, unless stated otherwise. If the level $K$ is small enough, the embedding $\iota$ induces a closed embedding on the associated Shimura--Deligne varieties. More precisely, we have the following. 

\begin{definition} \label{HSmallDef}
Let $w$ equal the $2n \times 2n$ block diagonal matrix
\[
w \defeq \iota(1, -1) = \tbyt{1}{}{}{-1} \; \in \; \mbf{G}(\mbb{Q}).
\]
We say that a compact open subgroup $K \subset \mbf{G}(\mbb{A}_f)$ is $\mbf{H}$\emph{-small} if there exists a compact open subgroup $K' \subset \mbf{G}(\mbb{A}_f)$, containing both $K$ and $wKw$, such that for any $g \in \mbf{G}(\mbb{A}_f)$, $gK'g^{-1} \cap \mbf{G}(\mbb{Q})$ has no non-trivial stabilisers for its action on $X_{\mbf{G}}$. 
\end{definition}
\begin{proposition}
Let $K \subset \mbf{G}(\mbb{A}_f)$ be an $\mbf{H}$-small compact open subgroup. Then the natural map
\[
\opn{Sh}_{\mbf{H}}(K \cap \mbf{H}(\mbb{A}_f)) \xrightarrow{\iota} \opn{Sh}_{\mbf{G}}(K)
\]
is a closed immersion.
\end{proposition}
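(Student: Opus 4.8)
The plan is to reduce the statement to the standard functoriality and finiteness properties of Shimura varieties. Recall that for an inclusion of Shimura data $(\mbf{H}, X_{\mbf{H}}) \hookrightarrow (\mbf{G}, X_{\mbf{G}})$ induced by a morphism $\iota$ of algebraic groups with $\iota \circ h_{\mbf{H}}$ conjugate to $h_{\mbf{G}}$ (which holds here by the explicit formulas for $h_{\mbf{G}}$ and $h_{\mbf{H}}$), there is always an induced morphism of Shimura varieties $\opn{Sh}_{\mbf{H}}(K \cap \mbf{H}(\mbb{A}_f)) \to \opn{Sh}_{\mbf{G}}(K)$ for any compact open $K \subset \mbf{G}(\mbb{A}_f)$ for which both sides are defined (i.e. $K \cap \mbf{H}(\mbb{A}_f)$ small enough in $\mbf{H}$ and $K$ small enough in $\mbf{G}$). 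So the content is that this morphism is a \emph{closed immersion} under the $\mbf{H}$-smallness hypothesis, and since these are quasi-projective varieties over $E$, it suffices to check this after base change to $\mbb{C}$ and on complex points, where we must show the map is injective, proper (automatic for these arithmetic quotients, or one can argue it is finite), and an immersion (injective on tangent spaces).

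First I would record the description of the map on complex points: it sends the class of $(x, h)$ with $x \in X_{\mbf{H}}$, $h \in \mbf{H}(\mbb{A}_f)$ to the class of $(\iota(x), \iota(h))$. Injectivity on tangent spaces is the easy local part: since $\iota \colon \mbf{H} \hookrightarrow \mbf{G}$ is a closed immersion of algebraic groups, the induced map on the compact duals / Borel embeddings of $X_{\mbf{H}}^+$ into $X_{\mbf{G}}^+$ is a closed immersion of complex manifolds (the relevant Harish-Chandra realizations are equivariant), and this passes to the arithmetic quotients since the level is small enough that both quotients are manifolds and the covering maps are local isomorphisms. The substantive point is global injectivity on points.

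The heart of the argument is the following: suppose $(\iota(x_1), \iota(h_1))$ and $(\iota(x_2), \iota(h_2))$ represent the same point of $\opn{Sh}_{\mbf{G}}(K)(\mbb{C})$, i.e. there is $\gamma \in \mbf{G}(\mbb{Q})$ and $k \in K$ with $\gamma \iota(x_1) = \iota(x_2)$ and $\gamma \iota(h_1) k = \iota(h_2)$. From $\gamma \cdot \iota(x_1) = \iota(x_2)$ and the fact that both $\iota(x_i)$ land in the sub-Shimura datum, $\gamma$ must normalize the relevant structure: concretely, $\gamma$ conjugates the cocharacter/Hodge datum of $\iota(h_{\mbf{H}})$-type to another of the same type, which forces $\gamma$ to lie in $\mbf{H}(\mbb{Q})$ up to the centralizer. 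The role of the element $w = \opn{diag}(1,-1)$ and the $\mbf{H}$-smallness hypothesis is exactly to handle the ambiguity coming from the centralizer of $\mbf{H}$ in $\mbf{G}$: conjugation by $w$ is the nontrivial outer piece relating the two summands, and $wKw \subset K'$ guarantees that an element of $\mbf{G}(\mbb{Q})$ realizing the identification and lying in $(\text{centralizer})\cdot\mbf{H}(\mbb{Q})$ can be absorbed, after adjusting $k$ within $K'$, into genuinely making $(x_1,h_1)$ and $(x_2,h_2)$ equivalent for $\opn{Sh}_{\mbf{H}}(K \cap \mbf{H}(\mbb{A}_f))$. I expect this bookkeeping — tracking how $\gamma$ decomposes relative to $\mbf{H}$ and its centralizer, and checking $w$-conjugation plus $\mbf{H}$-smallness closes the gap — to be the main obstacle; everything else (existence of the morphism, properness/finiteness of the image, the tangent space computation, and descent from $\mbb{C}$ to $E$) is standard. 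Finally, having injectivity and the local immersion property, properness of $\opn{Sh}_{\mbf{H}}(K\cap\mbf{H}(\mbb{A}_f)) \to \opn{Sh}_{\mbf{G}}(K)$ (the source being a finite union of arithmetic quotients mapping to the target, which one checks is a finite morphism onto its image, e.g. via the theory of Baily--Borel compactifications or directly) upgrades the injective immersion to a closed immersion, and this descends to the canonical models over $E$.
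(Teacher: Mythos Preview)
Your proposal identifies the right ingredients but does not supply the key step, and the route you sketch for global injectivity is more difficult than necessary. You try to argue that if $\gamma\in\mbf{G}(\mbb{Q})$ satisfies $\gamma\cdot\iota(x_1)=\iota(x_2)$ then $\gamma$ must lie in $\mbf{H}(\mbb{Q})$ ``up to the centralizer''; but merely conjugating one point of $X_{\mbf{H}}$ to another inside $X_{\mbf{G}}$ does not force $\gamma$ into $\mbf{H}(\mbb{R})$ or its normalizer in any clean way, and you admit the bookkeeping here is the unresolved obstacle.

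The paper's argument avoids this entirely. One first uses that $\iota\colon\opn{Sh}_{\mbf{H}}\hookrightarrow\opn{Sh}_{\mbf{G}}$ is injective at \emph{infinite} level (standard), so the only failure of injectivity at level $K$ can come from some $u\in K\setminus(K\cap\mbf{H}(\mbb{A}_f))$ with $Q,\,Qu\in\opn{Sh}_{\mbf{H}}$. The decisive observation is that right translation by $w$ fixes $\opn{Sh}_{\mbf{H}}$ \emph{pointwise} inside $\opn{Sh}_{\mbf{G}}$: if $P=[(y,h)]$ with $y\in X_{\mbf{H}}$ and $h\in\mbf{H}(\mbb{A}_f)$, then $Pw=[(y,hw)]=[(wy,whw)]=[(y,h)]$ since $w\in\mbf{G}(\mbb{Q})$ centralises $\mbf{H}$. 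Applying this to $Q$ and $Qu$ gives $Qu'=Q$ with $u'=u\cdot wu^{-1}w$. But $u'\in K'$ (this is exactly what $\mbf{H}$-smallness provides: $K\cup wKw\subset K'$), and sufficient smallness of $K'$ means its action on $\opn{Sh}_{\mbf{G}}$ is free, so $u'=1$. Hence $u$ commutes with $w$, i.e.\ $u\in\mbf{H}(\mbb{A}_f)$, a contradiction. No analysis of $\gamma$ over $\mbb{Q}$ or of tangent spaces is needed; finiteness and unramifiedness of the morphism (already known from Deligne) plus injectivity give the closed immersion.
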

\begin{proof}
This follows the same argument in  \cite[Proposition 5.3.1]{LSZ17}. Indeed, one has an injective map $\iota \colon \opn{Sh}_{\mbf{H}} \hookrightarrow \opn{Sh}_{\mbf{G}}$ on the infinite level Shimura--Deligne varieties. So it suffices to prove that for any $u \in K - K \cap \mbf{H}(\mbb{A}_f)$,   one has $\opn{Sh}_{\mbf{H}}u \cap \opn{Sh}_{\mbf{H}} = \varnothing$ as subsets of $\opn{Sh}_{\mbf{G}}$. If $Q$ and $Qu$ are both in $\opn{Sh}_{\mbf{H}}$, we must have that $u' = u \cdot w u^{-1} w$ fixes $Q$. By the conditions on $K'$, we have that $u' = 1$. This implies that $u$ is in the centraliser of $w$, which is precisely $\mbf{H}(\mbb{A}_f)$.   
\end{proof}

We also consider the trivial Shimura--Deligne datum $(\mbf{T}, X_{\mbf{T}})$, where $X_{\mbf{T}} $ is the singleton $h_{\mbf{T}}(z) = \bar{z}/z$ (considered as a $ \mbf{T}(\RR)$-conjugacy class of homomorphisms $ \mathbb{S} \to  \mbf{T} $) and the datum $(\Gt, X_{\Gt}) = (\Gt, X_{\mbf{G}} \times X_{\mbf{T}})$. Then the data associated with the groups $\mbf{H}$ and $\Gt$ are clearly compatible with respect to the inclusion $(\iota, \nu)$, and if $U = K \times C \subset \Gt(\mbb{A}_f)$ is a sufficiently small compact open subgroup, we have a finite unramified map
\[
\opn{Sh}_{\mbf{H}}\left(U \cap \mbf{H}(\mbb{A}_f) \right) \to \opn{Sh}_{\Gt}\left( U \right) \cong \opn{Sh}_{\mbf{G}}(K) \times \opn{Sh}_{\mbf{T}}(C).
\]
which is a closed embedding when $ K $ is $ \Hb $-small. The reflex field for $(\Gt, X_{\Gt})$ is equal to the imaginary quadratic field $E$.

\begin{lemma} \label{LemSD5issatisfied}
The groups $\mbf{H}$, $\mbf{G}$, $\Gt$, and $\mbf{T}$ satisfy axiom (SD5), i.e. $\mbf{Z}_{\invs{G}}(\mbb{Q})$ is discrete in $\mbf{Z}_{\invs{G}}(\mbb{A}_f)$ for $\invs{G} \in \{\mbf{H}, \mbf{G}, \Gt, \mbf{T} \}$.
\end{lemma}
\begin{proof}
Note that $ \mathbf{Z}_{\Hb} =\GU(1) \times \U(1) $, $ \mathbf{Z}_{\Gb}  =  \GU(1) $, $ \mathbf{Z}_{\tilde{\Gb}} = \GU(1) \times  \U(1) $ and since $ \GU(1)(\ZZ) $ and $ \U(1)(\ZZ) $ are finite, the $\ZZ$-points of all these central tori are finite too. As a result $ \GU(1)(\QQ) $ (resp. $  \U(1)(\QQ) $) is discrete in  $\GU(1)(\Ab_{f}) $ (resp. $ \U(1)(\Ab_{f}) $), as required.
\end{proof}

\subsection{Dual groups, base-change, and $L$-factors} \label{DualGrpsBC}

In this section, we describe the dual groups attached to the groups defined in the previous section and several maps between them. Let $W_{F}$  denote the Weil group of a local or global field $F$. In each of the following examples,  we describe the connected component of the $L$-group. The Weil group $W_E$ will always act trivially on this component, so to describe the full $L$-group, it will be enough to specify how complex conjugation acts.

As in \cite[\S 2.2]{Skinner}, let $\Phi_m = \left( (-1)^{i+1} \delta_{i, m+1 - j} \right)_{i, j}$. The dual groups are as follows:
\begin{itemize}
    \item $\widehat{\mbf{G}} = \opn{GL}_1(\mbb{C}) \times \opn{GL}_{2n}(\mbb{C})$ and complex conjugation acts by 
    \[
    (\lambda; g) \mapsto (\lambda \cdot \opn{det}g; \Phi_{2n}^{-1} {^t g^{-1}} \Phi_{2n} ).
    \]
    \item $\widehat{\mbf{H}} = \opn{GL}_1(\mbb{C}) \times \opn{GL}_n(\mbb{C}) \times \opn{GL}_n(\mbb{C})$ and complex conjugation acts as
    \[
    (\lambda; h_1, h_2) \mapsto (\lambda \cdot \opn{det}h_1 \cdot \opn{det}h_2 ; \Phi_n^{-1} {^t h_1^{-1}} \Phi_n, \Phi_n^{-1} {^t h_2^{-1}} \Phi_n )
    \]
    \item ${\widehat{\mbf{G}}_0} = \opn{GL}_{2n}(\mbb{C})$ and ${\widehat{\mbf{H}}_0} = \opn{GL}_n(\mbb{C}) \times \opn{GL}_n(\mbb{C})$ and complex conjugation acts via the same formulae above, but omitting the $\opn{GL}_1$ factor. 
    \item $\widehat{\mbf{T}} = \opn{GL}_1(\mbb{C})$ and complex conjugation acts by sending an element $x$ to its inverse $x^{-1}$.
    \item ${\widehat{\widetilde{\mbf{G}}}} = {\widehat{\mbf{G}} } \times {\widehat{\mbf{T}}}$ and complex conjugation acts diagonally.
\end{itemize}
We also note that we can choose a splitting such that the dual group of $R\opn{GL}_m \defeq \opn{Res}_{E/\mbb{Q}} \opn{GL}_m$ is equal to $\opn{GL}_m(\mbb{C}) \times \opn{GL}_m(\mbb{C})$ and complex conjugation acts by $(g_1, g_2) \mapsto \left( \Phi_m^{-1} {^t g_2^{-1}} \Phi_m, \Phi_m^{-1} {^t g_1^{-1}} \Phi_m \right)$. This implies that the natural diagonal embedding $\widehat{\mbf{G}}_0 \to \widehat{R\opn{GL}}_{2n}$ extends to a map of $L$-groups, which we will denote by $\opn{BC}$. Similarly,  we have a map $\opn{BC} \colon {^L \mbf{G}} \to {^L (R\opn{GL}_1 \times R\opn{GL}_{2n})}$. We also have a natural map ${^L \mbf{G}} \to {^L \mbf{G}_0}$ given by projecting to the second component, and this is compatible with base-change in the sense that we have a commutative diagram: 
\[
\begin{tikzcd}
{^L \mbf{G}} \arrow[r, "\opn{BC}"] \arrow[d] & {^L (R\opn{GL}_1 \times R\opn{GL}_{2n})} \arrow[d] \\
{^L \mbf{G}_0} \arrow[r, "\opn{BC}"]         & {^L R\opn{GL}_{2n} }                             
\end{tikzcd}
\]
where the right-hand vertical arrow is projection to the second component. 

The map ${^L \mbf{G}} \to {^L \mbf{G}_0}$ corresponds on the automorphic side to restricting an automorphic representation of $\mbf{G}$ to $\mbf{G}_0$. It turns out that all irreducible constituents in this restriction lie in the same $L$-packet, and one can always lift a representation of $\mbf{G}_0$ to one of $\mbf{G}$. We summarise this in the following proposition. 

\begin{proposition} \label{UnitaryBaseChangeProp}
Let $\pi$ be a cuspidal automorphic representation of $\mbf{G}(\mbb{A})$. Then all irreducible constituents of $\pi|_{\mbf{G}_0}$ are cuspidal and lie in the same $L$-packet with $L$-parameter obtained by post-composing the $L$-parameter for $\pi$ with the natural map ${^L \mbf{G}} \to {^L \mbf{G}_0}$. Conversely, if $\pi_0$ is a cuspidal automorphic representation of $\mbf{G}_0(\mbb{A})$, then $\pi_0$ lifts to a cuspidal automorphic representation of $\mbf{G}(\mbb{A})$.

Moreover, the same statements hold if we work over a local field $F$, that is to say: if $\pi$ is an irreducible admissible representation of $\mbf{G}(F) $, then all irreducible constituents of $\pi|_{\mbf{G}_0}$ lie in the same local $L$-packet (with parameter obtained by composing with ${^L \mbf{G}_F} \to {^L \mbf{G}_{0, F} }$), with cuspidal representations corresponding to each other, and every representation of $\mbf{G}_0(F)$ lifts to one of $\mbf{G}(F)$. If $F = \mbb{R}$ and $\pi_0$ lies in the discrete series, then we can lift $\pi_0$ to a discrete series representation of $\mbf{G}(\mbb{R})$.  

If $\omega_{\pi_0}$ denotes the central character of a  representation $\pi_0$, and $\omega$ is a (unitary) character extending $\omega_{\pi_0}$ to $Z_{\mbf{G}}(\mbb{Q}) \backslash Z_{\mbf{G}}(\mbb{A})$, then there exists a lift $\pi$ of $\pi_0$ as above with central character $\omega$. A similar statement holds for local fields. 
\end{proposition}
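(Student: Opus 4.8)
The plan is to deduce the whole proposition from the classical Clifford theory for the normal inclusion $\mbf{G}_0 \hookrightarrow \mbf{G}$, whose quotient $\mbf{G}/\mbf{G}_0 \cong \mbb{G}_m$ (via the similitude $c$) is a torus --- this is exactly the ``isometry versus similitude'' situation studied, for $\opn{SL}_n \subset \opn{GL}_n$ and its relatives, by Labesse--Langlands, Gelbart--Knapp and Hiraga--Saito, and once the relevant finiteness is recorded their arguments apply essentially verbatim. The structural inputs I would first set down are: $\mbf{G}_0 = \ker c$; the identity $\mbf{G} = \mathbf{Z}_{\mbf{G}}\cdot\mbf{G}_0$ of algebraic groups, with $\mathbf{Z}_{\mbf{G}} = \opn{GU}(1) = \Res_{E/\QQ}\mbb{G}_m$, $c|_{\mathbf{Z}_{\mbf{G}}} = \Nm$ and $\mathbf{Z}_{\mbf{G}}\cap\mbf{G}_0 = \mathbf{Z}_{\mbf{G}_0} = \mbf{T}$; and hence the exact sequence $1 \to \mbf{T} \to \mathbf{Z}_{\mbf{G}}\times\mbf{G}_0 \to \mbf{G} \to 1$, which yields $\mbf{G}(F)/\mathbf{Z}_{\mbf{G}}(F)\mbf{G}_0(F) \hookrightarrow H^1(F,\mbf{T})$. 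Over a local field $F$ this last group has order $[F^\times : \Nm((E\otimes F)^\times)] \le 2$, and is trivial at split places (in particular at $p$), so $\mbf{G}_0(F)$ has finite index in $\mbf{G}(F)$ modulo the centre; globally, class field theory for $\mbf{T}$ gives that $\mbf{G}(\QQ)\mathbf{Z}_{\mbf{G}}(\Ab)\mbf{G}_0(\Ab)$ has finite index in $\mbf{G}(\Ab)$.

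For the local statements, fix $F$ and an irreducible admissible $\pi$ of $\mbf{G}(F)$. By Clifford theory for the finite-index-modulo-centre inclusion, $\pi|_{\mbf{G}_0(F)}$ is a finite direct sum of irreducibles forming a single orbit under $\mbf{G}(F)/\mbf{G}_0(F)$, and $\pi$ is supercuspidal if and only if each constituent is (matrix coefficients are compactly supported modulo $\mathbf{Z}_{\mbf{G}}(F)$ exactly when their restrictions are compactly supported modulo $\mathbf{Z}_{\mbf{G}_0}(F)$). That the constituents lie in a single $L$-packet with parameter the composite of that of $\pi$ with ${}^{L}\mbf{G}_F \to {}^{L}\mbf{G}_{0,F}$ is the compatibility of the local correspondence with this map, which by \S\ref{DualGrpsBC} amounts to dropping the $\opn{GL}_1$-factor (equivalently, to base change to $\opn{GL}_{2n}$). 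Conversely, given an irreducible $\sigma$ of $\mbf{G}_0(F)$ and a unitary character $\omega$ of $\mathbf{Z}_{\mbf{G}}(F)$ restricting to the central character of $\sigma$ on $\mathbf{Z}_{\mbf{G}_0}(F)$, any irreducible constituent $\pi$ of the compact induction $\opn{ind}_{\mbf{G}_0(F)\mathbf{Z}_{\mbf{G}}(F)}^{\mbf{G}(F)}(\sigma\boxtimes\omega)$ has central character $\omega$, and by Frobenius reciprocity $\sigma$ occurs in $\pi|_{\mbf{G}_0(F)}$; for $F = \RR$ with $\sigma$ in the discrete series, one runs the same induction and checks that square-integrability of matrix coefficients modulo $\mathbf{Z}_{\mbf{G}}(\RR)$ is preserved --- the extra split direction of $\mathbf{Z}_{\mbf{G}}(\RR)$ over $\mathbf{Z}_{\mbf{G}_0}(\RR)$, which $c$ identifies with $\RR_{>0}$, is exactly what the central character $\omega$ absorbs --- so $\pi$ may be taken in the discrete series.

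Globally, given cuspidal $\pi$ inside $L^2_{\opn{cusp}}(\mbf{G}(\QQ)\backslash\mbf{G}(\Ab),\omega)$, restriction of automorphic forms to $\mbf{G}_0(\Ab)$ lands in the cuspidal spectrum of $\mbf{G}_0$, since every proper parabolic of $\mbf{G}_0$ is $P\cap\mbf{G}_0$ with the same unipotent radical as $P$, so the constant-term integrals coincide. This restriction is non-zero on $\pi$ (if $\varphi\in\pi$ is a non-zero cusp form with $\varphi(g_0)\ne0$ then $R_{g_0}\varphi\in\pi$ has non-zero restriction), so $\pi|_{\mbf{G}_0(\Ab)}$ embeds as a finite sum of cuspidal automorphic representations $\sigma_i$ into $L^2_{\opn{cusp}}(\mbf{G}_0)$; each $\sigma_i$ is place-by-place a constituent of $\pi_v|_{\mbf{G}_0(\QQ_v)}$, hence by the local statement all carry the parameter obtained by composing that of $\pi$ with ${}^{L}\mbf{G}\to{}^{L}\mbf{G}_0$. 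For the converse, given cuspidal $\pi_0$ with central character $\omega_{\pi_0}$ and an extension $\omega$ to $\mathbf{Z}_{\mbf{G}}(\QQ)\backslash\mathbf{Z}_{\mbf{G}}(\Ab)$, induce $\pi_0\boxtimes\omega$ from the finite-index subgroup $\mbf{G}(\QQ)\mathbf{Z}_{\mbf{G}}(\Ab)\mbf{G}_0(\Ab)$ up to $\mbf{G}(\Ab)$, realise it inside $L^2_{\opn{cusp}}(\mbf{G},\omega)$ (cuspidality is again checked on $\mbf{G}_0$), and take any irreducible summand $\pi$; it is cuspidal automorphic with central character $\omega$, and Frobenius reciprocity exhibits $\pi_0$ as a constituent of $\pi|_{\mbf{G}_0(\Ab)}$. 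The same recipe yields a lift with discrete-series archimedean component whenever $\pi_{0,\infty}$ has one.

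The formal Clifford theory and the cuspidality bookkeeping are routine; the substantive point --- which I expect to be the main obstacle --- is the $L$-packet/parameter assertion, resting on compatibility of the local Langlands correspondence, and (for the shape of the global parameter) of weak base change to $\opn{GL}_{2n}$ (see \S\ref{CSV_section} and Proposition \ref{WeakBCProp}), with the projection ${}^{L}\mbf{G}\to{}^{L}\mbf{G}_0$ for these similitude unitary groups; once that is in hand the rest is standard. A secondary technical nuisance is that the naive identity $\mbf{G}=\mbf{G}_0\cdot\mathbf{Z}_{\mbf{G}}$ fails on $\QQ$- and $\Ab$-points (the norm $\Nm\colon\mbb{A}_E^\times\to\mbb{A}^\times$ is not surjective), so one must work throughout with the finite-index subgroup $\mbf{G}(\QQ)\mathbf{Z}_{\mbf{G}}(\Ab)\mbf{G}_0(\Ab)$ and check that the transitive $\mbf{G}(\Ab)$-action on the constituents $\sigma_i$ is compatible with their realisation in the cuspidal spectrum.
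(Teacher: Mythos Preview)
Your proposal is correct and takes essentially the same approach as the paper, which simply defers to \cite[Proposition 1.8.1]{HarrisLabesse} and \cite[Theorems 1.1.1, 5.2.2]{LabesseSchwermer} together with Lemma~\ref{center}; those references carry out precisely the Clifford-theoretic argument you outline for the similitude/isometry pair. Your ``secondary technical nuisance'' about the failure of $\mbf{G}(\Ab) = \mathbf{Z}_{\mbf{G}}(\Ab)\mbf{G}_0(\Ab)$ is exactly what Lemma~\ref{center} is there to handle in the central-character lifting step.
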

\begin{proof}
See Proposition 1.8.1 and the discussion preceding it in \cite{HarrisLabesse} (which relies on \cite[Lemme 5.6]{Clozel}) and Theorem 1.1.1 in \cite{LabesseSchwermer}. The last part follows from the techniques used in \cite[Theorem 5.2.2]{LabesseSchwermer} and Lemma \ref{center}. 
\end{proof}

We will frequently need to talk about local $L$-factors attached to unramified representations of the groups defined in \S \ref{TheGroups}, so for completeness, we recall the definition here. We follow \S 1.11, \S 1.14 and \S 5.1 Example (b) in \cite{BlasiusRogawski}.

\begin{definition} \label{StandardLFactorDef}
Let $\ell$ be a prime which is unramified in $E/\mbb{Q}$ and for which $\mbf{G}_{0, \mbb{Q}_\ell}$ is quasi-split. For a positive integer $m$, let $\opn{Std}_m$ denote the standard representation of $\opn{GL}_m$.
\begin{enumerate}[(a)]
    \item Let $F$ be a finite extension of $\mbb{Q}_\ell$. For $\mathscr{G} \in \{\opn{GL}_{2n}, \opn{GL}_1 \times \opn{GL}_{2n} \}$, let $r_{\mathrm{std}} \colon {^L \mathscr{G}_F} \to \opn{GL}_{2n}(\mbb{C})$ denote the following representation:
    \[
    r_{\mathrm{std}}(g, x) = \left\{ \begin{array}{cc}
\opn{Std}_{2n}(g) & \text{ if } \mathscr{G} = \opn{GL}_{2n} \text{ with } {^L \mathscr{G}_F} = \opn{GL}_{2n}(\mbb{C}) \times W_F \\
\left(\opn{Std}_1 \boxtimes \opn{Std}_{2n}\right)(g) & \text{ if } \mathscr{G} = \opn{GL}_1 \times \opn{GL}_{2n} \text{ with } {^L \mathscr{G}_F} = \opn{GL}_1(\mbb{C}) \times \opn{GL}_{2n}(\mbb{C}) \times W_F 
\end{array} \right.
\]
Let $q$ denote the order of the residue field of $F$ and $\opn{Frob}$ an arithmetic Frobenius in $W_F$. For any unramified representation $\sigma$ of $\mathscr{G}(F)$, with corresponding (unramified) Langlands parameter $\varphi_{\sigma} \colon W_F \to {^L \mathscr{G}_F}$, the \emph{standard} local $L$-factor is defined to be
\[
L(\sigma, s) \defeq \opn{det}\left(1 - q^{-s}(r_{\mathrm{std}} \circ \varphi_{\sigma})(\opn{Frob}^{-1})\right)^{-1}
\]
where $s \in \mbb{C}$.  
\item If $\ell$ splits in $E/\mbb{Q}$, then we have an isomorphism $\mbf{G}_{0, \mbb{Q}_\ell} \cong \opn{GL}_{2n, \mbb{Q}_\ell}$, and for any unramified representation $\sigma$ of $\mbf{G}_0(F)$, we define the local $L$-factor of $\sigma$ to be as in part (a), viewing $\sigma$ as a representation of $\opn{GL}_{2n}(F) \cong \mbf{G}_0(F)$. We have a similar definition for representations of $\mbf{G}(F)$.
\end{enumerate}
\end{definition}

\subsection{Discrete series representations} \label{DiscreteSeriesReps}

Recall from Remark \ref{RemIdentifyGroups} that one has an identification $\mbf{G}_E = \opn{GL}_{1, E} \times \opn{GL}_{2n, E}$. We let $\mbb{G}_m^{1+2n} \subset \mbf{G}_{E}$ denote the standard torus with respect to this identification. Then any algebraic character of this torus is of the form 
\begin{eqnarray}
\mbb{G}_m^{1+ 2n} & \rightarrow & \mbb{G}_m \nonumber \\
(t_0; t_1, \dots, t_{2n}) & \mapsto & t_0^{c_0} \cdot \prod_i t_i^{c_i} \nonumber 
\end{eqnarray}
where $\mbf{c} = (c_0; c_1, \dots, c_{2n}) \in \mbb{Z}^{1+2n}$. Such a character is called \emph{dominant} if $c_1 \geq \cdots \geq c_{2n}$ and these characters classify all irreducible algebraic representations of $\mbf{G}_E$. Indeed, any such representation corresponds to a representation with highest weight given by $\mbf{c}$. There is a similar description for $\mbf{G}_0$ by omitting the first $\mbb{G}_m$-factor. 

The $L$-packets of discrete series representations of $\mbf{G}(\mbb{R})$ are described by the above algebraic representations. More precisely, following \cite[\S 3.3]{Clozel} (the extension to similitude groups is immediate), if $\pi_{\infty}$ lies in the discrete series, the Langlands parameter $\varphi_{\infty} \colon W_{\mbb{R}} \to {^L \mbf{G} }$ associated with $\pi_{\infty}$, restricted to $W_{\mbb{C}} = \mbb{C}^{\times}$, is equivalent to a parameter of the form
\[
z \mapsto \left((z/\bar{z})^{p_0}; (z/\bar{z})^{p_1 + \frac{2n-1}{2}}, \dots, (z/\bar{z})^{p_{2n} + \frac{2n-1}{2}} \right) 
\]
with $p_i \in \mbb{Z}$ and $p_1 > \cdots > p_{2n}$. Then $\pi_{\infty}$ corresponds to an irreducible algebraic representation of $\mbf{G}_E$ with highest weight $\mbf{c} = (c_0; c_1, \dots, c_{2n})$ satisfying $c_0 = p_0$ and $c_i = p_i + (i-1)$, for $i=1, \dots, 2n$. We have a similar description for discrete series $L$-packets for $\mbf{G}_0(\mbb{R})$. 

\begin{lemma} \label{TrivialCentralLiftLemma}
Let $\pi_0$ be a cuspidal automorphic representation of $\mbf{G}_0(\mbb{A})$ such that $\pi_{0, \infty}$ lies in the discrete series $L$-packet corresponding to the highest weight $\mbf{c} = (c_1, \dots, c_{2n})$. Suppose that the central character of $\pi_0$ is trivial. Then there exists a cuspidal automorphic representation $\pi$ of $\mbf{G}(\mbb{A})$ such that:
\begin{itemize}
    \item $\pi$ is a lift of $\pi_0$.
    \item $\pi$ has trivial central character.
    \item $\pi_{\infty}$ lies in the discrete series $L$-packet corresponding to the weight $(0; c_1, \dots, c_{2n})$.
\end{itemize}
\end{lemma}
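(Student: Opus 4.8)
The plan is to deduce this lemma directly from Proposition \ref{UnitaryBaseChangeProp}, which is the general lifting statement from $\mbf{G}_0$ to $\mbf{G}$, by tracking both the central character and the archimedean component through the lift. The key point is that Proposition \ref{UnitaryBaseChangeProp} already provides essentially everything: it says that any cuspidal automorphic representation $\pi_0$ of $\mbf{G}_0(\mbb{A})$ lifts to a cuspidal automorphic representation $\pi$ of $\mbf{G}(\mbb{A})$, that one can prescribe the central character of the lift to be any extension $\omega$ of $\omega_{\pi_0}$ to $Z_{\mbf{G}}(\mbb{Q})\backslash Z_{\mbf{G}}(\mbb{A})$, and that when $\pi_{0,\infty}$ is a discrete series the lift can be chosen with $\pi_\infty$ in the discrete series as well. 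So the work is just to make the two bullet points precise.

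First I would handle the central character. Since $\omega_{\pi_0}$ is trivial and $Z_{\mbf{G}} = \opn{GU}(1)$ contains $Z_{\mbf{G}_0} = \mbf{T}$, the trivial character of $\mbf{T}(\mbb{Q})\backslash\mbf{T}(\mbb{A})$ extends to the trivial character of $Z_{\mbf{G}}(\mbb{Q})\backslash Z_{\mbf{G}}(\mbb{A})$ (here I use that $Z_{\mbf{G}}/Z_{\mbf{G}_0} = \opn{GU}(1)/\opn{U}(1)$ is a torus, so a character of the subtorus extends — or just note that $\omega = 1$ is a valid choice since it restricts to $\omega_{\pi_0} = 1$). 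Applying the last part of Proposition \ref{UnitaryBaseChangeProp} with $\omega = 1$ yields a lift $\pi$ with trivial central character.

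Next I would pin down the archimedean component. By Proposition \ref{UnitaryBaseChangeProp}, since $\pi_{0,\infty}$ lies in the discrete series, we may arrange the lift so that $\pi_\infty$ is a discrete series representation of $\mbf{G}(\mbb{R})$, and moreover $\pi_\infty|_{\mbf{G}_0(\mbb{R})}$ has all constituents in the same local $L$-packet as $\pi_{0,\infty}$, the packet obtained by composing the parameter of $\pi_\infty$ with ${^L\mbf{G}}_{\mbb{R}} \to {^L\mbf{G}_{0,\mbb{R}}}$. Using the explicit description of discrete series $L$-packets from the discussion preceding the lemma (the parameter $z\mapsto((z/\bar z)^{p_0};(z/\bar z)^{p_1+\frac{1-2n}{2}},\dots)$ corresponding to highest weight $\mbf{c}$ with $c_i = p_i + i$), the fact that $\pi_\infty$ restricts into the packet of highest weight $(c_1,\dots,c_{2n})$ forces the $\opn{GL}_{2n}$-part of the parameter of $\pi_\infty$ to be $(z/\bar z)^{p_i+\frac{1-2n}{2}}$ with $p_i = c_i - i$; it only remains to fix the $\opn{GL}_1$-component $p_0$. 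But the $\opn{GL}_1$-component of the $L$-parameter of $\pi_\infty$ is determined by the central character $\omega_\infty$ of $\pi_\infty$ (the $\opn{GL}_1$ factor of $\widehat{\mbf{G}}$ being dual to the similitude/central direction), and since we have arranged $\omega = 1$, this forces $p_0 = 0$, i.e. $c_0 = 0$. Hence $\pi_\infty$ lies in the discrete series $L$-packet of highest weight $(0;c_1,\dots,c_{2n})$, as claimed.

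The only genuine subtlety — and the step I would be most careful about — is the compatibility between the two prescriptions: that one can \emph{simultaneously} arrange the central character to be trivial \emph{and} the archimedean parameter to have $c_0 = 0$. These are not independent conditions, but they are consistent precisely because the central character of a lift determines the $\opn{GL}_1$-part of its archimedean $L$-parameter, so fixing $\omega = 1$ automatically produces $c_0 = 0$; there is no extra freedom to reconcile. Concretely, one invokes the last sentence of Proposition \ref{UnitaryBaseChangeProp} (``a similar statement holds for local fields'', applied at $\infty$ together with the global statement, via \cite[Theorem 5.2.2]{LabesseSchwermer} and Lemma \ref{center}), which is exactly engineered to let one choose the lift with prescribed central character while retaining control of the archimedean packet. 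I would therefore phrase the proof as: apply Proposition \ref{UnitaryBaseChangeProp} to get a discrete series lift with trivial central character, then read off from the parametrisation of archimedean discrete series $L$-packets that such a lift must have highest weight $(0;c_1,\dots,c_{2n})$.
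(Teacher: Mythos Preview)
Your overall strategy matches the paper's: invoke Proposition \ref{UnitaryBaseChangeProp} to produce a cuspidal lift $\pi$ with trivial central character and $\pi_\infty$ in the discrete series, then determine the weight $(c_0; c_1, \dots, c_{2n})$ of the archimedean $L$-packet. The difference is in how you pin down $c_0$, and there is an imprecision in your argument at that point.

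Your claim that ``the $\opn{GL}_1$-component of the $L$-parameter of $\pi_\infty$ is determined by the central character $\omega_\infty$ (the $\opn{GL}_1$ factor of $\widehat{\mbf{G}}$ being dual to the similitude/central direction)'' conflates two different objects. The $\opn{GL}_1$ factor in $\widehat{\mbf{G}} = \opn{GL}_1(\mbb{C}) \times \opn{GL}_{2n}(\mbb{C})$ corresponds to the similitude coordinate in the identification $\mbf{G}_E \cong \opn{GL}_{1,E} \times \opn{GL}_{2n,E}$, not to the centre $Z_{\mbf{G}} = \opn{GU}(1)$. A central element $z$ maps to $(z\bar z;\, z,\dots,z)$ under this identification, so the restriction of the weight $(c_0; c_1,\dots,c_{2n})$ to the centre involves both $c_0$ and $\sum_i c_i$, not $c_0$ alone. (Note also that the projection $\widehat{\mbf{G}} \to \opn{GL}_1$ onto the first factor does not even extend to a map of $L$-groups, since complex conjugation sends $(\lambda;g)$ to $(\lambda\cdot\det g;\, \ldots)$.) So the inference ``$\omega=1 \Rightarrow p_0=0$'' is not justified as written.

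The paper closes this gap by explicitly using the hypothesis $\omega_{\pi_0}=1$, which your argument never invokes: looking at the infinitesimal character of $\pi_{0,\infty}$ forces $c_1+\cdots+c_{2n}=0$. Once the lift $\pi$ with $\omega_\pi=1$ is in hand, triviality of the central character of $\pi$ gives $c_0+\sum_i c_i=0$, and combining the two yields $c_0=0$. Your proof is easily repaired by inserting this step.
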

\begin{proof}
By looking at the infinitesimal character of $\pi_{0, \infty}$, we must have $c_1 + \cdots + c_{2n} = 0$ because the central character is trivial. By Proposition \ref{UnitaryBaseChangeProp}, there exists a cuspidal automorphic representation $\pi$ of $\mbf{G}(\mbb{A})$ lifting $\pi_0$ such that $\pi_{\infty}$ lies in the discrete series. Furthermore, we can arrange it so that the central character of $\pi$ is trivial. In particular, if $(c_0; c_1, \dots, c_{2n})$ is the weight corresponding to the $L$-packet of $\pi_{\infty}$, then since $\pi$ has trivial central character, we must have 
\[
c_0 + \sum_{i=1}^{2n} c_i = 0.
\]
This implies that $c_0 = 0$.
\end{proof}


\section{Coefficient sheaves for Shimura--Deligne varieties}

In this section, we describe how to associate to algebraic representations of $\mbf{G}$ an appropriate coefficient sheaf for motivic or \'{e}tale cohomology. We follow \cite{Pink1992}, \cite{Ancona2015}, \cite{Torzewski2019} and \cite{LSZ17} closely (although note that the conventions are slightly different in the latter to that of the first three -- we follow conventions in the first three references). In particular, this will mean that the multiplier character of $\mbf{G}$ is sent to the Tate motive under the functor $\opn{Anc}_{\mbf{G}}$. Since the group $\mbf{H}$ does not give rise to a Shimura datum in the usual sense, we have provided justifications for the results in this section in Appendix \ref{AnconaAppendix}. 

Throughout this section we let $(\invs{G}, X_{\invs{G}})$ denote one of the Shimura--Deligne data $(\mbf{H}, X_{\mbf{H}})$, $(\mbf{G}, X_{\mbf{G}})$, $(\Gt, X_{\Gt})$ introduced in \S \ref{TheShimuraData}. Recall that we have fixed an odd prime $p$ that splits in $E/\mbb{Q}$. Let $\ide{p}$ denote the prime of $E$ lying above $p$ which is fixed by the embedding $E \hookrightarrow \Qpb$ (so $E_{\ide{p}}$ is identified with $\mbb{Q}_p$).

\subsection{The canonical construction for \'{e}tale cohomology} \label{TheCanonicalConstructionEtaleCoh}

Let $T_n$ be a finite representation of a (sufficiently small) compact open subgroup $K \subset \invs{G}(\mbb{A}_f)$ with coefficients in $\mbb{Z}/p^n \mbb{Z}$. Since $T_n$ is finite, there exists a finite-index normal subgroup $L \subset K$ that acts trivially on $T_n$. 

\begin{definition}
We let $\mu_{\invs{G}, K, n}(T_n)$ be the locally constant \'{e}tale sheaf of abelian groups on $\opn{Sh}_{\invs{G}}(K)$ corresponding to 
\[
\left(\opn{Sh}_{\invs{G}}(L) \times T_n\right)/\Gamma \to \opn{Sh}_{\invs{G}}(K)
\]
where $\Gamma \defeq K/L$ acts via $(x, t)\cdot h = (xh, h^{-1}t)$. This construction is independent of the choice of $L$. 
\end{definition}

If $x_0 = \opn{Spec}(k)$ is a point on $\opn{Sh}_{\invs{G}}(K)$ and $x = \opn{Spec}(\bar{k})$ is the associated geometric point obtained from fixing a separable closure of $k$, then we can explicitly describe the action of $\Gal(\bar{k}/k)$ on the stalk $\left(\mu_{\invs{G}, K, n}(T_n) \right)_x$. Indeed, lift $x$ to a geometric point $\tilde{x}$ on $\opn{Sh}_{\invs{G}}(L)$. Then for $\sigma \in \Gal(\bar{k}/k) $,   there exists a unique element $\psi(\sigma) \in \Gamma$ such that
\[
\sigma \cdot \tilde{x} = \tilde{x}^{\psi(\sigma)}.
\]
Since the Galois action commutes with that of $\Gamma$, the map $\psi$ defines a homomorphism $\psi \colon \Gal(\bar{k}/k) \to \Gamma$. The stalk $\left(\mu_{\invs{G}, K, n}(T_n) \right)_x$ is isomorphic to $T_n$ with the Galois action given by $\psi$.  

\begin{lemma}
The functors $\mu_{\invs{G}, K, n}$ are compatible as $n$ varies. In particular, if we let $T$ be a finitely-generated continuous representation of $K$ with coefficients in $\mbb{Z}_p$, then
\[
\mathscr{T}_K = \mu_{\invs{G}, K}(T) \defeq \left( \mu_{\invs{G}, K, n}(T/p^nT) \right)_{n \geq 1}
\]
defines a lisse sheaf on $\opn{Sh}_{\invs{G}}(K)$.
\end{lemma}
\begin{proof}
This follows immediately from the fact that for any two integers $n, n'$ one can pick the same normal subgroup $L \subset K$ in the definitions of $\mu_{\invs{G}, K, n}(T/p^nT)$ and $\mu_{\invs{G}, K, n'}(T/p^{n'}T)$.
\end{proof}

By considering the category of \'{e}tale sheaves on $\opn{Sh}_{\invs{G}}(K)$ with coefficients in $\mbb{Z}_p$ up to isogeny, this construction extends to finite-dimensional continuous representations of $K$ with coefficients in $\mbb{Q}_p$. In particular, any algebraic representation of $\invs{G}_{\mbb{Q}_p}$ gives rise to a continuous representation of $K$ via the natural map $K \to \invs{G}(\mbb{Q}_p)$, with coefficients in $\mbb{Q}_p$. Therefore, one obtains an additive tensor functor 
\[
\mu_{\invs{G}, K} \colon \opn{Rep}_{\mbb{Q}_p}(\invs{G}) \to \opn{\acute{E}t}(\opn{Sh}_{\invs{G}}(K) )_{\mbb{Q}_p}
\]
from the category of algebraic representations of $\invs{G}_{\mbb{Q}_p}$ to the category of \'{e}tale sheaves on $\opn{Sh}_{\invs{G}}(K)$ with coefficients in $\mbb{Q}_p$ (see Definition \ref{DefOfEtCategory}).

\subsection{Equivariant coefficient sheaves} \label{EquivariantCoefficientSheaves}

If we wish to study the sheaves $\mathscr{V}_K = \mu_{\invs{G}, K}(V)$ as $K$ varies, we need to keep track of the action of $\invs{G}(\mbb{A}_f)$. More precisely, for any sufficiently small compact open subgroups $K, L \subset \invs{G}(\mbb{A}_f)$ and any element $\sigma \in \invs{G}(\mbb{A}_f)$ satisfying $\sigma^{-1}L\sigma \subset K$, one obtains a finite \'{e}tale morphism of Shimura--Deligne varieties
\[
\opn{Sh}_{\invs{G}}(L) \xrightarrow{\sigma} \opn{Sh}_{\invs{G}}(K) 
\]
given by right-translation by $\sigma$.

\begin{definition} \label{DefOfEquivariantCat}
Let $\opn{\acute{E}t}\left( \opn{Sh}_{\invs{G}} \right)_{\mbb{Q}_p}$ denote the category of equivariant \'{e}tale sheaves -- that is to say -- the objects are collections of sheaves $\mathscr{F}_K \in \opn{\acute{E}t}\left( \opn{Sh}_{\invs{G}}(K) \right)_{\mbb{Q}_p}$, where $K$ varies over all sufficiently small compact open subgroups of $\invs{G}(\mbb{A}_f)$, and isomorphisms $\varphi_{\sigma} \colon \sigma^* \mathscr{F}_{K} \cong \mathscr{F}_L$ for any $\sigma \in \invs{G}(\mbb{A}_f)$ satisfying $\sigma^{-1} L \sigma \subset K$, such that the following diagram is commutative
\[
\begin{tikzcd}
                                & \tau^*\sigma^*\mathscr{F}_K = (\tau \sigma)^*\mathscr{F}_K \arrow[ld, "\tau^*\varphi_{\sigma}", swap] \arrow[rd, "\varphi_{\tau \sigma}"] &                  \\
\tau^* \mathscr{F}_L \arrow[rr, "\varphi_{\tau}"] &                                                                                  & \mathscr{F}_{L'}
\end{tikzcd}
\]
where $\sigma, \tau \in \invs{G}(\mbb{A}_f)$ and $\sigma^{-1} L \sigma \subset K$, $\tau^{-1}L'\tau \subset L$.

The morphisms in this category are morphisms $\mathscr{F}_K \to \mathscr{G}_K$ in $\opn{\acute{E}t}\left( \opn{Sh}_{\invs{G}}(K) \right)_{\mbb{Q}_p}$ which are compatible with the collection of isomorphisms in the natural way. This is an additive category with tensor products.
\end{definition}

We have the following proposition which collects together the construction in the previous subsection for varying $K$.

\begin{proposition} \label{PropMuEquiv}
The functors $\mu_{\invs{G}, K}$ assemble into an additive tensor functor:
\[
\mu_{\invs{G}} \colon \opn{Rep}_{\mbb{Q}_p}(\invs{G}) \to \opn{\acute{E}t}\left( \opn{Sh}_{\invs{G}} \right)_{\mbb{Q}_p}.
\]
Explicitly, for an algebraic representation $V$ of $\invs{G}_{\mbb{Q}_p}$, the equivariant sheaf $\mathscr{V} = \mu_{\invs{G}}(V)$ is the collection of lisse sheaves $\mathscr{V}_K$ with isomorphisms $\sigma^* \mathscr{V}_K \xrightarrow{\sim} \mathscr{V}_L$ induced from the action of $\sigma$ on $V$.
\end{proposition}

Keeping track of this extra data also allows us to define pushforwards and pullbacks between the cohomology of these sheaves. Indeed, for an equivariant sheaf $\mathscr{F} \in \opn{\acute{E}t}\left( \opn{Sh}_{\invs{G}} \right)_{\mbb{Q}_p}$ we define the pullback $[\sigma]^*$ as the composition
\[
[\sigma]^* \colon \opn{H}^i_{\et}\left(\opn{Sh}_{\invs{G}}(K), \mathscr{F}_K \right) \xrightarrow{\sigma^*} \opn{H}^i_{\et}\left(\opn{Sh}_{\invs{G}}(L), \sigma^* \mathscr{F}_K \right) \xrightarrow{\sim} \opn{H}^i_{\et}\left(\opn{Sh}_{\invs{G}}(L), \mathscr{F}_L \right)
\]
where the last isomorphism is induced from $\varphi_{\sigma} \colon \sigma^* \mathscr{F}_K \cong \mathscr{F}_L$. Similarly, we define the pushforward $[\sigma]_*$ as the composition
\[
[\sigma]_* \colon \opn{H}^i_{\et}\left(\opn{Sh}_{\invs{G}}(L), \mathscr{F}_L \right) \xrightarrow{\varphi_{\sigma}^{-1}} \opn{H}^i_{\et}\left(\opn{Sh}_{\invs{G}}(L), \sigma^* \mathscr{F}_K \right) \xrightarrow{\opn{Tr}_{\sigma}} \opn{H}^i_{\et}\left(\opn{Sh}_{\invs{G}}(K), \mathscr{F}_K \right)
\]
where the last map is the trace map, and we have used the fact that the morphism $\sigma$ is finite \'{e}tale.

\begin{proposition} \label{PropRationalCohFunctor}
Let $\mathscr{F} \in \opn{\acute{E}t}\left( \opn{Sh}_{\invs{G}} \right)_{\mbb{Q}_p}$ be an equivariant sheaf and let $\Upsilon$ denote the collection of all sufficiently small compact open subgroups of $\invs{G}(\mbb{A}_f)$. Then the data 
\[
\left( M_{\invs{G}}(K) = \opn{H}^i_{\et}\left( \opn{Sh}_{\invs{G}}(K), \mathscr{F}_K \right), [-]^*, [-]_* \right)
\]
defines a cohomology functor $M_{\invs{G}} \colon \invs{P}(\invs{G}(\mbb{A}_f), \Upsilon) \to \mbb{Q}_p\text{-Mod}$ in the sense of Definitions \ref{CohoFunc} and \ref{DefOfAdditionalProperties}. Since the Shimura--Deligne datum $(\invs{G}, X_{\invs{G}})$ satisfies (SD5), this cohomology functor is CoMack.
\end{proposition}
\begin{proof}
The fact that these cohomology groups and pullbacks/pushforwards form a cohomology functor follows from the commutative diagram in Definition \ref{DefOfEquivariantCat}. For example, for $g \in \invs{G}(\mbb{A}_f)$ and $L, K \in \Upsilon$ satisfying $g^{-1}Lg = K$, axiom (C2) follows by taking $\sigma = g$ and $\tau = g^{-1}$ in the commutative diagram and using various naturality properties of the unit $1 \to g_*g^*$ and counit/trace $g_*g^* \to 1$. If $L=K$ and $g \in K$, then axiom (C3) follows from the fact that under the identification $g^* \mathscr{F}_K = \mathscr{F}_K$, the morphism $\varphi_g \colon g^*\mathscr{F}_K \to \mathscr{F}_K$ is identified with the identity morphism.

The covering axiom follows from the analogous property for \'{e}tale morphisms (see \cite[Tome 3, Expose IX, \S 5]{SGA4}) and Lemma \ref{LemSD5index}. Finally, since the Shimura--Deligne datum satisfies (SD5), in the notation of Definition \ref{DefOfAdditionalProperties} one has a Cartesian square
\[
\begin{tikzcd}
\bigsqcup_{\gamma} \opn{Sh}_{\invs{G}}(L_{\gamma}) \arrow[d, "\sqcup_{\gamma} \gamma"'] \arrow[r, "\sqcup_{\gamma} \opn{pr}"] & \opn{Sh}_{\invs{G}}(L) \arrow[d, "\opn{pr}"] \\
\opn{Sh}_{\invs{G}}(L') \arrow[r, "\opn{pr}"]                                                                              & \opn{Sh}_{\invs{G}}(K)                      
\end{tikzcd}
\]
Axiom (M) then follows from the fact that pushforwards and pullbacks commute for Cartesian squares (c.f. \cite[p. 6]{Loeffler19}).
\end{proof}

\begin{notation}
For an equivariant sheaf $\mathscr{F} \in \opn{\acute{E}t}\left(\opn{Sh}_{\invs{G}} \right)_{\mbb{Q}_p}$ we set
\[
\opn{H}^i_{\et}\left(\opn{Sh}_{\invs{G}}, \mathscr{F} \right) \defeq \varinjlim_{K \in \Upsilon} \opn{H}^i_{\et}\left(\opn{Sh}_{\invs{G}}(K), \mathscr{F}_K \right)
\]
where the limit is with respect to the maps $\opn{pr}_{L, K}^*$ for $L \subset K$. This carries a smooth action of $\invs{G}(\mbb{A}_f)$. 
\end{notation}

\subsection{An integral version} \label{IntegralEquivariantCoefficientSheaves}

We will also need to talk about equivariant \'{e}tale $\mbb{Z}_p$-sheaves. Let $\Sigma \subset \invs{G}(\mbb{A}_f)$ denote an open submonoid.

\begin{definition} \label{DefOfEquivIntegral}
We let $\opn{\acute{E}t}\left( \opn{Sh}_{\invs{G}} \right)_{\mbb{Z}_p, \Sigma}$ denote the category of collections of \'{e}tale $\mbb{Z}_p$-sheaves $\mathscr{F}_K$, where $K$ runs over all sufficiently small compact open subgroups of $\Sigma$, and \emph{morphisms} $\sigma^* \mathscr{F}_K \to \mathscr{F}_L$, where $\sigma \in \Sigma$ such that $\sigma^{-1}L \sigma \subset K$, satisfying the same commutative diagram in Definition \ref{DefOfEquivariantCat}. We impose the additional condition that $1^* \mathscr{F}_K \to \mathscr{F}_L$ is an isomorphism, for any $L \subset K$. The morphisms in this category are defined in the same way as in Definition \ref{DefOfEquivariantCat}.
\end{definition}

Let $T$ denote a finite-dimensional continuous $\Sigma$-representation with coefficients in $\mbb{Z}_p$, i.e. $T$ is a finite-free $\mbb{Z}_p$-module with a continuous homomorphism (of monoids) 
\[
\Sigma \to \opn{End}_{\mbb{Z}_p,\mathrm{cont}}(T).
\]
As in the previous section, any such representation gives rise to an equivariant \'{e}tale $\mbb{Z}_p$-sheaf, denoted $\mathscr{T} = \mu_{\invs{G}}(T)$.

Let $\mathscr{F} \in \opn{\acute{E}t}\left( \opn{Sh}_{\invs{G}} \right)_{\mbb{Z}_p, \Sigma}$. Then we can define pullbacks and pushforwards on the cohomology of these sheaves in a similar fashion to the previous subsection. Indeed, if $\sigma \in \Sigma$ and $K, L \subset \Sigma$ are compact open subgroups satisfying $\sigma^{-1}L \sigma \subset K$, then we can define the pullback $[\sigma]^*$ as the composition
\[
[\sigma]^* \colon \opn{H}^i_{\et}\left(\opn{Sh}_{\invs{G}}(K), \mathscr{F}_K \right) \xrightarrow{\sigma^*} \opn{H}^i_{\et}\left(\opn{Sh}_{\invs{G}}(L), \sigma^* \mathscr{F}_K \right) \to \opn{H}^i_{\et}\left(\opn{Sh}_{\invs{G}}(L), \mathscr{F}_L \right)
\]
where the last map is induced from $\sigma^* \mathscr{F}_K \to \mathscr{F}_L$. Similarly, if $\sigma \in \Sigma^{-1}$ such that $\sigma^{-1}L\sigma \subset K$, we define the pushforward $[\sigma]_*$ as the composition
\begin{align*} 
[\sigma]_* \colon \opn{H}^i_{\et}\left(\opn{Sh}_{\invs{G}}(L), \mathscr{F}_L \right) &\xrightarrow{[\sigma^{-1}]^*} \opn{H}^i_{\et}\left(\opn{Sh}_{\invs{G}}(\sigma^{-1} L \sigma), \mathscr{F}_{\sigma^{-1}L\sigma} \right) \\
&\xrightarrow{\sim} \opn{H}^i_{\et}\left(\opn{Sh}_{\invs{G}}(\sigma^{-1} L \sigma), 1^*\mathscr{F}_{K} \right) \\
&\xrightarrow{\opn{Tr}_1} \opn{H}^i_{\et}\left(\opn{Sh}_{\invs{G}}(K), \mathscr{F}_K \right)
\end{align*}
where the middle map is induced from the inverse of the isomorphism $1^*\mathscr{F}_K \xrightarrow{\sim} \mathscr{F}_{\sigma^{-1}L\sigma}$ and the last map is the trace map.

\begin{proposition} \label{PropIntegralCohFunc}
Let $\mathscr{F} \in \opn{\acute{E}t}\left( \opn{Sh}_{\invs{G}} \right)_{\mbb{Z}_p, \Sigma}$ and let $\Upsilon$ denote the collection of all sufficiently small compact open subgroups of $\Sigma$. Then the data 
\[
\left( M_{\invs{G}}(K) = \opn{H}^i_{\et}\left( \opn{Sh}_{\invs{G}}(K), \mathscr{F}_K \right), [-]^*, [-]_* \right)
\]
defines a cohomology functor $M_{\invs{G}} \colon \invs{P}(\invs{G}(\mbb{A}_f), \Sigma, \Upsilon) \to \mbb{Z}_p\text{-Mod}$ in the sense of Definitions \ref{CohoFunc} and \ref{DefOfAdditionalProperties}. Since the Shimura--Deligne datum $(\invs{G}, X_{\invs{G}})$ satisfies (SD5), this cohomology functor is CoMack.
\end{proposition}
\begin{proof}
The proof is essentially the same as that of Proposition \ref{PropRationalCohFunctor}, however as pushforwards are defined in a somewhat ad-hoc manner, we provide some justifications. Let $\sigma \in \Sigma^{-1}$ and $L, K \in \Upsilon$ such that $\sigma^{-1}L \sigma \subset K$. Then the commutative diagram
\[
\begin{tikzcd}
\opn{Sh}_{\invs{G}}(L_{\sigma}) \arrow[d, "\sigma^{-1}"'] \arrow[r, "\opn{pr}"] & \opn{Sh}_{\invs{G}}(K) \arrow[d, "\sigma^{-1}"] \\
\opn{Sh}_{\invs{G}}(L) \arrow[r, "\opn{pr}"]                                    & \opn{Sh}_{\invs{G}}(\sigma K \sigma^{-1})      
\end{tikzcd}
\]
is Cartesian (the vertical arrows are isomorphisms), where $L_{\sigma} \defeq \sigma^{-1}L \sigma$. This implies that the pushforward $[\sigma]_*$ can be expressed in two equivalent ways, namely the pushforward is equal to the composition:
\[
\left( L_{\sigma} \xrightarrow{\opn{pr}} K \right)_* \circ \left( L_{\sigma} \xrightarrow{\sigma^{-1}} L \right)^* = \left(K \xrightarrow{\sigma^{-1}} \sigma K \sigma^{-1} \right)^* \circ \left(L \xrightarrow{\opn{pr}} \sigma K \sigma^{-1} \right)_* .
\]
One can check that, using this description, $M_{\invs{G}}$ does indeed define a cohomology functor. This functor is CoMack for the same reasons as in the proof of Proposition \ref{PropRationalCohFunctor}. 
\end{proof}

\begin{remark}
Note that if $\sigma \in \Sigma \cap \Sigma^{-1}$ and the morphism $\sigma^*\mathscr{F}_K \to \mathscr{F}_L$ is in fact an isomorphism, then the definition of $[\sigma]_*$ coincides with the one in the previous subsection. More generally, if $\mathscr{G} \in \opn{\acute{E}t}\left( \opn{Sh}_{\invs{G}} \right)_{\mbb{Q}_p}$ then we can (non-canonically) think of $\mathscr{G}$ as an equivariant $\mbb{Z}_p$-\'{e}tale sheaf, denoted $\mathscr{F} \in \opn{\acute{E}t}\left( \opn{Sh}_{\invs{G}} \right)_{\mbb{Z}_p, \Sigma}$, and one has commutative diagrams:
\[
\begin{tikzcd}
{\opn{H}^i\left(\opn{Sh}_{\invs{G}}(L), \mathscr{G}_L \right)} \arrow[r, "{[\sigma]^*}"]           & {\opn{H}^i\left(\opn{Sh}_{\invs{G}}(K), \mathscr{G}_K \right)}           &  & {\opn{H}^i\left(\opn{Sh}_{\invs{G}}(L), \mathscr{G}_L \right)} \arrow[r, "{[\sigma]_*}"]           & {\opn{H}^i\left(\opn{Sh}_{\invs{G}}(K), \mathscr{G}_K \right)}           \\
{\opn{H}^i\left(\opn{Sh}_{\invs{G}}(L), \mathscr{F}_L \right)} \arrow[u] \arrow[r, "{[\sigma]^*}"] & {\opn{H}^i\left(\opn{Sh}_{\invs{G}}(K), \mathscr{F}_K \right)} \arrow[u] &  & {\opn{H}^i\left(\opn{Sh}_{\invs{G}}(L), \mathscr{F}_L \right)} \arrow[u] \arrow[r, "{[\sigma]_*}"] & {\opn{H}^i\left(\opn{Sh}_{\invs{G}}(K), \mathscr{F}_K \right)} \arrow[u]
\end{tikzcd}
\]
whenever the maps make sense. This follows from property (C2) of the cohomology functor in Proposition \ref{PropRationalCohFunctor}.
\end{remark}

As in the previous subsection, one can pass to the limit over all compact open subgroups in $\Upsilon$ to obtain a smooth $\Sigma$-representation $\opn{H}^i_{\et}\left(\opn{Sh}_{\invs{G}}, \mathscr{F} \right)$.

\subsection{Functoriality} \label{FunctorialitySubSec}

In this section, let $(\invs{G}, X_{\invs{G}})$ denote either $(\mbf{G}, X_{\mbf{G}})$ or $(\Gt, X_{\Gt})$ and let $\iota \colon (\mbf{H}, X_{\mbf{H}}) \hookrightarrow (\invs{G}, X_{\invs{G}})$ denote the morphism of Shimura--Deligne data induced from $\iota \colon \mbf{H} \hookrightarrow \mbf{G}$ or $(\iota, \nu) \colon \mbf{H} \hookrightarrow \Gt$ respectively. Recall that we are considering the Shimura--Deligne variety $\opn{Sh}_{\invs{G}}(K)$ as a variety over $E$, so we obtain finite unramified morphisms $\iota \colon \opn{Sh}_{\mbf{H}}(U) \to \opn{Sh}_{\invs{G}}(K)$ where $U \subset \mbf{H}(\mbb{A}_f)$ and $K \subset \invs{G}(\mbb{A}_f)$ are any sufficiently small compact open subgroups satisfying $U \subset K$.

\begin{lemma}
The morphism $\iota \colon \opn{Sh}_{\mbf{H}}(U) \to \opn{Sh}_{\invs{G}}(K)$ forms an \'{e}tale smooth $\Spec E$-pair of codimension $n$, in the sense of Definition \ref{pushschemes}.
\end{lemma}
\begin{proof}
This follows from \cite[Proposition 1.15]{DeligneTS}. More precisely, \emph{loc.cit.} implies that there exists a compact open subgroup $L$ containing $U$ such that $\opn{Sh}_{\mbf{H}}(U) \to \opn{Sh}_{\invs{G}}(L)$ is a closed immersion. This map factors through the morphism $\opn{Sh}_{\mbf{H}}(U) \to \opn{Sh}_{\invs{G}}(L \cap K)$, which by the cancellation property, is also a closed immersion. The desired factorisation is then given by $\opn{Sh}_{\mbf{H}}(U) \to \opn{Sh}_{\invs{G}}(L \cap K) \to \opn{Sh}_{\invs{G}}(K)$.
\end{proof}

Let $\Sigma_{\invs{G}} \subset \invs{G}(\mbb{A}_f)$ denote an open submonoid and set $\Sigma_{\mbf{H}} \defeq \Sigma_{\invs{G}} \cap \mbf{H}(\mbb{A}_f)$. Let $\mathscr{G} \in \opn{\acute{E}t}\left( \opn{Sh}_{\invs{G}} \right)_{\mbb{Z}_p, \Sigma_{\invs{G}}}$ and $\mathscr{F} \in \opn{\acute{E}t}\left( \opn{Sh}_{\mbf{H}} \right)_{\mbb{Z}_p, \Sigma_{\mbf{H}}}$ be equivariant \'{e}tale $\mbb{Z}_p$-sheaves on $\opn{Sh}_{\invs{G}}$ and $\opn{Sh}_{\mbf{H}}$ respectively, as in Definition \ref{DefOfEquivIntegral}.

\begin{notation} \label{PullbackOfEquivSheafAdhoc}
    For any compatible pair $(U, K)$ (i.e. sufficiently small compact open subgroups $U \subset \Sigma_{\mbf{H}}$, $K \subset \Sigma_{\mathcal{G}}$ with $U \subset K$), let $\iota_{U, K} \colon \opn{Sh}_{\mbf{H}}(U) \to \opn{Sh}_{\mathcal{G}}(K)$ denote the induced morphism. Then, by abuse of notation, we write $\phi \colon \mathscr{F} \to \iota^*\mathscr{G}$ to mean a collection of morphisms $\phi_{U, K} \colon \mathscr{F}_U \to \iota_{U, K}^* \mathscr{G}_K$ for any compatible pair $(U, K)$ that commute with the equivariant structures coming from $\mathscr{F}$ and $\mathscr{G}$, i.e. for any morphism $[h] \colon (V, L) \to (U, K)$ of compatible pairs with $h \in \Sigma_{\mbf{H}}$, we have a commutative diagram:
    \[
\begin{tikzcd}
{[h]_{V, U}^*\mathscr{F}_U} \arrow[d] \arrow[r, "{[h]^*_{V, U} \phi_{U, K}}"] & {[h]_{V, U}^*\iota^*_{U, K}\mathscr{G}_K} \arrow[r, equals] & {\iota^*_{V, L}[h]^*_{L, K}\mathscr{G}_K} \arrow[d] \\
\mathscr{F}_V \arrow[rr, "{\phi_{V ,L}}"]                                     &                                                                                    & {\iota^*_{V, L}\mathscr{G}_L}                      
\end{tikzcd}
    \]
    where the vertical arrows are induced from the equivariant structures of $\mathscr{F}$ and $\mathscr{G}$. We will often omit the compatible pair from the notation.\footnote{It is possible to make sense of $\iota^*\mathscr{G}$ as an equivariant sheaf on $\opn{Sh}_{\mbf{H}}$ by defining $(\iota^*\mathscr{G})_U \defeq \varinjlim_K \iota_{U, K}^*\mathscr{G}_K$ where the colimit is over all $K$ such that $(U, K)$ is a compatible pair (and the transition morphisms are given by the equivariant structure induced from the identity, which are all isomorphisms). The cocycle condition then implies we have an induced equivariant structure obtained from pulling back the equivariant structure on $\mathscr{G}$. We leave the details to the interested reader.}
\end{notation}

Suppose that we have a morphism $\phi \colon \mathscr{F} \to \iota^* \mathscr{G}$ as in Notation \ref{PullbackOfEquivSheafAdhoc}.

\begin{proposition} \label{PropPushCohFunc}
Let $\Upsilon_{\invs{G}}$ (resp. $\Upsilon_{\mbf{H}}$) denote the collection of sufficiently small compact open subgroups $K \subset \Sigma_{\invs{G}}$ (resp. $U \subset \Sigma_{\mbf{H}}$), and set
\begin{align*}
    M_{\invs{G}} \colon \invs{P}\left(\invs{G}(\mbb{A}_f), \Sigma_{\invs{G}}, \Upsilon_{\invs{G}} \right) &\to \mbb{Z}_p\text{-Mod} \\
    K &\mapsto \opn{H}^{i+2n}_{\et}\left( \opn{Sh}_{\invs{G}}(K), \mathscr{G}(n) \right) \\
     M_{\mbf{H}} \colon \invs{P}\left(\mbf{H}(\mbb{A}_f), \Sigma_{\mbf{H}}, \Upsilon_{\mbf{H}} \right) &\to \mbb{Z}_p\text{-Mod} \\
     U &\mapsto \opn{H}^{i}_{\et}\left( \opn{Sh}_{\mbf{H}}(U), \mathscr{F} \right)
\end{align*}
to be the cohomology functors as in Proposition \ref{PropIntegralCohFunc}.

Then, for pairs $U \subset \Sigma_{\mbf{H}}$, $K \subset \Sigma_{\invs{G}}$ with $U \subset K$, the pushforward
\[
\opn{H}^{i}_{\et}\left( \opn{Sh}_{\mbf{H}}(U), \mathscr{F}_U \right) \xrightarrow{\phi_{U, K}} \opn{H}^{i}_{\et}\left( \opn{Sh}_{\mbf{H}}(U), \iota_{U, K}^*\mathscr{G}_K \right) \xrightarrow{\iota_{U, K,*}} \opn{H}^{i+2n}_{\et}\left( \opn{Sh}_{\invs{G}}(K), \mathscr{G}_K(n) \right)
\]
defines a pushforward of cohomology functors $M_{\mbf{H}} \to M_{\invs{G}}$ as in Definition \ref{push}. Since both $(\invs{G}, X_{\invs{G}})$ and $(\mbf{H}, X_{\mbf{H}})$ satisfy (SD5), this pushforward is Cartesian.
\end{proposition}
\begin{proof}
Let $[h]\colon (V, L) \to (U, K)$ be a morphism of compatible pairs. Then one has a commutative diagram 
\[
\begin{tikzcd}
\opn{Sh}_{\mbf{H}}(U) \arrow[r, "\iota"]                                       & \opn{Sh}_{\invs{G}}(K)                                     \\
\opn{Sh}_{\mbf{H}}(h^{-1}Vh) \arrow[r, "\iota"] \arrow[u] \arrow[d, "h^{-1}"'] & \opn{Sh}_{\invs{G}}(h^{-1}Lh) \arrow[u] \arrow[d, "h^{-1}"] \\
\opn{Sh}_{\mbf{H}}(V) \arrow[r, "\iota"]                                       & \opn{Sh}_{\invs{G}}(L)                                     
\end{tikzcd}
\]
where the bottom square is Cartesian because the morphisms $h^{-1}$ are isomorphisms. The fact that $\iota_*$ defines a pushforward follows from Proposition \ref{contpush} and the fact that pushforwards commute for any commutative square, and pushforwards and pullbacks commute for Cartesian squares.

Since both Shimura--Deligne data satisfy (SD5), in the notation of Definition \ref{push} we have a commutative square
\[
\begin{tikzcd}
\bigsqcup_{\gamma} \opn{Sh}_{\mbf{H}}(U_{\gamma}) \arrow[d, "\sqcup_{\gamma} \opn{pr}"'] \arrow[r, "\sqcup_{\gamma} \iota_{\gamma}"] & \opn{Sh}_{\invs{G}}(L) \arrow[d, "\opn{pr}"] \\
\opn{Sh}_{\mbf{H}}(U) \arrow[r, "\iota"]                                                                                             & \opn{Sh}_{\invs{G}}(K)                      
\end{tikzcd}
\]
where $\iota_{\gamma}$ is the composition $\opn{Sh}_{\mbf{H}}(U_{\gamma}) \xrightarrow{\iota} \opn{Sh}_{\invs{G}}(\gamma L \gamma^{-1}) \xrightarrow{\gamma} \opn{Sh}_{\invs{G}}(L)$, which is Cartesian because the vertical maps are finite \'{e}tale of the same degree (namely $\sum_{\gamma} [U : U_{\gamma}] = [K : L]$). The Cartesian property follows.
\end{proof}

\begin{remark}
We have an analogous statement for equivariant \'{e}tale sheaves over $\mbb{Q}_p$. 
\end{remark}

\begin{example}
We are chiefly interested in the following special case of Proposition \ref{PropPushCohFunc}; namely, suppose $V$ (resp. $W$) is a continuous representation of $\Sigma_{\invs{G}}$ (resp. $\Sigma_{\mbf{H}}$) and we have a $\Sigma_{\mbf{H}}$-equivariant map $W \xrightarrow{\phi} V$. Then we obtain a morphism $\mathscr{W} \xrightarrow{\phi} \iota^*\mathscr{V}$ of equivariant sheaves as in Notation \ref{PullbackOfEquivSheafAdhoc}. Such examples arise naturally from algebraic representations of $\invs{G}_{\mbb{Q}_p}$ and $\mbf{H}_{\mbb{Q}_p}$.  
\end{example}

\subsection{Motivic sheaves}

In this subsection we let $(\invs{G}, X_{\invs{G}})$ denote either $(\mbf{G}, X_{\mbf{G}})$ or $(\mbf{H}, X_{\mbf{H}})$ (so in particular $(\invs{G}, X_{\invs{G}})$ is a PEL-type Shimura--Deligne datum that satisfies (SD5)). Recall that we have identified $E_{\ide{p}}$ with $\mbb{Q}_p$. The following result is due to Ancona and Torzewski:

\begin{theorem} \label{ThmAncAndEt}
Let $K \subset \invs{G}(\mbb{A}_f)$ be a sufficiently small compact open subgroup. There exists an additive tensor functor 
\[
\opn{Anc}_{\invs{G}, K} \colon \opn{Rep}_E(\invs{G}) \to \opn{CHM}\left( \opn{Sh}_{\invs{G}}(K) \right)_E
\]
where $\opn{CHM}\left( - \right)_E$ denotes the category of relative Chow motives over $E$ (see \S \ref{NotationSection}) which fits into the following commutative diagram (up to natural equivalence)
\[
\begin{tikzcd}
\opn{Rep}_E(\invs{G}) \arrow[d, "- \otimes \mbb{Q}_p"'] \arrow[r, "{\opn{Anc}_{\invs{G}, K}}"] & \opn{CHM}\left(\opn{Sh}_{\invs{G}}(K) \right)_E \arrow[d, "r_{\et}"] \\
\opn{Rep}_{\mbb{Q}_p}(\invs{G}) \arrow[r, "{\mu_{\invs{G}, K}}"]                               & \opn{\acute{E}t}\left( \opn{Sh}_{\invs{G}}(K) \right)_{\mbb{Q}_p}         
\end{tikzcd}
\]
where $r_{\et}$ denotes the $\ide{p}$-adic realisation of a motive. 
\end{theorem}
\begin{proof}
See \cite{Ancona2015} and \cite[\S 10]{Torzewski2019}. Since we are working with Shimura--Deligne data which do not necessarily satisfy (SD3), justifications have been provided in Appendix \ref{AnconaAppendix}.
\end{proof}

We also have a similar compatibility property for the functors $\opn{Anc}_{\invs{G}, K}$ for varying $K$, as in Proposition \ref{PropMuEquiv}. 

\begin{lemma} \label{LemmaEquivariantAncFunc}
Let $K, L \subset \invs{G}(\mbb{A}_f)$ be sufficiently small compact open subgroups, and $\sigma \in \invs{G}(\mbb{A}_f)$ such that $\sigma^{-1}L \sigma \subset K$. Then there are natural isomorphisms 
\[
\sigma^*\opn{Anc}_{\invs{G}, K} \cong \opn{Anc}_{\invs{G}, L}
\]
compatible with composition.
\end{lemma}
\begin{proof}
This follows the same argument as in the proof of \cite[Proposition 6.2.4]{LSZ17} using the fact that the canonical model $\opn{Sh}_{\invs{G}}(K)$ represents a moduli problem of abelian varieties with extra structure (Example \ref{UnitaryPELModEx}).
\end{proof}

As in \S \ref{EquivariantCoefficientSheaves}, one can interpret this lemma as saying that the functors $\opn{Anc}_{\invs{G}, K}$ assemble into a functor $\opn{Anc}_{\invs{G}}$ valued in \emph{equivariant relative Chow motives} -- in particular one can define pullbacks $[\sigma]^*$ and pushforwards $[\sigma]_*$ in exactly the same way as in \S \ref{EquivariantCoefficientSheaves}. One also has the following functoriality statement with respect to the Shimura--Deligne datum:

\begin{proposition} \label{PropFunctorialityAnc}
One has a commutative diagram (up to natural isomorphism):
\[
\begin{tikzcd}
\opn{Rep}_E(\mbf{G}) \arrow[d, "{\iota^*}"'] \arrow[r, "\opn{Anc}_{\mbf{G}, K}"] & \opn{CHM}\left( \opn{Sh}_{\mbf{G}}(K) \right)_E \arrow[d, "{\iota^*}"] \\
\opn{Rep}_E(\mbf{H}) \arrow[r, "\opn{Anc}_{\mbf{H}, U}"]                        & \opn{CHM}\left(\opn{Sh}_{\mbf{H}}(U) \right)_E                         
\end{tikzcd}
\]
compatible with the natural isomorphisms in Lemma \ref{LemmaEquivariantAncFunc}, where $\iota$ denotes the embedding $\mbf{H} \hookrightarrow \mbf{G}$ in section \ref{TheGroups}.
\end{proposition}
\begin{proof}
This follows from \cite[Theorem 9.7]{Torzewski2019} because the morphism of associated PEL data is ``admissible'' (Definition 9.1 in \emph{op.cit.}). 
\end{proof}

\begin{remark}
Let $\opn{Rep}_E(\Gt)_0$ denote the full subcategory of $\opn{Rep}_E(\Gt)$ consisting of representations of the form $V \boxtimes \mbf{1}$. Let $\tilde{K} \subset \Gt(\mbb{A}_f)$ be a sufficiently small compact open subgroup and let $K$ denote its projection to $\mbf{G}(\mbb{A}_f)$ (which is also sufficiently small). Then we have an induced map $p \colon \opn{Sh}_{\Gt}(\tilde{K}) \twoheadrightarrow \opn{Sh}_{\mbf{G}}(K)$ and we can extend the construction of $\opn{Anc}_{\mbf{G}, K}$ to a functor on $\opn{Rep}_E(\Gt)_0$ by defining
\[
\opn{Anc}_{\Gt, \tilde{K}}(V \boxtimes \mbf{1}) \defeq p^*\opn{Anc}_{\mbf{G}, K}(V) .
\]
In particular, Theorem \ref{ThmAncAndEt} and Lemma \ref{LemmaEquivariantAncFunc} both hold for this functor and one has an analogous result to Proposition \ref{PropFunctorialityAnc} with respect to the embedding $(\iota, \nu) \colon \mbf{H} \hookrightarrow \Gt$. Note that $(\Gt, X_{\Gt})$ does not give rise to a PEL-type Shimura--Deligne datum, so we cannot immediately apply the results of \cite{Ancona2015} and \cite{Torzewski2019}.
\end{remark}

Since relative purity also holds for constructible Beilinson motives (see \cite[Theorem 2.4.50(3) and \S 14.2.15]{CDMixed}), one can define pushforwards
\[
(\iota, \nu)_* \colon \opn{H}^i_{\mathrm{mot}}\left( \opn{Sh}_{\mbf{H}}(U), \mathscr{F} \right) \to \opn{H}^{i+2n}_{\mathrm{mot}}\left( \opn{Sh}_{\Gt}(\tilde{K}), \mathscr{G}(n) \right)
\]
for any morphism $\phi \colon \mathscr{F} \to (\iota, \nu)^*\mathscr{G}$ of relative Chow motives, similar to subsection \ref{FunctorialitySubSec} and Proposition \ref{contpush}. 


\section{Branching laws} \label{BranchingLawsAndReps}

Recall that one has an identification $\mbf{G}_E = \opn{GL}_{1, E} \times \opn{GL}_{2n, E}$ (Remark \ref{RemIdentifyGroups}) and irreducible algebraic representations of $\mbf{G}_E$ are classified by tuples $\mbf{c} = (c_0; c_1, \dots, c_{2n}) \in \mbb{Z}^{1+2n}$ which satisfy $c_1 \geq \cdots \geq c_{2n}$ (see \S \ref{DiscreteSeriesReps}). We are interested in algebraic representations $V$ of $\mbf{G}_E$ with highest weight $\mbf{c} = (0; c_1, \dots, c_{2n})$ that are self dual: in this case, we must have $\mbf{c} = -\mbf{c}'$, where $\mbf{c}' = (0; c_{2n}, \dots, c_1)$. We  have the following ``branching law":
\begin{lemma} \label{Branching}
Let $V$ be a self-dual algebraic representation of $\mbf{G}_{E}$ with highest weight $\mbf{c} = (0; c_1, \dots, c_{2n})$ as above and let $\iota^*V$ denote its restriction to an algebraic representation of $\mbf{H}_E$. Then $\iota^* V$ contains the trivial representation as a direct summand, with multiplicity one. The same statement holds for representations of $\Gt_E$ of the form $\tilde{V} \defeq V \boxtimes \mbf{1}$, with respect to the embedding $(\iota, \nu)$. 
\end{lemma}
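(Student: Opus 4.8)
The plan is to reduce the branching statement to a classical fact about the restriction of irreducible representations of $\opn{GL}_{2n}$ to the Levi subgroup $\opn{GL}_n \times \opn{GL}_n$, and then package the computation with the determinant twist coming from the similitude/torus factor. First I would base-change everything to $E$: under the identification $\mbf{G}_E \cong \opn{GL}_{1,E} \times \opn{GL}_{2n,E}$ of Section \ref{DiscreteSeriesReps}, the self-dual representation $V$ corresponds to an irreducible algebraic representation of $\opn{GL}_{2n}$ with highest weight $\mbf{c} = (c_1 \geq \cdots \geq c_{2n})$ satisfying the self-duality constraint $c_i = -c_{2n+1-i}$ (and the $\opn{GL}_1$-factor acts trivially, as $c_0 = 0$). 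Likewise $\mbf{H}_E \cong \opn{GL}_{1,E} \times \opn{GL}_{n,E} \times \opn{GL}_{n,E}$, and under $\iota$ the embedding $\mbf{H}_E \hookrightarrow \mbf{G}_E$ sends $(h_1, h_2)$ to the block-diagonal matrix $\opn{diag}(h_1, h_2)$, i.e. it is (up to the harmless $\opn{GL}_1$-factors) the standard Levi embedding $\opn{GL}_n \times \opn{GL}_n \hookrightarrow \opn{GL}_{2n}$.

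The key step is then the branching rule for $\opn{GL}_{2n} \downarrow \opn{GL}_n \times \opn{GL}_n$: the multiplicity of an irreducible $W_{\mu_1} \boxtimes W_{\mu_2}$ of $\opn{GL}_n \times \opn{GL}_n$ inside the restriction of $W_{\mathbf{c}}$ is the Littlewood--Richardson coefficient $c^{\mathbf{c}}_{\mu_1, \mu_2}$ (after shifting weights to partitions by adding a large multiple of the determinant character, which does not affect the combinatorics). We want the multiplicity of the trivial representation of $\mbf{H}_E$, which under the twist $(h_1,h_2) \mapsto \opn{diag}(h_1,h_2)$ and $\nu(h_1,h_2) = \det h_2/\det h_1$ corresponds to asking for the summand on which $\opn{GL}_n \times \opn{GL}_n$ acts by $(\det h_1)^{a} (\det h_2)^{b}$ for the unique pair $(a,b)$ forced by the $\nu$-twist being trivial on $V \boxtimes \mathbf 1$; concretely, since $V$ has trivial central character and is self-dual, one is led to the pair $\mu_1 = (c_1, \ldots, c_n)$ (or its appropriate dominant representative) and $\mu_2 = (c_{n+1}, \ldots, c_{2n}) = (-c_n, \ldots, -c_1)$, i.e. $\mu_2 = -\mu_1^{\mathrm{op}}$, which is exactly the dual of $\mu_1$. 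The relevant Littlewood--Richardson / Pieri-type fact is that $W_{\mu_1} \boxtimes W_{\mu_1}^*$ appears in $\opn{Res}\, W_{\mathbf{c}}$ with multiplicity exactly one: indeed $\opn{Hom}_{\opn{GL}_n \times \opn{GL}_n}(W_{\mu_1} \boxtimes W_{\mu_1}^*, W_{\mathbf c}) \cong \opn{Hom}_{\opn{GL}_n}(W_{\mu_1} \otimes W_{\mu_1}, W_{\mathbf c})$ contains the canonical pairing $W_{\mu_1} \otimes W_{\mu_1}^* \to \mathbb{C}$ pulled back along the highest-weight component, and a dimension/weight count (or the explicit value $c^{\mathbf c}_{\mu_1,\mu_1^*} = 1$ for this self-dual $\mathbf c$) shows it is one-dimensional. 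This gives the trivial summand in $\iota^* V$ with multiplicity one. The statement for $\Gt_E$ and $V \boxtimes \mathbf 1$ with respect to $(\iota, \nu)$ is then immediate: the extra $\mbf{T}$-factor pulls back under $\nu$ to exactly the determinant character by which the chosen summand transforms, so after this twist the summand becomes genuinely trivial for $\mbf{H}$, still with multiplicity one, and no other summand acquires trivial $\mbf{H}$-action because the $\opn{GL}_1 \times \opn{GL}_1$-weights $(a,b)$ separate the isotypic pieces.

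I expect the main obstacle to be bookkeeping rather than conceptual: one must carefully match the self-duality hypothesis $\mathbf{c} = -\mathbf{c}'$ to the statement that the two "halves" $\mu_1$ and $\mu_2$ of the weight are dual to each other, and track how the determinant twists from the similitude factor of $\mbf{G}$ and from $\nu$ interact — in particular, verifying that the pair $(a,b)$ of determinant exponents forced by "trivial on $V \boxtimes \mathbf{1}$" is precisely the pair realized by the Littlewood--Richardson summand $W_{\mu_1} \boxtimes W_{\mu_1}^*$, and not some other summand. A clean way to organize this is to first prove the multiplicity-one statement for $\opn{SL}_{2n} \downarrow S(\opn{GL}_n \times \opn{GL}_n)$ (where central characters drop out and one only needs that the trivial representation of $\opn{GL}_n$ appears in $W_{\mu_1} \otimes W_{\mu_2}$ with multiplicity one iff $\mu_2 \cong \mu_1^*$, a standard fact), and then restore the $\opn{GL}_1$-factors. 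Alternatively, one can cite the analogous branching computation in \cite{LSZ17} (the $\opn{GSp}_4 \times \opn{GSp}_4 \hookrightarrow \opn{GSp}_6$ or $\opn{GL}_2 \times \opn{GL}_2 \hookrightarrow \opn{GL}_4$ cases) and note that the argument is formally identical.
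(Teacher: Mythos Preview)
Your reduction to the Levi branching $\opn{GL}_{2n} \downarrow \opn{GL}_n \times \opn{GL}_n$ is correct, but the core computation is aimed at the wrong target. The lemma asks for the multiplicity of the \emph{trivial} representation of $\mbf{H}_E \cong \opn{GL}_1 \times \opn{GL}_n \times \opn{GL}_n$ in $\iota^* V$; in Littlewood--Richardson terms this is the coefficient $c^{\mathbf{c}}_{\mathbf{0},\mathbf{0}}$, i.e.\ $\mu_1 = \mu_2 = 0$. You instead argue about the summand $W_{\mu_1} \boxtimes W_{\mu_1}^*$ with $\mu_1 = (c_1,\dots,c_n)$. That summand is the highest-weight piece of the Levi decomposition and does occur with multiplicity one, but it is \emph{not} the trivial representation of $\opn{GL}_n \times \opn{GL}_n$ unless $\mu_1 = 0$. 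The subsequent Hom-space identity you write, $\opn{Hom}_{\opn{GL}_n \times \opn{GL}_n}(W_{\mu_1} \boxtimes W_{\mu_1}^*, W_{\mathbf c}) \cong \opn{Hom}_{\opn{GL}_n}(W_{\mu_1} \otimes W_{\mu_1}, W_{\mathbf c})$, does not parse: $W_{\mathbf c}$ is a representation of $\opn{GL}_{2n}$, not of $\opn{GL}_n$, and no such adjunction exists between the Levi embedding and a diagonal $\opn{GL}_n$. Your alternative via $\opn{SL}_{2n} \downarrow S(\opn{GL}_n\times\opn{GL}_n)$ has the same confusion --- the statement ``the trivial representation of $\opn{GL}_n$ appears in $W_{\mu_1}\otimes W_{\mu_2}$ with multiplicity one iff $\mu_2\cong\mu_1^*$'' is about tensor products (diagonal restriction), not about restriction to the Levi.

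There is also no determinant twist to track: since $\mbf{T}$ acts on $V\boxtimes\mbf{1}$ through the second factor, which is literally trivial, one has $(\iota,\nu)^*(V\boxtimes\mbf{1}) = \iota^* V$ on the nose, so the second assertion of the lemma is identical to the first. The paper itself does not give an argument and simply invokes \cite[Proposition~6.3.1]{GR2014}. If you want to supply a proof, the clean route is: the pair $(\opn{GL}_{2n},\opn{GL}_n\times\opn{GL}_n)$ is a symmetric (hence spherical) pair, so $\dim V_{\mathbf c}^{H}\le 1$ for every irreducible $V_{\mathbf c}$, and the $H$-spherical highest weights are exactly those of the self-dual form $c_i = -c_{2n+1-i}$. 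Equivalently, one checks directly that the shifted Littlewood--Richardson coefficient $c^{\,\mathbf c + N\cdot(1^{2n})}_{(N^n),(N^n)}$ equals $1$ for such $\mathbf c$.
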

\begin{proof}
This is well-known and follows from Proposition 6.3.1 in \cite{GR2014} for example.
\end{proof}

These self-dual representations will correspond to the coefficient sheaves that we will consider throughout the paper. We will also need to consider certain lattices inside these algebraic representations (after base-changing to $\mbb{Q}_p$). For this we introduce the following subgroups and elements.

\begin{definition}
Let $Q_0 \subset \mbf{G}_{0, \mbb{Q}_p} = \opn{GL}_{2n, \mbb{Q}_p}$ denote the standard parabolic subgroup
\[
Q_0 \defeq \tbyt{*}{*}{}{*}
\]
where the blocks are of size $(n \times n)$. Set $Q = \opn{GL}_{1, \mbb{Q}_p} \times Q_0 \subset \mbf{G}_{\mbb{Q}_p} = \opn{GL}_{1, \mbb{Q}_p} \times \mbf{G}_{0, \mbb{Q}_p}$ and $\tilde{Q} = Q \times \opn{GL}_{1, \mbb{Q}_p} \subset \Gt_{\mbb{Q}_p} = \mbf{G}_{\mbb{Q}_p} \times \opn{GL}_{1, \mbb{Q}_p}$. Let $\tau_0$ denote the following $(2n \times 2n)$-block matrix
\[
\tau_0 \defeq \tbyt{p}{}{}{1} \in \mbf{G}_0(\mbb{Q}_p)
\]
and set $\tau = 1 \times \tau_0 \in \mbf{G}(\mbb{Q}_p)$ and $\tilde{\tau} = \tau \times 1 \in \Gt(\mbb{Q}_p)$. 

We let $J_0 \subset \mbf{G}_0(\mbb{Z}_p)$ denote the parahoric subgroup of elements which land in $Q_0$ modulo $p$, and let $J \defeq \mbb{Z}_p^{\times} \times J_0 \subset \mbf{G}(\mbb{Z}_p)$ and $\tilde{J} \defeq J \times \mbb{Z}_p^{\times} \subset \Gt(\mbb{Z}_p)$.
\end{definition}

Let $V$ be an algebraic representation of $\mbf{G}_{E}$ that is self-dual of weight $\mbf{c} = (0; c_1, \dots, c_{2n})$. If we let $\overline{B}$ denote the standard opposite Borel subgroup in $\mbf{G}_E$, then this representation can be viewed as the algebraic induction:
\[
V = \opn{Ind}_{\overline{B}}^{\mbf{G}_E} \mu \defeq \left\{ f \colon \mbf{G}_E \to \mbb{A}^1_E : \begin{array}{c} f \text{ is algebraic } \\ f(b x) = \mu(b)f(x) \text{ for all } b \in \overline{B} \end{array}\right\}
\]
where $\mu$ is (the inflation to $\overline{B}$ of) the character which sends an element $\opn{diag}(t_0; t_1, \dots, t_{2n}) \in \mbb{G}_m^{1+2n}$ to $t_1^{c_1} \cdots t_{2n}^{c_{2n}}$. The action of $\mbf{G}_E$ on $\opn{Ind}_{\overline{B}}^{\mbf{G}_E} \mu$ is given by right-translation of the argument, i.e. $g \cdot f(x) = f(xg)$.

\begin{definition} \label{SpotLattice}
Let $T \subset V_{\mbb{Q}_p}$ denote the $\mbf{G}(\mbb{Z}_p)$-stable lattice defined as the subspace of all functions $f \in \opn{Ind}_{\overline{B}_{\mbb{Q}_p}}^{\mbf{G}_{\mbb{Q}_p}} \mu$ satisfying $f\left(\mbf{G}(\mbb{Z}_p) \right) \subset \mbb{Z}_p$. This extends trivially to a $\Gt(\mbb{Z}_p)$-stable lattice $\tilde{T} \subset \tilde{V}_{\mbb{Q}_p}$.
\end{definition}

We introduce the following monoids and actions on the lattices introduced in Definition \ref{SpotLattice}.

\begin{lemma} \label{SpotAction}
Consider the following sets:
\begin{itemize} 
\item $\Sigma_{0, G_p} \defeq J_0 \cdot \{ \tau_0^{-r} : r \in \mbb{Z}_{\geq 0} \} \cdot J_0$ 
\item $\Sigma_{G_p} \defeq \mbb{Q}_p^\times \times \Sigma_{0, G_p}$ as a subset of $\mbf{G}(\mbb{Q}_p)$
\item $\Sigma_{{\gt}_p} \defeq \Sigma_{G_p} \times \mbb{Q}_p^{\times}$ as a subset of $\Gt(\mbb{Q}_p)$. 
\end{itemize}
Then $\Sigma_{G_p}$ and $\Sigma_{\gt_p}$ are open submonoids of $\mbf{G}(\mbb{Q}_p)$ and $\Gt(\mbb{Q}_p)$ respectively, and the actions of $J$ and $\tilde{J}$ extend to actions $\bullet$ of $\Sigma_{G_p}$ and $\Sigma_{\gt_p}$ on $T$ and $\tilde{T}$ which factor through $\Sigma_{0, G_p}$ and satisfy:
\[
\tau^{-1} \bullet v = \mu(\tau)(\tau^{-1} \cdot v) \quad \quad \quad \tilde{\tau}^{-1} \bullet \tilde{v} = \mu(\tau)(\tilde{\tau}^{-1} \cdot \tilde{v})
\]
for all $v \in T$ and $\tilde{v} \in \tilde{T}$.
\end{lemma}
\begin{proof}
We have the following Iwahori decomposition
\[
J = \overline{N}_1 \cdot \mbf{H}(\mbb{Z}_p) \cdot N(\mbb{Z}_p) = N(\mbb{Z}_p) \cdot \mbf{H}(\mbb{Z}_p) \cdot \overline{N}_1
\]
where $N \subset Q$ (resp. $\overline{N} \subset \overline{Q}$) is the unipotent radical (resp. opposite unipotent radical) and $\overline{N}_r = \tau^{-r}\overline{N}(\mbb{Z}_p)\tau^r$. Since $\tau^{-1} \overline{N}_1 \tau \subset \overline{N}_1$ and $\tau N(\mbb{Z}_p) \tau^{-1} \subset N(\mbb{Z}_p)$, we see that $\Sigma_{0, G_p}$, $\Sigma_{G_p}$ and $\Sigma_{\gt_p}$ do indeed form monoids. Furthermore, one can easily check that the action of $J$ on $V_{\mbb{Q}_p}$ extends to a unique action $\bullet$ of $\Sigma_{G_p}$ on $\tilde{V}_{\mbb{Q}_p}$ satisfying the property in the statement of the proposition. Therefore, we are required to check that $\bullet$ stabilises the lattice $T$. 

Via the description in terms of an algebraic induction, we are required to prove the statement: for any $f \in T$, one has
\[
\mu(\tau) f(x \tau^{-1}) \in \mbb{Z}_p
\]
for any $x \in \mbf{G}(\mbb{Z}_p)$. For this, we let $I$ denote the standard Iwahori subgroup of $\mbf{G}(\mbb{Q}_p)$ and note that we have the decomposition
\[
\mbf{G}(\mbb{Z}_p) = \bigsqcup_w \overline{B}(\mbb{Z}_p) \cdot w \cdot I
\]
where the disjoint union runs over any representatives of the Weyl group of $\mbf{G}_{\mbb{Q}_p}$. For any element $i \in I$ and representative $w$, one has
\[
\mu(\tau) f(wi \tau^{-1}) = \mu(\tau) f(w \tau^{-1} w^{-1} w \tau i \tau^{-1}) = \mu(\tau w \tau^{-1} w^{-1}) f(w\tau i \tau^{-1}).
\]
Since $\mu$ is dominant and $w \tau i \tau^{-1} \in \mbf{G}(\mbb{Z}_p)$, this gives the claim. The statement for $\tilde{T}$ follows the same argument.
\end{proof}

We have the following compatibility between these lattices and their rational versions on the level of cohomology.

\begin{lemma} \label{LemmaCompatibleAction}
Let $\mathscr{T} = \mu_{\mbf{G}}(T)$, $\tilde{\mathscr{T}} = \mu_{\Gt}(\tilde{T})$, $\mathscr{V} = \mu_{\mbf{G}}(V_{\mbb{Q}_p})$ and $\tilde{\mathscr{V}} = \mu_{\Gt}(\tilde{V}_{\mbb{Q}_p})$ denote the equivariant sheaves constructed in \S \ref{EquivariantCoefficientSheaves}--\ref{IntegralEquivariantCoefficientSheaves}, where the first two are defined with respect to the monoids $\mbf{G}(\mbb{A}_f^p) \times \Sigma_{G_p}$ and $\Gt(\mbb{A}_f^p) \times \Sigma_{\gt_p}$ respectively. The natural maps 
\[
\opn{H}^i_{\et}\left(\opn{Sh}_{\mbf{G}}, \mathscr{T}(j) \right) \to \opn{H}^i_{\et}\left(\opn{Sh}_{\mbf{G}}, \mathscr{V}(j) \right) \quad \quad \opn{H}^i_{\et}\left(\opn{Sh}_{\Gt}, \tilde{\mathscr{T}}(j) \right) \to \opn{H}^i_{\et}\left(\opn{Sh}_{\Gt}, \tilde{\mathscr{V}}(j) \right)
\]
are equivariant for the actions of $\mbf{G}(\mbb{A}_f^p) \times J$ and $\Gt(\mbb{A}_f^p) \times \tilde{J}$ respectively. Furthermore, the action of $\tau^{-1}$ (resp. $\tilde{\tau}^{-1}$) on the left-hand side is intertwined with the action of $\mu(\tau)\tau^{-1}$ (resp. $\mu(\tau)\tilde{\tau}^{-1}$) on the right-hand side.
\end{lemma}
\begin{proof}
The first part follows from the fact that $T \subset V_{\mbb{Q}_p}$ and $\tilde{T} \subset \tilde{V}_{\mbb{Q}_p}$ are equivariant embeddings for the actions of $J$ and $\tilde{J}$ respectively. The last assertion follows from the fact that $\tilde{\tau}^{-1} \bullet \tilde{v} = \mu(\tau)(\tilde{\tau}^{-1} \cdot \tilde{v})$ for all $\tilde{v} \in \tilde{T}$. 
\end{proof}

\begin{notation} \label{NoteFixedEmbedding}
We fix once and for all a $\mbf{H}_E$-equivariant embedding $\opn{br} \colon E \hookrightarrow \tilde{V}$ which restricts to a $\mbf{H}(\mbb{Z}_p)$-equivariant embedding $\mbb{Z}_p \hookrightarrow \tilde{T}$. Such an embedding exists by Lemma \ref{Branching}.
\end{notation}

Let $\Sigma_{H_p} \defeq \Sigma_{G_p} \cap \mbf{H}(\mbb{Q}_p) = \Sigma_{\gt_p} \cap \mbf{H}(\mbb{Q}_p)$. This monoid can be described as the collection of triples $(\alpha; p^{-r} h_1, h_2)$ where $\alpha \in \mbb{Q}_p^{\times}$, $h_1, h_2 \in \opn{GL}_n(\mbb{Z}_p)$ and $r \in \mbb{Z}_{\geq 0}$. In particular, if we let $\lambda = \sum_{i=1}^n c_i$, then we have a well-defined (continuous) character
\begin{align*} 
|\opn{det}|_p^{\frac{\lambda}{n}} \colon \quad \quad \quad \quad \Sigma_{H_p}  &\to  \mbb{Z}_p - \{0\}  \\
 (\alpha; p^{-r}h_1, h_2)  &\mapsto  p^{\lambda r} . 
\end{align*}
Let $\mbb{Z}_p[\lambda / n]$ denote the free $\mbb{Z}_p$-module of rank one with a continuous action of $\Sigma_{H_p}$ given by the character $|\opn{det}|_p^{\frac{\lambda}{n}}$. Then the $\mbf{H}(\mbb{Z}_p)$-equivariant embedding $\mbb{Z}_p \hookrightarrow \tilde{T}$ in Notation \ref{NoteFixedEmbedding} extends to a $\Sigma_{H_p}$-equivariant embedding $\mbb{Z}_p[\lambda/n] \hookrightarrow \tilde{T}$.

By abuse of notation, we also let $\mbb{Z}_p[\lambda/n]$ denote the equivariant sheaf $\mu_{\mbf{H}}(\mbb{Z}_p[\lambda/n])$ (as in section \ref{IntegralEquivariantCoefficientSheaves} with respect to the monoid $\mbf{H}(\mbb{A}_f^p) \times \Sigma_{H_p}$). We obtain the following corollary:

\begin{corollary}
Let $\lambda = \sum_{i=1}^n c_i$. The morphism $\opn{br} \colon \mbb{Q}_p \to (\iota, \nu)^* \tilde{\mathscr{V}}$ restricts to a morphism
\[
\mbb{Z}_p[\lambda/n] \to (\iota, \nu)^* \tilde{\mathscr{T}}
\]
of $\mbf{H}(\mbb{A}_f^p) \times \Sigma_{H_p}$-equivariant sheaves.
\end{corollary}

\subsection{The cohomology functors} \label{FunctorsFromEt}

We now introduce the cohomology functors that will be used throughout this article. For any prime $\ell$, we denote 
\[
G_\ell \defeq \mbf{G}(\mbb{Q}_\ell), \quad G^\ell\defeq \mbf{G}(\mbb{A}_f^\ell), \quad G_{f} \defeq \Gb(\Ab_{f}) 
\]
and similarly for the groups $\mbf{H}$ and $\mbf{T}$. Recall $\Sigma_{G_{p}} \subset G_{p}$ is the open submonoid appearing in Lemma \ref{SpotAction}, and we have set $\Sigma _ { H _ { p } } = \Sigma_ { G _ {p }} \cap H_p $. We let $ \Upsilon _ { G _{ p } } $ be the collection of all compact open subgroups in $\Sigma _ {G_{p}} $ (or equivalently, all compact open subgroups in $J$) and take $ \Upsilon _ { G _ { p } } ^ { \text{tot} } $ to be the collection of all compact open subgroups in $ G_ { p } $. 

\begin{lemma} \label{MaximalLemma}
There exists a (non-empty) maximal collection $ \Upsilon _ { G ^ {p }} $ of compact open subgroups of $ G ^ { p } $, closed under inclusions and conjugation by $ G^{p} $, such that the product $ \Upsilon _ { G ^ { p } } \times \Upsilon_{G_{p} } ^ { \mathrm{tot} } $ consists of sufficiently small subgroups of $ G _ {f } $. Moreover, for any $\ell \neq p$ and compact open subgroup $K_\ell \subset G_\ell$, there exists an element in $\Upsilon_{G^p}$ whose $\ell$-adic component is equal to $K_\ell$.
\end{lemma}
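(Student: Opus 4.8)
The plan is to describe $\Upsilon_{G^p}$ explicitly and read off the required properties from the behaviour of sufficient smallness under the elementary operations on compact open subgroups. Recall from the footnote accompanying the definition of $\opn{Sh}_{\mbf{G}}(K)$ that a compact open $K \subset \mbf{G}(\mbb{A}_f)$ is sufficiently small whenever it is neat, and note two stability facts: (i) if $K$ is sufficiently small then so is every compact open $K' \subseteq K$, since $\mbf{G}(\mbb{Q}) \cap gK'g^{-1} \subseteq \mbf{G}(\mbb{Q}) \cap gKg^{-1}$ for all $g$; and (ii) if $K$ is sufficiently small and $h \in \mbf{G}(\mbb{A}_f)$, then $hKh^{-1}$ is sufficiently small, since $\mbf{G}(\mbb{Q}) \cap g(hKh^{-1})g^{-1} = \mbf{G}(\mbb{Q}) \cap (gh)K(gh)^{-1}$ and $g$ ranges over all of $\mbf{G}(\mbb{A}_f)$. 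I would also use that every compact open subgroup $U_p$ of $G_p = \opn{GL}_1(\mbb{Q}_p) \times \opn{GL}_{2n}(\mbb{Q}_p)$ is $G_p$-conjugate into $G(\mbb{Z}_p)$: $U_p$ lies in a maximal compact of $G_p$, which, being a maximal compact of a product group, is a product of maximal compacts of the two factors, hence of the form $(1,h)\,G(\mbb{Z}_p)\,(1,h)^{-1}$ for some $h \in \opn{GL}_{2n}(\mbb{Q}_p)$ (the unique maximal compact of $\mbb{Q}_p^\times$ being $\mbb{Z}_p^\times$, and all maximal compacts of $\opn{GL}_{2n}(\mbb{Q}_p)$ being conjugate to $\opn{GL}_{2n}(\mbb{Z}_p)$).

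Combining these, for a compact open $K^p \subseteq G^p$ the group $K^p \times U_p$ is sufficiently small for \emph{every} compact open $U_p \subseteq G_p$ precisely when $K^p \times G(\mbb{Z}_p)$ is sufficiently small: one direction is the case $U_p = G(\mbb{Z}_p)$, and for the other $K^p \times U_p$ is conjugate by $(1,h)$ to $K^p \times (h^{-1}U_p h) \subseteq K^p \times G(\mbb{Z}_p)$, so (i) and (ii) apply. I would therefore define
\[
\Upsilon_{G^p} \defeq \{\, K^p \subseteq G^p \text{ compact open} : K^p \times G(\mbb{Z}_p) \text{ is sufficiently small}\,\}.
\]
By the displayed equivalence $\Upsilon_{G^p} \times \Upsilon_{G_p}^{\mathrm{tot}}$ consists of sufficiently small subgroups; by (i) and (ii), $\Upsilon_{G^p}$ is closed under inclusions and under conjugation by $G^p$; and it is maximal among collections with the product property, since membership of $K^p$ in any such collection forces $K^p \times G(\mbb{Z}_p)$ to be sufficiently small. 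Non-emptiness follows from the existence of neat compact open subgroups of $\mbf{G}(\mbb{A}_f)$ (cf.\ \cite{PinkThesis}): one may take one of the form $K_{q_0} \times \prod_{q \neq p,q_0} G(\mbb{Z}_q)$ for an auxiliary prime $q_0 \neq p$ and a suitable congruence subgroup $K_{q_0} \subseteq G_{q_0}$ (see the next paragraph), whose product with $G(\mbb{Z}_p)$ is neat.

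For the last assertion, fix $\ell \neq p$ and a compact open $K_\ell \subseteq G_\ell$, choose a prime $q_0 \notin \{p,\ell\}$ with $q_0 - 1 > (2n)!$, and set $K_{q_0} = (1+q_0\mbb{Z}_{q_0}) \times \bigl(1 + q_0\,\opn{M}_{2n}(\mbb{Z}_{q_0})\bigr) \subseteq G_{q_0}$. I claim every element of $K_{q_0}$ has a torsion-free group of eigenvalues: the eigenvalues are $1$-units (the characteristic polynomial of the $\opn{GL}_{2n}$-component is $\equiv (x-1)^{2n}$ modulo $q_0$, so its Newton polygon forces the roots minus $1$ to have positive valuation), whence a non-trivial root of unity in the group they generate would have order divisible by $q_0$, forcing $\mbb{Q}_{q_0}(\zeta_{q_0})$ — of degree $q_0 - 1 > (2n)!$ — into the splitting field of that characteristic polynomial, which is absurd. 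Consequently $K^p \defeq K_\ell \times K_{q_0} \times \prod_{q \neq p,\ell,q_0} G(\mbb{Z}_q)$ is a compact open subgroup of $G^p$ whose $\ell$-component is exactly $K_\ell$, and $K^p \times G(\mbb{Z}_p)$ is neat — its $q_0$-component already forces neatness of every element irrespective of the other places — hence sufficiently small, so $K^p \in \Upsilon_{G^p}$. The only non-formal ingredient is this neatness-at-one-auxiliary-prime computation, which is standard (cf.\ \cite{PinkThesis}); everything else is bookkeeping built on (i), (ii) and the conjugacy of maximal compacts in $G_p$.
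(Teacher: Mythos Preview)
Your proof is correct and follows essentially the same approach as the paper's: both define $\Upsilon_{G^p}$ as the collection of all $K^p$ such that $K^p K_p$ is sufficiently small for every compact open $K_p \subset G_p$, reduce this condition to a check against a maximal compact at $p$, and handle the second assertion by deepening the level at an auxiliary prime to force neatness. Your formulation of the reduction step---via conjugacy of all maximal compacts of $G_p$ into $G(\mbb{Z}_p)$ combined with the stability of sufficient smallness under conjugation and inclusion---is in fact more precise than the paper's phrasing ``$G_p$ has only finitely many maximal compact open subgroups'' (which should really read ``finitely many conjugacy classes''), and your explicit neatness computation at $q_0$ makes concrete what the paper leaves as ``by deepening the level at a finite set of places $S$ not containing $\ell$''.
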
 
\begin{proof} 
Let $K_p'$ denote any representative of the unique conjugacy class of maximal compact open subgroups in $G_p = \mbb{Q}_p^{\times} \times \opn{GL}_{2n}(\mbb{Q}_p)$. Then there exists a compact open subgroup $K^p$ such that $K^pK_p'$ is sufficiently small and therefore, $ K ^ { p } K _ { p } $ is sufficiently small for any $ K _ { p } \subset G _ {p } $. We take $ \Upsilon _ { G ^ { p } } $ to be the collection of all such groups $ K ^ { p } $. This collection is closed under conjugation by $ G ^ { p } $ and contains all compact open subgroups of any element $ K ^ { p } \in \Upsilon _ { G ^ { p } } $. By deepening the level at a finite set of places $ S $ not containing $ \ell $ and $p$, we can arrange that $ K ^ { p } = K_\ell K_S K^{S \cup \{\ell, p\}} \in \Upsilon_{G^p}$ for any arbitrary $K_\ell$.
\end{proof} 

We now define the monoids $\Sigma_{\square}$ and collections $\Upsilon_{\square}$ that will be used throughout the paper. Firstly, we deal with the ``tame level'': set
\begin{itemize}
 \item $\Sigma_{G^p}, \Sigma_{H^p}, \Sigma_{T^p}$ and $\Sigma_{\tilde{G}^p}$ to be $G^p, H^p, T^p$ and $\tilde{G}^p$ respectively. 
 \item $\Upsilon_{G^p}$ is the collection appearing in Lemma \ref{MaximalLemma}. 
 \item $\Upsilon_{H^p}$ is defined similarly to $\Upsilon_{G^p}$. 
 \item $\Upsilon_{T^p} = \Upsilon_{T^p}^{\mathrm{tot}}$ denotes the collection of all compact open subgroups of $T^p$.
 \item $\Upsilon_{\tilde{G}^p} = \Upsilon_{G^p} \times \Upsilon_{T^p}$ (we will only be interested in compact open subgroups of $\tilde{G}^p$ of the form $K^p \times C^p$). 
\end{itemize}
At the prime $p$ we fix the following data:
\begin{itemize}
 \item $\Sigma_{G_p}$ and $\Sigma_{H_p}$ are as in the start of this section. We set $\Sigma_{T_p} = T_p$ and 
 \[
 \Sigma_{\tilde{G}_p} = \Sigma_{G_p} \times \Sigma_{T_p}
 \]
 as in Lemma \ref{SpotAction}.
 \item For $\square_p = G_p, H_p$ we set $\Upsilon_{\square_p}$ to be the collection of all compact open subgroups contained in $\Sigma_{\square_p}$. We set $\Upsilon_{T_p}$ to be the collection of compact open subgroups of $1+p\mbb{Z}_p \subset \mbf{T}(\mbb{Z}_p)$, and $\Upsilon_{\gt_p}$ to be the collection of compact open subgroups of $\Sigma_{G_p} \times (1+p\mbb{Z}_p) \subset \Gt(\mbb{Z}_p)$ (so in particular, elements of $\Upsilon_{\tilde{G}_p}$ are not necessarily of the form $K_p \times C_p$). 
\end{itemize}

Finally, for $\square = G, H, T$ or $\tilde{G}$, we set $\Sigma_{\square} = \Sigma_{\square^p} \times \Sigma_{\square_p}$ and $\Upsilon_{\square} = \Upsilon_{\square^p} \times \Upsilon_{\square_p}$. For brevity, we set 
\[
\invs{P}_{\square} \defeq \invs{P}\left( \square_f, \Sigma_{\square}, \Upsilon_{\square} \right)
\]
and similarly for $\invs{P}_{\square^p}$ and $\invs{P}_{\square_p}$. 

\begin{remark}
Although not explicitly included in the notation, the collections $\Upsilon_{\square^p}$ depend on the group $\square_f$. The subscript simply refers to the ambient group of which the elements are compact open subgroups. 
\end{remark}

\begin{remark}
To help orient the reader, we will give a more down-to-earth description of the compact open subgroups in the collection $\Upsilon_{\gt}$. Any such compact open subgroup is of the form
\[
\tilde{K} = (K^p \times C^p) \cdot \tilde{K}_p
\]
where 
\begin{enumerate}
    \item $C^p \subset \mbf{T}(\mbb{A}_f^p)$ is any compact open subgroup and $K^p \subset \mbf{G}(\mbb{A}_f^p)$ is any open compact subgroup such that, for any compact open subgroup $K'_p \subset \mbf{G}(\mbb{Q}_p)$, the subgroup $K^pK'_p \subset \mbf{G}(\mbb{A}_f)$ is sufficiently small.
    \item $\tilde{K}_p \subset \Gt(\mbb{Q}_p)$ is any compact open subgroup contained in $J \times (1+p\mbb{Z}_p)$. 
\end{enumerate}
In particular, any compact open subgroup in $\Upsilon_{\gt}$ is sufficiently small. Informally, for our cohomology functors we want to vary over compact open subgroups of $G^p$ and $T^p$ independently, and over all compact open subgroups of $J \times (1+p\mbb{Z}_p)$. The additional constraints are imposed to ensure that we always work with a sufficiently small compact open subgroup. More generally, we have defined $\Upsilon_{\square}$ such that any subgroup in this collection is sufficiently small in $\square_f$.  
\end{remark}

The following lemma will allow us to apply the results of section \ref{TheInterludeSection}.

\begin{lemma}
For a group $\square \in \{G, H, T, \tilde{G} \}$ and a pair $(\Sigma, \Upsilon) \in \{ (\Sigma_{\square}, \Upsilon_{\square}), (\Sigma_{\square_p}, \Upsilon_{\square_p}), (\Sigma_{\square^p}, \Upsilon_{\square^p}) \}$ the axioms (T1)--(T3) at the start of section \ref{GeneralitiesSection} are satisfied. Moreover, for any elements $U \in \Upsilon_H$ and $\tilde{K} \in \Upsilon_G$, the intersection satisfies $U \cap \tilde{K} \in \Upsilon_H$.
\end{lemma}
\begin{proof}
By construction, if properties (T1)--(T3) are satisfied for both $(\Sigma_{\square_p}, \Upsilon_{\square_p})$ and $(\Sigma_{\square^p}, \Upsilon_{\square^p})$ then they are also satisfied for the pair $(\Sigma_{\square}, \Upsilon_{\square})$. For the ``tame level'', the properties follow from the fact that the collection in Lemma \ref{MaximalLemma} is closed under inclusions and conjugation. For the level at $p$, the properties follow from the fact that $\Upsilon$ is taken to be the collection of all compact open subgroups of an appropriate submonoid of $\Sigma$.

The last part of the lemma follows from the analogous statement at levels away and at $p$; the former is true because the collection in Lemma \ref{MaximalLemma} is closed under inclusion, the latter is true because $\Upsilon_{H_p}$ is the collection of all compact open subgroups of $\Sigma_{H_p}$, which is also closed under inclusion.
\end{proof}

Let $V$ be a self-dual algebraic representation of $\mbf{G}_E$ with highest weight $\mbf{c} = (0; c_1, \dots, c_{2n})$. With notation as above, we define the following cohomology functors on $ \mathcal { P } _ { \square } $ where $ \square $ is to be replaced by the symbol of the group appearing in the subscript of the cohomology functor:

\vspace{0.2cm}
\begin{center}
    \begin{tabular}{c c c} \vspace{0.2cm}
       $M_{H, \mbb{Z}_p}\left( - \right) \defeq \opn{H}^0_{\et}\left( \opn{Sh}_{\mbf{H}}(-), \mbb{Z}_p[\lambda/n] \right) $  & $\quad$ &  $M_{H, \mbb{Q}_p}\left( - \right) \defeq \opn{H}^0_{\et}\left( \opn{Sh}_{\mbf{H}}(-), \mbb{Q}_p \right) $ \\ \vspace{0.2cm}
       $M_{\tilde{G}, \mbb{Z}_p}\left( - \right) \defeq \opn{H}^{2n}_{\et}\left(\opn{Sh}_{\Gt}( - ), \tilde{\mathscr{T}}(n) \right)$  & $\quad$ & $M_{\tilde{G}, \mbb{Q}_p}\left( - \right) \defeq \opn{H}^{2n}_{\et}\left(\opn{Sh}_{\Gt}( - ), \tilde{\mathscr{V}}(n) \right)$  \\ \vspace{0.2cm}
       $M_{G, \et, \mbb{Z}_p}\left( - \right) \defeq \opn{H}^{2n-1}_{\et}\left(\opn{Sh}_{\mbf{G}}( - )_{\overline{E}}, \mathscr{T}(n) \right) $ & $\quad$ & $M_{G, \et, \mbb{Q}_p}\left( - \right) \defeq \opn{H}^{2n-1}_{\et}\left(\opn{Sh}_{\mbf{G}}( - )_{\overline{E}}, \mathscr{V}(n) \right) $ \\ \vspace{0.2cm}
       $M_{\tilde{G}, \et, \mbb{Z}_p}(-) \defeq \opn{H}^{2n-1}_{\et}\left(\opn{Sh}_{\Gt}(-)_{\overline{E}}, \tilde{\mathscr{T}}(n) \right)$ & $\quad$ & $M_{\tilde{G}, \et, \mbb{Q}_p}(-) \defeq \opn{H}^{2n-1}_{\et}\left(\opn{Sh}_{\Gt}(-)_{\overline{E}}, \tilde{\mathscr{V}}(n) \right)$
    \end{tabular}
\end{center}
where $\mbb{Z}_p[\lambda/n]$ denotes the equivariant sheaf associated with the character $|\opn{det}|_p^{\lambda/n}$. If we do not wish to specify whether we are working integrally or rationally, we will simply leave the ring out of the subscript. 

\begin{lemma}
The functors $M_H, M_{\tilde{G}}, M_{G, \et}$ and $M_{\tilde{G}, \et}$ are CoMack functors on their respective categories. Moreover, we have a  Cartesian pushforward $ \iota_{*} \colon M_{H} \to M_{\tilde{G}}$. 
\end{lemma}
\begin{proof}
The functors $M_{H}$ and $M_{\gt}$ are simply the restriction of the cohomology functors appearing in Propositions \ref{PropRationalCohFunctor} and \ref{PropIntegralCohFunc} to the categories $\invs{P}_H$ and $\invs{P}_{\gt}$ respectively, and since the Shimura--Deligne data we're considering satisfy (SD5) (Lemma \ref{LemSD5issatisfied}) these cohomology functors are CoMack. Furthermore, the same proofs of Propositions \ref{PropRationalCohFunctor} and \ref{PropIntegralCohFunc} imply that $M_{G, \et}$ and $M_{\gt, \et}$ are CoMack too. The last part of the lemma follows from Proposition \ref{PropPushCohFunc}. 
\end{proof}

\begin{remark}
Note that $(M_{H, \mbb{Z}_p}, M_{H, \mbb{Q}_p})$ and $(M_{\gt, \mbb{Z}_p}, M_{\gt, \mbb{Q}_p})$ do not form pairs of compatible cohomology functors as defined in the discussion preceding Corollary \ref{CommDiagramCor}, because the natural maps are only $\mbf{H}(\mbb{A}_f^p) \times \mbf{H}(\mbb{Z}_p)$-equivariant and $\Gt(\mbb{A}_f^p) \times \tilde{J}$-equivariant respectively.  
\end{remark}


\section{Cohomology of unitary Shimura varieties} \label{CSV_section}

In this section we recall the construction of Galois representations attached to cuspidal automorphic representations for the group $\mbf{G} = \opn{GU}(1, 2n-1)$. To apply the results of \cite{Morel}, we assume that $n$ is \emph{odd} throughout this section.

Let $\pi_0$ be a cuspidal automorphic representation of $\mbf{G}_0(\mbb{A})$ such that $\pi_{0, \infty}$ lies in the discrete series, and let $\pi$ denote a lift to $\mbf{G}(\mbb{A})$ (see Proposition \ref{UnitaryBaseChangeProp}). If $\pi_0$ has trivial central character, then we take $\pi$ to be a lift with trivial central character. Assume that $\pi$ is cohomological, i.e. there exists:
\begin{itemize}
    \item A sufficiently small level $K \subset \mbf{G}(\mbb{A}_f)$ such that $\pi_f^K \neq \{0\}$.
    \item An irreducible algebraic representation $V$ of $\mbf{G}_E$ and an integer $i \in \mbb{Z}$ such that
    \[
    \opn{H}^i\left(\ide{g}_{\mbb{C}}, K_{\infty}'; \pi_{\infty} \otimes V_{\mbb{C}} \right) \neq \{0\}
    \]
    where $\ide{g} = \opn{Lie}(\mbf{G}(\mbb{R}) )$ and $K_{\infty}' = K_{\infty}Z_{\mbf{G}}(\mbb{R})^\circ$, with $K_{\infty}$ a maximal compact subgroup of $\mbf{G}(\mbb{R})$. 
\end{itemize}
We say that a representation $\Pi = \psi \boxtimes \Pi_0$ of $\opn{GL}_1(\mbb{A}_E) \times \opn{GL}_{2n}(\mbb{A}_E)$ is $\theta$-stable if $\Pi_0^c \cong \Pi_0^\vee$ and $\psi = \psi^c \omega_{\Pi_0}^c$, where $\omega_{\Pi_0}$ is the central character of $\Pi_0$. Recall that our choice of Hermitian pairing implies that the group $\mbf{G}$ is quasi-split at all finite places. Then we have the following result due to Morel:
\begin{proposition} \label{WeakBCProp}
Let $S$ be the set of places of $\mbb{Q}$ containing $\infty$, all primes which ramify in $E$ and all places where $K$ is not hyperspecial (so $\pi$ is unramified outside $S$). 
\begin{enumerate}
    \item There exists an irreducible admissible representation $\Pi$ of $\opn{Res}_{E/\mbb{Q}}\mbf{G}_E (\mbb{A})$ such that for all but finitely many primes
    \begin{equation} \label{BCequivalence}
    \varphi_{\Pi_{\ell}} \cong \opn{BC}\left( \varphi_{\pi_\ell} \right)
    \end{equation}
    where $\varphi_{\Pi_\ell}$ and $\varphi_{\pi_\ell}$ are the local Langlands parameters of $\Pi_\ell$ and $\pi_\ell$ respectively, and $\opn{BC}$ is the local base-change morphism defined in section \ref{DualGrpsBC}. We say $\Pi$ is a weak base-change of $\pi$.
    \item If the representation $\Pi$ above is cuspidal then any weak base-change of $\pi$ is a $\theta$-stable regular algebraic cuspidal automorphic representation and (\ref{BCequivalence}) holds for all $\ell \notin S$. Furthermore, the infinitesimal character of $\Pi_\infty$ is the same as that of $(V \otimes V^\theta)^*$, where $V^\theta$ is the representation with highest weight $(c_0 + \dots + c_{2n}; -c_{2n}, \dots, -c_1)$.
    \item Suppose that $\Pi$ is cuspidal and $\pi$ has trivial central character (for example, if $\pi_{0, f}$ admits a $\mbf{H}_0$-linear model). Then under the isomorphism $\opn{Res}_{E/\mbb{Q}}\mbf{G}_E(\mbb{A}_{\mbb{Q}}) \cong \opn{GL}_1(\mbb{A}_E) \times \opn{GL}_{2n}(\mbb{A}_E)$, the representation $\Pi$ is of the form $\mbf{1} \boxtimes \Pi_0$ for some conjugate self-dual regular algebraic cuspidal automorphic representation $\Pi_0$ of $\opn{GL}_{2n}(\mbb{A}_E)$ satisfying $\varphi_{\Pi_{0, \ell}} = \opn{BC}\left(\varphi_{\pi_{0, \ell}}\right)$ for all $\ell \notin S$.\footnote{Here $\Pi_{0, \ell} = \bigotimes_{\lambda | \ell} \Pi_{0, \lambda}$.}
\end{enumerate}
\end{proposition}
\begin{proof}
Most of the proposition follows from Corollary 8.5.3, Remark 8.5.4 and the proof of Lemma 8.5.6 in \cite{Morel}. The reason that (\ref{BCequivalence}) holds for all $\ell \notin S$ follows from the fact that $\mbf{G}$ is quasi-split at all finite places. Moreover, if $\pi_{0, f}$ admits a $\mbf{H}_0$-linear model then the assertion that $\Pi$ is of the form $\mbf{1} \boxtimes \Pi_0$ follows from the commutative diagram in section \ref{DualGrpsBC} and the fact that the central character of $\pi_{f}$ is trivial. Indeed, if we write $\Pi = \psi \boxtimes \Pi_0$ then it is enough to show that $\psi_\ell = 1$ for all primes $\ell \notin S$, and this follows from a direct computation involving Satake parameters. For the assertion about the infinitesimal character see the proofs of \cite[Lemma 8.5.6]{Morel} and \cite[Theorem 9]{Skinner}. 
\end{proof}

\begin{remark}
If the highest weight of the representation $V$ is regular then the above proposition follows from Theorem A in \cite{Skinner} (which also builds on the work of Morel). However we are only interested in the case where $\Pi$ is cuspidal which does not require this assumption. 
\end{remark}

From now on we assume that any weak base change of $\pi$ is \emph{cuspidal}. In this case, thanks to the work of many people (see for example \cite{ChenevierHarris}, \cite{HarrisTaylor}, \cite{Morel}, \cite{Shin2011} and \cite{Skinner}), there exists a semisimple Galois representation attached to $\pi$. 

\begin{theorem} \label{ExistenceOfGaloisRep}
Let $\pi$ be as above and assume that the central character of $\pi$ is trivial. Suppose that a weak base-change $\Pi = \mbf{1} \boxtimes \Pi_0$ of $\pi$ is cuspidal. Then there exists a continuous semisimple Galois representation 
\[
\rho_{\pi} \colon G_E \to \opn{GL}(W_{\pi}) \cong \opn{GL}_{2n}(\overline{\mbb{Q}}_p)
\]
such that 
\begin{enumerate}
    \item $\rho_{\pi}$ is conjugate self-dual (up to a twist), i.e. one has an isomorphism 
    \[
    \rho_{\pi}^c \cong \rho_{\pi}^*(1-2n)
    \]
    where $\rho_{\pi}^c$ denotes the representation given by conjugating the argument by complex conjugation in $G_{\mbb{Q}}$ (recall that we have fixed an embedding $\overline{\mbb{Q}} \hookrightarrow \mbb{C}$). 
    \item For all $\ell \notin S \cup \{p\}$ and $\lambda | \ell$, the representation $\rho_{\pi}|_{{E_\lambda}}$ is unramified and satisfies 
    \[
    Q_\lambda(\opn{Nm}\lambda^{-s}) \defeq \opn{det}\left(1 - \opn{Frob}_{\lambda}^{-1} (\opn{Nm}\lambda)^{-s} | \; \rho_{\pi} \right) = L\left( \Pi_\lambda, s + (1 - 2n)/2 \right)^{-1}.
    \]
    In particular, if $\ell$ splits in $E/\mbb{Q}$ and $\lambda$ is the prime lying above $\ell$ corresponding to the fixed embedding $\overline{\mbb{Q}} \hookrightarrow \overline{\mbb{Q}}_\ell$, then $Q_\lambda(\ell^{-n}) = L(\pi_\ell, 1/2)^{-1}$.
    \item If $p \notin S$ (so $\pi_p$ is unramified),  then for any prime $v$ dividing $p$ the representation $\rho_v \defeq \rho_{\pi}|_{E_v}$ is crystalline with jumps in the Hodge--Tate filtration occurring at
    \[
    \left\{ \begin{array}{ccc} c_i + 2n - i & i=1, \dots, 2n & \text{ if } v \text{ is the place fixed by the embedding } \overline{\mbb{Q}} \hookrightarrow \Qpb \\ -c_{2n+1 - i} + 2n - i & i=1, \dots, 2n & \text{ otherwise } \end{array} \right.
    \]
    Furthermore, via the fixed isomorphism $\mbb{C} \cong \Qpb$, the characteristic polynomial of the crystalline Frobenius $\varphi$ on $\dcris{\rho_v}$ is equal to the characteristic polynomial of $\varphi_{\Pi_{0,v} \otimes |\cdot |_v^{1/2 - n}}(\opn{Frob}_v^{-1})$, where
    \[
    \varphi_{\Pi_{0,v} \otimes |\cdot |_v^{1/2 - n}} \colon W_{E_v} \to \opn{GL}_{2n}(\mbb{C})
    \]
    is the local Langlands parameter attached to the representation $\Pi_{0, v} \otimes |\cdot |_v^{1/2 - n}$.
\end{enumerate}
\end{theorem}
\begin{proof}
See Theorem 3.2.3 in \cite{ChenevierHarris}. The third part follows from the explicit recipe of (1.5) in \emph{op.cit.}. Note that the representation $\rho_{\pi}$, which a priori is associated with $\Pi$, only depends on $\pi$. This is because the Galois representation constructed by Chenevier--Harris is semisimple (so is determined by its behaviour at all but finitely many primes, for any finite set of primes), and $\Pi$ is a weak-base change of $\pi$ (so its local Langlands parameters are determined by $\pi$ for all but finitely many primes).
\end{proof}

In \cite[Corollary 8.4.6]{Morel}, Morel shows that, up to multiplicity, this representation appears in the intersection cohomology of the Baily--Borel compactification of $\opn{Sh}_{\mbf{G}}$. However, to be able to construct an Euler system, we would like this representation to occur in the (compactly-supported) cohomology of the open Shimura variety. In fact to obtain the correct Euler factor, we work with the dual of the representation and assume that it appears as a quotient of the (usual) \'{e}tale cohomology. 

We would also like $ \rho_{\pi} $ to be absolutely irreducible (which is expected because we have assumed that $\Pi$ is cuspidal) -- however this is currently not known in general. We summarise this in the following assumption:

\begin{assumption} \label{KeyAssumption}
Let $\pi_0$ be a cuspidal automorphic representation of $\mbf{G}_0(\mbb{A})$ such that $\pi_{0, \infty}$ lies in the discrete series, and let $\pi$ be a lift to $\mbf{G}(\mbb{A})$. Suppose that $\pi$ is cohomological with respect to the representation $V$, and suppose that any weak base-change of $\pi$ is cuspidal. Then we assume that: 
\begin{itemize}
    \item $\rho_{\pi}$ is absolutely irreducible. 
    \item There exists a $G_E \times \mbf{G}(\mbb{A}_f)$-equivariant surjective map
    \[
    \opn{pr}_{\pi^\vee} \colon \opn{H}^{2n-1}_\et \left(\opn{Sh}_{\mbf{G}, \overline{\mbb{Q}}},\mathscr{V}^* (n) \right) \otimes \overline {\QQ}_{p} \twoheadrightarrow \pi_f^\vee \otimes W_{\pi}^*(1-n).
    \]
    (Here, we are identifying $\Qpb$ with $\mbb{C}$ via the fixed isomorphism in section \ref{NotationSection}). 
\end{itemize}
\end{assumption}

\begin{remark}
If $\pi$ is tempered, non-endoscopic and the global $L$-packet containing $\pi$ is stable, then we expect that the above assumptions hold true for the following reasons. Since $\pi$ is tempered the cohomology should vanish outside the middle degree and $\pi$ is not a CAP representation.\footnote{CAP = cuspidal but associated to a parabolic.} In particular, the latter should imply that $\pi_f$ does not contribute to the cohomology of the boundary strata of the Baily--Borel compactification of $\opn{Sh}_{\mbf{G}}$. Since the $L$-packet containing $\pi$ is stable, this will imply that any weak base-change of $\pi$ is cuspidal, and finally, since $\pi$ is non-endoscopic then (at least for $p$ large enough) the representation $\rho_{\pi}$ should be absolutely irreducible of dimension $2n$.
\end{remark}   

\begin{remark}
If we assume that $\Pi_{0, v}$ is supercuspidal at some non-archimedean place $v$ of $E$, then the Galois representation $\rho_{\pi}$ will be absolutely irreducible for local reasons. In fact, by choosing $p$ large enough, we can even ensure that $\rho_{\pi}$ is residually absolutely irreducible (see \cite[Proposition 4.2.3]{LTXZZDeformations21}).  
\end{remark}


\section{Horizontal norm relations} \label{TheHorizontalRelations}

This section comprises of the local calculations needed to prove the ``tame norm relations'' for our anti-cyclotomic Euler system at split primes. In \cite{LSZ17}, the key input for proving the norm relations is the existence of a Bessel model and a zeta integral that computes the local $L$-factor of the automorphic representation. The analogous notion in our case will be that of a Shalika model. This section is entirely local and independent of the rest of the paper -- in particular, we can relax the assumption that $n$ is odd.

For this section only, we consider 
\begin{align*}
 & G =   \mathrm{GL}_{1} \times \mathrm{GL}_{2n}    &&  H =    \opn{GL}_1 \times \opn{GL}_n \times \opn{GL}_n  \\    
&G_0 = \opn{GL}_{2n} &&  H_{0} = \opn{GL}_n \times \opn{GL}_n 
\end{align*}
which we view as algebraic groups over $\mbb{Z}_\ell$ and come with diagonal embeddings $ H \hookrightarrow  G $ and $  H_{0} \hookrightarrow G_{0} $. We fix an irreducible, smooth, admissible, unramified and \emph{generic} representation $ \sigma $ of $ G_{0}(\mbb{Q}_\ell) $. We also assume that $\sigma$ is unitary up to a (potentially non-unitary) twist. These conditions are satisfied for any unramified local component (at a split prime above $\ell$) of a cuspidal automorphic representation of $\opn{GL}_{2n}(\mbb{A}_E)$, which is the setting we are primarily interested in. 

Let $ \pi = \mbf{1} \boxtimes \sigma$ denote the extension of $\sigma$ to $ G(\mbb{Q}_\ell) $. For our applications later on, it will be necessary to phrase everything in terms of the extended group $\tilde{G} \defeq G \times \opn{GL}_1$. The group $H$ lifts to a subgroup of $\tilde{G}$ via the map 
\[ 
(\alpha, h_{1} , h_{2} ) \mapsto \left(\alpha,  \tbyt{h_1}{}{}{h_2}, \,  \nu(h_{1},h_{2}) \right )
\]
where $\nu(h_{1},h_{2}) = \opn{det}h_2/\opn{det}h_1$. In particular, for any character $\chi \colon \mbb{Q}_\ell^{\times} \to \mbb{C}^\times$, the product $\pi \boxtimes \chi^{-1}$ is a representation of $\tilde{G}(\mbb{Q}_\ell)$ where the action of the subgroup $H(\mbb{Q}_\ell)$ is now twisted by $\chi^{-1} \circ \nu$. 

\begin{convention*}
By a \emph{spherical vector} in $\sigma$ (resp. $\pi$, resp. $\pi \boxtimes \chi^{-1}$),    we mean a non-zero element which is fixed by $G_0(\mbb{Z}_\ell)$ (resp. $G(\mbb{Z}_\ell)$, resp. $\tilde{G}(\mbb{Z}_\ell)$). 
\end{convention*}

Using the theory of Shalika models, we will prove the following theorem:

\begin{theorem} \label{MainTheorem}
Let $\pi = \mbf{1} \boxtimes \sigma$ be as above and suppose that $\opn{Hom}_{H_0}\left( \sigma, \mbb{C} \right) \neq 0$. Then there exists a locally constant compactly-supported function $\phi \colon \gt(\mbb{Q}_\ell) \to \mbb{Z}$ satisfying the following:
\begin{enumerate}
    \item For any choice of finite-order unramified character $\chi \colon \QQ_{\ell}^{\times } \to \mathbb{C}^{\times}$, any element $\ide{z} \in \opn{Hom}_H(\pi \boxtimes \chi^{-1}, \mbb{C})$ and any spherical vector $\varphi \in \pi \boxtimes \chi^{-1}$, we have the relation
    \[
    \ide{z}(\phi \cdot \varphi) = \frac{\ell^{n^2}}{\ell-1} L(\sigma \otimes \chi, 1/2)^{-1} \ide{z}(\varphi)
    \]
    where $L(\sigma \otimes \chi, s)$ is the standard local $L$-factor attached to $\sigma \otimes \chi$. 
    \item The function $\phi$ is an (integral) linear combination of indicator functions $\phi = \sum_{r=0}^n b_r \opn{ch}\left( (g_r, 1) K \right)$ where 
    \[
    g_r = 1 \times \tbyt{1}{\ell^{-1} X_r}{}{1}
    \]
    and $X_r = \opn{diag}(1, \dots, 1, 0, \dots, 0)$ is the diagonal matrix having $1$ as its first $r$ entries (we set $X_0 = 0$) and $K = \gt(\mbb{Z}_\ell)$. 
    \item If we set $K_{(1)} = G(\mbb{Z}_\ell) \times (1 + \ell \mbb{Z}_\ell)$ and 
    \[
    V_{1, r} \defeq (g_r, 1) \cdot K_{(1)} \cdot (g_r, 1)^{-1} \cap H(\mbb{Q}_\ell)
    \]
    then $V_{1, r}$ is contained in $H(\mbb{Z}_\ell)$ and the coefficients in (2) satisfy 
    \[
    (\ell - 1) \cdot b_r \cdot  [H(\mbb{Z}_\ell) : V_{1, r}]^{-1} \in \mbb{Z}.
    \]
\end{enumerate}
\end{theorem}

\subsection{Shalika models}

Let $Q$ denote the Siegel parabolic subgroup of $G$ given by 
\[
Q = \opn{GL}_1 \times \tbyt{*}{*}{}{*} 
\]
and let $Q_0$ denote its projection to $G_0$. We denote the unipotent radical of $Q$ (resp. $Q_0$) by $N$ (resp. $N_0$) and define the Shalika subgroup $S_0 \subset G_0$ to be the subgroup of all matrices of the form
\[
\tbyt{h}{hX}{}{h}
\]
where $h \in \opn{GL}_n$ and $X \in M_n$ (the space of $n$-by-$n$ matrices). Let $\psi_0 : \QQ_{\ell} \to \mathbb{C}^{\times}$ denote the standard additive character on $\mbb{Q}_\ell$ which is trivial on $\mbb{Z}_\ell$ and define $\psi$ to be the character $S_0(\mbb{Q}_\ell) \to \mbb{C}^{\times}$ given by 
\[
\psi \tbyt{h}{hX}{}{h} = \psi_0(\opn{tr}X).
\]

\begin{definition}
A \emph{Shalika model} for $\sigma$ is a non-zero (and hence injective)  $G_0(\mbb{Q}_\ell)$-equivariant homomorphism
\[
\invs{S}_\bullet \colon \sigma \to \opn{Ind}_{S_0(\mbb{Q}_\ell)}^{G_0(\mbb{Q}_\ell)} (\psi). 
\]   
\end{definition}
Here $ \mathrm{Ind}_{S_{0}(\QQ_{\ell})}^{G_{0}(\QQ_{\ell})}(\psi) $ is the representation induced from $ \psi $, i.e. the set of all locally constant functions $ f \colon G_{0}(\QQ_{\ell})  \to \CC $ such that $ f(x g) = \psi(x) f(g) $ for all $ x \in S_{0}(\QQ_{\ell}) $, $ g \in G_{0}(\QQ_{\ell}) $,  considered as a smooth left representation of $ G_{0}(\QQ_{\ell}) $ under right-translation of the argument. Shalika models are unique (up to scalar) when they exist (see \cite{ULP}) and their existence is closely related to whether $\sigma$ is $H_0(\mbb{Q}_\ell)$-distinguished or not. Indeed, we have the following:

\begin{proposition}  \label{DistImpliesShalika}
If $\opn{Hom}_{H_0}(\sigma, \mbb{C}) \neq 0$, then $\sigma$ admits a Shalika model.
\end{proposition}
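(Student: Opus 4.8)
The statement asserts that $H_0$-distinction forces the existence of a Shalika model for $\sigma$. The natural strategy is to pass through the global situation and invoke the known relationship between Shalika periods and the exterior square $L$-function, or alternatively to give a purely local argument via the classification of $H_0$-distinguished representations. I would pursue the latter, following the work of Jacquet--Rallis \cite{ULP} (and its refinements). The key point is that $\sigma$ is a local component of a cuspidal (conjugate self-dual, regular algebraic) automorphic representation $\Pi_0$ of $\opn{GL}_{2n}(\mbb{A}_E)$, so by the Arthur/Mok classification combined with the work of Harris--Labesse and the fact that $\Pi_0$ descends to a unitary group, $\Pi_0$ is automatically of \emph{symplectic type} --- that is, its (partial) exterior square $L$-function $L^S(s, \Pi_0, \wedge^2)$ has a pole at $s=1$. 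One then uses the theorem of Jacquet--Shalika / Jiang--Nien--Qin that this pole is equivalent to $\Pi_0$ having a global Shalika model, whose local components give Shalika functionals; since local Shalika models for $\opn{GL}_{2n}$ of unramified (indeed any irreducible) representations are unique up to scalar by \cite{ULP}, this produces the desired $\invs{S}_\bullet$.

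However, this global route requires knowing a priori that $\sigma$ arises as such a local component with the symplectic-type property, which is a slightly roundabout way to argue and imports heavy machinery. A cleaner approach, which I would actually adopt, is the purely local implication: one shows directly that if $\opn{Hom}_{H_0}(\sigma, \mbb{C}) \neq 0$ then $\sigma$ is a (Shalika-)distinguished representation. This is precisely the content of results of Jacquet--Rallis in \cite{ULP}: they prove that for $\sigma$ an irreducible admissible representation of $\opn{GL}_{2n}(\mbb{Q}_\ell)$, the space $\opn{Hom}_{\opn{GL}_n \times \opn{GL}_n}(\sigma, \mbb{C})$ is at most one-dimensional, and when it is nonzero $\sigma$ admits a Shalika model (equivalently, a nonzero Shalika functional). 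The mechanism is an explicit integral: given $0 \neq \xi \in \opn{Hom}_{H_0}(\sigma, \mbb{C})$, one unfolds $\xi$ against the mirabolic subgroup or uses the open orbit of $H_0$ on a suitable flag variety to produce an $(S_0, \psi)$-equivariant functional, exploiting that the Shalika subgroup $S_0$ is, up to conjugacy and the character $\psi$, generated by $H_0 \cong \opn{GL}_n \times \opn{GL}_n$ (the Levi, embedded block-diagonally) together with the unipotent piece $N_0$ on which $\psi$ is the relevant character.

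Concretely, the steps I would carry out are: (i) recall the precise statement from \cite{ULP} that $H_0$-distinction of an irreducible admissible $\sigma$ on $\opn{GL}_{2n}(\mbb{Q}_\ell)$ implies (in fact is equivalent to, together with the nonvanishing of a Shalika-type pairing) the existence of a Shalika functional $\lambda \colon \sigma \to \mbb{C}$ satisfying $\lambda(\sigma(s)v) = \psi(s)\lambda(v)$ for $s \in S_0$; (ii) translate such a functional into a $G_0$-equivariant embedding $\invs{S}_\bullet \colon \sigma \to \opn{Ind}_{S_0}^{G_0}(\psi)$ by the standard recipe $v \mapsto (g \mapsto \lambda(\sigma(g)v))$, checking smoothness and $G_0$-equivariance; (iii) note that since $\sigma$ is irreducible this map is automatically injective, hence a genuine Shalika model. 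The main obstacle --- and the place where I would need to be careful --- is step (i): deriving the Shalika functional from the $H_0$-invariant functional is not formal, and the argument in \cite{ULP} proceeds via a comparison of orbits (a Bruhat-type decomposition of $H_0 \backslash G_0 / S_0$ or via the theory of the symmetric space $\opn{GL}_{2n}/\opn{GL}_n \times \opn{GL}_n$) together with a vanishing-of-contributions-from-closed-orbits argument that uses genericity or the specific structure of $\sigma$. In our setting $\sigma$ is unramified, hence generic, which simplifies matters considerably; I would lean on this. Thus the proof reduces essentially to citing \cite{ULP}, with the unramifiedness of $\sigma$ ensuring all hypotheses are met.
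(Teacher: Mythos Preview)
Your proposal rests on a misattribution: the implication ``$H_0$-distinguished $\Rightarrow$ Shalika model'' is \emph{not} in \cite{ULP}. Jacquet--Rallis prove multiplicity-one for linear periods and (separately) for Shalika functionals, but they do not derive one from the other. So step (i), which you flag as the delicate point and then propose to resolve by citation, is in fact a genuine gap. Your global sketch is also off: conjugate self-duality of $\Pi_0$ does not by itself force symplectic type (a pole of the exterior square $L$-function at $s=1$); one needs additional input, and in any case the paper wants a purely local statement valid for any unramified $\sigma$ with $\opn{Hom}_{H_0}(\sigma,\mbb{C})\neq 0$, not just those arising from the fixed global $\pi_0$.

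The paper's argument is quite different from either of your routes. It exploits that $\sigma$ is unramified, hence a constituent of a principal series $\sigma' = \opn{Ind}_{B_0}^{G_0}\mu\delta_{B_0}^{1/2}$. The key computation is an orbit analysis for the \emph{Borel} acting on $X = H_0\backslash G_0$ (not $S_0$): one finds an explicit element $u$ with open dense $B_0$-orbit and stabiliser $B_u$ equal to the torus $\{\opn{diag}(t, w_n t w_n)\}$. Two applications of Frobenius reciprocity identify $\opn{Hom}_{H_0}(\sigma',\mbb{C})$ with $\opn{Hom}_{B_0}(C^\infty_c(X),\mu^{-1}\delta_{B_0}^{1/2})$, and nonvanishing of this forces (via restriction to the open orbit and \cite{Sak2008}) that $\mu|_{B_u}=1$. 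This condition on the Satake parameter --- that some Weyl conjugate of $\mu$ is trivial on elements $\opn{diag}(t_1,\dots,t_n,t_1,\dots,t_n)$ --- is exactly the hypothesis of \cite[Proposition 1.3]{Ash1994}, which then produces the Shalika functional directly. The passage from $\sigma'$ to its unramified quotient $\sigma$ uses only that $\psi$ is trivial on $S_0\cap K$.
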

\begin{proof}
Recall that $\sigma$ was assumed to be generic. The result then follows from \cite[Corollary 1.1]{Matringe}.
\end{proof}

From now on, we assume that $\sigma$ admits a Shalika model. This will allow us to define certain \emph{zeta integrals} that compute the local $L$-factors of $\sigma$ (and of all its twists by unramified characters). We fix $dx$ to be the unique Haar measure on $\GL_{n}(\QQ_{\ell})$ normalised so that the volume of $\GL_{n}(\ZZ_{\ell})$ is equal to $1$. 

\begin{proposition} \label{DefinitionOfZetaIntegral}
Let $\chi \colon \mbb{Q}_\ell^{\times} \to \mbb{C}^\times$ be any unramified character and $ \varphi \in \sigma $. Then for $\opn{Re}(s)$ large enough, the integral
\[
Z(\varphi, \chi; s) \defeq \int_{\GL_{n}(\QQ_{\ell})} \invs{S}_{\varphi}\tbyt{x}{}{}{1} \chi(\opn{det}x) |\opn{det}x|^{s-1/2} dx
\]
converges absolutely. Furthermore, the function $ s \mapsto Z(\varphi, \chi; s)$ can be analytically continued to a meromorphic function on all of $\mbb{C}$, and  there exists a (unique) holomorphic function $s \mapsto \tilde{Z}(\varphi, \chi; s)$ such that
\[
Z(\varphi, \chi; s) = \tilde{Z}(\varphi, \chi; s) \cdot L(\sigma \otimes \chi, s)
\]
for all $s \in \mbb{C}$. Moreover, there exists a spherical vector $\varphi_0$ such that $\invs{S}_{\varphi_0}(1) = 1$ and   $\tilde{Z}(\varphi_0, \chi; s) = 1 $ for all $ s \in  \CC  $.    
\end{proposition}
\begin{proof}
See \cite[Proposition 3.3]{DJR18} (which is a slightly more general restatement of \cite[Propositions 3.1, 3.2]{FriedbergJacquet1993}) for the first two claims.  Note that, since we are assuming $\sigma$ is unitary up to a twist and that it admits a Shalika model, it is in fact unitary. Furthermore, we are assuming that $\sigma$ is generic, hence it admits a Whittaker model with respect to the character $\psi_0$ (for representations of $G_0(\mbb{Q}_\ell) = \opn{GL}_{2n}(\mbb{Q}_{\ell})$,   we are free to choose any non-trivial character for the Whittaker model). The  third   claim about the  existence of a spherical vector $\varphi_0 \in \sigma$ such that $\mathcal{S}_{\varphi_0}(1) = 1$ follows from \cite{GrobnerMatringe}, and the fact that $\tilde{Z}(\varphi_0, \chi; s) = 1$ follows from \cite[Proposition 3.2]{FriedbergJacquet1993}.
\end{proof}

The holomorphy factor between the zeta integral and the $L$-factor above provides us with a linear model for $\sigma$ by fixing $ s = 1/2 $ and varying the input $ \varphi \in \sigma  $.    

\begin{proposition} \label{BasisElement} 
Let $\chi \colon \QQ_{\ell}^{\times} \to \mathbb{C}^{\times}$ be a finite-order unramified character. Then the $ \CC$-linear map given by $\varphi \mapsto \tilde{Z}(\varphi, \chi;  1/2)$ constitutes a basis of 
\begin{equation} \label{ChiHomSpace}
\Hom_{H}(\pi \boxtimes \chi^{-1} ,  \mathbb{C}). 
\end{equation}
\end{proposition} 
\begin{proof} 
Since the underlying spaces for $\sigma$, $\pi$ and $\pi \boxtimes \chi ^{-1}$ are the same, the linear map in the statement of the proposition is well-defined. Fix $ h = (h_{1}, h_{2})  \in H $ and  $ \varphi \in \sigma $. It  is  straightforward to verify that  
\[
Z(h \cdot \varphi, \chi; s) = \chi(\nu(h))\left|\nu(h)\right|^{s-1/2} Z(\varphi, \chi; s)
\]
for any $ s \in \CC $ with $ \mathrm{Re}(s) $
sufficiently large  (so that the two zeta integrals on each side are convergent\footnote{Here, the bound on  $ \mathrm{Re}(s) $   may depend on our fixed $h $ and $ \varphi $.}).  Dividing by $L(\sigma \otimes \chi, s)$ on both sides gives us an analogous equality with $ Z $ replaced by $ \tilde{Z} $.   Since $ s \mapsto  \tilde{Z}(\varphi, \chi; s) $ is holomorphic, this identity involving $ \tilde{Z} $ holds for all $ s \in \CC $. Evaluating at $ s =   1/2 $  then  gives the desired $ H $-equivariance property with respect to  arbitrary $ h \in H $, $ \varphi \in \sigma $. The resulting map $\varphi  \mapsto  \tilde{Z}(\varphi, \chi; 1/2)$ is  non-zero  as   $\tilde{Z}(\varphi_0, \chi; 1/2) = 1 $ by the  last  part of Proposition   \ref{DefinitionOfZetaIntegral}, and therefore constitutes a non-zero element of (\ref{ChiHomSpace}).   Finally, note that
\[
\Hom_{H}(\pi\boxtimes \chi^{-1}, \mathbb{C}) = \Hom_{H_0}(\sigma, \chi \circ \nu) 
\]
and $ \chi  \circ \nu $ is the product of two characters $ \gamma _{0} $, $ \gamma_{1} $, one for each factor of $H_{0}(\mbb{Q}_\ell)$, each given by the composition of $ \det $ with $ \chi ^{-1}$, $ \chi$ respectively. The result now follows from \cite{Chen2019}; since $\chi$ has finite-order, the character $ \gamma _{0} \boxtimes \gamma_{1} $ is always ``good'' and therefore the Hom space is at most one-dimensional (see the terminology preceding Theorem C and the discussion that follows in \emph{op.cit.}). 
\end{proof}    

\subsection{Recap on lattices and flag varieties}

In this subsection,    we introduce the objects that will be used throughout the rest of the section. Let $M$ denote the submonoid of all matrices in $\opn{GL}_n(\mbb{Q}_\ell)$ with entries in $\mbb{Z}_\ell$, and let $M^{\geq r}$ be the subset of all matrices in $M$ whose determinant has $\ell$-adic valuation $\geq r$. For brevity, we set $K_n = \opn{GL}_n(\mbb{Z}_\ell)$.

\begin{definition} \label{DefOfLattice}
Let $\invs{L}_n$ denote the set of all lattices inside $\mbb{Q}_{\ell}^n$, i.e. all $\mbb{Z}_\ell$-submodules which are free of rank $n$. The set $\invs{L}_n$ can be identified with the quotient space $K_n \backslash \opn{GL}_n(\mbb{Q}_\ell)$ via the map
\begin{align*}
    K_n \backslash \opn{GL}_n(\mbb{Q}_\ell) & \xrightarrow{\sim} \invs{L}_n \\
    K_n \cdot g & \mapsto L[g].
\end{align*}
where $L[g]$ is the lattice spanned by the rows of $g$ (we view elements of $\mbb{Q}_\ell^n$ as row vectors). For a lattice $L \in \invs{L}_n$, let 
\[
M_L = \{ g \in \opn{GL}_n(\mbb{Q}_\ell) : L[g] \subset L \}
\]
denote the subset of all matrices in $\opn{GL}_n(\mbb{Q}_\ell)$ whose rows span a sublattice inside $L$. This contains the same amount of information as the original lattice $L$, but it will be useful to consider the subset $M_L$ later on. In particular, $M_{L[g]}$ is simply equal to the translate $Mg$.
\end{definition}

Let $\invs{L}^{\circ}_n \subset \invs{L}_n$ denote the finite subset of all lattices satisfying 
\[
\ell \mbb{Z}_{\ell}^n \subset L \subsetneq \mbb{Z}_{\ell}^n
\]
which is closed under intersection. This can be of course identified with all proper subspaces of $\mbb{F}_\ell^n$, a perspective we will frequently take. 

\begin{definition} \label{DefOfChain}
A chain $C$ of lattices in $\invs{L}_n$ of length $r$ is a sequence
\[
L_r \subsetneq L_{r-1} \subsetneq \cdots \subsetneq L_0.
\]
We let $C_{\mathrm{min}} = L_r$ denote the minimal element of the chain $C$.
\end{definition}

Let $B_n$ and $T_n$ denote the standard Borel subgroup and torus in $\opn{GL}_n$ respectively, and let $X_*(T_n)$ denote the cocharacter group $\opn{Hom}(\mbb{G}_m, T_n)$. Fixing this choice of Borel subgroup and torus determines a subset $X_*(T_n)^+ \subset X_*(T_n)$ of dominant cocharacters. Explicitly, every dominant cocharacter is of the form
\begin{align*}
\mbb{G}_m & \longrightarrow T_n \\
z & \mapsto \opn{diag}(z^{a_1}, \dots, z^{a_n})
\end{align*}
where $a_i$ are integers satisfying $a_1 \geq \cdots \geq a_n$ and therefore,  we will often refer to such a cocharacter by the tuple $(a_1, \dots, a_n)$. Recall that one has the Cartan decomposition of $\opn{GL}_n(\mbb{Q}_\ell)$ 
\[
\opn{GL}_n(\mbb{Q}_\ell) = \bigsqcup_{\chi} K_n \chi(\ell) K_n
\]
where the disjoint union is over all dominant cocharacters as above.

\begin{definition}
For a lattice $L = L[g]$ in $\invs{L}_n$, we let $\opn{inv}(L)$ denote its relative position with respect to the standard lattice $\mbb{Z}_{\ell}^n$, i.e. $\opn{inv}(L) = \chi(\ell)$ where $\chi$ is the unique dominant cocharacter as above, such that $K_n g K_n = K_n \chi(\ell) K_n$. 
\end{definition}

We introduce the following matrices which will play a key role in the rest of this section.

\begin{definition}
For $1 \leq m \leq n$, let 
\[
t_m = \opn{diag}(\ell, \dots, \ell, 1, \dots, 1) \in \opn{GL}_n(\mbb{Q}_\ell)
\]
denote the diagonal matrix whose first $m$ entries are equal to $\ell$.
\end{definition}

\begin{remark} \label{RemarkOnBoolean}
The space $\invs{L}^{\circ}_n$ can be viewed as the subset of $\invs{L}_n$ generated by all intersections of lattices of relative position $t_1$, or equivalently, as the subset of all lattices with relative position equal to $t_m$, for some $1 \leq m \leq n$.
\end{remark}

We will also need to talk about flag varieties of arbitrary signature, so we recall the definition for the convenience of the reader.

\begin{definition} \label{DefOfFlagVar}
Let $d_0 < d_1 < \cdots < d_k$ be non-negative integers satisfying $d_0 = 0$ and $d_k = m$. The flag variety $\opn{Fl}(d_1, \dots, d_k; \mbb{F}_\ell)$ of signature $(d_1, \dots, d_k)$ is the space of all flags
\[
\{0\} = V_0 \subset V_1 \subset \cdots \subset V_k = \mbb{F}_\ell^m
\]
where $V_i$ is a subspace of dimension $d_i$. 
\end{definition}

We will frequently use the fact that $\opn{Fl}(d_1, \dots, d_k; \mbb{F}_\ell)$ is a finite set of order equal to 
\[
\genfrac[]{0pt}{0}{m}{n_1 \cdots n_k}_\ell \defeq \frac{[m]_\ell !}{[n_1]_\ell ! \cdots [n_k]_\ell !}
\]
where $n_i = d_i - d_{i-1}$, with notation as in \S 
   \ref{NotationSection}.

\subsection{Lattice counting}

To study the zeta integrals introduced in Proposition \ref{DefinitionOfZetaIntegral} and their translates by certain Hecke operators, it will be necessary to formulate an inclusion-exclusion principle for the monoid $M$. Essentially, we will use this principle to count the elements in $M^{\geq 1}$ with respect to a suitable measure arising from the Shalika model for $\sigma$.

Recall that we have fixed a left-invariant Haar measure $dx$ on $\opn{GL}_n(\mbb{Q}_\ell)$ (which must also be right-invariant). Let $f \colon \opn{GL}_n(\mbb{Q}_\ell) \to \mbb{C}$ be any locally constant function such that $\int_{\opn{GL}_n(\mbb{Q}_\ell)} f(x) dx < + \infty$. Then the assignment 
\[
X \mapsto \mu_f(X) \defeq \int_X f(x) dx
\]
defines a (finite) measure on the $\sigma$-algebra generated by all compact open subsets of $\opn{GL}_n(\mbb{Q}_\ell)$.

\begin{proposition}[Inclusion-Exclusion] \label{LatticeFcn}
Let $f \colon \opn{GL}_n(\mbb{Q}_\ell) \to \mbb{C}$ be any locally constant function as above. Then
\begin{equation} \label{IncExc}
    \mu_f\left( M^{\geq 1} \right) = \mu_f\left( \bigcup_{L \in \invs{L}_n^{\circ}} M_L \right) = \sum_{C \in \ide{C}} (-1)^{l(C)} \mu_f \left( M_{C_{\mathrm{min}}} \right)
\end{equation}
where $\ide{C}$ denotes the set of chains in $\invs{L}_n^{\circ}$ and $l(C)$ is the length of the chain $C$ (Definition \ref{DefOfChain}). 
\end{proposition}
\begin{proof}
We follow \cite{Narushima}. For any non-empty finite subset $ S $ of $ \mathcal{L}^{\circ}_{n} $, we define a subset $ \overline{S}  \subset  \mathcal{L}^{\circ}_{n} $ as follows: set $ S_{0} = S $, and for $ i \geq 0 $, set $ S_{i+1} = S_{i}  \cup \left \{ L \cap L' | L , L ' \in  S_{i} \right \} $. Then we take $ \overline{S} = \bigcup_{i} S_{i} $. The collection of all such subsets $ \overline{S} $ obtained from any finite subset $ S $ is denoted by $ \mathcal{O}(\mathcal{L}_{n}^{\circ}) $. Recall that for any $ L  \in   \mathcal{L} ^ { \circ }_{n} $, we have set    
\[
M_{L} =  \left \{  g \in \opn{GL}_n(\mbb{Q}_\ell) \,   |    \,    L[g] \subset  L  \right \} \subset M^{\geq 1}. 
\]
Then, $ M ^ { \geq 1 } =  \bigcup _{ L  \in   \mathcal{L } ^ { \circ } _ { n }    }   M _{  L  }$. Indeed, for $g \in M^{\geq 1}$ one has $L[g] \subsetneq \mbb{Z}_\ell^n$, which implies that $L[g] + \ell \mbb{Z}_\ell^n \subsetneq \mbb{Z}_\ell^n$. Therefore, $g \in M_L$ with $L = L[g] + \ell \mbb{Z}_\ell^n$.

By the inclusion-exclusion principle for finite measures, we have 
\begin{equation}   \label{incl-excl}   
\mu_{f} \left  (M  ^ { \geq  1 }  \right )  =    \sum_{\emptyset \neq  S \subset  \mathcal{L} ^{\circ}_{n} }  ( - 1 ) ^ {  |S| - 1 }   \mu_{f}  \left (   \bigcap _{ L  \in  S }  M_{L}    \right  ) .
\end{equation} 
Fix an element $ Y \in  \mathcal{O} (\mathcal{L}_{n} ^ { \circ } ) $ and let 
\[
Y_{0} \defeq Y - \left \{ L \cap L' \, | \, L, L' \in Y,  L \not\subset L', L' \not\subset L \right \} 
\]
i.e. the subset obtained by deleting the intersection of incomparable pairs of lattices.  Set $ s = |Y_{0}| $, $ t = |Y| $, $ l = t - s  $. Clearly, for any non-empty $ S \subset \mathcal{L}^{\circ}_{n} $, we have $ \overline{S} = Y $ if and only if $ Y_{0} \subset S \subset Y $.  Therefore
\begin{align*} \sum _ { S \subset \mathcal{L}^{\circ}_{n}, \, \overline { S  }    = Y }(-1)^{|S|}  &  =  (-1) ^{s}  \sum_{ k = 0 } ^ { l } \binom{l}{k} (-1)^{k}  \\   
& = \begin{cases} (-1) ^{ s }  &   \text{ if } Y = Y_{0} \\
0  &  \text{ otherwise }
\end{cases} 
\end{align*} 
Moreover, $ Y = Y_{0} $ if and only if elements of $ Y $ form a chain $ C \in \ide{C} $. Using the fact that $ \bigcap_{L \in C} M_{L} = M_{C_{\mathrm{min}}}$, the right hand side of (\ref{incl-excl}) is equal to  
\[
\sum_{ C \in \ide{C} }  \left (  \sum _ {S: \, \overline{S} = C } (-1) ^{ |S|-1} \right )   \mu_{f}  (  M_{C_{\mathrm{min}}} )  =   \sum_{C \in \ide{C}} (-1) ^ { l(C) }  \mu_{f}(M_{C_{\mathrm{min}}} )     
\]
and the claim follows. 
\end{proof}

We obtain the following corollary.

\begin{corollary} \label{LambdaCor}
Keeping the same notation as in Proposition \ref{LatticeFcn}, suppose that $f$ is invariant under right-translation of the argument by $K_n$. For an integer $1 \leq m \leq n$ let
\[
\lambda_m  = \# \{L \in \invs{L}_n^{\circ} : \opn{inv}(L) = t_m \} \cdot \sum_{j=0}^{m-1} (-1)^j \cdot \#\left\{ \begin{array}{c} \text{ chains in } \invs{L}_n^{\circ} \text{ of length } j \\ \text{ with minimal element } L[t_m] \end{array} \right\}.
\]
Then
\begin{enumerate}
    \item $\mu_f(M^{\geq 1}) = \sum_{m=1}^n \lambda_m \mu_f\left(Mt_m\right)$.
    \item We have 
    \[
    \lambda_m \equiv \genfrac(){0pt}{0}{n}{m} (-1)^{m+1}
    \]
    modulo $\ell - 1$, which implies that $\sum_{m=1}^n \lambda_m \equiv 1$ modulo $\ell - 1$. 
\end{enumerate}
\end{corollary}
\begin{proof}
Since $f$ is invariant under right-translation by $K_n$, by Proposition \ref{LatticeFcn} and the fact that any element in $\invs{L}_n^{\circ}$ has relative position $t_m$ for some $m$ (Remark \ref{RemarkOnBoolean}), we have
\begin{eqnarray} 
\mu_f(M^{\geq 1}) & = & \sum_{m=1}^n \sum_{\opn{inv}(L) = t_m} \sum_{j=0}^{m-1} (-1)^j D_{j, m} \cdot \mu_f(M_L) \nonumber \\
 & = & \sum_{m=1}^n \lambda_m \cdot \mu_f(Mt_m) \nonumber 
\end{eqnarray}
where $D_{j, m}$ is the number of chains of length $j$ with minimal element $L[t_m]$. To calculate $D_{j, m}$ it is enough to study subspaces of $\mbb{F}_{\ell}^m$ (since $\invs{L}_n^{\circ}$ can be identified with all proper subspaces of $\mbb{F}_\ell^n$). 

Note that $D_{j, m}$ is equal to the sum over $(d_1, \dots, d_{j+1})$ of $\# \opn{Fl}(d_1, \dots, d_{j+1}; \mbb{F}_\ell)$, where the latter space is the flag variety of signature $(d_1,\dots, d_{j+1})$ associated to the vector space $\mbb{F}_\ell^m$ (Definition \ref{DefOfFlagVar}). The order of this flag variety is congruent modulo $\ell - 1$ to the multinomial coefficient 
\[
\genfrac(){0pt}{0}{m}{n_1 \; \cdots \; n_{j+1}}
\]
where $n_i = d_i - d_{i-1}$ (and $d_0 = 0$). This implies that $D_{j, m}$ is congruent to the sum of these coefficients over all $n_i$ such that $n_i \neq 0$ for all $i=1, \dots, j+1$. The total sum of all coefficients is $(j+1)^m$ by the multinomial theorem. We compute $D_{j, m}$ as follows:
\[
D_{j, m} \equiv (j+1)^m - \sum_{\text{some } n_i = 0} = (j+1)^m - \left( \sum_i \sum_{n_i = 0} - \sum_{i, j} \sum_{n_i = n_j = 0} + \cdots \pm \sum_{\text{all } n_i = 0} \right).
\]
This is equal to 
\[
D_{j, m} \equiv (j+1)^m - \left( \genfrac(){0pt}{0}{j+1}{1} j^m - \genfrac(){0pt}{0}{j+1}{2} (j-1)^m + \cdots \pm 0 \right) = \sum_{k = 0}^j (-1)^k \genfrac(){0pt}{0}{j+1}{k} (j+1 - k)^m .
\]
Consider the following variable change $(r, t) = (j-k, k)$. This implies that
\begin{eqnarray}
\sum_{j=0}^{m-1}(-1)^j D_{j, m} & \equiv & \sum_{r=0}^{m-1}(-1)^r (r+1)^m \sum_{t = 0}^{m-r-1} \bincoeff{r+t+1}{t} \nonumber \\
 & = & \sum_{r=0}^{m-1} (-1)^r (r+1)^m \bincoeff{m + 1}{m-r-1} \nonumber \\
 & = & - \sum_{s=0}^{m-1} (-1)^s \bincoeff{m+1}{s} P(s) \nonumber 
\end{eqnarray}
where $P(X) = (X - m)^m$ and for the last equality we have used the change of variables $s = m - r - 1$. Now we have $P(m) = 0$ and $P(m+1) = 1$, and since $P$ is a polynomial of degree $< m+1$, by the standard identities involving binomial coefficients we have
\[
-\sum_{s=0}^{m-1}(-1)^s \bincoeff{m+1}{s} P(s) = (-1)^{m+1} P(m+1) = (-1)^{m+1}.
\]
To complete the proof we note that $\# \{L \in \invs{L}_n^{\circ}: \opn{inv}(L) = t_m \}$ is equal to the number of points in the Grassmannian of $(n-m)$-dimensional subspaces of $\mbb{F}_\ell^n$. This quantity is congruent to 
\[
\bincoeff{n}{n-m} = \bincoeff{n}{m}
\]
modulo $\ell - 1$.
\end{proof}

\begin{example}
Let $n = 2$ and suppose that $f$ is right $K_n$-invariant. Then there are $\ell+2$ elements in $\invs{L}_2^{\circ}$; $\ell+1$ of them have relative position $t_1$ and we will denote them by $L_1, \dots, L_{\ell+1}$, the other has relative position $t_2$, and we will denote it by $S$. There are $\ell+2$ chains of length zero and $\ell+1$ chains of length $1$, namely they are all of the form $S \subset L_i$. Since $f$ is $K_n$-invariant we have 
\[
\mu_f(M^{\geq 1}) = (\ell+1) \mu_f(Mt_1) + \mu_f(Mt_2) - (\ell+1)\mu_f(Mt_2) = (\ell+1) \mu_f(Mt_1) - \ell \mu_f(Mt_2).
\]
\end{example}

\subsection{The $U_{m}$-operators} 

Let $\opn{M}_{m \times n}(\mbb{F}_\ell)$ denote the set of $(m \times n)$-matrices with coefficients in $\mbb{F}_\ell$ which can naturally be identified with the subgroup of $\opn{M}_{n \times n}(\mbb{F}_\ell)$ consisting of matrices with the last $n-m$ rows equal to zero. We also view $\opn{M}_{n \times n}(\mbb{F}_\ell) \subset \opn{M}_{n \times n}(\mbb{Z}_\ell)$ via the Teichm\"{u}ller lift. We emphasise that when talking about the ranks of these matrices, we always mean the rank as a linear map $\mbb{F}_\ell^n \to \mbb{F}_\ell^m$ (as the Teichm\"{u}ller lift doesn't preserve ranks).  

We introduce some operators. For $1 \leq m \leq n$ we let 
\[
U_m = \sum_{X} \tbyt{1}{X}{}{1} \tbyt{t_m}{}{}{1} \; \; \in \; \; \mbb{Z}[G_0(\mbb{Q}_\ell)]
\]
where the sum is over all matrices in $\opn{M}_{m \times n}(\mbb{F}_\ell)$ (the blocks in the above matrices are of size $n \times n$). These operators interact with the Shalika model as follows. 

\begin{lemma} \label{UmactiononShalika}
Let $\varphi_0 \in \sigma$ be a spherical vector. Then
\[
\invs{S}_{U_m \cdot \varphi_0}\tbyt{x}{}{}{1} = \left\{ \begin{array}{cc} 0 & \text{ if } x \notin M \\ \ell^{m n} \invs{S}_{\varphi_0} \tbyt{xt_m}{}{}{1} & \text{ if } x \in M \end{array} \right.
\]
Consequently, we have
\[
Z(\chi(\ell)^m \ell^{n(n-m)} U_m \cdot \varphi_0, \chi; s) = \ell^{n^2 + m(s-1/2)}\int_{Mt_m} S_{\varphi_0}\tbyt{x}{}{}{1} \chi(\opn{det}x)  |\opn{det}x|^{s-1/2} dx
\]
for $\opn{Re}(s)$ large enough so that the integrals converge. 
\end{lemma}
\begin{proof}
Note that the Iwasawa decomposition for $\opn{GL}_{2n}(\mbb{Q}_{\ell})$ implies that
\[
G_0(\mbb{Q}_\ell) = S_0(\mbb{Q}_\ell) \cdot \left\{ \tbyt{x}{}{}{1} : x \in \opn{GL}_n(\mbb{Q}_{\ell}) \right\} \cdot G_0(\mbb{Z}_{\ell})
. \]
As $ \mathcal{S}_{\bullet} $ is injective, we see from the decomposition  that any non-zero vector $\varphi_0 \in \sigma$ fixed by $G_0(\mbb{Z}_\ell)$ satisfies $\mathcal{S}_{\varphi_0}\tbyt{x}{}{}{1} \neq 0$ for some $x \in \opn{GL}_n(\mbb{Q}_{\ell})$. Let $M_n(\mbb{Z}_\ell)$ denote the set of $n \times n$-matrices with coefficients in $\mbb{Z}_\ell$. By right-translating the function $\mathcal{S}_{\varphi_0}(-)$ by $\tbyt{1}{X}{}{1}$ for any $X \in M_n(\mbb{Z}_\ell)$, we find that
\[
\mathcal{S}_{\varphi_0}\tbyt{x}{}{}{1} = \psi_0(\opn{tr}(x X)) \mathcal{S}_{\varphi_0}\tbyt{x}{}{}{1}, \quad \quad x \in \opn{GL}_n(\mbb{Q}_\ell) .
\]
Since for any $x \in \opn{GL}_n(\mbb{Q}_\ell) - \left( \opn{GL}_n(\mbb{Q}_\ell) \cap M_n(\mbb{Z}_\ell) \right)$ we can always find an $X \in M_n(\mbb{Z}_\ell)$ such that $\psi_0(\opn{tr}(x X)) \neq 1$, the support of the function   $x \mapsto \mathcal{S}_{\varphi_0}\tbyt{x}{}{}{1}$ is contained in
 $M = \opn{GL}_n(\mbb{Q}_\ell) \cap M_n(\mbb{Z}_\ell)$.  Furthermore, a direct calculation shows that
\[
\invs{S}_{U_m \cdot \varphi_0}\tbyt{x}{}{}{1} = \sum_{X} \psi_0(\opn{tr}(x \cdot X) ) \cdot  \invs{S}_{\varphi_0}\tbyt{xt_m}{}{}{1}
\]
which implies that the support of 
 $ x \mapsto \mathcal{S}_{U_{m}\cdot \varphi_{0}} \tbyt{x}{}{}{1} $ is  contained in $ Mt_m^{-1}$. Now for each  fixed $x \in Mt_m^{-1}$,  the function $X \mapsto \psi_0(\opn{tr}(x \cdot X) )$ defines an  additive  character $\opn{M}_{m \times n}(\mbb{F}_\ell) \to \mbb{C}^{\times}$ which is trivial if and only if $x \in M$. Therefore, the  first claim  follows from the usual character orthogonality relations. The second  claim   follows from the change of variable $x \mapsto xt_m^{-1}$. 
\end{proof}

\begin{remark}
The action of $U_m$ on the function $\invs{S}_{\varphi_0}(-)$ differs from the action of the more common $U_{\ell}$-operator associated with the parabolic of type $(m, n-m, n)$ by a power of $\ell$. However, it is more convenient for our purposes to use the above definition instead. Since we are eventually going to consider congruences modulo $\ell - 1$ this makes no difference to the end result. 
\end{remark}

We now apply the combinatorics of the previous section.
\begin{theorem} \label{ESRelation}
Let $\chi: \mbb{Q}_\ell^{\times} \to \mbb{C}^{\times}$ be a finite-order unramified character and let $\varphi_0$ be a spherical vector normalised such that $\invs{S}_{\varphi_0}(1) = 1$. Then there exist integers $\lambda_1, \dots, \lambda_n$ (independent of $\chi$) such that 
\[
\tilde{Z}\left((\sum_{m=1}^n \lambda_m \ell^{n(n-m)} \chi(\ell)^m U_m) \cdot \varphi_0, \chi; 1/2 \right) = \ell^{n^2} \left[\tilde{Z}(\varphi_0, \chi; 1/2) - L(\sigma \otimes \chi, 1/2)^{-1} \right].
\]
Furthermore, the sum $\ell^{n^2} - \sum_{m=1}^n \ell^{n(n-m)} \lambda_m$ is divisible by $\ell - 1$. 
\end{theorem}
\begin{proof}
For any $s \in \mbb{C}$, set $f_s(x) = \invs{S}_{\varphi_0}\tbyt{x}{}{}{1} \chi(\opn{det}x) |\opn{det}x|^{s-1/2}$. Then $f_s$ is right $K_n$-invariant and $\int_{\opn{GL}_n(\mbb{Q}_\ell)} f_s(x) dx < + \infty$ for $\opn{Re}(s) \gg 0$. By Corollary \ref{LambdaCor}, there exist integers $\lambda_1, \dots, \lambda_n$ which are independent of $\chi$ and $s$ and satisfy
\[
\mu_{f_s}(M^{\geq 1}) = \sum_{m=1}^n \lambda_m \mu_{f_s}(Mt_m), \quad \quad \opn{Re}(s) \gg 0.
\]
Therefore, by Lemma \ref{UmactiononShalika}, we have 
\begin{eqnarray} 
Z\left((\sum_{m=1}^n \lambda_m \ell^{n(n-m)} \chi(\ell)^m U_m) \cdot \varphi_0, \chi; s \right) & = & \sum_{m=1}^n \lambda_m \ell^{n^2 + m(s-1/2)}\mu_{f_s}(Mt_m) \nonumber \\
 & = & \ell^{n^2}\left( \sum_{m=1}^n \lambda_m (\ell^{m(s-1/2)} - 1) \mu_{f_s}(Mt_m) + \mu_{f_s}(M^{\geq 1}) \right) \nonumber
\end{eqnarray}
for $ \mathrm{Re}(s)  \gg    0 $. But  $ s \mapsto \mu_{f_s}(Mt_m)L(\sigma \otimes \chi, s)^{-1} =  \ell^{-n^2 -m (s-1/2)} \tilde{Z}( \chi(\ell)^{m} \ell^{n(n-m)} U_{m} \cdot \varphi_{0} , \chi ; s ) $ is holomorphic in $s$ and can therefore  be evaluated at $ s = 1/2 $. Therefore, multiplying the  displayed equality  above by $L(\sigma \otimes \chi, s)^{-1}$ and passing to the limit as $s \to 1/2$,  we find that  
   
\[
\tilde{Z}\left((\sum_{m=1}^n \lambda_m \ell^{n(n-m)} \chi(\ell)^m U_m) \cdot \varphi_0, \chi; 1/2 \right) = \ell^{n^2} \lim_{s \to 1/2} \left[ \mu_{f_s}(M^{\geq 1}) L(\sigma \otimes \chi, s)^{-1} \right].
\]
Now to conclude the proof, we note that 
\[
\mu_{f_s}(M^{\geq 1}) = \mu_{f_s}(M) - \mu_{f_s}(K_n) = Z(\varphi_0, \chi; s) - 1, \quad \quad \quad \opn{Re}(s) \gg 0.
\]
The second part of the theorem just follows from Corollary \ref{LambdaCor}. 
\end{proof}

\subsection{Congruences modulo $\ell - 1$} 
We now seek to find the element $\phi$ in Theorem \ref{MainTheorem}. For this, we must prove a congruence relation for the operators $U_m$. As before, we will express the result in terms of the extended group $\gt$, so we introduce some additional notation:

\begin{definition} 
For an element $g \in G_0(\mbb{Q}_\ell)$ let $\widetilde{g}$ denote the element
\[
(1, g, \opn{det}g^{-1}) \in \gt(\mbb{Q}_\ell).
\]
We also let $\widetilde{U}_m$ denote the corresponding operator in $\mbb{Z}[\gt(\mbb{Q}_\ell)]$.
\end{definition}

We note that we have a left action of $H(\mbb{Z}_\ell) = \opn{GL}_1(\mbb{Z}_\ell) \times \opn{GL}_n(\mbb{Z}_\ell) \times \opn{GL}_{n}(\mbb{Z}_\ell)$ on $\opn{M}_{n \times n}(\mbb{F}_\ell)$ given by
\[
(\alpha, A, B) \cdot X = AXB^{-1}
\]
which will be used in the proof of the following proposition. Let $c_{r, m}$ denote the number of rank $r$ matrices in $\opn{M}_{m \times n}(\mbb{F}_\ell)$; explicitly we have 
    \begin{equation} \label{NumberOfRankrMatrices}
    c_{r, m} = \prod_{j=0}^{r-1} \frac{(\ell^m - \ell^j)(\ell^n - \ell^j)}{\ell^r - \ell^j}
    \end{equation}
for $1 \leq r \leq m$, and $c_{0, m} = 1$. 

\begin{proposition} \label{IntegralProposition}
We retain the notation of Theorem \ref{MainTheorem}. Let $\varphi_0 \in \pi$ be a spherical vector. Let $\psi_m = \sum_{r=0}^m c_{r, m} \opn{ch}\left( (g_r, 1) K \right)$, which is a $\mbb{Z}$-valued right $K$-invariant smooth compactly supported function on $\gt(\mbb{Q}_\ell)$. Then
\begin{enumerate}
    \item For any unramified $\chi$ and $\ide{z} \in \opn{Hom}_H(\pi \boxtimes \chi^{-1}, \mbb{C})$
    \[
    \ide{z}( \widetilde{U}_m \cdot \varphi_0 ) = \ide{z}( \psi_m \cdot \varphi_0 ).
    \]
    \item For $r \geq 0$ we have 
    \[
    \opn{Stab}_{H(\mbb{Z}_\ell)} ( X_r ) = V_r \defeq (g_r, 1) K (g_r, 1)^{-1} \cap H(\mbb{Q}_\ell)
    \]
    which implies that $[H(\mbb{Z}_\ell) : V_r] = c_{r, n}$. Furthermore, we have $[V_r : V_{1, r}] = \ell - 1$ when $0 \leq r \leq n-1$ and $V_n = V_{1, n}$. 
\end{enumerate}
\end{proposition}
\begin{proof}
Let $g_X   :  = \tbyt{1}{X}{}{1}\tbyt{t_m}{}{}{1}$ be an element appearing the sum $U_m$. Since $\varphi_0$ is spherical, we have 
\[
g_X \cdot \varphi_0 = \tbyt{t_m A^{-1}}{}{}{B^{-1}} \tbyt{1}{t_m^{-1} (t_m A t_m^{-1})XB^{-1}}{}{1} \cdot \varphi_0
\]
for   arbitrary     $A, B \in \opn{GL}_n(\mbb{Z}_\ell)$. We can choose $A, B$ such that $t_m A t_m^{-1} \in \opn{GL}_n(\mbb{Z}_\ell)$ and $(t_m A t_m^{-1})XB^{-1} \equiv X_r$ modulo $\ell$, where $r$ is the rank of $X$ (viewed as a linear map $\mbb{F}_\ell^n \to \mbb{F}_\ell^m$).\footnote{Working modulo $\ell$, we can first apply column operations to ensure that $X$ has its last $(n-r)$ columns equal to zero. Then apply row operations to ensure that $X$ is only non-zero in its top-left $(r \times r)$ block (such an $A$ exists for this step because $r \leq m$). This block must be invertible, so we can apply column operations to put $X$ in the form $X_r$, as required.} Hence we have 
\[
\ide{z}(\widetilde{g}_X \cdot \varphi_0) = \ide{z}((g_r, 1) \cdot \varphi_0).
\]
Indeed, this follows from that fact that $(1, (t_mA^{-1}, B^{-1}), \ell^{-m} \opn{det}A \opn{det}B^{-1}) \in H(\mbb{Q}_\ell)$ and $\chi(\opn{det}A) = \chi(\opn{det}B) = 1$. This proves the first part of the proposition.

The  second    part of the proposition follows from an explicit calculation and the orbit--stabiliser theorem. Furthermore the induced map 
\[
\nu \colon V_r \to \mbb{Z}_\ell^{\times}
\]
given by $\nu(\alpha, A, B) = \opn{det}A^{-1}\opn{det}B$ is surjective when $r < n$ (for example, take $A=1$ and $B$ equal to a diagonal matrix with the first $n-1$ entries equal to $1$),     so we have $[V_r : V_{1, r}] = \ell - 1$.
\end{proof}

\subsection{Proof of Theorem \ref{MainTheorem}}

We begin with the following lemma:

\begin{lemma}
For $1 \leq r \leq n$ set 
\[
b_r' = \sum_{m=r}^n \ell^{n(n-m)} \lambda_m c_{r, m}
\]
where $\lambda_m$ and $c_{r, m}$ are the integers in Corollary \ref{LambdaCor} and Proposition \ref{IntegralProposition} respectively. Then $(\ell - 1) \cdot c_{r, n}$ divides $b_r'$ when $1 \leq r \leq n-1$, and $b_n' = \lambda_n c_{n, n}$.
\end{lemma}
\begin{proof}
We first show that $c_{r, n}$ divides $\lambda_m c_{r, m}$. Indeed the number of $\mbb{F}_\ell$-points of the Grassmannian of $m$-dimensional subspaces of $n$-dimensional space is equal to $\genfrac[]{0pt}{0}{n}{m}_\ell$, so by Corollary \ref{LambdaCor} the integer $\lambda_m$ is divisible by $\genfrac[]{0pt}{0}{n}{m}_\ell$. Using the formula in (\ref{NumberOfRankrMatrices}) we have 
\[
\genfrac[]{0pt}{0}{n}{m}_\ell \cdot \frac{c_{r, m}}{c_{r, n}} = \genfrac[]{0pt}{0}{n}{m}_\ell \cdot \genfrac[]{0pt}{0}{m}{r}_\ell \cdot \genfrac[]{0pt}{0}{n}{r}^{-1}_\ell = \genfrac[]{0pt}{0}{n-r}{n-m}_\ell .
\]
Furthermore, by Corollary \ref{LambdaCor} we have $\lambda_m \equiv \genfrac(){0pt}{0}{n}{m}(-1)^{m+1}$ modulo $\ell - 1$, which implies that
\[
\ell^{n(n-m)} \cdot \lambda_m \cdot c_{r, m} \cdot c_{r, n}^{-1} \equiv (-1)^{m+1} \genfrac(){0pt}{0}{n-r}{n-m} \text{ modulo } \ell -1 
\]
and thus by the binomial formula, $b_r' c_{r, n}^{-1} \equiv 0 \pmod{\ell - 1}$ in the case $r < n$. This completes the proof of the lemma. 
\end{proof}

We now construct the element $\phi$ claimed in Theorem \ref{MainTheorem}. We set
\[
b_0 = (\ell - 1)^{-1} \left( \ell^{n^2} - \sum_{m=1}^n \lambda_m \ell^{n(n-m)} \right)
\]
which is an integer by Theorem \ref{ESRelation}, and for $r \geq 1$ we set $b_r = -(\ell - 1)^{-1} b_r'$ which is an integer by the above lemma and the fact that $\ell - 1$ divides $c_{n, n}$. Then we set 
\[
\phi \defeq \sum_{r=0}^n b_r \opn{ch}\left( (g_r, 1) K \right) \; \; \in \; \;  \invs{H}\left( \gt(\mbb{Q}_\ell), \mbb{Z} \right)
\]
which naturally acts on the smooth representation $\pi \boxtimes \chi^{-1}$ of $\gt(\mbb{Q}_\ell)$. Let $\varphi_0$ be a spherical vector normalised so that $\mathcal{S}_{\varphi_0}(1) = 1$ and set $\ide{z}(-)   : = \tilde{Z}(-, \chi; 1/2)$. Then we have 
\begin{eqnarray}
\ide{z}( (\ell - 1) \phi \cdot \varphi_0 ) & = & \ide{z} \left( \left( \ell^{n^2} - \sum_{m=1}^n \lambda_m \ell^{n(n-m)} \right)\varphi_0 - \sum_{r = 1}^n b_r' (g_r, 1) \cdot \varphi_0 \right) \nonumber \\
 & = & \ide{z} \left( \left( \ell^{n^2} - \sum_{m=1}^n \lambda_m \ell^{n(n-m)} \right)\varphi_0 - \sum_{r=1}^n \sum_{m=r}^n \ell^{n(n-m)} \lambda_m c_{r, m} (g_r, 1) \cdot \varphi_0  \right) \nonumber \\
 & = & \ide{z} \left( \ell^{n^2} \varphi_0 - \sum_{m=1}^n \ell^{n(n-m)} \lambda_m \sum_{r=0}^m c_{r, m} (g_r, 1) \cdot \varphi_0 \right) \nonumber \\
 & = & \ide{z}\left( \ell^{n^2} \varphi_0 - \sum_{m=1}^n \ell^{n(n-m)} \lambda_m \cdot \widetilde{U}_m \cdot \varphi_0 \right)  \label{cstoUs} \\
 & = & \tilde{Z}\left( \ell^{n^2} \varphi_0 - (\sum_{m=1}^n \lambda_m \ell^{n(n-m)} \chi(\ell)^m U_m)\cdot \varphi_0, \chi; 1/2 \right)  \label{tildetonormal} \\
 & = & \ell^{n^2} L(\sigma \otimes \chi, 1/2)^{-1} \tilde{Z}(\varphi_0, \chi; 1/2) \label{toLvalue} 
\end{eqnarray}
where the equalities in (\ref{cstoUs}), (\ref{tildetonormal}) and (\ref{toLvalue}) follow from Proposition \ref{IntegralProposition}, the fact that $\widetilde{U}_m \cdot \varphi_0 = \chi(\ell)^m U_m \cdot \varphi_0$ and Theorem \ref{ESRelation} respectively. The latter equality also uses the fact that $\tilde{Z}(\varphi_0, \chi; 1/2) = 1$. 

Since $ \ide{z}(-) = \tilde{Z}(-, \chi; 1/2)$ is a basis of the hom-space $\opn{Hom}_H(\pi \boxtimes \chi^{-1}, \mbb{C})$ by  Proposition    \ref{BasisElement}, this completes the proof of the first part of Theorem \ref{MainTheorem}.   Furthermore, by construction $\phi$ is an integral linear combination of indicator functions $\opn{ch}((g_r, 1) K)$, so we have also proven the second part of the theorem.     Finally, by Proposition \ref{IntegralProposition} we have 
\[
(\ell - 1) \cdot b_r \cdot [H(\mbb{Z}_\ell) : V_{1, r}]^{-1} = \left\{ \begin{array}{cc} -b_r' \cdot c_{r, n}^{-1} \cdot (\ell - 1)^{-1} & \text{ if } 1 \leq r \leq n-1 \\ -b_n' \cdot c_{n, n}^{-1} & \text{ if } r=n \end{array} \right.
\]
which in both cases was shown to be an integer. For $r = 0$ the statement follows from the fact that $b_0$ is an integer. 


\section{Vertical norm relations at split primes} \label{VerticalNormRelations}

In this section we construct classes in the ``$p$-direction'' following the general method outlined in \cite{Loeffler19}. As a result we will obtain cohomologically trivial cycles which are compatible as one goes up in the \emph{anti-cyclotomic} tower. We will also need results of this section in the construction of the Euler system map. We do not require $n$ to be odd in this section. 

\subsection{Group theoretic set-up} \label{VerticalSetUp}

Let $p$ be an odd prime which splits in $E/\mbb{Q}$. Then we can consider the local versions of the groups in section \ref{TheGroups} at the prime $p$ -- we will denote these groups by unscripted letters. By Remark \ref{RemIdentifyGroups}, we have identifications 
\begin{eqnarray}
G = \Gb_{\QQ_{p}} & \cong & \opn{GL}_1 \times \opn{GL}_{2n} \nonumber \\
H = \Hb_{\QQ_{p}} & \cong & \opn{GL}_1 \times \opn{GL}_n \times \opn{GL}_n \nonumber \\
T = \Tb_{\QQ_{p}} & \cong & \opn{GL}_1 \nonumber 
\end{eqnarray}
Since all of these groups are unramified, they all admit a reductive model over $\mbb{Z}_p$ which we fix and, by abuse of notation, denote by the same letters. We recall some notation from section \ref{BranchingLawsAndReps}.

\begin{definition}
Let $Q$ denote the Siegel parabolic subgroup of $G$ given by
\[
Q = \opn{GL}_1 \times \tbyt{*}{*}{}{*} 
\]
and let $J \subset G(\mbb{Z}_p)$ denote the parahoric subgroup associated with $Q$ (i.e. all elements in $G(\mbb{Z}_p)$ which lie in $Q$ modulo $p$). We identify the Levi of $Q$ with the subgroup $\iota(H)$. 
\end{definition}

The automorphic representations that we will consider will be ``ordinary'' with respect to a certain Hecke operator associated with the parabolic subgroup $Q \times T \subset \widetilde{G}$, which we will now define. Let $\tau$ and $\tilde{\tau}$ be the elements 
\[
\tau =  1 \times \tbyt{p}{}{}{1} \in G(\mbb{Q}_p) \quad \quad \quad \tilde{\tau} =  1 \times \tbyt{p}{}{}{1}  \times 1 \in \widetilde{G}(\mbb{Q}_p). 
\]
Then for any sufficiently small compact open subgroup $\tilde{K} \subset \Gt(\mbb{A}_f)$,   we let $\invs{T}$ denote the Hecke operator associated with the double coset $[\tilde{K} \tilde{\tau}^{-1} \tilde{K}]$. More concretely, $\invs{T}$ is the operator associated with the following correspondence:
\[
\begin{tikzcd}
   & \opn{Sh}_{\Gt}(\tilde{K} \cap \tilde{\tau} \tilde{K} \tilde{\tau}^{-1}) \arrow[ld]  & \arrow[l, "{[\tilde{\tau}^{-1}]}", swap] \opn{Sh}_{\Gt}(\tilde{K} \cap \tilde{\tau}^{-1} \tilde{K} \tilde{\tau}) \arrow[rd] &   \\
\opn{Sh}_{\Gt}(\tilde{K}) &          &       & \opn{Sh}_{\Gt}(\tilde{K})
\end{tikzcd}
\]
Explicitly, if $p_1$ and $p_2$ denote the left-hand and right-hand projection maps respectively, then $\invs{T} = p_{2,*} \circ [\tilde{\tau}^{-1}]^* \circ p_1^*$ (see also section \ref{HeckeCorrs}). In all choices $\tilde{K}' \subset \tilde{K}$ that we will consider, this operator will be compatible with the pushforward morphism $(\opn{pr}_{\tilde{K}', \tilde{K}})_*$, where $\opn{pr}_{\tilde{K}', \tilde{K}} \colon \opn{Sh}_{\Gt}(\tilde{K}') \to \opn{Sh}_{\Gt}(\tilde{K})$ is the natural map; so we will often omit the level when referring to it (see \cite[\S 4.5]{Loeffler19}). Note that if $\tilde{K} = K \times C $ is decomposable, then $\invs{T}$ only acts on the cohomology of the first factor of $\opn{Sh}_{\Gt}(\tilde{K}) \cong \opn{Sh}_{\mbf{G}}(K) \times \opn{Sh}_{\mbf{T}}(C)$. We will denote the Hecke correspondence induced on $ \Sh_{\mbf{G}} ( K ) $ also by $ \mathcal{ T } $. 

\begin{remark} \label{RemAJAppendixRephrase}
We can rephrase this in the language of Appendix \ref{AbelJacobi} as follows. Let $K \subset \mbf{G}(\mbb{A}_f)$ be a sufficiently small compact open subgroup and, for this remark only, let $\invs{T}_{K}$ denote the Hecke operator $[K \tau^{-1} K]$ defined above, acting on either the cohomology group $\opn{H}^i_{\et}\left(\opn{Sh}_{\mbf{G}}(K), \mathscr{F}_{K} \right)$ or $\opn{H}^i_{\et}\left(\opn{Sh}_{\mbf{G}}(K)_{\overline{E}}, \mathscr{F}_{K} \right)$ for a lisse equivariant \'{e}tale $\Lambda$-sheaf $\mathscr{F}$ (with $\Lambda = \mbb{Z}_p$ or $\mbb{Q}_p$). To ease notation, we will set $K_{\tau} \defeq K \cap \tau^{-1} K \tau$. Then the endomorphism $\invs{T}_{K}$ is induced from the \'{e}tale correspondence 
\begin{equation} \label{EtCorForTK}
\left( \opn{Sh}_{\mbf{G}}(K_{\tau}), [\tau^{-1}]_{K_{\tau}, K}, \opn{pr}_{K_{\tau}, K}, \phi_{K} \right) \in \opn{\acute{E}tCor}\left(\opn{Sh}_{\mbf{G}}(K), \mathscr{F}_{K} \right)
\end{equation}
with morphism $\phi_{K} \colon [\tau^{-1}]_{K_{\tau}, K}^*\mathscr{F}_{K} \to \opn{pr}_{K_{\tau}, K}^* \mathscr{F}_{K}$ given by the composition 
\[
[\tau^{-1}]_{K_{\tau}, K}^*\mathscr{F}_{K} \to \mathscr{F}_{K_{\tau}} \xleftarrow{\sim} \opn{pr}_{K_{\tau}, K}^* \mathscr{F}_{K}
\]
where both maps arise from the equivariant structure on $\mathscr{F}$. Let $\mbb{N} = \{0, 1, 2, ... \}$ viewed as a discrete monoidal category. Then one obtains a monoidal functor $F_K \colon \mbb{N} \to \opn{\acute{E}tCor}\left(\opn{Sh}_{\mbf{G}}(K), \mathscr{F}_{K} \right)$ by defining $F_K(1)$ to be the correspondence in (\ref{EtCorForTK}). If we set $R = \Lambda[\mbb{N}]$ (which is identified with the polynomial algebra over $\Lambda$ in one variable, then one obtains $R$-module structures on $\opn{H}^i_{\et}\left(\opn{Sh}_{\mbf{G}}(K), \mathscr{F}_{K} \right)$ and $\opn{H}^i_{\et}\left(\opn{Sh}_{\mbf{G}}(K)_{\overline{E}}, \mathscr{F}_{K} \right)$.

Now suppose that $\mathscr{F}$ is an equivariant sheaf associated with a continuous representation of $\mbf{G}(\mbb{Q}_p)$ (or of an appropriate open submonoid of $\mbf{G}(\mbb{Q}_p)$ containing $\tau^{-1}$). Let $g \in \mbf{G}(\mbb{A}_f^p)$ which we view as element of $\mbf{G}(\mbb{A}_f)$ in the natural way. Suppose that $L \subset \mbf{G}(\mbb{A}_f)$ is a sufficiently small compact open subgroup such that $g^{-1}Lg \subset K$, and assume that $K = K^pK_p$, $L = L^pL_p$ with $L_p = K_p$. Then we obtain a commutative diagram of correspondences
\[
\begin{tikzcd}
\opn{Sh}_{\mbf{G}}(L) \arrow[d, "{[g]}"'] & \opn{Sh}_{\mbf{G}}(L_{\tau}) \arrow[d, "{[g]}"] \arrow[l, "{[\tau^{-1}]}", swap] \arrow[r, "\opn{pr}"] & \opn{Sh}_{\mbf{G}}(L) \arrow[d, "{[g]}"] \\
\opn{Sh}_{\mbf{G}}(K)                     & \opn{Sh}_{\mbf{G}}(K_{\tau}) \arrow[l, "{[\tau^{-1}]}", swap] \arrow[r, "\opn{pr}"]                    & \opn{Sh}_{\mbf{G}}(K)                   
\end{tikzcd}
\]
because $g$ and $\tau$ commute. Both squares are Cartesian because the degrees of the horizontal maps only depend on $L_p = K_p$ and the degrees of the vertical maps are all equal to $[K^p : g^{-1} L^p g]$ (c.f. the proof of Lemma \ref{Heckecommutes} for a similar calculation). Furthermore, under the canonical identification $[g]^*\mathscr{F}_K = \mathscr{F}_L$ (recall that the action of $\mbf{G}(\mbb{A}_f)$ factors through $\mbf{G}(\mbb{Q}_p)$) one has $\phi_{L} = [g]^*\phi_K$, because of the cocycle condition for the equivariant sheaf $\mathscr{F}$ and the fact that $g$ and $\tau^{-1}$ commute. This implies that the monoidal functors $F_L$ and $F_K$ are compatible under $[g]$ (in the sense of Definition \ref{DefOfCompatUnderPi}) and so the Hecke operators $\invs{T}_K$ and $\invs{T}_L$ commute with $[g]^*$ and $[g]_*$.

One has a similar description for the group $\Gt$ and the Hecke operator $[\tilde{K} \tilde{\tau}^{-1} \tilde{K}]$ as above -- in this case, if the level $\tilde{K}$ is of the form $\tilde{K}^p \tilde{K}_p$ with $\tilde{K}_p = K_p \times C_p$, then the Hecke operator $\invs{T}$ is compatible with varying the levels $\tilde{K}^p$ and $C_p$.
\end{remark}

\subsection{The flag variety} \label{TheFlagVariety}

We now consider the spherical variety that underlies the whole construction in this section. Let $\overline{Q}$ denote the opposite of $Q$ (with respect to the standard torus) with unipotent radical $\overline{N}$. We will also abusively write $\overline{N}$ for the unipotent radical of $\overline{Q} \times T$. Then we consider the flag variety $\invs{F} = \widetilde{G}/(\overline{Q} \times T) = G/\overline{Q}$ over $\opn{Spec}\mbb{Z}_p$, which has a natural left action of $\widetilde{G}$ (which factors through the projection to $G$). Let $\tilde{u} \in \widetilde{G}(\mbb{Z}_p)$ denote the element 
\[
\tilde{u} = 1 \times \tbyt{1}{1}{}{1} \times 1 
\]
and consider the following reductive subgroup $L^\circ \triangleleft \iota(H) \times T$ given by
\[
L^\circ = \iota(H) \times 1 = \opn{GL}_1 \times \left\{ \tbyt{X}{}{}{Y} : X, Y \in \opn{GL}_n \right\} \times 1 .
\]
Then we verify the following properties.
\begin{lemma} \label{flag}  
Keeping the same notation as above, the following are satisfied:
\begin{enumerate}
 \item The stabiliser $\opn{Stab}_{H}([\tilde{u}]) = (\iota, \nu)(H) \cap  \tilde{u} (\overline{Q} \times T) \tilde{u}^{-1}$ is contained in $\tilde{u} \cdot L^\circ \cdot \tilde{u}^{-1}$. 
 \item The $H$-orbit of $\tilde{u}$ in $\invs{F}$ (over $\opn{Spec}\mbb{Z}_p$) is Zariski open. 
\end{enumerate}
\end{lemma}
\begin{proof}
The first part is a routine check that is left to the reader. For the second part we note that we have the following equality
\[
(\iota, \nu)(H) \cap \tilde{u}(\overline{Q} \times T)  \tilde{u}^{-1} =  \GL_{1} \times \left\{ \tbyt{X}{}{}{X} : X \in \opn{GL}_n \right\} \times 1
\]
which implies that the orbit of $[\tilde{u}]$ can be identified with a copy of $\opn{GL}_n \subset \invs{F}$. One can check that this is open.
\end{proof}

We define the following level subgroups
\begin{itemize}
 \item $U_r = \{ g \in \tilde { G } (\mbb{Z}_p) : g \pmod{p^r} \in L^\circ   \text{ and }    \tilde{\tau}^{-r} g \tilde{\tau}^{r} \in \tilde{G} (\ZZ_{p} )  \}    $ 
 \item $V_r = \tilde{\tau}^{-r}U_r \tilde{\tau}^r $
\end{itemize}
for $r \geq 0$. We let $D_{p^r} \subset T(\mbb{Z}_p)$ denote the group of all elements $x \equiv 1$ modulo $p^r$. Then $V_r$ and $U_r$ are compact open subgroups of $J \times D_{p^r}$ for $r \geq 1$ (so in the notation of section \ref{FunctorsFromEt}, we have $V_r, U_r \in \Upsilon_{\gt_p}$).    

\begin{remark}
The conditions defining these subgroups are slightly stronger than in \cite[\S 4.4, \S 4.6]{Loeffler19}, however this does not affect the validity of \emph{op.cit.} because we have a stronger property on the flag variety (Lemma \ref{flag}(1)). Moreover, the second condition in the definition of $U_r$ is superfluous in our case, however we have included it to keep in line with the notation in \emph{op.cit.}
\end{remark}

The following lemma ensures that the action of the Hecke operator $\invs{T}$ will be independent of the choice of level subgroup.

\begin{lemma}   \label{Iwahori}
Let $r \geq 1$ and set $V_{r}' = V_{r} \cap \tilde{\tau} V_{r} \tilde{\tau}^{-1}$. Then 
\begin{enumerate}[(a)]
\item $V_{r}' \backslash V_{r} / V_{r+1}$ is a singleton and $V_{r}' \cap V_{r+1} = V_{r+1}'$. 
\item Set $(J \times D_{p^r})' = \tilde{\tau} (J \times D_{p^r}) \tilde{\tau}^{-1} \cap (J \times D_{p^r})$. The double coset space 
\[
( J \times D_{p^{r}})' \backslash J \times  D_{p^{r}} / V_{r}
\]
is a singleton and $V_{r} \cap ( J \times  D_{p^{r}})' = V_{r}'$.    
\end{enumerate}
\end{lemma}    
\begin{proof} 
In terms of $ U_{r} $, the first claim in part (a) amounts to showing that 
\[
( U_{r}  \cap  \tilde{\tau} U_{r} \tilde{\tau}^{-1}   ) \backslash      U_{r} / \tilde{\tau}^{-1} U_{r+1} \tilde{\tau} 
\]
is a singleton. Notice that we have the Iwahori decomposition 
\[
 U_{r} = \overline{N}_{r} \cdot L_{r} \cdot   N_{r}
\]
where $ N$, $\overline{N}$ are the unipotent radicals of $ Q \times T $, $ \overline{Q} \times T $ respectively, $ L = \iota(H) \times T \subset G \times   T $ is the Levi, and $ N_{r} \defeq \tilde{\tau}^{r} N(\mbb{Z}_p) \tilde{\tau}^{-r} $, $ \overline{N}_{r} \defeq \tilde{\tau}^{-r}   \overline{N}(\mbb{Z}_p)   \tilde{\tau}^{r}   $ and $ L_{r} = L^{\circ}(\ZZ_{p}) (1 \times D_{p^{r}}) \subset G(\ZZ_{p}) \times T(\ZZ_{p})$. One then easily sees that 
\[
U_{r} \cap \tilde{\tau} U_{r} \tilde{\tau}^{-1} =  \overline{N}_{r} \cdot L_{r} \cdot N_{r+1}  ,  \quad  \quad  \tilde{\tau} ^{-1} U_{r+1}  \tilde{\tau}  =   \overline{N}_{r+2} \cdot L_{r+1} \cdot  N_{r}   
\]
which implies that $U_r = \left(U_r \cap \tilde{\tau}U_r \tilde{\tau}^{-1}\right) \cdot \left(\tilde{\tau}^{-1}U_{r+1}\tilde{\tau}\right)$ as required. As 
\[
V_{r}' = \overline{N}_{2r} \cdot L_{r} \cdot N_{1}, \quad  \quad  V_{r+1} = \overline{N}_{2r+2} \cdot L_{r+1} \cdot N_0 
\]
their intersection equals $\overline{N}_{2r+2} \cdot L_{r+1} \cdot N_{1} = V_{r+1}' $. The proof of part (b) is similar.    
\end{proof}

\subsection{Abstract localisations} \label{AbstractLocalisations}

To obtain classes in the cohomology of $ \Sh_{\Gb}$ that are cohomologically trivial, it turns out it is enough to construct universal norms out of them in the ``$p$-direction''. Once we have accomplished this, we can apply the constructions in Appendix \ref{AbelJacobi} to map to the cohomology of the Galois representation attached to $\pi$. We obtain these universal norms by localising the cohomology of $\opn{Sh}_{\Gt} $ at a suitable maximal ideal of the ring $\mbb{Z}_p[\invs{T}]$. Informally, this process will cut out the part of the cohomology on which $\invs{T}$ acts through a scalar, which is congruent to a fixed element of $\overline{\mbb{F}}_p^{\times}$ modulo $p$. We describe this abstract process in this section.

Let $\mbb{Z}_p[X]$ denote the polynomial algebra in one variable over $\mbb{Z}_p$. This is a unique factorisation domain and its quotients satisfy the following property:

\begin{lemma} \label{LemmaRingDecomposition}
Let $h(X) \in \mbb{Z}_p[X]$ denote any monic polynomial (of positive degree) and set $\mbb{T} = \mbb{Z}_p[X]/h(X)$. Then one has a canonical isomorphism of rings:
\[
\mbb{T} \cong \bigoplus_{\ide{m} \subset \mbb{T}} \mbb{T}_{\ide{m}}
\]
where the sum is over all maximal ideals of $\mbb{T}$. 
\end{lemma}
\begin{proof}
Since $\mbb{Z}_p$ is a Henselian local ring, this follows from \cite[Tag 04GG, Lemma 10.153.3 (10)]{stacks-project}.
\end{proof}

For any element $\alpha \in \overline{\mbb{Z}}_p$ we let $\ide{m}_{\alpha}$ denote the kernel of the composition:
\[
\mbb{Z}_p[X] \to \mbb{F}_p[X] \xrightarrow{X \mapsto \alpha} \overline{\mbb{F}}_p .
\]
This defines a maximal ideal of $\mbb{Z}_p[X]$. We obtain the following corollary:

\begin{corollary} \label{CorFinGenSplitting}
Let $M$ be a $\mbb{Z}_p[X]$-module which is finitely-generated over $\mbb{Z}_p$. Then
\begin{enumerate}
    \item For any maximal ideal $\ide{n} \subset \mbb{Z}_p[X]$, the localisation 
    \[
    M_{\ide{n}} = M \otimes_{\mbb{Z}_p[X]} \mbb{Z}_p[X]_{\ide{n}}
    \]
    is finitely-generated over $\mbb{Z}_p$. Moreover, the natural map $M \to M_{\ide{n}}$ is surjective and admits a $\mbb{Z}_p[X]$-linear splitting.
    \item For $\alpha \in \overline{\mbb{Z}}_p$ and a $\overline{\mbb{Z}}_p[X]$-module $N$ on which $X$ acts through the scalar $\alpha$, any $\mbb{Z}_p[X]$-linear morphism $f \colon M \to N$ factors through the natural map $M \twoheadrightarrow M_{\ide{m}_{\alpha}}$.
\end{enumerate}
\end{corollary}
\begin{proof}
Let $h(X)$ be a monic polynomial (of positive degree) which kills $M$ (such a polynomial exists by the Cayley--Hamilton theorem) and set $\mbb{T} = \mbb{Z}_p[X]/h(X)$. By Lemma \ref{LemmaRingDecomposition}, we have the following canonical decomposition
\[
M = M \otimes_{\mbb{T}} \mbb{T} = \bigoplus_{\ide{m} \subset \mbb{T}} (M \otimes_{\mbb{T}} \mbb{T}_{\ide{m}} )
\]
where the sum is over all maximal ideals of $\mbb{T}$. If $h(X) \not\in \ide{n}$ then $M_{\ide{n}} = 0$ and the claim is vacuous. Otherwise, there is a one-to-one correspondence between maximal ideals $\ide{n}$ containing $h(X)$ and maximal ideals of $\mbb{T}$. Since the action of $\mbb{Z}_p[X]$ on $M$ factors through $\mbb{T}$, we have the following identification
\[
M_{\ide{n}} = M \otimes_{\mbb{T}} \mbb{T}_{\ide{m}}
\]
where $\ide{m} \subset \mbb{T}$ is the maximal ideal corresponding to $\ide{n}$. The decomposition above immediately gives part (1).

The second part of the corollary follows from the fact that for any $g(X) \in \mbb{Z}_p[X] - \ide{m}_{\alpha}$, one has $g(\alpha) \in \overline{\mbb{Z}}_p^{\times}$. 
\end{proof}

\begin{remark} \label{RemRationalLocalisation}
One also has a rational version of this as follows. For $\alpha \in \Qpb$ let $\ide{t}_{\alpha}$ denote the kernel of the evaluation-at-$\alpha$ map $\mbb{Q}_p[X] \to \Qpb$. Then if $N$ is a $\mbb{Q}_p[X]$-module which is finite-dimensional over $\mbb{Q}_p$, then the localisation
\[
N_{\ide{t}_{\alpha}} = N \otimes_{\mbb{Q}_p[X]} \mbb{Q}_p[X]_{\ide{t}_{\alpha}}
\]
is finite-dimensional over $\mbb{Q}_p$ and is naturally a direct summand of $N$ (if $\alpha \in \mbb{Q}_p$ then this is simply the generalised eigenspace where $X = \alpha$).

If $\alpha \in \overline{\mbb{Z}}_p$, $M$ is a $\mbb{Z}_p[X]$-module which is finitely-generated over $\mbb{Z}_p$, and $f \colon M \to N$ is a $\mbb{Z}_p[X]$-linear map, then one has an induced map $M_{\ide{m}_{\alpha}} \to N_{\ide{t}_{\alpha}}$ which commutes with the splittings $M_{\ide{m}_{\alpha}} \hookrightarrow M$ and $N_{\ide{t}_{\alpha}} \hookrightarrow N$.
\end{remark}

\subsection{Application to cohomology functors}

In this subsection we will freely use the notation from \S \ref{BranchingLawsAndReps}--\ref{FunctorsFromEt}. By a slight abuse of notation, if a $\mbb{Z}_p$-module $M$ carries a $\mbb{Z}_p$-linear action of the Hecke operator $\invs{T}$ then we will write $M$ is a $\mbb{Z}_p[\invs{T}]$-module to indicate that $M$ is a $\mbb{Z}_p[X]$-module with $X$ acting through $\invs{T}$ (so in particular, we do \emph{not} use the notation $\mbb{Z}_p[\invs{T}]$ to indicate the image of $\mbb{Z}_p[X]$ in the endomorphism ring of $M$). Similarly, if we define $\invs{U}_S' \defeq \mu(\tau)\invs{T}$ where $\mu$ is the highest weight of the algebraic representation $V$, then we will write $N$ is a $\mbb{Q}_p[\invs{U}'_S]$-module to indicate that $N$ is a $\mbb{Q}_p[X]$-module with $X$ acting through $\invs{U}'_S$.

We fix the level in the $ G_{p} $ component to be $ J  \in  \Upsilon_{G_{p}} $ in this section (although the results apply for any fixed level $ K_{p}   $). We note that each $ M_{\tilde{G}, \ZZ_{p}}(K^pJ \times C )$ is endowed with an action of $ \mathcal{T}$, and these actions commute with pullbacks and pushforwards. More precisely

\begin{lemma} \label{LemmaCohFuncHecke}
Let $ M_{\tilde{G}, \ZZ_{p}}^{p} \colon \mathcal{P}_{\tilde{G}^{p}} \times \mathcal{P}_{T_{p}} \to \ZZ_{p}  \textbf{-Mod} $ denote the cohomology functor 
\[
(K^p \times C^p, C_p) \mapsto  M_{\tilde{G},\ZZ_{p}}(K^pJ \times C^pC_p).
\]
Then $\mathcal{T}$ is an endo-functor of $ M_{\tilde{G},\ZZ_{p}}^{p} $. In other words, the functor $ M_{\tilde{G}, \ZZ_{p}}^{p} $ is valued in $ \ZZ_{p}[\mathcal{T}] \textbf{-Mod}$. 
\end{lemma}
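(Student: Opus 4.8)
The plan is to observe that $\mathcal{T}$ is a Hecke correspondence supported entirely in the $G_p$-direction, while every morphism of $\mathcal{P}_{\tilde G^p}\times\mathcal{P}_{T_p}$ acts only in the complementary directions, so that the desired naturality is a direct instance of Lemma \ref{Heckecommutes}.

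First I would make the ``$G_p$-direction'' assertion precise. Under the decomposition $\tilde G_p = G_p\times T_p$ the element $\tau$ of Section \ref{VerticalSetUp} has the form $\tau = \tau_p\times 1$ with $\tau_p\in G_p$ and trivial $T_p$-component, and by the standing hypothesis on $\Sigma_{G_p}$ we have $\tau_p^{-1}\in\Sigma_{G_p}$. Since the $G_p$-component of every level occurring in $M^p_{\tilde G,\mathbb{Z}_p}$ is the fixed Siegel parahoric $J$, for a level $K^pJ\times C^pC_p$ with $\tilde G_p$-part $U = J\times C_p$ the double coset $[U\tau^{-1}U]$ equals $(J\tau_p^{-1}J)\times C_p$; every intermediate group arising in the correspondence $[U\tau^{-1}U]$ (namely $U\cap\tau U\tau^{-1}$ and $\tau^{-1}U\tau\cap U$, whose $G_p$-parts $J\cap\tau_pJ\tau_p^{-1}$ and $\tau_p^{-1}J\tau_p\cap J$ are subgroups of $J$) is a compact open subgroup of $U$, hence lies in $\Upsilon_{\tilde G}$; and by the observation in Section \ref{HeckeCorrs} the correspondence depends only on the double coset. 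Thus $\mathcal{T}$ is a well-defined $\mathbb{Z}_p$-linear endomorphism of each $M_{\tilde G,\mathbb{Z}_p}(K^pJ\times C^pC_p)$, and, as recorded in Section \ref{VerticalSetUp}, it acts as $\mathrm{id}\boxtimes[J\tau_p^{-1}J]$ under $\tilde G_f\cong(\tilde G^p\times T_p)\times G_p$; in particular it is unaffected by how $C^pC_p$ is split, so it determines an endomorphism of the value $M^p_{\tilde G,\mathbb{Z}_p}(K^p\times C^p, C_p)$.

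Next I would verify naturality. Let $\mathfrak m\colon(L^p\times D^p, D_p)\to(K^p\times C^p, C_p)$ be a morphism in $\mathcal{P}_{\tilde G^p}\times\mathcal{P}_{T_p}$; it is given by an element of $\tilde G^p\times T_p$, so in the decomposition $\tilde G_f\cong(\tilde G^p\times T_p)\times G_p$ it has the shape $(g_1,1)$. One must show that $\mathcal{T}$ commutes with the pullback $\mathfrak m^\star$ and, when the relevant element lies in $\Sigma^{-1}$, with the pushforward $\mathfrak m_\star$. Both follow from Lemma \ref{Heckecommutes} applied with $\mathcal{P}_2 = \mathcal{P}_{G_p}$, $\mathcal{P}_1 = \mathcal{P}_{\tilde G^p}\times\mathcal{P}_{T_p}$, $\sigma = \tau_p^{-1}\in\Sigma_{G_p}$, $J\in\Upsilon_{G_p}$ the Siegel parahoric, $K = (K^p\times C^pC_p)\times J$, $L = (L^p\times D^pD_p)\times J$ (the same $J$ in the $G_p$-slot on both sides, as forced by the definition of $M^p_{\tilde G,\mathbb{Z}_p}$), and $g = (g_1,1)$: since $M_{\tilde G,\mathbb{Z}_p}$ has already been shown to be a Cartesian cohomology functor, Lemma \ref{Heckecommutes} gives $[g]^\star\circ[K\tau_p^{-1}K] = [L\tau_p^{-1}L]\circ[g]^\star$ together with its pushforward analogue, which is exactly the statement that $M^p_{\tilde G,\mathbb{Z}_p}$ is valued in $\mathbb{Z}_p[\mathcal{T}]$-modules.

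The only real work is the bookkeeping required to match the situation with the hypotheses of Lemma \ref{Heckecommutes}: one must note that $\tau$ has trivial $T_p$-component (so that $\mathcal{T}$ genuinely lives in the $G_p$-factor), that the level subgroups that intervene — the intersections $J\cap\tau_p^{\pm1}J\tau_p^{\mp1}$ and the groups $U_\gamma$ in the Cartesian square, all of which are subgroups of $J$ — still lie in $\Upsilon$ even though $\tau_p\notin\Sigma_{G_p}$ in general (this is fine because $\Upsilon_{G_p}$ consists of \emph{all} compact opens of $\Sigma_{G_p}\supset J$, and only intersections with $J$ ever occur), and that the $G_p$-level is pinned to $J$ throughout. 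I do not expect any serious obstacle beyond this routine matching of notation.
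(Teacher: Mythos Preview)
Your proposal is correct and follows exactly the paper's approach: the paper's entire proof is the single sentence ``This is a consequence of Lemma \ref{Heckecommutes},'' and you have simply unpacked the bookkeeping needed to apply that lemma with the factorisation $\tilde G_f \cong (\tilde G^p \times T_p)\times G_p$, $\sigma=\tau^{-1}$, and the $G_p$-level pinned at $J$. The extra care you take (checking $\tau^{-1}\in\Sigma_{G_p}$, that the intermediate groups lie in $\Upsilon$, and that morphisms in $\mathcal P_{\tilde G^p}\times\mathcal P_{T_p}$ have trivial $G_p$-component) is all routine and accurate.
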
  
\begin{proof}   
This is a consequence of Lemma \ref{Heckecommutes} (see also Remark \ref{RemAJAppendixRephrase}).
\end{proof}  

We now localise this functor at a maximal ideal of the ring $\mbb{Z}_p[\invs{T}]$ and pass to the inverse limit.

\begin{lemma} 
Let $\alpha \in \overline{\mbb{Z}}_p$.
\begin{enumerate} 
\item The mapping $ M_{\tilde{G},\ZZ_{p}, \alpha} \colon  \mathcal{P}_{\tilde{G}^{p}} \times \mathcal{P}_{T_{p}}  \to  \mathcal{\ZZ}_{p}\textbf{-Mod} $ given by 
\[
(K^p \times C^p, C_p)   \mapsto   M_{\tilde{G},\ZZ_{p}} ( K^pJ \times C^pC_p )_{\ide{m}_{\alpha}} 
\]
is a $ \ZZ_{p}[\mathcal{T}]_{\ide{m}_{\alpha}}$-valued CoMack functor, where $\ide{m}_{\alpha}$ is the maximal ideal defined in \S \ref{AbstractLocalisations}.   
\item The mapping $ M_{\tilde{G}, \Iw, \ZZ_{p}, \alpha} \colon \mathcal{P}_{\tilde{G}^{p}} \to \ZZ_{p}[\mathcal{T}]_{\ide{m}_{\alpha}}$-\textbf{Mod} given by  
\[
K^{p} \times C^p \mapsto \varprojlim_{ C_{p}} M_ { \tilde { G } , \ZZ_{p}, \alpha} ( K^pJ \times C^p C_{p}  )
\]
defines a CoMack functor.
\end{enumerate}
\end{lemma} 

\begin{proof}
This follows from the fact that the Hecke operator $\invs{T}$ commutes with pushforwards and pullbacks arising from morphisms in $\invs{P}_{\gt^p} \times \invs{P}_{T_p}$ (see Lemma \ref{LemmaCohFuncHecke}).
\end{proof}  

Let $ M_{ H , \ZZ_{p}} ^ { p } \colon \mathcal { P } _ { H ^ { p } } \to   \ZZ_{p} $-\textbf{Mod} denote the CoMack functor $ U ^ {p } \mapsto M _ { H ,  \ZZ_{p} } ( U ^ { p } H ( \ZZ_{p} ) ) $. 

\begin{proposition} \label{pushingintofs} 
Let $\alpha \in \overline{\mbb{Z}}_p^{\times}$. There is a (Cartesian) pushforward of cohomology functors
\[
\iota_{ *, \alpha} \colon M_ { H ,  \ZZ_{p}} ^ { p } \to M_{\tilde{G}, \Iw , \ZZ_{p}, \alpha } .
\]
where the latter is viewed as a functor into $\mbb{Z}_p$\textbf{-Mod} by forgetting the extra structure.
\end{proposition}   
\begin{proof} 
We apply the results of \cite{Loeffler19} to the pushforward $\iota_* \colon M_{H,\ZZ_{p}}\to M_{\tilde{G},\ZZ_{p}}$. Let $ V_{r} \in \mathcal{P} _ { \tilde {G } _ { p } } $ and $ D _ { p ^ { r} } \in \mathcal { P } _ { T _ { p } } $ be as in section \ref{TheFlagVariety}. By Lemma \ref{flag} and Theorem 4.5.3 in \emph{op.cit.}, we have a (Cartesian) pushforward 
\begin{equation} \label{LPush}
M _ { H , \ZZ_{p}} ^ { p } \to \displaystyle { \varprojlim _ { r } M _ { \tilde { G } , \ZZ_{p}} ( - \cdot V_{r} )_{\ide{m}_{\alpha}} }
\end{equation}
where the target is a functor on $ \mathcal { P } _ { \tilde { G } ^ { p } } $.  Explicitly, let $\eta_r = \tilde{u} \tilde{\tau}^r$ and $U^p = H^p \cap K^p$. For all $r \geq 1$, we have a map $ \theta_{r, K^{p}}   $ given by the composition:
\begin{eqnarray}
M_{H,\ZZ_{p}}(U^p \cdot H(\mbb{Z}_p) ) & \xrightarrow{\opn{pr}^*} & M_{H,\ZZ_{p}}(U^p \cdot (H_p \cap \eta_r V_r \eta_r^{-1}) ) \nonumber \\
 & \xrightarrow{\iota_*} & M_{\tilde{G},\ZZ_{p}}(K^p \cdot (\eta_r V_r \eta_r^{-1} ) ) \nonumber \\
 & \xrightarrow{[\eta_r]_*} & M_{\tilde{G},\ZZ_{p}}(K^p \cdot V_r) \nonumber \\
 & \xrightarrow{\invs{T}^{-r}} & M_{\tilde{G},\ZZ_{p}}(K^p \cdot V_r)_{\ide{m}_{\alpha}} \nonumber 
\end{eqnarray}
since $\invs{T} \not\in \ide{m}_{\alpha}$. For any $ r $, the maps $ \theta_{r, K^{p}} $, $ \theta_{r+1, K^{p}} $ commute with the pushforward map from level $  K^{p}  \cdot  V_{r+1} $ to $ K^{p}  \cdot  V_{r} $ --  this amounts to showing the commutativity of the bottom left square of the diagram in  \S 4.5 of \emph{op.cit.}, which is where the Cartesian property of $ \iota_{*} $ is invoked (here we use Lemma \ref{Iwahori} (a) and the Cartesian property of $ M_{\tilde{G} , \ZZ_{p}}  $). As the dependence on $K^{p} $ is functorial by Lemma \ref{Heckecommutes}, this gives the pushforward claimed in (\ref{LPush}).  

Now $V_r \subset J \times D_{p^r}$, so there also exist projection maps   
\[
\opn{pr}_* \colon M_{\tilde{G}}(- \cdot V_r) \longrightarrow M_{\tilde{G}}(- \cdot (J \times D_{p^r}) ).
\]
Again, by Lemma \ref{Heckecommutes}, this is a pushforward of cohomology functors, and moreover commutes with the action of $\invs{T}$ by Lemma \ref{Iwahori} (b), whence also  with  the  localisation with respect to $\ide{m}_{\alpha}$. We therefore obtain a pushforward  of cohomology functors 
\[
\iota_{*, \alpha} \colon   M_{H,\ZZ_{p}}^{p} \to  \varprojlim _ { r  } M_{\tilde{G} , \ZZ_{p}}  \left(-\cdot (J \times D_{p^{r}})\right)_{\ide{m}_{\alpha}}  
\]
by composing (\ref{LPush}) with $\opn{pr}_*$ (after localising at $ \ide{m}_{\alpha} $), as required.
\end{proof}

\begin{remark} \label{RemRationalpushinginto}
If we let $M^p_{H, \mbb{Q}_p} \colon \invs{P}_{H^p} \to \mbb{Q}_p$\textbf{-Mod} and $M_{\gt, \opn{Iw}, \mbb{Q}_p, \alpha} \colon \invs{P}_{\gt^p} \to \mbb{Q}_p[\invs{U}_S']_{\ide{t}_{\alpha}}$\textbf{-Mod} denote the cohomology functors 
\[
M_{H, \mbb{Q}_p}^p(U^p) = M_{H, \mbb{Q}_p}(U^p H(\mbb{Z}_p)) \quad \quad \quad M_{\gt, \opn{Iw}, \mbb{Q}_p, \alpha}(K^p \times C^p) = \varprojlim_{C_p} M_{\gt, \mbb{Q}_p}(K^pJ \times C^p C_p)_{\ide{t}_{\alpha}}
\]
then (for $\alpha \in \Qpb^{\times}$) we obtain a pushforward of cohomology functors $\iota_{*, \alpha} \colon M_{H, \mbb{Q}_p}^p \to M_{\gt, \opn{Iw}, \mbb{Q}_p, \alpha}$ given by the composition

\begin{eqnarray}
M_{H,\mbb{Q}_{p}}(U^p \cdot H(\mbb{Z}_p) ) & \xrightarrow{\opn{pr}^*} & M_{H,\mbb{Q}_{p}}(U^p \cdot (H_p \cap \eta_r V_r \eta_r^{-1}) ) \nonumber \\
 & \xrightarrow{\iota_*} & M_{\tilde{G},\mbb{Q}_{p}}(K^p \cdot (\eta_r V_r \eta_r^{-1} ) ) \nonumber \\
 & \xrightarrow{\mu(\tau)^r [\eta_r]_*} & M_{\tilde{G},\mbb{Q}_{p}}(K^p \cdot V_r) \nonumber \\
 & \xrightarrow{(\invs{U}_S')^{-r}} & M_{\tilde{G},\mbb{Q}_{p}}(K^p \cdot V_r)_{\ide{t}_{\alpha}} \nonumber 
\end{eqnarray}
where, as usual, $\mu$ is the highest weight of the algebraic representation $V$. By Lemma \ref{LemmaCompatibleAction}, whenever $\alpha \in \overline{\mbb{Z}}_p^{\times}$, one has a commutative diagram of cohomology functors
\[
\begin{tikzcd}
{M_{H, \mbb{Q}_p}^p} \arrow[r, "{\iota_{*, \opn{Iw}, \alpha}}"]           & {M_{\gt, \opn{Iw}, \mbb{Q}_p, \alpha}}           \\
{M_{H, \mbb{Z}_p}^p} \arrow[u] \arrow[r, "{\iota_{*, \opn{Iw}, \alpha}}"] & {M_{\gt, \opn{Iw}, \mbb{Z}_p, \alpha}} \arrow[u]
\end{tikzcd}
\]
where the vertical arrows are the natural ones.
\end{remark}

\subsection{Pushforward to Iwasawa  cohomology} 
For a compact open subgroup $ C \in \Upsilon_T $, we consider the finite group 
\[
\Delta(C) \defeq \mbf{T}(\mbb{Q}) \backslash \mbf{T}(\mbb{A}_f) / C 
\]
which carries a transitive action of $\Gal(E^{\mathrm{ab}}/E)$ via translation by the surjective character $\kappa _ { C } = \kappa $, where we set $\kappa_{ C } (\sigma) = \bar{s}_f/s_f$ for any idele $s \in \mbb{A}_E^{\times}$ satisfying $\opn{Art}_E(s) = \sigma$. We let $E(C ) $ denote the finite abelian extension of $E$ corresponding to the kernel of this character. We note the following application of Shapiro's lemma.
\begin{lemma} \label{ShapiroLemma}
We have an isomorphism 
\[
\opn{H}^1\left(E, M_{\tilde{G}, \et, \ZZ_{p}}\left( K \times C \right) \right) \cong \opn{H}^1\left(E(C), M_{G, \et, \ZZ_{p}} \left( K \right) \right)
\]
which intertwines the action of $\Gal(E(C)/E)$ and $\mbf{T}(\mbb{A}_f)$ via the character $\kappa_{C}^{-1}$ (explicitly, the action of an element $\sigma \in \Gal(E(C)/E)$ on the right-hand side corresponds to the action of $\kappa_C(\sigma)^{-1}$ on the left-hand side). The same statement holds after localising at a maximal ideal of $\mbb{Z}_p[\invs{T}]$. 
\end{lemma}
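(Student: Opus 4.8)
The plan is to realize the isomorphism as a direct consequence of Shapiro's lemma applied to the finite \'{e}tale cover $\opn{Sh}_{\Gt}(K\times C)\to\opn{Sh}_{\mbf{G}}(K)\times_{\Spec E}\Spec E(C)$ together with the fact that $\opn{Sh}_{\mbf{T}}(C)$, base-changed along $E\hookrightarrow E(C)$, becomes a torsor under the finite group $\Delta(C)$. First I would unwind the definition: by the product decomposition of the Shimura variety, $\opn{Sh}_{\Gt}(K\times C)_{\bar E}\cong \opn{Sh}_{\mbf G}(K)_{\bar E}\times \opn{Sh}_{\mbf T}(C)_{\bar E}$, and $\opn{Sh}_{\mbf T}(C)(\mbb C)=\mbf T(\mbb Q)\backslash\mbf T(\mbb A_f)/C=\Delta(C)$ is a finite set of points, each defined over $E(C)$, with $\Gal(\bar E/E)$ permuting them through the character $\kappa_C$. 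Hence as a scheme $\opn{Sh}_{\mbf T}(C)\cong \Spec\bigl(\prod_{\Delta(C)/\sim}E(C)\bigr)$, and after base change to $E(C)$ it splits into $\#\Delta(C)$ copies of $\Spec E(C)$ permuted simply transitively by $\Gal(E(C)/E)$ (using that $E(C)$ is exactly the fixed field of the kernel of $\kappa_C$).

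Next I would compute the cohomology. The sheaf $\mathscr{T}^*(n)$ on $\opn{Sh}_{\Gt}$ is pulled back from $\opn{Sh}_{\mbf G}$ (the $\mbf T$-factor carries the trivial sheaf), so by the K\"unneth/projection formula for the projection $\opn{Sh}_{\Gt}(K\times C)_{\bar E}\to \opn{Sh}_{\mbf G}(K)_{\bar E}$ we get, on the level of continuous \'{e}tale cohomology in the relevant degree,
\[
M_{\Gt,\et,\ZZ_p}(K\times C)=\opn{H}^{2n-1}_{\et}\bigl(\opn{Sh}_{\Gt}(K\times C)_{\bar E},\mathscr{T}^*(n)\bigr)\cong \opn{Ind}_{G_{E(C)}}^{G_E}\opn{H}^{2n-1}_{\et}\bigl(\opn{Sh}_{\mbf G}(K)_{\bar E},\mathscr{T}^*(n)\bigr),
\]
where the induction is the finite induced module associated to the $\Gal(E(C)/E)$-torsor structure on the $\mbf T$-factor; concretely it is $\bigoplus_{\Delta(C)}M_{\mbf G,\et,\ZZ_p}(K)$ with the twisted Galois action. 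Applying Shapiro's lemma (for continuous cohomology of profinite groups with the induced coefficient module, which applies verbatim since $G_{E(C)}\subset G_E$ is open of finite index) gives
\[
\opn{H}^1\bigl(E,M_{\Gt,\et,\ZZ_p}(K\times C)\bigr)\cong\opn{H}^1\bigl(E(C),M_{\mbf G,\et,\ZZ_p}(K)\bigr).
\]
The compatibility of the Galois/$\mbf T(\mbb A_f)$-actions through $\kappa_C^{-1}$ is then just the statement of how $\mbf T(\mbb A_f)$ acts on the index set $\Delta(C)$ versus how $\Gal(E(C)/E)$ permutes the connected components, read off from the explicit description of the Galois action on the stalks given in the ``Coefficient sheaves'' subsection (with the character $\psi$ there specializing to $\kappa_C$ on the $\mbf T$-factor); one checks directly that the Shapiro isomorphism intertwines the two, with the inverse appearing because $\mathbf T(\mathbb A_f)$ acts by right translation on points while Artin reciprocity is normalized geometrically.

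Finally, for the ordinary and finite-slope variants, I would observe that the Hecke operator $\invs{T}$ acts only on the $\opn{Sh}_{\mbf G}(K)$-factor (as recorded in section \ref{VerticalSetUp}), so it commutes with the whole construction above; hence applying the (exact, since it is cut out by an idempotent, resp. a localization) functors $e_{\ord}$ and $(-)_{\mathcal T}$ to both sides preserves the isomorphism. The main obstacle I anticipate is purely bookkeeping: pinning down the precise normalization of $\kappa_C$ and of the Artin map so that the twist comes out as $\kappa_C^{-1}$ rather than $\kappa_C$, and making sure the identification of $\opn{Sh}_{\mbf T}(C)_{E(C)}$ as a trivial $\Gal(E(C)/E)$-torsor is stated with the correct side (left vs.\ right) of the group action; everything else is a formal application of Shapiro's lemma and the projection formula.
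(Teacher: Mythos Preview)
Your proposal is correct and follows essentially the same approach as the paper: the paper invokes the K\"unneth formula to identify $M_{\Gt,\et,\ZZ_p}(K\times C)$ with $M_{G,\et,\ZZ_p}(K)\otimes\ZZ_p[\Gal(E(C)/E)]$ (via the computation $\opn{H}^0_{\et}(\opn{Sh}_{\mbf T}(C)_{\bar E},\ZZ_p)=\ZZ_p[\Gal(E(C)/E)]$ with the explicit Galois and $\mbf T(\mbb A_f)$-actions), then applies Shapiro's lemma, and notes that the ordinary and finite-slope projections commute with the $\mbf T$-action. Your write-up is somewhat more detailed on the torsor structure and the bookkeeping of the $\kappa_C^{-1}$ normalization, but the argument is the same.
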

\begin{proof}
One sees from the definition of the Shimura datum for $\mbf{T}$ that $\Gal(E^{\mathrm{ab}}/E)$ acts on $\opn{Sh}_{\mbf{T}}(C)_{\bar{E}} = \Delta(C)$ via translation by the character $\kappa = \kappa_C$. One has an isomorphism 
\begin{eqnarray} 
\opn{H}^0_{\et}\left( \opn{Sh}_{\mbf{T}}(C) \times_{\opn{Spec}E} \opn{Spec}\overline{E}, \ZZ_{p} \right) = \opn{Maps}\left(\opn{Sh}_{\mbf{T}}(C)(\mbb{C}), \mbb{Z}_p \right) & = & \ZZ_{p} [\Gal(E(C)/E)] \label{GroupRingEqn} \\
 \phi & \mapsto & \sum_{\sigma \in \Gal(E(C)/E)} \phi(\kappa(\sigma^{-1})) \cdot \sigma \nonumber 
\end{eqnarray}
which intertwines the action of $t \in \mbf{T}(\mbb{A}_f)$ on the left-hand side (given by right-translation of the argument $(t \cdot \phi) (-) = \phi(- \cdot t)$) with the action of $\sigma \in \Gal(E(C)/E)$ on the right-hand side (the action is by left-multiplication), where $\sigma$ is the unique element such that the image of $t$ in $\Delta(C)$ equals $\kappa(\sigma)$. 

We have the following isomorphisms
\begin{eqnarray}
\opn{H}^1\left(E, M_{\tilde{G}, \et, \ZZ_{p}}\left( K \times C \right) \right) & \cong & \opn{H}^1\left(E, M_{G, \et, \mbb{Z}_p}(K) \otimes_{\mbb{Z}_p} \opn{H}^0_{\et}\left( \opn{Sh}_{\mbf{T}}(C) \times_{\opn{Spec}E} \opn{Spec}\overline{E}, \ZZ_{p} \right) \right) \label{KunnethEqn} \\
 & \cong & \opn{H}^1\left(E, M_{G, \et, \mbb{Z}_p}(K) \otimes_{\mbb{Z}_p} \ZZ_{p} [\Gal(E(C)/E)] \right) \label{abovefact} \\
 & \cong & \opn{H}^1\left(E(C), M_{G, \et, \ZZ_{p}} \left( K \right) \right) \label{Shapiroeqnline}
\end{eqnarray}
where (\ref{KunnethEqn}), (\ref{abovefact}) and (\ref{Shapiroeqnline}) follow from the K\"{u}nneth formula, (\ref{GroupRingEqn}) and Shapiro's lemma respectively. The action of an element $\sigma \in \Gal(E(C)/E)$ on the right-hand side of (\ref{Shapiroeqnline}) is intertwined with left multiplication by $\sigma^{-1}$ on $\mbb{Z}_p[\Gal(E(C)/E)]$ appearing on the right-hand side of (\ref{abovefact}) (see \cite[\S 8.2]{nekovar-selmer}). By (\ref{GroupRingEqn}), this is precisely intertwined with the action of an element $t^{-1} \in \mbf{T}(\mbb{A}_f)$ on the left-hand side of (\ref{KunnethEqn}), where the image of $t$ in $\Delta(C)$ coincides with $\kappa(\sigma)$. 

The same result holds after localising at a maximal ideal of $\mbb{Z}_p[\invs{T}]$ because the Hecke operator only acts on the cohomology of $\opn{Sh}_{\mbf{G}}(K)$ (and commutes with the action of $\mbf{T}$).
\end{proof}

\begin{definition}
Let $\alpha \in \overline{\mbb{Z}}_p$. We let $N_{\Iw, \ZZ_{p}, \alpha} \colon \mathcal{P}_{\tilde{G}^{p}} \to \ZZ_{p} [ \mathcal{T} ]_{\ide{m}_{\alpha}} $\textbf{-Mod} denote the map 
\[
K^p \times C^p \mapsto \varprojlim_{r} \opn{H}^1\left(E, M_{\tilde{G}, \et, \ZZ_{p}}(K^p J \times C^p D_{p^r})_{\ide{m}_{\alpha}} \right).
\]
\end{definition}
\begin{lemma} \label{LemmaNIwisCoMack}
$ N _ { \Iw, \ZZ_{p}, \alpha} $ is a CoMack functor.    
\end{lemma} 
\begin{proof} 
The functor $ N _ { \Iw, \ZZ_{p}, \alpha} $ is obtained by composing the functor 
\[
K^p \times C^p \mapsto M _ { \tilde { G } , \et , \ZZ_{p}} ( K ^ { p } J \times C^{p} D _ { p^ { r} } )
\]
with $(-)_{\ide{m}_{\alpha}}$, applying $ \opn{H} ^ { 1 } ( E , - ) $ and then taking the inverse limit over $ r $. The localisation commutes with pullbacks and pushforwards arising from morphisms in $\invs{P}_{\gt^p}$, and the latter two operations are  additive   endomorphisms of the category of $ \ZZ_{p}[ \mathcal { T } ]_{\ide{m}_{\alpha}} $-modules.   
\end{proof} 

We now obtain the vertical norm relations for our Euler system. Recall from Remark \ref{RemRationalpushinginto} that we have defined $ M_{H,\QQ_{p}}^p \defeq M_{H,\ZZ_{p}}^{p} \otimes_{\ZZ_{p}} \QQ_{p} $. We set $ N_{\Iw,\QQ_{p}, \alpha} \colon \invs{P}_{\gt^p} \to \mbb{Q}_p[\invs{U}'_S]$\textbf{-Mod} to be the cohomology functor given by 
\[
N_{\Iw,\QQ_{p}, \alpha}(K^p \times C^p) \defeq \varprojlim_{r} \opn{H}^1\left(E, M_{\tilde{G}, \et, \mbb{Q}_{p}}(K^p J \times C^p D_{p^r})_{\ide{t}_{\alpha}} \right).
\]
This functor is CoMack by the same proof as in Lemma \ref{LemmaNIwisCoMack}, but note that $N_{\Iw,\mbb{Z}_{p}, \alpha} \otimes_{\mbb{Z}_p} \mbb{Q}_p \neq N_{\Iw,\QQ_{p}, \alpha}$.

\begin{theorem}[Vertical norm relations] \label{VerticalRelThm} 
Let $\alpha \in \overline{\mbb{Z}}_p^{\times}$. For $R = \mbb{Z}_p$ or $\mbb{Q}_p$, there is a (Cartesian) pushforward of cohomology functors 
\[
\iota _ { * , \Iw, \alpha } \colon M _ { H , R} ^ { p } \to N _ { \Iw , R, \alpha} 
\]
where the latter is viewed as a functor into $R$\textbf{-Mod} by forgetting the extra structure, which is compatible with respect to base change of coefficients (as in the paragraph preceding Corollary \ref{CommDiagramCor}). 
\end{theorem} 

\begin{proof}
Set $E_r(C^p) = E(C^pD_{p^r})$ and $E_{\infty}(C^p) = \bigcup E_r(C^p)$. We first note that, by Lemma \ref{ShapiroLemma}, the functor $N_{\mathrm{Iw},\ZZ_{p}, \alpha}$ is isomorphic to the functor 
\[
K^p \times C^p \mapsto \varprojlim_r \opn{H}^1\left(E_r(C^p), M_{G, \et,\ZZ_{p}}(K^p J)_{\ide{m}_{\alpha}} \right)
\]
where the inverse limit is with respect to corestriction. Since localisation is exact, one has localised versions of the Hochschild--Serre spectral sequence (see Appendix \ref{AbelJacobi}):
\begin{align*}
    {_{\mbb{Z}_p}}\opn{E}^{i, j}_2 \defeq \opn{H}^{i}\left( E, \opn{H}^j_{\et}\left( \opn{Sh}_{\Gt}(K^p J \times C^p D_{p^r})_{\overline{\mbb{Q}}}, \tilde{\mathscr{T}}(n) \right) \right)_{\ide{m}_{\alpha}} &\Rightarrow \opn{H}^{i+j}_{\et}\left( \opn{Sh}_{\Gt}(K^pJ \times C^pD_{p^r}), \tilde{\mathscr{T}}(n) \right)_{\ide{m}_{\alpha}} \\
    {_{\mbb{Q}_p}}\opn{E}^{i, j}_2 \defeq \opn{H}^{i}\left( E, \opn{H}^j_{\et}\left( \opn{Sh}_{\Gt}(K^p J \times C^p D_{p^r})_{\overline{\mbb{Q}}}, \tilde{\mathscr{V}}(n) \right) \right)_{\ide{t}_{\alpha}} &\Rightarrow \opn{H}^{i+j}_{\et}\left( \opn{Sh}_{\Gt}(K^pJ \times C^pD_{p^r}), \tilde{\mathscr{V}}(n) \right)_{\ide{t}_{\alpha}}
\end{align*}
where we have suppressed the dependence on $r$, $C^p$, $K^p$ and $\alpha$ from the notation. The Hochschild--Serre spectral sequence is functorial with respect to the variety, so the spectral sequences above are functorial with respect to pullbacks/pushforwards arising from morphisms in $\invs{P}_{\gt^p} \times \invs{P}_{T_p}$ (combine Proposition \ref{PropFuncPropsForAJet} and Remark \ref{RemAJAppendixRephrase}). Moreover, there is a natural $\mbb{Z}_p$-linear map of spectral sequences $ {_{\mbb{Z}_p}}\opn{E}^{i, j}_2 \to {_{\mbb{Q}_p}}\opn{E}^{i, j}_2$ (Remark \ref{RemRationalLocalisation}) which commutes with pullbacks/pushforwards up to rescaling the action of $\tilde{\tau}^{-1}$ (see Lemma \ref{LemmaCompatibleAction}). In particular, by Lemma \ref{ShapiroLemma} (the rational version follows from the same proof) and the above functoriality properties, one obtains ``Abel--Jacobi'' maps
\begin{align*}
    \opn{AJ}_{\opn{Iw}, \mbb{Z}_p, \alpha} \colon \varprojlim_r M_{\gt, \mbb{Z}_p}\left( K^pJ \times C^p D_{p^r} \right)_{\ide{m}_{\alpha}, 0} &\to \varprojlim_r \opn{H}^1\left(E_r(C^p), M_{G, \et, \mbb{Z}_p}(K^pJ)_{\ide{m}_{\alpha}} \right) \\
    \opn{AJ}_{\opn{Iw}, \mbb{Q}_p, \alpha} \colon \varprojlim_r M_{\gt, \mbb{Q}_p}\left( K^pJ \times C^p D_{p^r} \right)_{\ide{t}_{\alpha}, 0} &\to \varprojlim_r \opn{H}^1\left(E_r(C^p), M_{G, \et, \mbb{Q}_p}(K^pJ)_{\ide{t}_{\alpha}} \right)
\end{align*}
arising from edge maps in the Hochschild--Serre spectral sequences above, where
\begin{align*}
    M_{\gt, \mbb{Z}_p}\left( K^pJ \times C^p D_{p^r} \right)_{\ide{m}_{\alpha}, 0} &\defeq \opn{ker}\left( M_{\gt, \mbb{Z}_p}\left( K^pJ \times C^p D_{p^r} \right)_{\ide{m}_{\alpha}} \to \opn{H}^0\left(E_r(C^p),  M_{G, \et, \mbb{Z}_p}(K^pJ) \right)_{\ide{m}_{\alpha}} \right)  \\
    M_{\gt, \mbb{Q}_p}\left( K^pJ \times C^p D_{p^r} \right)_{\ide{t}_{\alpha}, 0} &\defeq \opn{ker}\left( M_{\gt, \mbb{Q}_p}\left( K^pJ \times C^p D_{p^r} \right)_{\ide{t}_{\alpha}} \to \opn{H}^0\left(E_r(C^p),  M_{G, \et, \mbb{Q}_p}(K^pJ) \right)_{\ide{t}_{\alpha}} \right)
\end{align*}
denote the ``cohomologically trivial'' classes. These Abel--Jacobi maps are the inverse limit of the maps $\opn{AJ}_{\et, \ide{m}_{\alpha}}$ and $\opn{AJ}_{\et, \ide{t}_{\alpha}}$ in the notation of Definition \ref{AJetDef}, and are functorial with respect to $K^p$ and $C^p$ by Proposition \ref{PropFuncPropsForAJet}. The maps $\opn{AJ}_{\opn{Iw}, \mbb{Z}_p, \alpha}$ and $\opn{AJ}_{\opn{Iw}, \mbb{Q}_p, \alpha}$ are also compatible with each other.

Since $M_{G, \et, \mbb{Z}_p}(K^pJ)$ is finitely-generated over $\mbb{Z}_p$ (\cite[Corollaire 1.10, Chapitre 7]{SGA4demi}) so is its localisation $M_{G, \et, \mbb{Z}_p}(K^pJ)_{\ide{m}_{\alpha}}$ (Corollary \ref{CorFinGenSplitting}). In particular, by \cite[Lemma 3.2, Appendix B]{rubin} and the fact that $E_{\infty}(C^p)/E_1(C^p)$ is a Galois extension with Galois group $\mbb{Z}_p$, one sees that 
\begin{equation} \label{VanishingIwasawaH0}
\varprojlim_r \opn{H}^0\left(E_r(C^p),  M_{G, \et, \mbb{Z}_p}(K^pJ) \right)_{\ide{m}_{\alpha}} \hookrightarrow \varprojlim_r \opn{H}^0\left(E_r(C^p),  M_{G, \et, \mbb{Z}_p}(K^pJ)_{\ide{m}_{\alpha}} \right) = 0 .
\end{equation}
The first map is injective because localisation is exact and taking invariants under $\Gal(\overline{E}/ E_r(C^p))$ is left-exact. Now consider the exact sequence
\[
0 \to M_{\gt, \mbb{Z}_p}\left( K^pJ \times C^p D_{p^r} \right)_{\ide{m}_{\alpha}, 0} \to M_{\gt, \mbb{Z}_p}\left( K^pJ \times C^p D_{p^r} \right)_{\ide{m}_{\alpha}} \to \opn{H}^0\left(E_r(C^p),  M_{G, \et, \mbb{Z}_p}(K^pJ) \right)_{\ide{m}_{\alpha}} .
\]
Using the left-exactness of $\varprojlim_r$ applied to this sequence and the vanishing in (\ref{VanishingIwasawaH0}), we obtain the following equality
\[
\varprojlim_r M_{\gt, \mbb{Z}_p}\left( K^pJ \times C^p D_{p^r} \right)_{\ide{m}_{\alpha}, 0} = \varprojlim_r M_{\gt, \mbb{Z}_p}\left( K^pJ \times C^p D_{p^r} \right)_{\ide{m}_{\alpha}} .
\]
Therefore, for $R = \mbb{Z}_p$ or $\mbb{Q}_p$, the image of the pushforward $\iota_{*, \alpha}$ is contained in the inverse limit of the subspaces of cohomologically trivial elements (the claim for $R = \mbb{Q}_p$ follows from the case $R=\mbb{Z}_p$ by Remark \ref{RemRationalpushinginto}), so we can compose this with the Abel--Jacobi map above to obtain a (Cartesian) pushforward of cohomology functors
\[
\iota_{*, \opn{Iw}, \alpha} \defeq \opn{AJ}_{\opn{Iw}, R, \alpha} \circ \; \iota_{*, \alpha} \colon M^p_{H, R} \to N_{\opn{Iw}, R, \alpha}
\]
as required.
\end{proof}

\begin{remark} \label{RemVerticalRelThm}
For $R = \mbb{Z}_p$ or $\mbb{Q}_p$, if we define $N_{\opn{Iw}, R} \colon \invs{P}_{\gt^p} \to R$\textbf{-Mod} as the functor
\[
K^p \times C^p \mapsto \varprojlim_r \opn{H}^1\left( E, M_{\gt, \et, R}(K^pJ \times C^p D_{p^r}) \right)
\]
then $N_{\opn{Iw}, R}$ is a CoMack cohomology functor and we obtain pushforwards $\iota_{*, \opn{Iw}, \alpha} \colon M^p_{H, R} \to N_{\opn{Iw}, R}$ by composing the pushforward in Theorem \ref{VerticalRelThm} with the splittings
\[
M_{G, \et, \mbb{Z}_p}(K^pJ)_{\ide{m}_{\alpha}} \hookrightarrow M_{G, \et, \mbb{Z}_p}(K^pJ) \quad \quad \quad M_{G, \et, \mbb{Q}_p}(K^pJ)_{\ide{t}_{\alpha}} \hookrightarrow M_{G, \et, \mbb{Q}_p}(K^pJ).
\]
For notational simplicity, we will work with this pushforward (rather than the one in Theorem \ref{VerticalRelThm}), however we stress that to obtain (potentially) non-zero Euler system classes we will still need to make an ordinarity assumption on the automorphic representation with respect to $\alpha$ (see Assumption \ref{MainAssumption}(5) ).
\end{remark}


\section{The anticyclotomic Euler system}

We now have all the ingredients needed to define the Euler system.

\subsection{Automorphic assumptions}

Let $\pi_0$ be a cuspidal automorphic representation of $\mbf{G}_0$ such that $\pi_{0, \infty}$ lies in the discrete series, and let $\pi$ be a lift to $\mbf{G}$. From now on, we make the following assumption:

\begin{assumption} \label{MainAssumption}
We assume that:
\begin{enumerate}
    \item $\pi_{0, f}$ admits a $\mbf{H}_0(\mbb{A}_f)$-linear model. In particular, we can choose a lift $\pi$ with trivial central character (see section \ref{DiscreteSeriesReps}). 
    \item $n$ is odd and $\pi$ satisfies Assumption \ref{KeyAssumption}. In particular, let $K$ be a compact open subgroup of $\mbf{G}(\mbb{A}_f)$ such that $\pi_f^K \neq 0$.     
    \item Let $S$ be the finite set of primes containing all primes that ramify in $E$ and such that we have a decomposition $K = K_S \cdot \prod_{\ell \not\in S} K_\ell$ with $K_S \subset \mbf{G}(\mbb{A}_S)$ compact open and $K_\ell \subset \mbf{G}(\mbb{Q}_\ell)$ hyperspecial. We assume that $p \notin S$ and splits in $E/\mbb{Q}$. In this case we set $S' = S \cup \{p\}$.

    For any prime $\ell \notin S'$ which splits in $E/\mbb{Q}$, we assume that $K_\ell = \opn{GL}_1(\mbb{Z}_\ell) \times \opn{GL}_{2n}(\mbb{Z}_\ell)$ under the identification $\mbf{G}_{\mbb{Q}_\ell} = \opn{GL}_{1, \mbb{Q}_\ell} \times \opn{GL}_{2n, \mbb{Q}_{\ell}}$ induced from the fixed embedding $E \hookrightarrow \overline{\mbb{Q}}_{\ell}$.
    \item We assume that $K_S$ satisfies the following property: for any compact open subgroups $L_p \subset \mbf{G}(\mbb{Q}_p)$ and $L^{S'} \subset \mbf{G}(\mbb{A}_f^{S'})$, the subgroup $L_p K_S L^{S'} \subset \mbf{G}(\mbb{A}_f)$ is sufficiently small (see Definition \ref{DefinitionOfSufficientlySmall}).
    \item Let $\mbf{c} = (0; c_1, \dots, c_{2n})$ denote the weight corresponding to the discrete series $L$-packet containing $\pi_{\infty}$, and set $\lambda = \sum_{i=1}^n c_i$. We assume that $\pi$ is ``Siegel ordinary'', i.e. there exists an eigenvector 
    \[
    \varphi = \left( \bigotimes_{\ell \notin S'} \varphi_\ell \right) \otimes \varphi_S \otimes \varphi_p \in \bigotimes_{\ell \notin S'} \pi_\ell^{K_\ell} \otimes \pi_S^{K_S} \otimes \pi_p^J = \pi_f^{K^p J}
    \]
    of the Hecke operator $\invs{U}_S \defeq p^\lambda [J \tau J]$ with eigenvalue $\alpha$ a $p$-adic unit. Here, $J$ is the Siegel parahoric subgroup and $\tau$ is the element defined in section \ref{VerticalSetUp}. In particular, this implies that $\alpha$ occurs as an eigenvalue for the transpose operator $\invs{U}'_S \defeq p^\lambda [J \tau^{-1} J]$ on $(\pi_f^\vee)^{K^pJ}$. 
\end{enumerate}

\end{assumption}

\begin{remark}[Justification of Assumption \ref{MainAssumption}, (5)]
Suppose that $\pi_{0,p}$ is \emph{generic}, in the sense that $\pi_{0,p} \cong I(\chi)$, where $I(-)$ denotes the (normalised) induction from the (standard) Borel to $\mbf{G}_{0}(\mbb{Q}_p) \cong \opn{GL}_{2n}(\mbb{Q}_p)$, and $\chi = \chi_1 \boxtimes \cdots \boxtimes \chi_{2n} \colon (\mbb{Q}_p^\times)^{2n} \to \mbb{C}^{\times}$ is an unramified character, naturally viewed as a character of the (standard) torus. Let $\beta_i = \chi_i(p)$ ($i=1, \dots, 2n$) denote the Satake parameters of $\pi_{0, p}$. For any rational number $s$, we let $p^s$ denote the unique element of $\overline{\mbb{Q}}$ which maps to $e^{s\opn{log}p}$ under the fixed embedding $\overline{\mbb{Q}} \hookrightarrow \mbb{C}$. Via the fixed embedding $\overline{\mbb{Q}} \hookrightarrow \Qpb$, we can also view $p^s$ as an element of $\Qpb$.

Let $v | p$ be the prime of $E$ fixed by the embedding $\overline{\mbb{Q}} \hookrightarrow \Qpb$, and set $\rho_v \defeq \rho_{\pi}|_{E_v}$. Then by Theorem \ref{ExistenceOfGaloisRep}(3), the Galois representation $\rho_v$ is crystalline and one has an equality of polynomials
\[
\opn{det}\left(1 - \varphi \cdot X | \dcris{\rho_v} \right) = \prod_{i=1}^{2n}(1 - p^{n-1/2}\beta_i \cdot X ).
\]
Let $v_p$ denote the $p$-adic valuation on $\Qpb$, normalised so that $v_p(p) = 1$, and order the Satake parameters so that $\gamma_i \defeq p^{n-1/2}\beta_i$ satisfy $v_p(\gamma_1) \leq \cdots \leq v_p(\gamma_{2n})$. For any subset $I \subset \{1, \dots, 2n\}$ of size $n$, set $\gamma_{I} = \prod_{i \in I} \gamma_i$.

Recall that attached to $D = \dcris{\rho_v}$ one has the Hodge polygon $\opn{Hodge}(D)$, which is defined to be the convex hull of the points 
\[
\{(0, 0) \} \cup \left\{ \left( i, \sum_{j=1}^i c_{2n+1-j} + (j-1) \right) : i=1, \dots, 2n \right\}
\]
using the explicit recipe for the jumps in Hodge--Tate filtration in Theorem \ref{ExistenceOfGaloisRep}(3). One also has the Newton polygon $\opn{Newt}(D)$, which is defined to be the convex hull of the points
\[
\{(0, 0)\} \cup \left\{ \left(i, \sum_{j=1}^i v_p(\gamma_j) \right) : i=1, \dots, 2n \right\}. 
\]
Since $D$ is weakly admissible, the Newton polygon lies on or above the Hodge polygon (and has equal endpoints). In particular, this implies that 
\begin{equation} \label{NewtHodgeIneq}
v_p(\gamma_I) \geq \sum_{j=1}^n v_p(\gamma_j) \geq \sum_{j=1}^n c_{2n+1-j} + (j-1) = \sum_{j=1}^n c_{2n+1-j} + n(n-1)/2.
\end{equation}
where the second inequality uses the regularity of the Hodge--Tate weights. On the other hand, we can express the eigenvalues of $\invs{U}_S$ acting on $\pi_p^J$ in terms of $\gamma_I$, using the results of \cite[\S 5.1.1]{OST19}. Indeed, note that we have $\pi_p^J = \pi_{0, p}^{J_0} \cong I(\chi)^{J_0}$ and, in the notation of \emph{loc.cit.}, the action of the operator $[J\tau J]$ corresponds to the action of the operator $I_\chi(\mathbbm{1}_{\check{\gamma}})$, with $\check{\gamma}(p) = \opn{diag}(p, \dots, p, 1, \dots, 1)$ (there are $n$ lots of $p$). Therefore, one has an equality of polynomials
\[
\opn{det}\left(1 - p^{-n^2/2}[J\tau J] \cdot X | \pi_p^J \right) = \opn{det}\left(1 - p^{-n^2/2}I_{\chi}(\mathbbm{1}_{\check{\gamma}}) \cdot X | I(\chi)^{J_0} \right) = \prod_{I} ( 1 - \beta_I \cdot X)
\]
where the product runs over all subsets $I \subset \{1, \dots, 2n\}$ of size $n$, and $\beta_I = \prod_{i \in I} \beta_i$. This implies that the eigenvalues of $\invs{U}_S$ acting on $\pi_p^J$ are of the form
\begin{equation} \label{OSTEquality}
p^\lambda p^{n^2/2} \beta_I = p^{\lambda + n^2/2 - n(n - 1/2)} \gamma_I = p^{\lambda - n(n-1)/2} \gamma_I.
\end{equation}
It turns out that, under Assumption \ref{MainAssumption}(1), the algebraic representation with highest weight $(c_1, \dots, c_{2n})$ is self-dual (see Lemma \ref{SelfDualCoefficients} below), which implies that $\lambda = -\sum_{j=1}^n c_{2n+1-j}$. Therefore, combining (\ref{NewtHodgeIneq}) and (\ref{OSTEquality}), it is reasonable (at least generically) to assume that $\invs{U}_S$ may have an eigenvalue which is a $p$-adic unit. In particular, if $\pi_p$ is Borel ordinary (or equivalently, $\opn{Newt}(D) = \opn{Hodge}(D)$) then $\pi_p$ is Siegel ordinary. 

In fact, the above argument shows that Assumption \ref{MainAssumption}(5) is equivalent to the middle inequality of (\ref{NewtHodgeIneq}) being an equality, i.e. the Hodge and Newton polygons share the same middle point. Furthermore, in this case there is a unique eigenvalue (with multiplicity one) of $\invs{U}_S$ which is a $p$-adic unit, so the local vector $\varphi_p$ is uniquely determined (up to $\mbb{C}^{\times}$).
\end{remark}

Let $V^*$ be the algebraic representation of $\mbf{G}_{\mbb{C}}$ (which arises from an algebraic representation of $\mbf{G}_E$) parameterising the discrete series $L$-packet of $\pi_\infty$. By \cite[\S II.5]{BorelWallach}, this implies that $\pi$ is cohomological with respect to the representation $V$. 

\begin{lemma} \label{SelfDualCoefficients}
The representation $V$ is self-dual. 
\end{lemma}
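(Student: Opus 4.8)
The plan is to deduce the palindromic condition $\mathbf{c}=-\mathbf{c}'$ of Definition~\ref{DefinitionOfTheReps} from the $\mbf{H}_0$-linear model, by routing the hypothesis through the weak base change $\Pi_0$ of $\pi_0$ to $\opn{GL}_{2n}(\mbb{A}_E)$. Concretely, I claim it suffices to prove that $\Pi_0$ is \emph{self-dual}, $\Pi_0\cong\Pi_0^\vee$. Granting this, at the archimedean place one gets $\Pi_{0,\infty}\cong\Pi_{0,\infty}^\vee$; comparing the Langlands parameter of $\pi_\infty$ restricted to $W_{\mbb{C}}$, namely $z\mapsto\opn{diag}\big((z/\bar z)^{p_i+\frac{1-2n}{2}}\big)$ with $p_1>\cdots>p_{2n}$, with its contragredient, the strict ordering of the $p_i$ forces the multiset $\{p_i+\tfrac{1-2n}{2}\}$ to be stable under $x\mapsto -x$, i.e. $p_i+p_{2n+1-i}$ is constant. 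Feeding this symmetry back through the recipe relating the highest weight $\mathbf{c}$ to these parameters, together with $c_0=0$ (which holds because $\pi$ has trivial central character), gives $c_i=-c_{2n+1-i}$, i.e. $\mathbf{c}=-\mathbf{c}'$, so that $V$ is self-dual.

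To prove $\Pi_0\cong\Pi_0^\vee$, note that the nonzero functional in $\opn{Hom}_{\mbf{H}_0(\mbb{A}_f)}(\pi_{0,f},\mbb{C})$ restricts, for every finite prime $\ell$, to a nonzero element of $\opn{Hom}_{\mbf{H}_0(\mbb{Q}_\ell)}(\pi_{0,\ell},\mbb{C})$ (a nonzero functional on a restricted tensor product is nonzero on some pure tensor). By Chebotarev there are infinitely many $\ell\notin S$ split in $E/\mbb{Q}$, and for each such $\ell$ we have the identifications $\mbf{G}_0(\mbb{Q}_\ell)\cong\opn{GL}_{2n}(\mbb{Q}_\ell)$, $\mbf{H}_0(\mbb{Q}_\ell)\cong\opn{GL}_n(\mbb{Q}_\ell)\times\opn{GL}_n(\mbb{Q}_\ell)$ compatible with the block-diagonal embedding, together with $\pi_{0,\ell}\cong\Pi_{0,\lambda}$ for the prime $\lambda\mid\ell$ fixed in Section~\ref{NotationSection}. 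Proposition~\ref{DistImpliesShalika} then shows that $\Pi_{0,\lambda}$ has a Shalika model; since $\Pi_{0,\lambda}$ is unramified, its Satake parameter is therefore fixed (up to conjugacy) by inversion, whence $\Pi_{0,\lambda}^\vee\cong\Pi_{0,\lambda}$. Thus $\Pi_0$ and $\Pi_0^\vee$ are cuspidal automorphic representations of $\opn{GL}_{2n}(\mbb{A}_E)$ whose local components agree at a density-one set of places, and strong multiplicity one forces $\Pi_0\cong\Pi_0^\vee$. (Alternatively, local Shalika models at infinitely many places imply, via the Jacquet--Shalika theory of the exterior square $L$-function, that $\Pi_0$ is globally of symplectic type, hence self-dual.)

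The Shalika-model input and the strong multiplicity one step are standard, as is the conjugate self-duality of $\Pi_0$ recorded in Proposition~\ref{WeakBCProp} and Theorem~\ref{ExistenceOfGaloisRep} (used here only to know that $\Pi_0$ has the expected shape). I expect the genuinely delicate point to be the archimedean bookkeeping of the first paragraph: one must line up the normalisation in the relation $c_i=p_i+i$, the shift $\tfrac{1-2n}{2}$ in the parameter, and the convention under which the similitude character of $\mbf{G}$ maps to the Tate motive and $\pi$ has trivial central character, as well as keep track of whether it is $V$ or $V^*$ that carries the palindrome (these are interchangeable, since $V$ is self-dual if and only if $V^*$ is). Once $\mathbf{c}=-\mathbf{c}'$ is established, Lemma~\ref{Branching} applies and the branching map $\opn{br}$ of Definition~\ref{DefinitionOfTheReps} is available for the rest of the construction.
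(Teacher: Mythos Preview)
Your approach is essentially the same as the paper's: establish local self-duality at split primes from the $\mbf{H}_0$-linear model, globalise via strong multiplicity one, and read off the palindromic condition on the highest weight at infinity. The differences are only in the intermediate citations. For the local step, the paper invokes Jacquet--Rallis \cite{ULP} directly to conclude $\pi_{0,\ell}^\vee\cong\pi_{0,\ell}$ from $\mbf{H}_0(\mbb{Q}_\ell)$-distinction, whereas you pass through Proposition~\ref{DistImpliesShalika} and the Satake-parameter symmetry coming from the Shalika model; both arrive at the same conclusion. For the global step, the paper first deduces $\pi_0\cong\pi_0^\vee$ on the unitary group using Ramakrishna's mild Chebotarev theorem and then transfers to $\Pi$, while you work directly with $\Pi_0$ on $\opn{GL}_{2n}(\mbb{A}_E)$ and apply classical strong multiplicity one. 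For the archimedean step, the paper sidesteps your explicit parameter computation by citing the fact (Proposition~\ref{WeakBCProp}) that the algebraic representation attached to $\Pi_0$ has weight $(\mbf{c},-\mbf{c}')$, so $\Pi\cong\Pi^\vee$ immediately forces $\mbf{c}=-\mbf{c}'$; this avoids the bookkeeping you flag as delicate.
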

\begin{proof}
Let $\ell \notin S'$ be a split prime. Since $\pi_{0, f}$ admits a $\mbf{H}_{0}(\mbb{A}_f)$-linear model, the representation $\pi_{0, \ell}$ is $\Hb_{0} (\mbb{Q}_\ell)$-distinguished, therefore by \cite{ULP} 
\[
\pi_{0, \ell}^\vee \cong \pi_{0, \ell}.
\]
Let $\Pi = \mbf{1} \boxtimes \Pi_0$ be a weak base-change of $\pi$, which we have assumed to be cuspidal. Then, since duality is preserved under base-change, we have $\Pi_{0,\lambda} \cong \Pi_{0,\lambda}^{\vee}$ for any prime $\lambda$ of $E$ dividing $\ell$ (see Proposition \ref{WeakBCProp}). This implies that $\Pi_{0}$ is self-dual by the mild Chebotarev density theorem proven in \cite[Corollary B]{Ramakrishna}. In particular, this implies that $V$ is self-dual since the algebraic representation of $\opn{Res}_{\mbb{C}/\mbb{R}}\opn{GL}_{2n}$ corresponding to $\Pi_0$ has weight $(\mbf{c}, -\mbf{c}')$, where $\mbf{c} = (c_1, \dots, c_{2n})$ is the weight of $V^*$ and $\mbf{c}' = (c_{2n}, \dots, c_1)$ (with notation as in Proposition \ref{WeakBCProp}).
\end{proof}

\subsection{Ring class fields}

Let $\invs{R}$ be the set of all square-free products of rational primes that lie outside the set $S'$ and split in the extension $E/\mbb{Q}$ (we allow $1 \in \invs{R}$). For any positive integer $m$, let $\widehat{\OO}_{m} ^{\times}  \subset \Ab_{E, f}^{\times}$ be the compact open subgroup consisting of the units of the profinite completion of $ \OO_{m} = \ZZ +  m \OO_{E} $, and let $ D[m]  \subset \Tb(\Ab_{f})  $ denote the image of $  \widehat{\OO}_{m} ^ {\times } $ under the map
\begin{eqnarray} 
\Nm \colon \mbb{A}_{E, f}^{\times} & \to & \Tb(\Ab_{f}) \nonumber \\
 z & \mapsto & \bar{z}/z \nonumber 
\end{eqnarray}
By Lemma \ref{ACCFT}, $ D[m] $ is a compact open subgroup.

\begin{remark}
If $m \in \invs{R}$ and we set $D_\ell = \Nm \left( (\ordd_m \otimes \mbb{Z}_\ell)^{\times} \right)$ for $\ell | m$, then $D[m]$ decomposes as $\prod_{\ell | m} D_\ell \times \prod_{\ell \nmid m} \Nm \left( (\ordd_E \otimes \mbb{Z}_\ell)^{\times} \right)$. Furthermore, for any integer $r \geq 1$, the compact open subgroup $D[mp^r]$ decomposes as $D[mp^r]^p D_{p^r}$, where $D[mp^r]^p \subset \mbf{T}(\mbb{A}^p_f)$ and $D_{p^r} \subset \mbf{T}(\mbb{Q}_p)$ is as in \S \ref{TheFlagVariety}.
\end{remark}

Let $ E[\infty] $ be the compositum of all ring class extensions $ E[m] $ for $ m \geq 1 $ and let $ \mathrm{Res}_{E[\infty]} : \Gal(E^\mathrm{ab}/E) \to \Gal(E[\infty]/E) $ denote the restriction map. Since the infinite place of $ E $ is complex, the restriction of the Artin map $ \Art_{E}$ to the finite ideles is surjective onto the abelianisation of $G_E$. 

\begin{lemma} \label{TorusVariationLemma}
There exists a continuous surjective homomorphism 
\[
\mathrm{Art}_{0} \colon \Tb (\Ab_{f})  \to \Gal(E[\infty]/E) 
\]
with kernel $ \Tb(\QQ)$, which satisfies the following properties:   
\begin{itemize} 
\item One has the following equality of compositions
\[
\mathrm{Res}_{E[\infty]} \circ \Art = \Art_{0} \circ \Nm   
\]
\item For each $ m \geq 1 $, the map $ \Art_{0}$ induces an isomorphism $  \Tb(\Ab_{f})/\Tb(\QQ ) D[m] \cong  \Gal(E[m]/E) $.   
\item If $ \ell \nmid m $ is split in $ E $, then under the identification $ \Tb(\QQ_{\ell}) \cong \QQ_{\ell}^{\times}$, the map $\Art_{0}$ sends $\ell$ to the geometric Frobenius at $\lambda$ in $\Gal(E[m]/E)$, where $\lambda$ is the prime above $\ell$ distinguished by the fixed embedding $\overline{\mbb{Q}} \hookrightarrow \overline{\mbb{Q}}_\ell$ (the identification is also with respect to this fixed embedding). 
\end{itemize}
\end{lemma}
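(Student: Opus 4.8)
The plan is to obtain $\Art_0$ by \emph{descending the global Artin map along the norm map $\Nm$}, so that the only real content is an explicit identification of the relevant norm subgroup. First I would set $\widetilde{\Art}$ to be the composite $\mbb{A}_{E,f}^\times\xrightarrow{\Art_E}G_E^{\mathrm{ab}}\xrightarrow{\mathrm{Res}_{E[\infty]}}\Gal(E[\infty]/E)$, which is surjective since the restriction of $\Art_E$ to the finite ideles is surjective onto $G_E^{\mathrm{ab}}$ (the infinite place of $E$ being complex). Because $\Gal(E[\infty]/E)=\varprojlim_m\Gal(E[m]/E)$ and, by the very definition of the ring class field recalled in \S\ref{NotationSection}, $\Gal(E[m]/E)\cong\mbb{A}_{E,f}^\times/\bigl(E^\times\widehat{\mathcal{O}}_m^\times\bigr)$, the kernel of $\widetilde{\Art}$ is exactly $\bigcap_{m\ge 1}E^\times\widehat{\mathcal{O}}_m^\times$.

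The heart of the argument is then to prove
\[
\bigcap_{m\ge 1}E^\times\widehat{\mathcal{O}}_m^\times \;=\; E^\times\widehat{\ZZ}^\times .
\]
One inclusion is easy: place by place $\ZZ_\ell^\times\subset\mathcal{O}_{m,\ell}^\times$, so $\widehat{\ZZ}^\times\subset\widehat{\mathcal{O}}_m^\times$ for all $m$, and conversely $\bigcap_m\mathcal{O}_{m,\ell}=\ZZ_\ell$ (as $\mathcal{O}_{E,\ell}$ is finite free over $\ZZ_\ell$), whence $\bigcap_m\widehat{\mathcal{O}}_m^\times=\widehat{\ZZ}^\times$. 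For the reverse inclusion I would take $x$ in the left-hand side, use $m=1$ (the Hilbert class field case) to reduce to $x\in\widehat{\mathcal{O}}_E^\times$, and then for each $m$ write $x=\beta_m v_m$ with $\beta_m\in E^\times$, $v_m\in\widehat{\mathcal{O}}_m^\times$; then $\beta_m=xv_m^{-1}\in E^\times\cap\widehat{\mathcal{O}}_E^\times=\mathcal{O}_E^\times$, which is \emph{finite} because $E$ is imaginary quadratic. Hence some value $\beta$ of the $\beta_m$ occurs for a cofinal set of $m$, so $\beta^{-1}x$ lies in $\widehat{\mathcal{O}}_m^\times$ for a cofinal set of $m$, hence in $\bigcap_m\widehat{\mathcal{O}}_m^\times=\widehat{\ZZ}^\times$, giving $x\in E^\times\widehat{\ZZ}^\times$. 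This descending-chain/pigeonhole step (relying on the finiteness of $\mathcal{O}_E^\times$) is the one non-formal point, and I expect it to be the main obstacle.

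Next I would descend along $\Nm$. By the exact sequence in the proof of Lemma~\ref{ACCFT}, $\ker\Nm=\mbb{A}_f^\times$ (the finite rational ideles, via $\QQ\hookrightarrow E$), and $\mbb{A}_f^\times=\QQ_{>0}^\times\widehat{\ZZ}^\times\subset E^\times\widehat{\ZZ}^\times=\ker\widetilde{\Art}$; since $\Nm$ is surjective and open (Lemma~\ref{ACCFT}), $\widetilde{\Art}$ factors through a continuous surjection
\[
\Art_0\colon \Tb(\mbb{A}_f)=\mbb{A}_{E,f}^\times/\mbb{A}_f^\times\longrightarrow\Gal(E[\infty]/E),
\]
with $\Art_0\circ\Nm=\mathrm{Res}_{E[\infty]}\circ\Art_E$ by construction, which is the first bullet. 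Its kernel is $\Nm(\ker\widetilde{\Art})=\Nm(E^\times\widehat{\ZZ}^\times)=\Nm(E^\times)$ (as $\widehat{\ZZ}^\times\subset\ker\Nm$), and $\Nm(E^\times)=\Tb(\QQ)$ by Hilbert~90, exactly as in Lemma~\ref{ACCFT}; so $\ker\Art_0=\Tb(\QQ)$.

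Finally, the second bullet follows since $D_m=\Nm(\widehat{\mathcal{O}}_m^\times)$, hence $\Tb(\QQ)D_m=\Nm(E^\times\widehat{\mathcal{O}}_m^\times)$ and $\Nm^{-1}(\Tb(\QQ)D_m)=E^\times\widehat{\mathcal{O}}_m^\times\mbb{A}_f^\times=E^\times\widehat{\mathcal{O}}_m^\times$ (using $\mbb{A}_f^\times\subset E^\times\widehat{\mathcal{O}}_m^\times$), so $\Art_0$ induces $\Tb(\mbb{A}_f)/\Tb(\QQ)D_m\cong\mbb{A}_{E,f}^\times/E^\times\widehat{\mathcal{O}}_m^\times\cong\Gal(E[m]/E)$. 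For the third bullet, with $\ell\nmid m$ split and $\lambda\mid\ell$ the prime fixed by the embedding $\overline{\QQ}\hookrightarrow\overline{\QQ}_\ell$, one traces the identification $\Tb(\QQ_\ell)\cong\QQ_\ell^\times$: a uniformizer corresponds to the idele supported at the appropriate place above $\ell$ with value $\ell$, whose image under $\Art_E$ restricts to a geometric Frobenius there, and unwinding the map $z\mapsto\bar z/z$ together with the geometric normalisation of $\Art_E$ fixed in \S\ref{NotationSection} gives $\Art_0(\ell)=\mathrm{Fr}_\lambda$. This last point is purely a bookkeeping check of conventions and is the only other place requiring care.
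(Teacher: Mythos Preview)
Your proposal is correct and the overall strategy---descend the Artin map for $E$ along the surjection $\Nm\colon\mbb{A}_{E,f}^\times\to\Tb(\Ab_f)$---is the same as the paper's. The substantive difference is in how you justify that the descent is possible. The paper invokes (via a citation) the fact that $E[\infty]$ is exactly the fixed field of the transfer map $\mathrm{Ver}\colon G_\QQ^{\mathrm{ab}}\to G_E^{\mathrm{ab}}$; this yields immediately a commutative diagram with exact rows
\[
\begin{tikzcd}
1 \arrow[r] & \mbb{A}_f^\times \arrow[r] \arrow[d,"\Art_\QQ"] & \mbb{A}_{E,f}^\times \arrow[r,"\Nm"] \arrow[d,"\Art_E"] & \Tb(\Ab_f) \arrow[r] \arrow[d,"\Art_0"] & 1 \\
1 \arrow[r] & G_\QQ^{\mathrm{ab}} \arrow[r,"\mathrm{Ver}"] & G_E^{\mathrm{ab}} \arrow[r,"\mathrm{Res}"] & \Gal(E[\infty]/E) \arrow[r] & 1
\end{tikzcd}
\]
and $\Art_0$ drops out by diagram chase. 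You instead prove the equivalent statement $\bigcap_m E^\times\widehat{\mathcal{O}}_m^\times = E^\times\widehat{\ZZ}^\times$ directly, via the pigeonhole argument exploiting finiteness of $\mathcal{O}_E^\times$. This makes your argument self-contained and more elementary (no appeal to the transfer map), at the cost of the extra combinatorial step; the paper's version is shorter but leans on the external reference. One small point of care in your write-up: ``some $\beta$ occurs for a cofinal set of $m$'' needs the divisibility order and the observation that if each of the finitely many sets $S_\beta=\{m:\beta^{-1}x\in\widehat{\mathcal{O}}_m^\times\}$ failed to be cofinal, their common ``avoiding'' multiple would lie in no $S_\beta$---you might make this explicit. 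The remaining bullets are handled identically in both approaches.
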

\begin{proof}  
It is well known (e.g. see \cite[Lemma 2]{khne2017intersections}) that $E[\infty]$ is the fixed field of the transfer map $\opn{Ver} \colon G_{\QQ}^{\mathrm{ab}} \to G_{E}^{\mathrm{ab}}$, so have a commutative diagram
\begin{center}   
          \begin{tikzcd}  1  \arrow[r] & \mbb{A}_f^{\times}  \arrow[r]   \arrow[d, "\Art_{\QQ}"]  &   \mbb{A}_{E, f}^{\times}  \arrow[r, "\Nm"]   \arrow[d, "\Art_{E}"] &  \Tb(\Ab_ {f} )    \arrow[r]    \arrow[d, "\Art_{0}"]   &   1 \\
          1  \arrow[r]  &  G_{\QQ}^{\mathrm{ab}}  \arrow[r, "\mathrm{Ver}"] &  G_{E} ^ {\mathrm{ab}}   \arrow[r, "\Res"]   &  \Gal(E[\infty]/E))  \arrow[r] & 1      
\end{tikzcd} 
\end{center}
with exact rows, by Lemma \ref{ACCFT}. The surjectivity of $ \Art_{0} $ follows from that of $ \Art_{E} $. The isomorphism in the second assertion is induced from restricting the isomorphism $  \Gal(E[m]/E)  \cong  \mbb{A}_{E, f}^\times /  E^{\times}   \widehat{\OO}_{m}^{\times} $ via $ \Nm $ and $ \Res $. The third assertion follows by tracking the uniformiser under $ \Res \circ \Art_{E}$.
\end{proof}

\subsection{Lattices and completions of pushforwards}

Let $V$ be the representation of $\mbf{G}_E$ with highest weight $\mbf{c} = (0; c_1, \dots, c_{2n})$ associated with $\pi$ as above, and let $\tilde{V} \defeq V \boxtimes \mbf{1}$ denotes its trivial extension to $\Gt_E$. Since $V$ is self-dual, we put ourselves in the situation of \S \ref{BranchingLawsAndReps}--\ref{FunctorsFromEt} and \S \ref{VerticalNormRelations} and we will freely use notation from these sections. In particular, we let $\mathscr{V}$, $\tilde{\mathscr{V}}$, $\mathscr{T}$ and $\tilde{\mathscr{T}}$ denote the \'{e}tale equivariant sheaves associated with the representations $V$, $\tilde{V}$, $T$ and $\tilde{T}$ respectively. The fixed parahoric vector in Assumption \ref{MainAssumption} gives rise to a ``modular parametrisation", i.e. an equivariant linear map
\[
v_{\varphi} \circ \pr_{\pi^{\vee}}  \colon   \mathrm{H}^{2n-1}_{\et}(\Sh_{\Gb,\overline{\QQ}}, \mathscr{V}(n)) \otimes \overline{\QQ}_{p}  \twoheadrightarrow W_{\pi}^*(1-n) 
\]
where $v_{\varphi}$ is evaluation at $\varphi$. Here we have used the fact that $V$ is self-dual, so $\mathscr{V}^* \cong \mathscr{V}$. Since we have assumed that $\pi$ is Siegel ordinary, this map is non-trivial when pre-composed with the map
\[
\opn{H}^{2n-1}_{\et}\left( \opn{Sh}_{\mbf{G}}(K^pJ)_{\overline{\mbb{Q}}}, \mathscr{V}(n) \right)_{\ide{t}_{\alpha}} \hookrightarrow \opn{H}^{2n-1}_{\et}\left( \opn{Sh}_{\mbf{G}}(K^pJ)_{\overline{\mbb{Q}}}, \mathscr{V}(n) \right) \to \opn{H}^{2n-1}_{\et}\left( \opn{Sh}_{\mbf{G},\overline{\mbb{Q}}}, \mathscr{V}(n) \right)
\]
where the first map is the splitting in Remark \ref{RemRationalLocalisation}. This is a necessary condition for the Euler system classes to be non-zero, but is not required for the actual construction of the classes because we will work with the pushforward in Remark \ref{RemVerticalRelThm}.

\begin{definition} \label{DefOfGStableLattice}
We let $T_{\pi}^*(1-n) \subset W_{\pi}^{*}(1-n)$ denote the image of the composition 
\begin{equation} \label{EqnLatticeDef}
\mathrm{H}^{2n-1}_{\et}(\Sh_{\Gb,\overline{\QQ}}, \mathscr{T}(n)) \otimes \overline{\mbb{Z}}_{p} \to \mathrm{H}^{2n-1}_{\et}(\Sh_{\Gb,\overline{\QQ}}, \mathscr{V}(n)) \otimes \overline{\mbb{Q}}_{p} \xrightarrow{v_{\varphi} \circ \pr_{\pi^{\vee}}} W_{\pi}^{*}(1-n)
\end{equation}
which is by construction a Galois stable $\overline{\mbb{Z}}_p$-submodule inside $ W_{\pi}^{*}(1-n) $. Let $\Phi$ be a finite extension of $\mbb{Q}_p$ such that:
\begin{itemize}
    \item $W^*_{\pi}(1-n)$ has a model over $\Phi$ (i.e. there exists finite-dimensional vector space $W^*_{\pi}(1-n)_{\Phi}$ over $\Phi$ with a continuous action of $\Gal(\overline{E}/E)$ such that $W^*_{\pi}(1-n)_{\Phi} \otimes_{\Phi} \Qpb = W^*_{\pi}(1-n)$)
    \item The map $v_{\varphi} \circ \opn{pr}_{\pi^\vee}$ is defined over $\Phi$.
\end{itemize} 
Let $\mathcal{O}$ denote the ring of integers of $\Phi$. Then we define $T^*_{\pi}(1-n)_{\ordd}$ to be the image of the composition:
\[
\mathrm{H}^{2n-1}_{\et}(\Sh_{\Gb,\overline{\QQ}}, \mathscr{T}(n)) \otimes \ordd \to \mathrm{H}^{2n-1}_{\et}(\Sh_{\Gb,\overline{\QQ}}, \mathscr{V}(n)) \otimes \Phi \xrightarrow{v_{\varphi} \circ \pr_{\pi^{\vee}}} W_{\pi}^{*}(1-n)_{\Phi} .
\]
This defines a Galois stable $\ordd$-lattice inside $W^*_{\pi}(1-n)_{\Phi}$ and satisfies the property $T^*_{\pi}(1-n)_{\ordd} \otimes_{\ordd} \overline{\mbb{Z}}_p = T^*_{\pi}(1-n)$. Since such an extension $\Phi$ always exists, this implies that $T^*_{\pi}(1-n)$ is a Galois stable $\overline{\mbb{Z}}_p$-lattice. For the majority of the time, we will work with Galois modules over $\Qpb$ or $\overline{\mbb{Z}}_p$, however sometimes we will need to work with their models over $\Phi$ or $\ordd$ (see Corollary \ref{CorForThmA} for example). We will always indicate the latter with an appropriate subscript as above.
\end{definition}

\begin{remark}
Note that $T^*_{\pi}(1-n)$ depends on $\varphi$ and the lattice $T \subset V_{\mbb{Q}_p}$, but by rescaling, it is enough to establish the main result of this article (Theorem \ref{TheoremA}) for a single Galois-stable lattice. Therefore, we have suppressed the dependence on $\varphi$ and $T$ from the notation.
\end{remark}

\begin{notation}
We denote the (Galois-equivariant) map 
    \[
    M_{G, \et, \mbb{Z}_p}\left(K^p J \right) \to T^*_{\pi}(1-n),
    \]
    defined as the composition of the pullback $M_{G, \et, \mbb{Z}_p}\left(K^p J \right) \to \mathrm{H}^{2n-1}_{\et}(\Sh_{\Gb,\overline{\QQ}}, \mathscr{T}(n)) \otimes \overline{\mbb{Z}}_p$ and the map in (\ref{EqnLatticeDef}), by $v_{\varphi} \circ \opn{pr}_{\pi^\vee}$.
\end{notation}

For every (sufficiently small) compact open subgroup $U \subset \mbf{H}(\mbb{A}_f)$, fix elements 
\[
\mbf{1}_{U} \in \opn{H}^0_{\mathrm{mot}}\left(\opn{Sh}_{\mbf{H}}(U), \mbb{Z} \right)
\]
that are compatible under pullbacks, such that their common image $\mbf{1} \in \opn{H}^0_{\mathrm{mot}}\left(\opn{Sh}_{\mbf{H}}, \mbb{Z} \right)$ is fixed by the action of $\mbf{H}(\mbb{A}_f)$. By abuse of notation, we also let $\mbf{1}_{U}$ denote the \'{e}tale realisation in $\opn{H}^0_{\et}\left(\opn{Sh}_{\mbf{H}}(U), \mbb{Z}_p \right)$.  

\begin{example}
One has $\opn{H}^0_{\mathrm{mot}}\left( \opn{Sh}_{\mbf{H}}(U), \mbb{Z} \right) = \opn{CH}^0\left(\opn{Sh}_{\mbf{H}}(U) \right)$ where the latter group is the abelian group of $0$-dimensional cycles up to rational equivalence. We can take
\[
\mbf{1}_U = [\opn{Sh}_{\mbf{H}}(U)]
\]
where $[\opn{Sh}_{\mbf{H}}(U)]$ denotes the fundamental cycle as in \cite[\S 1.5]{FultonIntTheory}. Indeed, if $h \in \mbf{H}(\mbb{A}_f)$ and $V$, $U$ are sufficiently small compact open subgroups satisfying $h^{-1}V h \subset U$, then the map $h \colon \opn{Sh}_{\mbf{H}}(V) \to \opn{Sh}_{\mbf{H}}(U)$ given by right-translation by $h$ is finite \'{e}tale and one has an equality $h^*[\opn{Sh}_{\mbf{H}}(U)] = [\opn{Sh}_{\mbf{H}}(V)]$ (see Lemma 1.7.1 in \emph{loc.cit.}). This implies that $\mbf{1}_U$ are compatible under pullbacks and their common image $\mbf{1}$ is invariant under the action of $\mbf{H}(\mbb{A}_f)$.
\end{example}

\begin{definition} \label{TwistedIwClassesDef}
Let $m \in \invs{R}$ and $g \in \tilde{G}^p$, and set $U_g = U_g^p \cdot \mbf{H}(\mbb{Z}_p)$ where 
\[
U_g^p \defeq g \left(K^p \times D[m]^p \right) g^{-1} \cap H^p.
\]
We define the following class 
\[
z_{g, m, \mathrm{Iw}}^{\circ} \defeq (v_{\varphi} \circ \opn{pr}_{\pi^\vee}) \circ \; [g]_* \circ \iota_{*, \mathrm{Iw}, \alpha} \left( \mbf{1}_{U_g} \right) \in \opn{H}^1_{\opn{Iw}} \left( E[m p^{\infty}], T_{\pi}^*(1-n) \right)    
\]
If $\phi = \sum_i a_i \opn{ch}(g_i (K^p \times D[m]^p) ) \in \invs{H}(\gt^p, \mbb{Z}_p)$ is an arbitrary element invariant under the action of $K^p \times D[m]^p$, then we have a well-defined element 
\[
z_{\phi, m, \mathrm{Iw}}^{\circ} \defeq \sum_i a_i z_{g_i, m, \mathrm{Iw}}^{\circ} \in \opn{H}^1_{\opn{Iw}} \left( E[m p^{\infty}], T_{\pi}^*(1-n) \right).  
\]
We let $z_{\phi, m, \mathrm{Iw}}$ denote the image of $z_{\phi, m, \mathrm{Iw}}^{\circ}$ in $\varprojlim_r \opn{H}^1\left(E[mp^r], W^*_{\pi}(1-n) \right)$. Note that although the level subgroup $U_g$ only depends on the projection of $g$ to $G^p$, the pushforward $[g]_*$ does depend on the component of $g$ in $T^p$.
\end{definition} 

\begin{remark} \label{RemAboutModelOfESclass}
Suppose that $\Phi$ is a finite extension of $\mbb{Q}_p$ with ring of integers $\mathcal{O}$, as in Definition \ref{DefOfGStableLattice}. Then the class $z_{\phi, m, \opn{Iw}}^{\circ}$ lies in the cohomology group $\opn{H}^1_{\opn{Iw}}\left(E[mp^{\infty}], T^*_{\pi}(1-n)_{\ordd} \right)$.
\end{remark}

The following proposition will allow us to prove norm compatibility relations for our Euler system classes.

\begin{proposition} \label{TwistedClassIwProp}
Keeping the same notation as in Definition \ref{TwistedIwClassesDef}, there exists a $\mbf{H}(\mbb{A}_f^p) \times \Gt(\mbb{A}_f^p)$-equivariant linear map
\[
\mathcal{L}_{m} \colon \invs{H}(\tilde{G}^p)^{1 \times D[m]^{p}} \longrightarrow \pi_f^\vee \; \otimes \;  \varprojlim_r \opn{H}^1\left( E[mp^r], W_{\pi}^* (1-n) \right) 
\]
where 
\begin{itemize}
    \item The action of $(h, \tilde{g}) \in \mbf{H}(\mbb{A}_f^p) \times \Gt(\mbb{A}_f^p)$ on $\xi \in \invs{H}(\tilde{G}^p)^{1 \times D[m]^{p}}$ is given by $(h, \tilde{g}) \cdot \xi(-) = \xi(h^{-1} \cdot - \cdot \tilde{g})$
    \item If we write $\tilde{g} = g \times t$ for $g \in \mbf{G}(\mbb{A}_f^p)$ and $t \in \mbf{T}(\mbb{A}_f^p)$, then $(h, \tilde{g})$ acts on 
    \[
    x \otimes y \in \pi_f^\vee \; \otimes \;  \varprojlim_r \opn{H}^1\left( E[mp^r], W_{\pi}^* (1-n) \right) 
    \]
    as $(h, \tilde{g}) \cdot (x \otimes y) = g \cdot x \otimes \sigma^{-1} \cdot y$, where $\sigma \in \Gal(E[mp^{\infty}]/E)$ is the unique element such that its image under the map 
    \begin{align*}
    \kappa \colon \Gal(E[mp^{\infty}]/ E) &\xrightarrow{\sim} \mbf{T}(\mbb{Q}) \backslash \mbf{T}(\mbb{A}_f) / D[m]^p \\
    \sigma = \opn{Art}_E(s) &\mapsto \overline{s}_f/s_f 
    \end{align*}
    is equal to the image of $t$ in the right-hand side
\end{itemize}
satisfying: 
\begin{enumerate}[(a)]
\item  For any $\phi^{(m)} = \sum_i a_i \opn{ch}\left(g_i (K^p \times D[m]^p) \right)$ in the Hecke algebra $\invs{H}(\tilde{G}^p, \mbb{Z}_{p})$, one has an equality
\[
z_{\phi^{(m)}, m, \mathrm{Iw}} = v_{\varphi} \circ \mathcal{L}_m (\psi^{(m)})
\]
where $\psi^{(m)} = \sum \frac{a_i}{\opn{Vol}(U_{g_i})} \opn{ch}\left(g_i (K^p \times D[m]^p) \right)$.
\item  For a prime $\ell$ such that $\ell m \in \invs{R}$, one has
\[
\opn{cores}^{E[\ell m p^{\infty}]}_{E[mp^{\infty}]} \invs{L}_{\ell m}(\psi^{(\ell m)}) = \invs{L}_m( \pr_{*} (\psi^{(\ell m)} )).
\]
where $ \pr _{ * }   : \mathcal{H}(\widetilde{G}^{p}) ^{1 \times D[\ell m]^{p} }   \to  \mathcal{ H } (  \widetilde{G} ^{p} ) ^ { 1 \times  D [m ] ^{ p } } $ is the natural norm map.  
\end{enumerate}
\end{proposition}

\begin{proof}
Since $1 \times D[m]^p$ lies in the centre of $\tilde{G}^p$, the completed pushforward induced from the pushforward in Remark \ref{RemVerticalRelThm} gives rise to a map
\begin{equation} \label{CompPushComposition}
\hat{\iota}_{*, \mathrm{Iw}, \alpha} \colon \widehat{M}_{H, \mbb{Q}_p}^p \otimes \invs{H}(\tilde{G}^p)^{1 \times D[m]^p} \longrightarrow \overline{N}_{\mathrm{Iw}, \mbb{Q}_p} \left( 1 \times D[m]^p \right) \to \varprojlim_r \opn{H}^1\left( E[mp^r], \varinjlim_{L^p} M_{G, \et, \mbb{Q}_p}(L^p J) \right) 
\end{equation}
where the last map is the natural one and $\alpha$ is the eigenvalue in Assumption \ref{MainAssumption}(5). We consider the following bilinear map:
\[
\invs{S}_m \colon \widehat{M}_{H, \mbb{Q}_p}^p \otimes \invs{H}(\tilde{G}^p)^{1 \times D[m]^p} \to \pi_f^\vee \; \otimes \;  \varprojlim_r \opn{H}^1\left( E[mp^r], W_{\pi}^* (1-n) \right) 
\]
defined as the composition
\begin{align*}
\widehat{M}_{H, \mbb{Q}_p}^p \otimes \invs{H}(\tilde{G}^p)^{1 \times D[m]^p} &\xrightarrow{\hat{\iota}_{*, \opn{Iw}, \alpha}} \varprojlim_r \opn{H}^1\left( E[mp^r], \varinjlim_{L^p} M_{G, \et, \mbb{Q}_p}(L^p J) \right)  \\
&\longrightarrow \varprojlim_r \opn{H}^1\left( E[mp^r], \opn{H}^{2n-1}_{\et}\left(\opn{Sh}_{\mbf{G}, \overline{\mbb{Q}}}, \mathscr{V}(n) \right) \otimes \Qpb \right)  \\
&\xrightarrow{\opn{pr}_{\pi^\vee}} \pi_f^\vee \; \otimes \;  \varprojlim_r \opn{H}^1\left( E[mp^r], W_{\pi}^* (1-n) \right)  
\end{align*}
where the middle map is induced from the pullback map to the direct limit. The map $\invs{S}_m$ forms an intertwining map in the sense of Definition \ref{DefinitionOfIntertwiningMap}, and we define $\invs{L}_m(\phi) \defeq \invs{S}_m(\mbf{1} \otimes \phi)$. The equivariance properties for $\invs{L}_m$ follow immediately from the fact that $\mbf{1}$ is invariant under the action of $\mbf{H}(\mbb{A}_f^p)$. Moreover, for part (b), notice that there are induced maps  $\overline{N}_{\mathrm{Iw}, \mbb{Q}_p} \left( 1 \times D[\ell m]^p \right) \to \overline{N}_{\mathrm{Iw}, \mbb{Q}_p} \left( 1 \times D[m]^p \right)$ coming from the pushforward of $ N_{\Iw,\QQ_{p}} $ in the $ T^{p}$ component. By Lemma \ref{ShapiroLemma}, the norm map corresponds to corestriction under the second morphism in (\ref{CompPushComposition}).

Let $\phi^{(m)} = \opn{ch}(g (K^p \times D[m]^p)) \in \invs{H}( \gt^p, \mbb{Z}_p )$. By Corollary \ref{CommDiagramCor} and Definition \ref{DefOfGStableLattice}, one obtains a commutative diagram:

\[
\begin{tikzcd}
\quad  \quad  \quad      &{M_{H, \mbb{Z}_p}^p\left( U_g^p \right)} \arrow[d, "{\iota_{*, \opn{Iw}, \alpha}}"'] \arrow[r]  & {\widehat{M}_{H, \mbb{Q}_p}^p} \arrow[ddd, "{\hat{\iota}_{*, \opn{Iw}, \alpha}}"] \arrow[dddd, to path={..controls ++(320:2)  and ++(10:5.5) ..   node[near start, sloped,above] {\scalebox{0.8}{$\mathcal{S}_{m}(  - \otimes  \phi^{(m)})$}}   (\tikztotarget)}] \\
& {N_{\opn{Iw}, \mbb{Z}_p}\left( g (K^p \times D[m]^p ) g^{-1} \right)} \arrow[d, "{[g]_*}"']& \\
&{N_{\opn{Iw}, \mbb{Z}_p}(K^p \times D[m]^p)} \arrow[d, equals]& \\
&{\opn{H}^1_{\opn{Iw}}\left( E[mp^{\infty}], M_{G, \et, \mbb{Z}_p}(K^p J) \right)} \arrow[dd, "v_{\varphi} \circ \opn{pr}_{\pi^\vee} "'] \arrow[r, "\opn{Vol}(U_g)"] & {\varprojlim_r \opn{H}^1\left( E[mp^r], \varinjlim_{L^p} M_{G, \et, \mbb{Q}_p}(L^p J) \right) } \arrow[d]    \\                      && {\pi_f^\vee \; \otimes \;  \varprojlim_r \opn{H}^1\left( E[mp^r], W_{\pi}^* (1-n) \right)} \arrow[d, "v_{\varphi}"]             \\
&{\opn{H}^1_{\opn{Iw}}\left(E[mp^{\infty}], T^*_{\pi}(1-n) \right)} \arrow[r, "\opn{Vol}(U_g)"]          & {\varprojlim_r \opn{H}^1\left( E[mp^r], W_{\pi}^* (1-n) \right) }                 
\end{tikzcd}
\]
Hence, we must have $z_{\phi^{(m)}, m, \opn{Iw}} = \opn{Vol}(U_g)^{-1} (v_{\varphi} \circ \invs{L}_m)(\phi^{(m)})$. The proof for general $\phi^{(m)}$ follows immediately from linearity.
\end{proof}

\subsection{The Euler system} \label{TheEulerSystemSection}

We continue with the same notation as in the previous sections. We consider the following test data for our Euler system: for $m \in \invs{R}$, define an element $\phi^{(m)} \in \invs{H}(\tilde{G}^p, \mbb{Z})^{K^p \times D[m]^p}$ as the product
\[
\phi^{(m)} \defeq \left( \bigotimes_{\substack{ \ell \notin S \\ \ell \nmid m }} \phi_{\ell, 0} \right) \otimes \left( \bigotimes_{\ell | m} \phi_\ell \right) \otimes \phi_S
\]
where 
\begin{itemize}
    \item $\phi_S \in \invs{H}(\tilde{G}_S, \mbb{Z})^{K_S \times \Nm ((\ordd_E \otimes \mbb{Z}_S)^{\times})}$ is fixed.
    \item $\phi_{\ell, 0} = \opn{ch}\left(K_{\ell} \times \mbf{T}(\mbb{Z}_\ell) \right) \in \invs{H}(\tilde{G}_\ell, \mbb{Z})$ (note that $\Nm((\ordd_E \otimes \mbb{Z}_\ell)^{\times}) = \mbf{T}(\mbb{Z}_\ell)$ for primes $\ell$ that do not ramify in $E/\mbb{Q}$, by the proof of Lemma \ref{ACCFT}).
    \item Using the notation in Theorem \ref{MainTheorem}, we define $\phi_\ell = \sum_{i=0}^n a_i \opn{ch}\left( (g_i, 1) (K_{\ell} \times D_\ell) \right)$, where the element $g_i$ is 
    \[
    g_i = 1 \times \tbyt{1}{\ell^{-1} X_i}{}{1} \; \; \in \; \; \opn{GL}_1(\mbb{Q}_\ell) \times \opn{GL}_{2n}(\mbb{Q}_\ell) = \mbf{G}(\mbb{Q}_\ell).
    \]
    The integers $a_i$ are defined to be 
    \[
    a_i \defeq (\ell - 1) \cdot b_i \cdot [\mbf{H}(\mbb{Z}_\ell) : V_{1, i}]^{-1}
    \]
    where $V_{1, i} = (g_i, 1) (K_\ell \times D_\ell ) (g_i, 1)^{-1} \cap H_\ell$ and we have chosen the model of $\mbf{G}_{\mbb{Q}_\ell}$ over $\mbb{Z}_\ell$ so that $K_{\ell} = \mbf{G}(\mbb{Z}_\ell)$ (this quantity $a_i$ is an integer by part (3) of Theorem \ref{MainTheorem}). 
\end{itemize}

\begin{definition}
Let $\pi_0$ be a cuspidal automorphic representation of $\mbf{G}_0$ satisfying Assumption \ref{MainAssumption}. For $m \in \invs{R}$, we define the \emph{Iwasawa Euler system class} to be 
\[
\invs{Z}^{\circ}_{m, \mathrm{Iw}} \defeq m^{-n^2} \cdot z^{\circ}_{\phi^{(m)}, m, \mathrm{Iw}} \
\; \; \;  \in \; \; \; \opn{H}^1_{\mathrm{Iw}}\left(E[mp^\infty], T_\pi^* (1-n) \right) .
\]
We set $\invs{Z}_{m, \mathrm{Iw}} \defeq m^{-n^2} \cdot z_{\phi^{(m)}, m, \mathrm{Iw}}$ which equals the image of $\invs{Z}^{\circ}_{m, \mathrm{Iw}}$ in $\varprojlim_r \opn{H}^1\left(E[mp^r], W^*_{\pi}(1-n) \right)$.    
\end{definition}
We now prove our main result of this article.      
\begin{theorem} \label{RelationForIwClasses}
Let $\ell, m \in \invs{R}$ such that $\ell$ is prime and their product satisfies $\ell m \in \invs{R}$. The Iwasawa Euler system classes satisfy
\begin{equation} \label{CoresRelationMainThm}
\opn{cores}^{E[\ell mp^\infty]}_{E[ m p^\infty]} \invs{Z}_{\ell m, \mathrm{Iw}} = P_{\lambda}(\opn{Fr}_\lambda^{-1}) \cdot \invs{Z}_{m, \mathrm{Iw}}
\end{equation}
where $\lambda$ is the unique prime of $E$ lying above $\ell$ fixed by the embedding $\overline{\mbb{Q}} \hookrightarrow \overline{\mbb{Q}}_\ell$, the polynomial $P_{\lambda}(X) = \opn{det}\left(1 - \opn{Frob}_{\lambda}^{-1} X | \rho_{\pi}(n) \right)$ is the reverse characteristic polynomial of a geometric Frobenius at $\lambda$, and $\opn{Fr}_{\lambda} \in \Gal\left(E[m p^{\infty}]/E \right)$ denotes the arithmetic Frobenius at $\lambda$.  
\end{theorem}
\begin{proof}
We will perform a series of reduction steps to reduce the proof to that of Theorem \ref{MainTheorem}.   \\

\noindent \textit{Step 1.} By Proposition  \ref{TwistedClassIwProp} (b), there exists a commutative diagram   
\begin{center}   
\begin{tikzcd} 
 \mathcal{H} ( \widetilde{G} ^{p} ) ^ { 1 \times D[ \ell m ]^{p} }  \arrow[r,  "\mathcal{L}_{\ell m}" ]   \arrow[d, "\pr_{*}", swap] &   \pi_f^{\vee} \otimes \varprojlim_r \opn{H}^1\left(E[\ell mp^r], W_{\pi}^*(1-n) \right)    \arrow[d,  "{\opn{cores}^{E[\ell mp^\infty]}_{E[ m p^\infty]}}"   ]   \\
\mathcal{H}(\widetilde{G}^{p} ) ^ { 1 \times D [ m ] ^{p} }  \arrow[r,  "\mathcal{L}_{m}" ]  &  \pi_{f}^{\vee} \otimes   \varprojlim_{r}  \mathrm{H}^{1} ( E[mp^{r} ] , W_{\pi}^{*}(1-n))   
\end{tikzcd} 
\end{center} 

and functions $\psi^{(\ell m)}$ and $\psi^{(m)}$ such that:
\begin{itemize}
    \item $\invs{Z}_{\ell m, \mathrm{Iw}} = \ell^{-n^2} m^{-n^2} (v_{\varphi} \circ \invs{L}_{\ell m})(\psi^{(\ell m)})$
    \item $\invs{Z}_{m, \mathrm{Iw}} = m^{-n^2} (v_{\varphi} \circ \invs{L}_{m})(\psi^{(m)})$.
\end{itemize}
By the diagram above, and cancelling the $m^{-n^2}$ factor, this reduces the theorem to proving the statement 
\begin{equation} \label{ReductionAfterStep1}
\left(v_{\varphi} \circ \invs{L}_m \right)(  
\pr_{*}  (\psi^{(\ell m)} ) ) = \ell^{n^2} P_{\lambda}(\opn{Fr}_{\lambda}^{-1}) \cdot \left(v_{\varphi} \circ \invs{L}_m \right)(\psi^{(m)} ).
\end{equation}
We note that both functions $\pr _{ * } ( \psi^{(\ell m)}  )$ and $\psi^{(m)}$ decompose as $ 
\pr_{*} ( \psi^{(\ell m)}_\ell   ) \otimes \psi^\ell   $ and $\psi^{(m)}_\ell \otimes \psi^\ell$ for some fixed element $\psi^\ell \in \invs{H}(\tilde{G}^{\ell p})$, where
\begin{eqnarray}
\opn{pr}_*\left(\psi^{(\ell m)}_{\ell}\right) & = & \sum_{i=0}^n \frac{a_i}{\opn{Vol}(V_{1, i})} \opn{ch}\left( (g_i, 1) (K_\ell \times \mbf{T}(\mbb{Z}_\ell)) \right) \quad \in \quad \invs{H}(\tilde{G}_{\ell}) \nonumber \\
\psi^{(m)}_\ell & = & \frac{1}{\opn{Vol}\left(\mbf{H}(\mbb{Z}_\ell ) \right)} \opn{ch}(K_\ell \times \mbf{T}(\mbb{Z}_\ell)) \quad \in \quad \invs{H}(\tilde{G}_\ell ) \nonumber 
\end{eqnarray}
using the explicit formulae in Proposition \ref{TwistedClassIwProp}(a) and at the start of section \ref{TheEulerSystemSection}, and the fact that $(K_\ell \times \mbf{T}(\mbb{Z}_\ell) ) \cap H_\ell = \mbf{H}(\mbb{Z}_\ell )$ (by our choice of models over $\mbb{Z}_\ell$).  \\
 
\noindent \textit{Step 2.} We now simplify the expression in (\ref{ReductionAfterStep1}) further by cutting out by a finite-order character. Since it is enough to prove the relation (\ref{ReductionAfterStep1}) at each finite layer of the inverse limit $\varprojlim_r \opn{H}^1\left(E[mp^r], W_{\pi}^*(1-n) \right)$, it is enough to show the relation in (\ref{ReductionAfterStep1}) after post-composing both sides by any Galois equivariant morphism
\begin{equation} \label{CharacterProj}
\varprojlim_r \opn{H}^1\left(E[mp^r], W_{\pi}^*(1-n) \right) \to \Qpb(\chi)
\end{equation}
where $\chi$ is a finite-order character of $\Gal(E[mp^{\infty}]/E)$. 

More precisely, let $\chi$ be a finite-order character of $\Gal(E[mp^{\infty}]/E)$. Since the Galois action and the action of $\mbf{T}(\mbb{A}_f^p) = T^p$ are intertwined by the character $\kappa^{-1}$ (see Lemma \ref{ShapiroLemma}), we can view $\chi$ as a character of $T^p$ by setting $\chi(t) = \chi(\sigma)$, where $\sigma \in \Gal(E[mp^\infty]/E)$ is an element such that $\kappa(\sigma^{-1})$ equals the image of $t$ in $\varprojlim_r \Delta(D[m]^p D_{p^r})$. This character factorises as $\chi_{\ell}\cdot \chi^{\ell}$, where $\chi_\ell$ is a character of $T_\ell$ and $\chi^{\ell}$ is a character of $T^{\ell p} = \mbf{T}(\mbb{A}_f^{\ell p})$.

Furthermore, the Frobenius element $\opn{Fr}_{\lambda}^{-1}$ satisfies $\kappa(\opn{Fr}_{\lambda}^{-1}) = \ell^{-1}$, so the action of $P_{\lambda}(\opn{Fr}_{\lambda}^{-1})$ is intertwined with multiplication by 
\[
P_{\lambda}(\chi_\ell(\ell) ) = Q_\lambda(\chi_\ell(\ell) \ell^{-n}) = L(\pi_\ell \otimes \chi_\ell, 1/2)^{-1}
\]
under the map in (\ref{CharacterProj}), where $Q_\lambda(X)$ is as in Theorem \ref{ExistenceOfGaloisRep}.

Let $\invs{L}_m^{\chi}$ denote the composition of $\invs{L}_m$ with the map $1 \otimes $(\ref{CharacterProj}). This is a $\mbf{H}(\mbb{A}^p_f) \times \Gt(\mbb{A}^p_f)$-equivariant linear map $\invs{H}(\tilde{G}^p)^{1 \times D[m]^p} \to (\pi_f \boxtimes \chi^{-1})^\vee$, and to prove the theorem, we are required to prove the statement
\begin{equation} \label{ReductionAfterStep2}
\left(v_{\varphi} \circ \invs{L}^{\chi}_m \right)(
\psi^{(\ell m)}) = \ell^{n^2}  L(\pi_\ell \otimes \chi_\ell, 1/2)^{-1} \left(v_{\varphi} \circ \invs{L}^{\chi}_m \right)(\psi^{(m)} )
\end{equation}
for all finite-order characters $\chi$. \\

\noindent \textit{Step 3.} We now reduce the statement in (\ref{ReductionAfterStep2}) to one in local representation theory. The map 
\begin{align*} 
\ide{Z} \colon \invs{H}(\tilde{G}_\ell) ^ { 1 \times D[m] ^{p } }  &\longrightarrow (\pi_\ell \boxtimes \chi_\ell^{-1})^\vee \\
 \upsilon &\mapsto \left( v_{\varphi^\ell} \circ \invs{L}_m^{\chi}\right)(\upsilon \otimes \psi^{\ell} )
\end{align*}
where $v_{\varphi^\ell} \colon (\pi_f^\ell \boxtimes (\chi^{\ell})^{-1})^\vee \otimes (\pi_\ell \boxtimes \chi_\ell^{-1})^\vee \to (\pi_\ell \boxtimes \chi_\ell^{-1})^\vee$ is the evaluation map at the vector $\varphi^\ell = \bigotimes_{q \notin S' \cup \{\ell\}} \varphi_q \otimes \varphi_S \otimes \varphi_p$, is $H_\ell \times \tilde{G}_\ell$-equivariant by construction.  Note that we have 
\[
(\ell - 1)^{-1} \cdot \opn{Vol}\left( \mbf{H}(\mbb{Z}_\ell) \right) \cdot \frac{a_i}{\opn{Vol}(V_{1, i})} = b_i
\]
where $b_i$ are the integers appearing at the start of section \ref{TheEulerSystemSection}. So by multiplying the relation in (\ref{ReductionAfterStep2}) by $(\ell - 1)^{-1} \cdot \opn{Vol}\left( \mbf{H}(\mbb{Z}_\ell) \right)$ and using the explicit formulae in Step 1, we are reduced to proving the relation:
\begin{equation} \label{ReductionAfterStep3}
\left( v_{\varphi_\ell} \circ \ide{Z} \right) \left( \sum_i b_i \opn{ch}\left( (g_i, 1) (K_\ell \times \mbf{T}(\mbb{Z}_\ell)) \right) \right) = \frac{\ell^{n^2}}{\ell - 1} L(\pi_\ell \otimes \chi_\ell, 1/2)^{-1} \cdot \left( v_{\varphi_\ell} \circ \ide{Z} \right)\left( \opn{ch}(K_\ell \times \mbf{T}(\mbb{Z}_\ell)) \right).
\end{equation} \\

\noindent \textit{Step 4.} We now apply Frobenius reciprocity. Let $\upsilon$ denote the element $\sum_i b_i \opn{ch}\left( (g_i, 1) (K_\ell \times \mbf{T}(\mbb{Z}_\ell)) \right)$. By Lemma \ref{Frobenius} (b),  the map $\ide{Z}$ corresponds to an element $ \mathfrak{z} \in \opn{Hom}_H(\pi_\ell \boxtimes \chi_\ell^{-1}, \mbb{C})$ (implicitly using the identification of $\Qpb$ and $\mbb{C}$), and the relation in (\ref{ReductionAfterStep3}) takes the form
\[
\mathfrak{z} (\upsilon \cdot \varphi_\ell ) = \frac{\ell^{n^2}}{\ell - 1} L(\pi_\ell \otimes \chi_\ell, 1/2)^{-1} \cdot \mathfrak{z} (\varphi_\ell)
\]
noting that $\varphi_\ell$ is spherical. Note that $\chi_{\ell}$ is unramified and, by Proposition \ref{WeakBCProp}(2), the local representation $\pi_{0, \ell}$ appears as the local component $\Pi_{0, \lambda}$ of the cuspidal automorphic representation $\Pi_0$ of $\opn{GL}_{2n}(\mbb{A}_E)$. By the work of Shalika \cite{ShalikaMultOne}, any such local component is generic. The relation above now follows from Theorem \ref{MainTheorem} and this completes the proof of Theorem \ref{RelationForIwClasses}.
\end{proof}

\begin{definition}
For $m \in \invs{R}$ and $r \geq 0$, we let $d_{mp^r}^{\circ}$ (resp. $d_{mp^r}$) denote the image of $\invs{Z}_{m, \mathrm{Iw}}^{\circ}$ (resp. $\invs{Z}_{m, \mathrm{Iw}}$) under the natural map 
\begin{align*}
\opn{H}^1_{\mathrm{Iw}}\left(E[mp^\infty], T_\pi^* (1-n) \right) &\to \opn{H}^1\left(E[mp^r], T_\pi^* (1-n) \right) \\ 
\Biggl( {\text{ resp. }} \varprojlim_r \opn{H}^1\left(E[mp^r], W_\pi^* (1-n) \right)  &\to \opn{H}^1\left(E[mp^r], W_\pi^* (1-n) \right) \Biggr)
\end{align*}
\end{definition}

As a corollary of Theorem \ref{RelationForIwClasses}, we obtain Theorem \ref{TheoremA} in the introduction:

\begin{corollary}[Theorem \ref{TheoremA}] \label{CorForThmA}
For $m \in \invs{R}$ and $r \geq 0$, there exist classes $c_{mp^r} \in \opn{H}^1\left( E[mp^r], T^*_{\pi}(1-n) \right)$ satisfying 
\[
\opn{cores}^{E[\ell m p^r]}_{E[mp^r]} c_{\ell m p^r} = \left\{ \begin{array}{cc} P_{\lambda} \left( \opn{Fr}_\lambda^{-1} \right) \cdot c_{m p^r} & \text{ if } \ell \neq p \text{ and } \ell m \in \invs{R} \\ c_{m p^r} & \text{ if } \ell = p  \end{array} \right.
\]
where $P_{\lambda}(X) \in \overline{\mbb{Z}}_p[X]$ and $\lambda$ are as in Theorem \ref{RelationForIwClasses}. Moreover, there exists a constant $C \in \mbb{Z}_{>0}$ such that $c_{mp^r} = C \cdot d_{mp^r}^\circ$ for all $m \in \invs{R}$ and $r \geq 0$. 
\end{corollary}
\begin{proof}
Let $\Phi$ be a finite extension of $\mbb{Q}_p$ with ring of integers $\mathcal{O}$, as in Definition \ref{DefOfGStableLattice}. Then by Remark \ref{RemAboutModelOfESclass}, the classes $d^{\circ}_{mp^r}$ and $d_{mp^r}$ are elements of the cohomology groups $\opn{H}^1\left(E[mp^r], T^*_{\pi}(1-n)_{\ordd} \right)$ and $\opn{H}^1\left(E[mp^r], W^*_{\pi}(1-n)_{\Phi} \right)$ respectively, and $d_{mp^r}$ is the image of $d_{mp^r}^{\circ}$ under the map
\begin{equation} \label{EqnIntCohtoRatCoh}
\opn{H}^1\left(E[mp^r], T^*_{\pi}(1-n)_{\ordd} \right) \to \opn{H}^1\left(E[mp^r], W^*_{\pi}(1-n)_{\Phi} \right)
\end{equation}
Since $W^*_{\pi}(1-n)_{\Phi}$ is absolutely irreducible (by assumption) and of $\Phi$-dimension $\geq 2$, one must have $\opn{H}^0\left(E^{\mathrm{ab}}, W^*_{\pi}(1-n)_{\Phi} \right) = 0$. Indeed, suppose for the sake of contradiction that  $v \in W^*_{\pi}(1-n)_{\Phi}$ is a non-zero vector fixed by $\Gal( \overline{E}/E^{\mathrm{ab}})$. As $W^*_{\pi}(1-n)_{\Phi}$ is irreducible, the $\Gal(\overline{E}/E)$-span of any non-zero vector (in particular, $v$) must be all of $W^*_{\pi}(1-n)_{\Phi}$. But this implies that the action of $\Gal(\overline{E}/E)$ on $W^*_{\pi}(1-n)_{\Phi}$ factors through $\Gal(E^{\mathrm{ab}}/E)$, and hence $ W^*_{\pi}(1-n) = W^*_{\pi}(1-n)_{\Phi} \otimes_{\Phi} \Qpb$ contains a one-dimensional subrepresentation -- a contradiction.

In particular, this implies that the cohomology group $\opn{H}^0\left(E^{\mathrm{ab}}, W^*_{\pi}(1-n)_{\Phi}/T^*_{\pi}(1-n)_{\ordd} \right)$ is finite of order $C \in \mbb{Z}_{>0}$ say (using the compactness of $T^*_{\pi}(1-n)_{\ordd} - \varpi T^*_{\pi}(1-n)_{\ordd}$, where $\varpi$ is a uniformiser of $\ordd$, one can show that elements of $W^*_{\pi}(1-n)_{\Phi}$ that are fixed by the Galois action modulo $T^*_{\pi}(1-n)_{\ordd}$ have to be universally bounded). Passing to the long exact sequence in group cohomology for the short exact sequence
\[
0 \to T^*_{\pi}(1-n)_{\ordd} \to W^*_{\pi}(1-n)_{\Phi} \to W^*_{\pi}(1-n)_{\Phi}/T^*_{\pi}(1-n)_{\ordd} \to 0
\]
one sees that $C$ kills the kernel of the map in (\ref{EqnIntCohtoRatCoh}). By Theorem \ref{RelationForIwClasses}, the classes $d_{mp^r}$ satisfy the Euler system relations (the relations in the statement of the corollary) so we can define $c_{mp^r} \defeq C \cdot d_{mp^r}^{\circ}$. 
\end{proof}

\subsection{Motivic interpretation}

It will come as no surprise to the reader that the Euler system classes $c_{mp^r}$ arise from elements in the motivic cohomology (see section \ref{NotationSection}) of $\opn{Sh}_{\Gt}$. More precisely, we consider the following definition.
\begin{definition} \label{DefOfTwistedMotivic}
Let $g \in \Gt(\mbb{A}_f)$ and $L \subset \Gt(\mbb{A}_f)$ a sufficiently small compact open subgroup. Set $U \defeq gLg^{-1} \cap \mbf{H}(\mbb{A}_f)$. Recall the definition of the embedding ``$\opn{br}$'' from Notation \ref{NoteFixedEmbedding}. Then we define 
\[
\invs{Z}_{g, \mathrm{mot}} \defeq ([g]_* \circ (\iota, \nu)_* \circ \opn{br} )(\mbf{1}_U) \in \opn{H}^{2n}_{\mathrm{mot}}\left(\opn{Sh}_{\Gt}(L), \tilde{\mathscr{V}} (n) \right)
\]
where, to simplify notation, we set $\tilde{\mathscr{V}} = \opn{Anc}_{\Gt, E}(\tilde{V})$. This definition can be extended by linearity to elements $\invs{Z}_{\phi, \mathrm{mot}}$ where $\phi \in \invs{H}(\Gt(\mbb{A}_f))^L$, by the same formula in Definition \ref{TwistedIwClassesDef}.
\end{definition}

Unwinding the definition of the classes $\invs{Z}_{m, \mathrm{Iw}}$ we have the following result (c.f. the proof of Theorem \ref{VerticalRelThm}):

\begin{proposition}
For $r \geq 1$, set $\phi_{mp^r} \defeq \mu(\tau)^r \phi^{(m)} \otimes \opn{ch}\left(\eta_r (J \times D_{p^r}) \right)$ where $\eta_r$ is the element defined in the proof of Proposition \ref{pushingintofs}. Then
\begin{enumerate}
    \item The image of $\invs{Z}_{\phi_{mp^r}, \mathrm{mot}}$ under the composition 
    \begin{eqnarray}
    \opn{H}^{2n}_{\mathrm{mot}}\left( \opn{Sh}_{\Gt}(K^pJ \times D[m]^pD_{p^r}), \tilde{\mathscr{V}} (n) \right) & \xrightarrow{r_{\et}} & \opn{H}^{2n}_{\et}\left( \opn{Sh}_{\Gt}(K^pJ \times D[m]^pD_{p^r}), \tilde{\mathscr{V}} (n) \right) \nonumber \\
    & \xrightarrow{m^{-n^2}(\invs{U}_S')^{-r}} & \opn{H}^{2n}_{\et}\left( \opn{Sh}_{\Gt}(K^pJ \times D[m]^pD_{p^r}), \tilde{\mathscr{V}} (n) \right)_{\ide{t}_{\alpha}} \nonumber 
    \end{eqnarray}
    is cohomologically trivial, where $r_{\et}$ is the \'{e}tale regulator and $\alpha$ is the eigenvalue in Assumption \ref{MainAssumption}(5).
    \item The element $d_{mp^r}$ equals the image of $m^{-n^2}(\invs{U}_S')^{-r} r_{\et}\left(\invs{Z}_{\phi_{mp^r}, \mathrm{mot}} \right)$ under the composition 
    \begin{eqnarray}
        \opn{H}^{2n}_{\et}\left( \opn{Sh}_{\Gt}(K^pJ \times D[m]^pD_{p^r}), \tilde{\mathscr{V}} (n) \right)_{\ide{t}_{\alpha}, 0} & \xrightarrow{\opn{AJ}_{\et, \ide{t}_\alpha}} & \opn{H}^1\left( E[mp^r], \opn{H}^{2n-1}_{\et}\left( \opn{Sh}_{\mbf{G}}(K^pJ), \mathscr{V} (n) \right)_{\ide{t}_{\alpha}} \right) \nonumber \\
        & \longrightarrow & \opn{H}^1\left( E[mp^r], \opn{H}^{2n-1}_{\et}\left( \opn{Sh}_{\mbf{G}}(K^pJ), \mathscr{V} (n) \right) \right) \nonumber \\
        & \xrightarrow{\opn{pr}_{\pi^\vee}} & \pi_f^{\vee} \otimes \opn{H}^1\left(E[mp^r], W_{\pi}^* (1-n) \right) \nonumber  \\
        & \xrightarrow{v_{\varphi}} & \opn{H}^1\left(E[mp^r], W_{\pi}^* (1-n) \right) \nonumber 
    \end{eqnarray}
    where the first map is the Abel--Jacobi map (and Shapiro's lemma) and the second is induced from the splitting $(\cdots)_{\ide{t}_{\alpha}} \hookrightarrow (\cdots)$.
\end{enumerate}
\end{proposition}

The fact that these classes come from motivic cohomology will allow us to investigate the image of our Euler system classes in other cohomology theories, e.g. in Deligne cohomology or syntomic cohomology as defined by Nekov\'{a}\v{r} and {Nizio\l} (\cite{NN}). We will investigate these questions in a future paper. We end with the following result showing that, under suitable conditions, the anticyclotomic Euler system classes actually live in the Bloch--Kato Selmer group (after inverting $p$).

\begin{proposition} \label{UnramifiedClassProp}
Let $\pi_0$ be a cuspidal automorphic representation satisfying Assumption \ref{MainAssumption} and let $S_{\mathrm{ns}} \subset S$ denote the subset of all primes which are either inert or ramified in $E/\mbb{Q}$. Then the image of the anticyclotomic Euler system classes $c_{mp^r}$ in $\opn{H}^1(E[mp^r], W_{\pi}^* (1-n) )$ are unramified at all primes $\lambda$ of $E[mp^r]$ which do not lie above a prime in $S_{\mathrm{ns}} \cup \{p\}$, i.e. the image of $c_{mp^r}$ under the map 
\[
\opn{H}^1(E[mp^r], W_{\pi}^* (1-n) ) \to \opn{H}^1(E[mp^r]_{\lambda}, W_{\pi}^* (1-n) ) \to \opn{H}^1(I_{\lambda}, W_{\pi}^* (1-n) )
\]
is trivial, where $I_{\lambda} \subset G_{E[mp^r]}$ denotes the inertia subgroup at $\lambda$. 

Furthermore, for any prime $\ide{P}$ of $E[mp^r]$ dividing $p$, the restriction of the class $c_{mp^r}$ at $\ide{P}$ lies in the Bloch--Kato local group $\opn{H}^1_f(E[mp^r]_{\ide{P}}, W_{\pi}^* (1-n) )$.
\end{proposition}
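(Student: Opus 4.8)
The plan is to verify the two assertions—unramifiedness away from $S_{\mathrm{ns}} \cup \{p\}$ and the Bloch--Kato condition at primes above $p$—separately, since they have different sources.

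\textbf{Unramifiedness away from bad primes.} First I would recall that the class $c_{mp^r}$ lives in $\opn{H}^1\left(E[mp^r], W_\pi^*(1-n)\right)$, and that this is by construction the image under the Abel--Jacobi map of a cohomology class on $\opn{Sh}_{\Gb}$ coming (via the modular parameterisation and the Gysin pushforward) from the \'{e}tale realisation of a \emph{motivic} cohomology class on the integral model of $\opn{Sh}_{\Gt}$. The key point is that for a prime $\lambda$ of $E[mp^r]$ not above any prime in $S_{\mathrm{ns}} \cup \{p\}$, the relevant Shimura varieties have good reduction at $\lambda$: indeed the level $K = K_p K_S K^{S'}$ is hyperspecial outside $S$, the prime $p$ is excluded, and the primes in $S$ that split in $E/\mbb{Q}$ still give hyperspecial level by our running hypotheses (the problematic case being exactly the inert/ramified primes, which is why $S_{\mathrm{ns}}$ appears). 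Over such $\lambda$ one has a smooth proper (or at least smooth) integral model of $\opn{Sh}_{\Gb}$ and the sheaf $\mathscr{V}^*(n)$ is lisse and unramified, so by the standard argument (the Abel--Jacobi class of a cycle, or more precisely of a motivic cohomology class, over a field with good reduction is unramified—this is the usual specialisation/smooth-base-change argument, cf. the treatment in \cite{LSZ17}) the restriction of $c_{mp^r}$ to $\opn{H}^1(I_\lambda, W_\pi^*(1-n))$ vanishes. Concretely, the class extends to a class over $\Spec \mathcal{O}_{E[mp^r],\lambda}$ with coefficients in an unramified lattice, and such classes are automatically unramified. I would phrase this using the integral models constructed in the proof of Lemma \ref{integralmodel} together with the fact that $W_\pi^*(1-n)$ is crystalline (hence unramified away from $p$) at the relevant primes, following Theorem \ref{ExistenceOfGaloisRep}(2).

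\textbf{The Bloch--Kato condition at $p$.} For a prime $\ide{P} \mid p$ of $E[mp^r]$, I would argue that the local representation $W_\pi^*(1-n)|_{G_{E[mp^r]_{\ide{P}}}}$ is crystalline (this follows from Theorem \ref{ExistenceOfGaloisRep}(3), since $p \notin S$, together with the fact that $E[mp^r]/\mbb{Q}$ is unramified at $p$ away from the $p$-part of the conductor—here one must be slightly careful, as $E[mp^r]$ is ramified at $p$ when $r \geq 1$, so ``crystalline'' should be replaced by ``de Rham'' and one works over the maximal unramified subextension, or one notes that $\opn{H}^1_f$ is insensitive to finite base change in a controlled way). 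The cleanest route is: the class $c_{mp^r}$ is the image of a class coming from \'{e}tale cohomology of a smooth projective variety over $E[mp^r]_{\ide{P}}$ (or an open variety with the relevant weight), and by the comparison theorems / $p$-adic Hodge theory (the fact that \'{e}tale Abel--Jacobi images of cycles land in $\opn{H}^1_f$, due to the crystallinity of the \'{e}tale cohomology of the special fibre—this is the $p$-adic analogue of the unramifiedness argument above and is standard, see e.g. \cite{Nekovar1992} or the syntomic description via \cite{NN}), such classes automatically lie in the Bloch--Kato local subgroup $\opn{H}^1_f$. Since the ordinary projection and the modular parameterisation are Galois-equivariant and $\opn{H}^1_f$ is functorial for Galois-equivariant maps, the image $c_{mp^r}$ inherits this property.

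\textbf{Main obstacle.} I expect the genuinely delicate point to be the $p$-adic assertion: the \'{e}tale cohomology of the open Shimura variety $\opn{Sh}_{\Gb}$ need not be the cohomology of a smooth \emph{proper} variety, so one cannot directly invoke ``$\opn{H}^1_f$ contains all geometric classes'' for a smooth projective model. One needs either (i) that $\pi$ being tempered/cuspidal (our running assumption) forces the relevant Hecke-eigenpart of $\opn{H}^{2n-1}_{\et}$ to be pure of the expected weight and crystalline at $p$—this is where Theorem \ref{ExistenceOfGaloisRep}(3) is essential, since it directly asserts crystallinity of $\rho_\pi$ at primes above $p$—or (ii) a Gysin/boundary argument to reduce to the proper case, or (iii) the syntomic-regulator formalism of Nekov\'{a}\v{r}--Nizio\l{} to produce the class directly in $\opn{H}^1_f$. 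I would take route (i): since $\rho_\pi$ (hence $W_\pi^*(1-n)$) is crystalline at $\ide{P}$ by Theorem \ref{ExistenceOfGaloisRep}(3), and the class $c_{mp^r}$ is in the image of the $p$-adic Abel--Jacobi map (edge map in Hochschild--Serre) applied to a class on the smooth model over $\mathcal{O}_{E[mp^r]_{\ide{P}}}$ constructed as in Lemma \ref{integralmodel}, the compatibility of the Abel--Jacobi map with the integral structure forces $c_{mp^r} \in \opn{H}^1_f$, exactly as in \cite[\S 11-12]{LSZ17}. The subtlety of $E[mp^r]$ being ramified at $p$ is handled by observing that $\opn{H}^1_f(E[mp^r]_{\ide{P}}, -)$ can be computed after the unramified base change $E[mp^r]_{\ide{P}}^{\mathrm{ur}}$ where the representation becomes crystalline in the strict sense, and the class, coming from a proper-over-$\mathcal{O}$ object, restricts into the crystalline subspace.
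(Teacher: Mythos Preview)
There is a genuine gap in your unramifiedness argument. You claim that for a prime $\lambda$ not above $S_{\mathrm{ns}} \cup \{p\}$ the Shimura varieties have good reduction, and in particular that ``the primes in $S$ that split in $E/\mathbb{Q}$ still give hyperspecial level by our running hypotheses''. This is false: the set $S$ is \emph{defined} to contain all primes where the level $K$ is not hyperspecial, so split primes in $S$ are exactly places of bad level. Moreover, at the primes $\ell \mid m$ (which are split and outside $S$), the test data $\phi^{(m)}$ involves the non-spherical elements $\phi_\ell$ supported on translates $(g_i,1)K_\ell$, so the effective level in the construction of $c_{mp^r}$ is strictly deeper than hyperspecial there as well. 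Your smooth-model argument therefore only covers primes outside $\widetilde{S} = S \cup \{\text{primes dividing } mp^r\}$, not the full complement of $S_{\mathrm{ns}} \cup \{p\}$.

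The paper's proof supplies the missing ingredient: for a split prime $\ell \notin S_{\mathrm{ns}} \cup \{p\}$ (whether in $S$ or dividing $m$), the decomposition group of any $\lambda \mid \ell$ in the anticyclotomic tower $E[mp^\infty]/E[mp^r]$ is infinite, because split primes of $E$ do not split completely in the anticyclotomic $\mathbb{Z}_p$-extension. Since $c_{mp^r}$ is by construction the image of a universal norm $\mathcal{Z}_{m,\mathrm{Iw}}$ in this tower, \cite[Corollary B.3.5]{rubin} forces it to be unramified at $\lambda$. This Iwasawa-theoretic step is essential and is not visible in your sketch.

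For the Bloch--Kato condition at $p$, your outline is close but conflates $\mathrm{H}^1_g$ and $\mathrm{H}^1_f$. The syntomic/\'etale comparison of \cite{NN} (your route (iii), which the paper in fact uses) only places the class in $\mathrm{H}^1_g$. The paper then shows $\mathrm{H}^1_g = \mathrm{H}^1_f$ by checking that $p^{-1}$ is not an eigenvalue of crystalline Frobenius on $\mathbf{D}_{\mathrm{cris}}(W_\pi^*(1-n))$: by Theorem \ref{ExistenceOfGaloisRep}(3) these eigenvalues are $p^{n-1}\alpha_i^{-1}$ with $\alpha_i$ Weil numbers of weight $2n-1$ (Ramanujan--Petersson for $\Pi$, which holds since $\Pi$ is cuspidal regular algebraic), hence of weight $-1 \neq -2$. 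Your route (i) does not supply this weight argument, and crystallinity of $\rho_\pi$ alone is not enough to upgrade $\mathrm{H}^1_g$ to $\mathrm{H}^1_f$.
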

\begin{proof}
It is enough to prove the statement for $r \geq 1$ and for the classes $d_{mp^r}$. In this case, if $\lambda$ is a prime not dividing a rational prime in 
\[
\widetilde{S} \defeq S \cup \{ \text{ primes dividing } mp^r \}
\]
then the class $d_{mp^r}$ is constructed from sub-Shimura varieties which have hyperspecial level outside $\widetilde{S}$. By \cite{LanArithmetic}, these Shimura varieties have smooth integral models over $\ordd_E[\widetilde{S}^{-1}]$, which implies that the class $d_{mp^r}$ is unramified at $\lambda$.

If $\lambda$ is a prime lying above $\ell \in \widetilde{S}$, with $\ell \neq p$ and split in $E/\mathbb{Q}$, then the decomposition group of $\lambda$ in $E[mp^{\infty}]/E[mp^r]$ is infinite. Indeed, it is enough to check that the prime of $E$ lying below $\lambda$ doesn't split completely in the anticyclotomic $\mbb{Z}_p$-extension of $E$, and this is standard. In this case the result follows from \cite[Corollary B.3.5]{rubin}, since the classes $d_{mp^r}$ are universal norms in this extension. 

The comparison of syntomic cohomology and \'{e}tale cohomology in \cite{NN} and the fact that the classes come from motivic classes imply that, at primes dividing $p$, the classes $d_{mp^r}$ lie in the local group $H^1_g$. A criterion for when $H^{1}_g$ and $H^1_f$ coincide is that $p^{-1}$ is not an eigenvalue of the crystalline Frobenius $\varphi$ on $\dcris{W_{\pi}^* (1-n)}$. By part (3) of Theorem \ref{ExistenceOfGaloisRep} the eigenvalues of $\varphi$ are given by $p^{n-1}\alpha_i^{-1}$ where 
\[
L(\Pi_v, s + (1-2n)/2 )^{-1} = \prod_{i=1}^{2n} (1 - \alpha_i p^{-s} ).
\]
Now $\alpha_i$ are Weil numbers of weight $2n-1$, because the representation $\Pi$ satisfies the Ramanujan--Petersson conjecture (see \cite[Corollary 8.4.9]{Morel}). This implies that the eigenvalues of $\varphi$ are Weil numbers of weight $-1$, hence $p^{n-1}\alpha_i^{-1}$ cannot equal $p^{-1}$. 
\end{proof}

\begin{corollary} \label{BKClassCor}
Suppose that the weak base-change $\Pi$ is unramified outside a finite set of primes that split in $E/\mbb{Q}$. Then 
\[
c_{mp^r} \in \opn{H}^1_f\left(E[mp^r], W_{\pi}^*(1-n) \right)
\]
i.e. the classes live in the Bloch--Kato Selmer group after inverting $p$.
\end{corollary}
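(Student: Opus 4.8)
\textbf{Proof proposal for Corollary \ref{BKClassCor}.}

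The plan is to combine Proposition \ref{UnramifiedClassProp} with the local triviality at the bad split primes, and then invoke the definition of the Bloch--Kato Selmer group. First I would recall that $\opn{H}^1_f(E[mp^r], W_{\pi}^*(1-n))$ consists of global classes whose restriction to every place $\lambda$ of $E[mp^r]$ lies in the local Bloch--Kato subspace $\opn{H}^1_f(E[mp^r]_\lambda, W_{\pi}^*(1-n))$, which at a prime $\lambda \nmid p$ of good reduction for $W_{\pi}^*$ coincides with the unramified subspace $\opn{H}^1_{\mathrm{ur}}(E[mp^r]_\lambda, W_{\pi}^*(1-n)) = \opn{ker}\left(\opn{H}^1(E[mp^r]_\lambda, -) \to \opn{H}^1(I_\lambda, -)\right)$, and at $\lambda \mid p$ is by definition the subspace $\opn{H}^1_f$ cut out by $\mathbf{B}_{\mathrm{cris}}$. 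By Proposition \ref{UnramifiedClassProp}, the class $c_{mp^r}$ is already unramified at every prime not lying above $S_{\mathrm{ns}} \cup \{p\}$, and lies in $\opn{H}^1_f$ at every prime above $p$; so it remains only to treat the primes of $E[mp^r]$ lying above primes in $S_{\mathrm{ns}}$, and under the hypothesis of the corollary these are empty --- indeed, the hypothesis says that the weak base-change $\Pi$ (equivalently $\rho_{\pi}$) is unramified outside a finite set of split primes, so in particular $\rho_{\pi}$ is unramified at all primes above $S_{\mathrm{ns}}$, and $S_{\mathrm{ns}}$ consists precisely of the inert and ramified primes of $S$.

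Concretely I would argue as follows. Let $\lambda$ be a prime of $E[mp^r]$ and let $\ell$ be the rational prime below it. If $\ell \notin S_{\mathrm{ns}} \cup \{p\}$, then Proposition \ref{UnramifiedClassProp} gives that $\opn{res}_\lambda(c_{mp^r})$ maps to zero in $\opn{H}^1(I_\lambda, W_{\pi}^*(1-n))$; since $\rho_{\pi}$ is unramified at $\lambda$ (because $\lambda$ does not lie above a ramified prime of $\rho_{\pi}$ --- here one uses that such a $\lambda$ lies above a prime not in $S$, or above a split prime in $S$ at which $\rho_\pi$ is unramified by the hypothesis on $\Pi$), the unramified local condition equals the Bloch--Kato local condition at $\lambda$ (this is the standard fact that $\opn{H}^1_f = \opn{H}^1_{\mathrm{ur}}$ for unramified representations at primes away from $p$; see \cite{rubin}), so $\opn{res}_\lambda(c_{mp^r}) \in \opn{H}^1_f(E[mp^r]_\lambda, W_{\pi}^*(1-n))$. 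If $\ell = p$, then the last assertion of Proposition \ref{UnramifiedClassProp} gives $\opn{res}_\lambda(c_{mp^r}) \in \opn{H}^1_f(E[mp^r]_\lambda, W_{\pi}^*(1-n))$ directly. Finally, if $\ell \in S_{\mathrm{ns}}$, then by the hypothesis of the corollary $\rho_{\pi}$ is unramified at every prime of $E$ above $\ell$ (as $S_{\mathrm{ns}} \subset S$ consists of inert and ramified primes, all excluded from the ramification set of $\Pi$), hence so is $W_{\pi}^*(1-n)$ up to the Tate twist which is tamely ramified only at $p$; in fact one needs here that $\opn{res}_\lambda(c_{mp^r})$ is unramified. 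This is the point that requires a small additional argument.

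I would fill in the $\ell \in S_{\mathrm{ns}}$ case by the same mechanism as in Proposition \ref{UnramifiedClassProp}: the class $c_{mp^r}$ is constructed from sub-Shimura varieties whose level at $\ell$ is hyperspecial (since $\ell \notin S'$ for the Euler system index, but here $\ell \in S$ --- so one instead uses that, although the level may be ramified at $\ell$, the \emph{Galois representation} $\rho_{\pi}$ is unramified at $\lambda$ by hypothesis, and the Shimura variety $\opn{Sh}_{\mbf{G}}$ at the relevant tame level still has a model with controlled ramification; more cleanly, one notes that $\rho_\pi$ unramified at $\lambda$ together with $\opn{H}^0(E[mp^r]_\lambda, W_\pi^*(1-n)/(\text{sub})) $ having no $\ell$-torsion forces $\opn{H}^1(I_\lambda, W_\pi^*(1-n))$ to be torsion-free, and the image of $c_{mp^r}$ there is a universal norm in the anticyclotomic tower, hence infinitely divisible, hence zero). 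I expect the main obstacle to be precisely this verification that the classes are unramified at the \emph{bad} non-split primes $\ell \in S_{\mathrm{ns}}$: at such primes the source Shimura varieties need not have good reduction, so one cannot directly cite smooth integral models as in the good-reduction case, and one must instead leverage the hypothesis that $\rho_{\pi}$ itself is unramified there together with a norm-compatibility / divisibility argument of the type used via \cite[Corollary B.3.5]{rubin}. Once all local conditions are verified, membership in $\opn{H}^1_f(E[mp^r], W_{\pi}^*(1-n))$ follows by definition, and "after inverting $p$" accounts for the fact that $c_{mp^r}$ a priori lives in the cohomology of the lattice $T_\pi^*(1-n)$.
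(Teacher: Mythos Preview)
Your proposal correctly isolates the problem: after Proposition \ref{UnramifiedClassProp}, only the local conditions at primes of $E[mp^r]$ above $S_{\mathrm{ns}}$ remain. However, your strategy for those primes has a genuine gap.

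You try to show that $c_{mp^r}$ is \emph{unramified} at such primes, either via integral models or via the universal-norm argument of \cite[Corollary B.3.5]{rubin}. Neither works. The integral-model argument fails because the level $K_S$ need not be hyperspecial at primes in $S_{\mathrm{ns}} \subset S$, so the relevant Shimura varieties have no reason to have good reduction there. The universal-norm argument fails for a structural reason: primes of $E$ that are inert or ramified in $E/\mbb{Q}$ (and not above $p$) split completely in the anticyclotomic $\mbb{Z}_p$-extension, so the decomposition group of any prime above $S_{\mathrm{ns}}$ in $E[mp^\infty]/E[mp^r]$ is \emph{finite}, and Rubin's corollary gives no information. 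Your final suggestion (``infinitely divisible, hence zero'') therefore has no force at these primes.

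The paper takes a different route that sidesteps the question of whether the class is unramified. Let $\mathfrak{L}$ lie above $\ell \in S_{\mathrm{ns}}$ and let $\lambda$ be the prime of $E$ below it. By hypothesis $\Pi_\lambda$ is unramified, so by \cite[Theorem 3.2.3]{ChenevierHarris} the representation $\rho_\pi$ is unramified at $\lambda$ and the eigenvalues of $\opn{Frob}_\lambda^{-1}$ on $W_\pi^*(1-n)$ are Weil numbers of weight $-1$ (using Ramanujan--Petersson for $\Pi$, exactly as in the proof of Proposition \ref{UnramifiedClassProp}). Hence no power of $\opn{Frob}_\lambda$ has eigenvalue $1$, so $\opn{H}^0(E[mp^r]_{\mathfrak{L}}, W_\pi^*(1-n)) = 0$. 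Now the self-duality $\Pi \cong \Pi^\vee$ gives $W_\pi^*(1-n) \cong W_\pi(n) = \bigl(W_\pi^*(1-n)\bigr)^*(1)$, and the local Euler characteristic formula together with Tate duality yields
\[
\dim \opn{H}^1(E[mp^r]_{\mathfrak{L}}, W_\pi^*(1-n)) = \dim \opn{H}^0(E[mp^r]_{\mathfrak{L}}, W_\pi^*(1-n)) + \dim \opn{H}^0(E[mp^r]_{\mathfrak{L}}, W_\pi(n)) = 0.
\]
Thus the entire local $\opn{H}^1$ at $\mathfrak{L}$ vanishes, and the Bloch--Kato condition there is vacuous. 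This is the missing idea in your proposal: rather than proving the class satisfies the local condition, one shows there is no nontrivial local cohomology for it to land in.
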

\begin{proof}
Let $\mathfrak{L}$ be a prime of $E[mp^r]$ lying above a prime $\ell$ that ramifies in $E/\mbb{Q}$, and let $\lambda$ denote the prime of $E$ lying below $\mathfrak{L}$. By assumption, the representation $\Pi_{\lambda}$ is unramified, so \cite[Theorem 3.2.3]{ChenevierHarris} implies that the representation $\rho_{\pi}$ is unramified at $\lambda$ and the eigenvalues $\beta_1, \dots, \beta_{2n}$ of $\opn{Frob}_{\lambda}^{-1}$ acting on $\rho_{\pi}^*(1-n)$ satisfy
\[
L(\Pi_{\lambda}, s + (1-2n)/2)^{-1} = \prod_{i=1}^{2n} \left(1 - \beta_i^{-1} (\opn{Nm} \lambda)^{s + n -1}\right) = \prod_{i=1}^{2n} \left(1 - \beta_i^{-1} \ell^{s + n -1}\right).
\]
As in the proof of Proposition \ref{UnramifiedClassProp}, the quantities $\ell^{n-1}\beta^{-1}_i$ are Weil numbers of weight $2n-1$, and since $\opn{Frob}_{\mathfrak{L}}$ is a power of $\opn{Frob}_{\lambda}$, we see that the eigenvalues of $\opn{Frob}_{\mathfrak{L}}$ acting on $\rho_{\pi}^*(1-n)$ cannot possibly equal $1$. This implies that
\begin{equation} \label{H0Vanish}
\opn{H}^0\left( E[mp^r]_{\mathfrak{L}}, W_{\pi}^*(1-n) \right) = 0 .
\end{equation}
Since $\Pi \cong \Pi^{\vee} \cong \Pi^c$ we have $W_{\pi}(n) \cong W_{\pi}^*(1-n)$ (see the proof of Lemma \ref{SelfDualCoefficients} and Theorem \ref{ExistenceOfGaloisRep}). Combining this with (\ref{H0Vanish}), local Tate duality (\cite[Theorem I.4.1]{rubin}) and the Euler--Poincar\'{e} characteristic for local fields (\cite[Theorem 7.3.1]{Neukirch} and \cite[Corollary 4.6.10]{nekovar-selmer}) implies 
\[
\opn{H}^1\left( E[mp^r]_{\mathfrak{L}}, W_{\pi}^*(1-n) \right) = 0 
\]
so in particular, any local condition at $\mathfrak{L}$ is vacuous. The result then follows from Proposition \ref{UnramifiedClassProp}.  
\end{proof}

\begin{remark}
The conditions in Corollary \ref{BKClassCor} are completely analogous to the case of Heegner points. Indeed, in this setting one considers an elliptic curve whose conductor is divisible only by primes that split in the imaginary quadratic extension. 
\end{remark}


\appendix


\section{Continuous \'{e}tale cohomology}   \label{TheAppendix} 
In this appendix, we explicate in a general setting two constructions in Jannsen's continuous \'{e}tale cohomology that we have made use of in this article. The first of these are the ``pushforward functors'' -- compositions of certain Gysin and trace morphisms. In \cite[\S 2.1]   {KLZ2015}, these were defined in cases when continuous \'{e}tale cohomology coincides with the usual definition of ($p$-adic) \'{e}tale cohomology. The second is localised versions of the ($p$-adic) \'{e}tale Abel--Jacobi map arising from the Hochschild-Serre spectral sequence, where we establish certain functoriality properties of these maps.

\subsection{Pushforwards maps}   

For any abelian category $\mathcal{A}$, we let $ D^{+}(\mathcal{A}) $ denote the derived category of left-bounded complexes in $ \mathcal{A}$, and if $ \mathcal{C} $ is any category, we denote by $ \mathcal{C} ^{\mathbb{N}} $ the category of inverse systems of objects in $ \mathcal{C} $. If $ \mathcal{C} $ is Grothendieck  abelian, so is $ \mathcal{C}^{\mathbb{N}}  $, in which case $ D ^{+}(\mathcal{C}^{\mathbb{N}})  =  D^{+}(\mathcal{C})^{\mathbb{N}}$.  Moreover, if $   F :  \mathcal{C}  \to  \mathcal{D} $ is a left exact functor, then so is $ F^{\mathbb{N}  } : \mathcal{C} ^{\mathbb{N}}  \to \mathcal{D}  ^ { \mathbb{N}} $, and if $  \mathcal{C} $ has enough injectives, so does $ \mathcal{C}^{\mathbb{N}} $, in which case the right derived functors of $ F $ and $ F^{\mathbb{N} } $ are related by $ R ^{i} ( F^{\mathbb{N}  } )  =  ( R ^{i}  F   ) ^ {  \mathbb{N} }  $  \cite[Proposition 1.2]{Jannsen1988}.  If $ X $ is a scheme, we denote by $ \mathbf{Sh}(X_{\et}) $ the category of \'{e}tale sheaves of abelian groups on the small \'{e}tale site $ X_{\et} $. We let $ \Gamma_{X} : \mathbf{Sh}(X_{\et})^ {\mathbb{N}}  \to \mathbf{Ab} $ denote the functor $ \mathscr{F} = (\mathscr{F}_{n}) \mapsto \varprojlim_{n} H^{0}(X,\mathscr{F}_{n}) $. Then, following \cite[\S 3]{Jannsen1988}, the continuous \'{e}tale cohomology of $\mathscr{F}$ is defined to be 
\[
\cohom{j}{\et}(X,\mathscr{F}) \defeq R^{j}\Gamma_{X} (\mathscr{F}).
\]

\begin{lemma}   \label{A.1}         
Let $ X $, $ Y $ be quasi-compact, quasi-separated (qcqs) schemes. For any quasi-finite separated morphism $ f \colon  X \to Y $, there exists a pair of functors 
\[
f_! \colon \mathbf{Sh}(X_{\et})^{\mathbb{N}}  \to  \mathbf{Sh}(Y_{\et})^{\mathbb{N}} \quad \quad  f^{!}  \colon    \mathbf{Sh}(Y_{\et})^{\mathbb{N}} \to  \mathbf{Sh}(X_{\et})^{\mathbb{N}} 
\]
satisfying the following properties:    
\begin{enumerate}
\item $ f_{!} $ is exact and  $ f_{!}  \dashv f^{!}  $ i.e. for any $ \mathscr{F} \in   \mathbf{Sh}({X_{\et})}^{\mathbb{N}} $ and $  \mathscr{G} \in  \mathbf{Sh}(Y_{\et})^{\mathbb{N}} $,  there are functorial isomorphisms   
\[
\Hom (f_{!} \mathscr{F},    \mathscr{G}  )   \xrightarrow{\sim}   \Hom ( \mathscr{F} ,   f^{!} \mathscr{G}). 
\]
\item If $ f =  g \circ h $ with $ g $, $ h $ both quasi-finite and separated, then there are canonical isomorphisms $  f_{!} \cong g_{!} \circ h_{!} $ and $ f^{!} \cong h ^{!} \circ g^{!} $. Moreover, these isomorphisms are compatible with any third such composition. 
\item $ f_{!} $ is a sub-functor of $f_{*} $ and if $ f $ is proper, $ f_{!} = f_{*}   $.
\item  $ f^{!} $ is a sub-functor of $f^{*} $ and if $ f $ is \'{e}tale, $ f^{!} = f^{*} $.
\end{enumerate}
\end{lemma}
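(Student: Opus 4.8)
The plan is to construct the functors $f_!$ and $f^!$ on inverse systems levelwise, reducing everything to the classical six-functor formalism for $\mathbf{Sh}(X_\et)$ as developed in \cite{SGA4} and used in \cite{Jannsen1988}. First I would recall that for a quasi-finite separated morphism $f \colon X \to Y$ of qcqs schemes, Zariski's main theorem gives a factorization $f = \bar{f} \circ j$ with $j$ an open immersion and $\bar{f}$ finite (hence proper); then one \emph{defines} $f_! \defeq \bar{f}_* \circ j_!$ on $\mathbf{Sh}(X_\et)$, where $j_!$ is extension by zero. One checks this is independent of the factorization (the standard argument via a third dominating compactification), and then defines $f_!$ on $\mathbf{Sh}(X_\et)^{\mathbb{N}}$ by applying $f_!$ in each degree, i.e. $f_!(\mathscr{F}_n)_n = (f_!\mathscr{F}_n)_n$. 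Exactness of $f_!$ on inverse systems is immediate from exactness of $f_!$ on each $\mathbf{Sh}(X_\et)$ (which holds since $j_!$ is exact and $\bar{f}_*$ is exact for $\bar{f}$ finite) together with the fact that a sequence of inverse systems is exact iff it is exact in each degree.

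Next I would produce the right adjoint $f^!$. Since $f_!$ is exact and cocontinuous (it commutes with colimits, being a composite of $j_!$ and $\bar f_*$; for a finite morphism $\bar f_*$ commutes with filtered colimits on the small étale site), and $\mathbf{Sh}(Y_\et)^{\mathbb{N}}$ is a Grothendieck abelian category, the adjoint functor theorem yields a right adjoint $f^!$; alternatively one imports $f^!$ directly from the classical theory (where $j^! = j^*$ and $\bar f^!$ is the proper-pushforward right adjoint) and applies it degreewise. Property (1), the adjunction $\Hom(f_!\mathscr{F}, \mathscr{G}) \cong \Hom(\mathscr{F}, f^!\mathscr{G})$, then follows from the degreewise adjunction together with the compatibility of these levelwise maps with transition morphisms. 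Property (3), $f_! \subset f_*$ with equality for $f$ proper, is inherited from the classical statement since for $f$ proper we may take $j = \mathrm{id}$; property (4), $f^! \subset f^*$ with equality for $f$ étale, is likewise inherited, using that for $f$ étale we may take $\bar f = \mathrm{id}$ and $f^* = f^!$ classically. For property (2), the composition isomorphisms $f_! \cong g_! \circ h_!$ and $f^! \cong h^! \circ g^!$ with the compatibility under a third composition, I would cite the corresponding coherences from the classical formalism (where they follow from manipulating compactifications), noting they pass to inverse systems formally since everything is defined degreewise.

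Finally, to get the derived functors on $D^+$, I would observe that $\mathbf{Sh}(X_\et)^{\mathbb{N}}$ has enough injectives (it is Grothendieck abelian, and in fact $D^+(\mathbf{Sh}(X_\et)^{\mathbb{N}}) = D^+(\mathbf{Sh}(X_\et))^{\mathbb{N}}$ as noted in the excerpt), so $Rf^!$ exists as the right derived functor of $f^!$; and since $f_!$ is already exact, it descends directly to $D^+$ without deriving, which is precisely why the statement writes $f_!$ rather than $Rf_!$. The main obstacle I anticipate is the independence of $f_!$ from the chosen compactification $f = \bar f \circ j$ and the coherence of the composition isomorphisms in (2): these are the substantive points of the classical theory, requiring one to compare two compactifications through a common refinement (the closure of the diagonal in the fiber product) and verify a pentagon/associativity constraint. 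Since the paper only needs the degreewise behavior and these coherences are standard (the excerpt already signals we may invoke the classical theory), I would treat them by explicit reference rather than reproving them, emphasizing only the new point — that passing to inverse systems $(-)^{\mathbb{N}}$ preserves all the relevant exactness, adjunction, and coherence since these are checked degreewise.
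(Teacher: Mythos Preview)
Your proposal is correct and aligns with the paper's approach: the paper's proof consists entirely of citations to SGA4 (Exp.~XVII, Prop.~6.1.4 and Thm.~5.1.8; Exp.~XVIII, Prop.~3.1.4 and 3.1.8) and the Stacks project, so the content is precisely the classical construction you sketch, extended levelwise to inverse systems. One minor quibble: for $f$ \'etale you cannot in general take $\bar f = \mathrm{id}$ (a finite \'etale cover is not an open immersion), but the conclusion $f^! = f^*$ still holds directly since for \'etale $f$ the functor $f_!$ is already left adjoint to $f^*$.
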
    

\begin{proof} 
See Propositions 3.1.4 and 3.1.8 in \cite[Exp. XVIII]{SGA4} and Proposition 6.1.4 and Theorem 5.1.8 in \cite[Exp. XVII]{SGA4}. In a more general setting, these are also found in  \cite[\href{https://stacks.math.columbia.edu/tag/0F4U}{Tag 0F4U}]{stacks-project}. 
\end{proof}  

In \cite{EkedahlLadic}, Ekedahl introduced an adic formalism in which the derived versions of the aforementioned functors behave well. We will need this formalism to establish certain functoriality properties of our constructions. For the rest of this section, we take $S$ to be a regular qcqs scheme of dimension $0$ or $1$, and we let $p$ denote a prime number which is invertible on $S$. We denote by $\opn{Sch}_S$ the category of schemes that are separated and of finite-type over $S$. Morphisms in $\opn{Sch}_S$ are just morphisms of schemes over $S$.

\begin{definition} \label{DefOfEtCategory}
Let $X \in \opn{Sch}_S$. 
\begin{enumerate}
\item The \emph{Artin--Rees category} $\mbf{Sh}(X_{\et})^{\mbb{N}}_{\opn{AR}}$ of $\mbf{Sh}(X_{\et})^{\mbb{N}}$ is the category whose objects are the same as those of $\mbf{Sh}(X_{\et})^{\mbb{N}}$, but whose morphisms are given by 
\[
\opn{Hom}_{\mbf{Sh}(X_{\et})^{\mbb{N}}_{\opn{AR}}}(\mathscr{F}, \mathscr{G}) \defeq \varinjlim_{n \geq 1} \opn{Hom}_{\mathbf{Sh}(X_{\et})^{\mathbb{N}}}\left( \mathscr{F}\{ n \}, \mathscr{G} \right) .
\]
Here, $(\mathscr{F}\{ n \})_i \defeq \mathscr{F}_{n+i}$ with the natural induced transition maps.
\item An  \emph{\'{e}tale $ \mathbb{Z}_{p} $-sheaf}, or  a \emph{$ p $-adic \'{e}tale sheaf} $ \mathscr{F}  \in  \mathbf{Sh}  (X _{\et }   ) ^   {   \mathbb{N }  }   $ is an inverse system $ \left\{ \mathscr{F}_ n\right\} _{n\geq 1} $ 
where   
\begin{itemize} 
\item $\mathscr{F}_ {n} $ is a constructible $ \mathbb{Z}/p^ n\mathbb{Z}$-module on $X_{\et} $, 
\item the transition maps $ \mathscr{F}_{n+1}\to \mathscr{F}_ n $  induce isomorphisms  $  \mathscr{F}_{n+1} \otimes _{(\mathbb{Z}/ p   ^{n+1}\mathbb{Z})} (\mathbb{Z}/ p ^ n\mathbb{Z} ) \cong \mathscr{F}_ n    $. 
\end{itemize}
We  say  that  $  \mathscr{F} $ is  \emph{lisse} or that $ \mathscr{F} $ is  a \emph{local system}  if each $ \mathscr{F}_{n} $ is  locally  constant. A morphism of such sheaves is just a morphism of these  objects in  $\mathbf{Sh}(X_{\et})^{\mathbb{N}}$. We denote the category of \'{e}tale $\mbb{Z}_p$-sheaves on $X$ by $\opn{\acute{E}t}(X)_{\mbb{Z}_p}$, and its isogeny category by $\opn{\acute{E}t}(X)_{\mbb{Q}_p}$. The objects of the latter will be referred to as \'{e}tale $\mbb{Q}_p$-sheaves.\footnote{In concrete terms, the objects of $\opn{\acute{E}t}(X)_{\mbb{Q}_p}$ are the same as those in $\opn{\acute{E}t}(X)_{\mbb{Z}_p}$ but the morphisms are $\opn{Hom}_{\opn{\acute{E}t}(X)_{\mbb{Q}_p}}\left(\mathscr{F}, \mathscr{G}\right) \defeq \opn{Hom}_{\opn{\acute{E}t}(X)_{\mbb{Z}_p}}\left(\mathscr{F}, \mathscr{G}\right) \otimes_{\mbb{Z}_p} \mbb{Q}_p$. The continuous cohomology of a sheaf $\mathscr{F} = (\mathscr{F}_n)_{n \geq 1} \in \opn{\acute{E}t}(X)_{\mbb{Q}_p}$ is then defined to be the $\mbb{Q}_p$-vector space $\opn{H}^i_{\et}\left(X, \mathscr{F} \right) \defeq R^i \Gamma_X (\mathscr{F}) \otimes_{\mbb{Z}_p} \mbb{Q}_p$.}
\end{enumerate}
\end{definition}

Let $\opn{\acute{E}t}(X)_{\mbb{Z}_p, \opn{AR}}$ denote the full subcategory of $\mbf{Sh}(X_{\et})^{\mbb{N}}_{\opn{AR}}$ spanned by projective systems $\mathscr{F}$ which are isomorphic in $\mbf{Sh}(X_{\et})^{\mbb{N}}_{\opn{AR}}$ to an \'{e}tale $\mbb{Z}_p$-sheaf. This is an abelian category by \cite[Proposition I.12.11]{FKWeil}. It is easily seen that the natural functor $\opn{\acute{E}t}(X)_{\mbb{Z}_p} \to \opn{\acute{E}t}(X)_{\mbb{Z}_p, \opn{AR}}$ is an isomorphism of categories (see \cite[p. 123]{FKWeil}), so by transport of structure, we view $\opn{\acute{E}t}(X)_{\mbb{Z}_p}$ as an abelian category. Similarly, $\opn{\acute{E}t}(X)_{\mbb{Q}_p}$ is isomorphic to $\opn{\acute{E}t}(X)_{\mbb{Q}_p, \opn{AR}}$ (the isogeny category of $\opn{\acute{E}t}(X)_{\mbb{Z}_p, \opn{AR}}$).

Let $D^b_c(X, \mbb{Z}_p)$ denote the bounded ``derived category'' of (constructible) $p$-adic \'{e}tale sheaves as constructed in \cite[Theorem 6.3]{EkedahlLadic} (see also \cite{DeligneLAdic, BhattScholzeLAdic}). This has a canonical $t$-structure whose heart is $\opn{\acute{E}t}(X)_{\mbb{Z}_p}$ (with abelian structure as above) but note that this is \emph{not} the bounded derived category of $\opn{\acute{E}t}(X)_{\mbb{Z}_p}$ in the usual sense. Nevertheless, $D^b_c(-, \mbb{Z}_p)$ satisfies a $6$-functor formalism and the functors $f_*, f^*, f_!, f^!, \Gamma_{-}$ induce derived functors $Rf_*, Rf^*, Rf_!, Rf^!, R\Gamma_{-}$ on $D^b_c(-, \mbb{Z}_p)$. Moreover, $R^i\Gamma_X = \opn{Hom}_{D^b_c(X, \mbb{Z}_p)}(\mbb{Z}_p, -[i])$ applied to $\mathscr{F} \in \opn{\acute{E}t}(X)_{\mbb{Z}_p}$ recovers Jannsen's continuous \'{e}tale cohomology of $\mathscr{F}$ in degree $i$ (see \cite[Lemma 4.1]{HuberLAdic}). We have similar properties for $D^b_c(X, \mbb{Q}_p)$ (the localisation of $D^b_c(X, \mbb{Z}_p)$ at the full subcategory of objects that are annihilated by some power of $p$).

\begin{definition}  
Let $ X $, $ Y $ be schemes, $ \mathscr{F}  \in  \mathbf{Sh}(X_{\et})^{\mathbb{N}}$ and $\mathscr{G}    \in   \mathbf{Sh}(Y_{\et})^{\mathbb{N}}$.  A \emph{pushforward} $ (f,\phi)_{*}   \colon (X,\mathscr{F}) \to (Y, \mathscr{G}) $ is a morphism $ f \colon X \to Y $ of schemes and a morphism $ \phi : \mathscr{F} \to f^{*} \mathscr{G} $ of sheaves on $ X $ (in the category $\mbf{Sh}(X_{\et})^{\mbb{N}}$).  
\end{definition}           

\begin{definition}   \label{pushschemes}   
Let $ S $ be as above. An \emph{\'{e}tale smooth $ S $-pair of codimension $c $} is a morphism $ f \colon X \to Y $ in $\opn{Sch}_S$ of smooth schemes $X, Y$ over $S$ satisfying the following condition: there exists a scheme $\bar{Y} \in \opn{Sch}_S$ smooth over $S$ and a factorisation $ f : X \xrightarrow{h}  \bar{Y} \xrightarrow{g} Y $ in $\opn{Sch}_S$ such that $ h $ is a closed immersion with fibres over each point of $ S $ of  pure codimension $ c $ in $  \bar{Y}  $ and $ g $ is \'{etale}. If $ f ' \colon X'  \to  Y' $ is another such pair, then a \emph{morphism} from $ f $ to $ f ' $ is a pair of \'{e}tale maps $ p \colon X ' \to X $ and $ q \colon Y' \to Y $ in $\opn{Sch}_S$ that commute with $ f $, $ f' $. 
\end{definition}

\begin{prop}   \label{contpush}  
Let $f \colon X \to Y $ be an \'{e}tale smooth $ S $-pair of codimension $ c $ and let $ \mathscr{F} $, $\mathscr{G}$ be \'{e}tale  $ \ZZ_{p} $-sheaves on $ X $, $ Y $ respectively. Then for any pushforward $ (f,\phi)_{*} : (X,\mathscr{F}) \to (Y,\mathscr{G}) $ and any $ j \in \ZZ_{\geq 0} $, there is an induced ``pushforward'' on cohomology  
\[
(f, \phi)_{*}   \colon   \cohom{j}{\et} (X, \mathscr{F} )  \to  \cohom{j+2c}{\et}(Y, \mathscr{G}(c))  
\]
that is functorial and Cartesian: 
\begin{itemize}
    \item(Functoriality) if $ f' \colon X ' \to Y' $ is another such pair and $ (p,q) \colon f \to f' $ is a morphism, then we have a commutative diagram 
\begin{center}
    \begin{tikzcd}   \cohom{j}{\et}(X', p^{*} \mathscr{F} )  \arrow[r,"{(f',\phi')_*}"]   \arrow[d,"\mathrm{Tr}_{p}", swap]  &   \cohom{j+2c}{\et}(Y', q^{*}\mathscr{G}(c))  \arrow[d,"{\mathrm{Tr}_{q}}"] \\
    \cohom{j}{\et}(X,\mathscr{F})  \arrow[r,"{(f,\phi)_*}"]  &  \cohom{j+2c}{\et}(Y, \mathscr{G}(c))   
\end{tikzcd}
\end{center}   
where $ \phi' = p^{*} \phi $.
\item (Cartesian) if $ q : Y' \to  Y  $  is  any  finite  \'{e}tale   morphism,     $ X' =  X \times_{Y}  Y' $, $ f ' : X' \to Y ' $, $ p : X' \to  X   $ are the natural morphisms, then we have a commutative diagram
\begin{center}
    \begin{tikzcd}   \cohom{j}{\et} (X',p^{*} \mathscr{F}) \arrow[r,"{(f',\phi')_{*}}"]  &   \cohom{j+2c}{\et}(Y', q^{*}\mathscr{G}(c))    \\
       \cohom{j}{\et} (X, \mathscr{F})  \arrow[u, "p^{*}"] \arrow[r,"{(f,\phi)_{*}}"]  &   \arrow[u,"q^{*}",swap]   \cohom{j+2c}{\et}(Y,\mathscr{G}(c))   
    \end{tikzcd}
\end{center}
where $ \phi' = p^{*} \phi $.
\end{itemize}
\end{prop}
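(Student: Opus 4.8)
\textbf{Proof proposal for Proposition \ref{contpush}.}

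The plan is to define the pushforward as a composition of two classical morphisms in the continuous étale setting, using the factorization $f = g \circ h$ with $h$ a closed immersion of smooth schemes of relative codimension $c$ and $g$ étale. First I would treat the closed immersion $h \colon X \to U$. Working with the inverse systems $(\mathscr{F}_m)$ of $\mathbb{Z}/p^m$-sheaves, absolute purity (the Gabber--Thomason form, available since $p$ is invertible on $S$ and all schemes are smooth qcqs over $S$) provides a canonical isomorphism $\mathbf{R}h^! (\mathscr{G}_m|_U) \cong h^*(\mathscr{G}_m|_U)(-c)[-2c]$ compatibly in $m$. Composing the adjunction unit with $\phi_m \colon \mathscr{F}_m \to h^* g^* \mathscr{G}_m$ and the purity isomorphism produces Gysin maps $\mathbf{R}\Gamma(U_{\mathrm{\acute et}}, \mathscr{F}_m) \to \mathbf{R}\Gamma(U_{\mathrm{\acute et}}, h_*h^!(g^*\mathscr{G}_m)(c)[2c])$; passing to $\varprojlim_m$ and then to cohomology (the $\varprojlim$ here is exact on the relevant systems, or at worst contributes a $\varprojlim^1$ term that vanishes for the systems coming from lisse sheaves, so no correction term appears) gives the Gysin pushforward in continuous étale cohomology. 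For the étale part $g \colon U \to Y$, one has the trace morphism $g_! g^! \to \mathrm{id}$, and since $g$ is étale $g^! = g^*$ and $g_! \subset g_*$; the induced trace $\mathbf{R}\Gamma(U_{\mathrm{\acute et}}, g^*\mathscr{G}_m) \to \mathbf{R}\Gamma(Y_{\mathrm{\acute et}}, \mathscr{G}_m)$ again passes to the limit. The composite of these two, Tate-twisted by $(c)$ and shifted by $2c$, is the desired map $(f,\phi)_*$. One then checks it is independent of the chosen factorization by the usual argument (any two factorizations are dominated by a common refinement, using that a composition of an étale map after a closed immersion is insensitive to such refinements).

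Next I would establish functoriality. Given a morphism $(p,q)\colon f' \to f$ of pairs with $p,q$ étale, one can refine the factorizations so that they are compatible, i.e. obtain $X' \xrightarrow{h'} U' \xrightarrow{g'} Y'$ with étale maps $U' \to U$ making everything commute (taking $U' = U \times_Y Y'$ works since $g$ is étale and $h'$ is then the base change of $h$, still a closed immersion of the right codimension because $S$-fiberwise codimension is preserved under the étale base change). Then the claim reduces to two compatibilities: the Gysin map commutes with the trace map $\mathrm{Tr}_p$ for étale base change of a closed immersion — this is the compatibility of absolute purity with smooth (here étale) base change, which is part of the Gabber purity package — and the étale trace $g_!g^! \to \mathrm{id}$ is compatible with further étale base change, which is a standard property of the $(-)_!$ formalism recorded in the lemma preceding this proposition. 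The Cartesian property is the special case where $X' = X \times_Y Y'$ and $q$ is arbitrary; here I would note that although $q$ need not be étale, the morphism $p \colon X' \to X$ obtained by base change is a morphism of $S$-schemes and $f'$ is again an étale smooth $S$-pair of codimension $c$ (the factorization $X' \to U\times_Y Y' \to Y'$ has a closed immersion of the same fiberwise codimension followed by an étale map), so the statement follows from proper base change applied to $g_! = g_*$-type trace maps together with the base-change compatibility of the purity isomorphism. The only place care is needed is that $p^*$ and $q^*$ here mean pullback on continuous étale cohomology, defined levelwise on the inverse systems and then passing to $\varprojlim$; since pullback is exact it commutes with the limit with no derived correction.

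The main obstacle, and the step I would spend the most care on, is the passage from the levelwise ($\mathbb{Z}/p^m$-coefficient) statements to the continuous ($\mathbb{Z}_p$-coefficient) statements: one must verify that the Gysin and trace morphisms, which are a priori constructed in the derived category of inverse systems $D^+(\mathbf{Sh}(X_{\mathrm{\acute et}})^{\mathbb{N}})$, genuinely induce maps on $R^j\Gamma_X$ and that no $\varprojlim^1$ obstruction spoils functoriality. Concretely I would invoke that for the lisse $\mathbb{Z}_p$-sheaves in question (arising as in section \ref{equiv-sheaves}, with each $\mathscr{F}_m$ locally constant constructible) the transition maps are surjective, so the Mittag-Leffler condition holds and $R^j\Gamma_X(\mathscr{F}) = \varprojlim_m \mathrm{H}^j_{\mathrm{\acute et}}(X,\mathscr{F}_m)$ with vanishing $\varprojlim^1$; functoriality of the construction at each finite level then passes to the limit. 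Everything else — absolute purity, the étale trace, compatibility with composition — is either quoted from \cite{SGA4} / \cite{stacks-project} or is the classical content already used in \cite{KLZ2015}, the only novelty being that we now run it systematically in Jannsen's framework rather than assuming agreement with naive $p$-adic cohomology.
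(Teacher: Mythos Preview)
Your overall strategy matches the paper's: factor $f$ as a closed immersion followed by an étale map, use purity for the first piece, trace for the second, and compose. The paper's proof is essentially a three-line execution of this plan, so you have the right idea.

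Where you diverge is in the framework. You oscillate between two viewpoints: first you construct the Gysin and trace maps levelwise on $\mathbb{Z}/p^m$-coefficients and then pass to $\varprojlim_m$; later you worry that this passage is the ``main obstacle'' and invoke Mittag--Leffler to kill $\varprojlim^1$. The paper avoids this entirely. The lemma immediately preceding the proposition already sets up $f_!$ and $Rf^!$ as honest functors on $\mathbf{Sh}(X_{\et})^{\mathbb{N}}$ and $D^+(\mathbf{Sh}(X_{\et})^{\mathbb{N}})$, so one can run the whole argument directly in the derived category of inverse systems: purity gives $Rf^! \mathscr{G}(c) \cong f^*\mathscr{G}[-2c]$ there, the adjunction counit gives a trace $\Gamma_X f^! \to \Gamma_Y$, and deriving this yields $R\Gamma_X \circ Rf^! \to R\Gamma_Y$. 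Taking $H^{j+2c}$ of this map \emph{is} the pushforward on continuous cohomology, by definition of $\cohom{\bullet}{\et}$ as $R^\bullet\Gamma_X$. No $\varprojlim^1$ bookkeeping is needed because you never leave the derived category.

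Your Mittag--Leffler justification is also shaky as stated: surjectivity of the sheaf transition maps $\mathscr{F}_{m+1} \to \mathscr{F}_m$ does not by itself force surjectivity (or Mittag--Leffler) on the cohomology groups $\mathrm{H}^j(X,\mathscr{F}_m)$, so the claim that $\varprojlim^1$ vanishes would need more work. Fortunately, once you adopt the paper's viewpoint, this step simply disappears. Similarly, the independence-of-factorization check you mention is unnecessary: the paper defines the map via the composite functor $Rf^!$, which depends only on $f$, not on a chosen factorization.
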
 
When $ f $ is a closed immersion, $ (f, \phi)_{*}   $    is defined in \cite[Theorem 3.17]{Jannsen1988}. The proof below is along similar lines except that we account for the independence of the factorisation of $ f $ by working in $D^b_c(-, \mbb{Z}_p)$.       
\begin{proof}  
Recall that $ p $ is assumed to be invertible on $S $. Let $ f = g \circ h  : X \xrightarrow{h}  \bar{Y}  \xrightarrow{g}  Y $ be a factorisation.  As $ g $ is   \'{e}tale,  $ g^{!} = g^{*} $ by Lemma \ref{A.1}   (4).     By \cite[Eqn.  3.20]{Jannsen1988},  $  R h^{!}  \ZZ_{p}(c) = \ZZ_{p} [-2c]   $   canonically. By equation (3.19) of \emph{op.cit.}, for any $  \bar{\mathscr{G}} \in \opn{\acute{E}t}(\bar{Y})_{\mbb{Z}_p}$, $ R h ^{!}  \bar{\mathscr{G}}(  c )    =  h  ^{*}   \bar { \mathscr{G } } (c)   \otimes   Rh^{!} \ZZ_{p} (c) =   h ^{*} \bar{\mathscr{G}} [-2c]  $. Applying this to $ \bar{\mathscr{G}} = g ^{*}  \mathscr{G} $, we see   that        
\[
Rf^{!}  \mathscr{G}(c) = Rh^{!} \circ R g^{!} \, \mathscr{G}(c) =  R h  ^{!} (   g    ^{*}\mathscr{G}(c)) =  f^{*} \mathscr{G}[-2c] 
\]
Deriving $  \mathrm{Tr}_{f}  \colon  \Gamma_{X} f^{!} \to \Gamma _{Y} $, we obtain   a natural transformation $ R ({\mathrm{Tr} _{f} })  :  R ( \Gamma  _  { X } f^{!} ) = R \Gamma_{X} \circ Rf^{!}  \to  R \Gamma_{Y} $ between the two derived functors on $D^b_c(Y, \mbb{Z}_p)$.   Evaluating  $ R (  \mathrm{Tr} _ { f }  ) $  at   $   \mathscr{G} ^ {   * }   (  c  )    $, we obtain an induced  map $   \cohom{j}{\et} (X,f^{*}\mathscr{G}) \cong 
R^{j+2c}\Gamma_{X}(Rf^{!} (\mathscr{G}(c) )  ) \xrightarrow{\mathrm{Tr_{f}}} \cohom{j+2c}{\et}(Y, \mathscr{G}(c)   )    $  via  passage  to  cohomology.   The pushforward map is now defined to be 
$$ (f, \phi)_{*} = \cohom{j}{\et}(X,\mathscr{F}) \xrightarrow{\phi}      \cohom{j}{\et}(X,f^{*}\mathscr{G})  \xrightarrow{\mathrm{Tr}_{f}} \cohom{j+2c}{\et}(Y,\mathscr{G}(c))  .  $$  Lemma \ref{A.1} (2)   then gives the  independence of this map on the choice of $ \bar{Y} $.    The functoriality of these pushforwards follows from that of the trace, i.e. $R\mathrm{Tr}_{f} \circ R\mathrm{Tr}_{p} =  R   \mathrm{Tr}_{q} \circ R \mathrm{Tr}_{f'}$. The Cartesian property  follows  by  applying  proper  base  change (\cite[Tag 095S]{stacks-project})  to  $ q : Y' \to  Y $.   
\end{proof}

\subsection{Localised Abel--Jacobi maps} \label{AbelJacobi}

In this section we let $\Lambda = \mbb{Z}_p$ or $\mbb{Q}_p$ and take $E$ to be a field of characteristic $0$. All schemes in this section are assumed to be finite-type, separated over $\opn{Spec}E$; if $X$ is such a scheme, then we denote by $\overline{X} = X \times_{\Spec E}\Spec \overline{E}$ its base change to a fixed separable closure of $E$.

\begin{theorem}[Jannsen, Deligne] \label{JannsenDeligne} 
Let $\mathscr{F}$ be a lisse \'{e}tale $\Lambda$-sheaf on $X$. There is a first quadrant cohomological spectral sequence 
\[
E^{i,j}_{2} \colon \opn{H}^i\left(E, \opn{H}^j_{\et}\left(\overline{X}, \mathscr{F} \right) \right) \Rightarrow \opn{H}^{i+j}_{\et}\left( X, \mathscr{F} \right)
\]
which we refer to as the Hochschild--Serre spectral sequence.
\end{theorem}

The Hochschild--Serre spectral sequence enjoys the follows properties:

\begin{proposition} \label{FuncPropsOfHSseq}
The Hochschild--Serre spectral sequence is functorial with respect to morphisms of sheaves and varieties, in the following sense:
\begin{enumerate}
    \item Let $f \colon Y \to X$ be an \'{e}tale morphism of finite-type, separated schemes over $\Spec E$. Then the pullback maps 
    \[
    f^* \colon \opn{H}^i\left(E, \opn{H}^j_{\et}\left(\overline{X}, \mathscr{F} \right) \right) \to \opn{H}^i\left(E, \opn{H}^j_{\et}\left(\overline{Y}, f^*\mathscr{F} \right) \right)
    \]
    commute with the differentials in $E_2^{i, j}$ and induce the pullback $f^* \colon \opn{H}^{i+j}_{\et}\left( X, \mathscr{F} \right) \to \opn{H}^{i+j}_{\et}\left( Y, f^*\mathscr{F} \right)$ on the abutment. 
    \item Let $f \colon Y \to X$ be a finite \'{e}tale morphism of finite-type, separated schemes over $\Spec E$. Then the pushforward maps 
    \[
    f_* \colon \opn{H}^i\left(E, \opn{H}^j_{\et}\left(\overline{Y}, f^* \mathscr{F} \right) \right) \to \opn{H}^i\left(E, \opn{H}^j_{\et}\left(\overline{X}, \mathscr{F} \right) \right)
    \]
    commute with the differentials in $E_2^{i, j}$ and induce the pushforward $f_* \colon \opn{H}^{i+j}_{\et}\left( Y, f^*\mathscr{F} \right) \to \opn{H}^{i+j}_{\et}\left( X, \mathscr{F} \right)$ on the abutment. 
    \item If $\phi \colon \mathscr{F} \to \mathscr{G}$ is a morphism of lisse \'{e}tale $\Lambda$-sheaves on $X$, then the induced maps 
    \[
    \phi \colon \opn{H}^i\left(E, \opn{H}^j_{\et}\left(\overline{X}, \mathscr{F} \right) \right) \to \opn{H}^i\left(E, \opn{H}^j_{\et}\left(\overline{X}, \mathscr{G} \right) \right)
    \]
    commute with the differentials in $E_2^{i, j}$ and induce the map $\phi \colon \opn{H}^{i+j}_{\et}\left( X, \mathscr{F} \right) \to \opn{H}^{i+j}_{\et}\left( X, \mathscr{G} \right)$ on the abutment.
\end{enumerate}
\end{proposition}
\begin{proof}
The first part follows from the fact that we have a natural transformation of derived functors $R\Gamma_X \to R\Gamma_Y \circ f^*$ induced from the unit of the adjunction $f^* \dashv f_*$, which is compatible with base-change to $\Spec \overline{E}$. Similarly, the second part follows from the natural transformation $R \Gamma_Y \circ f^* \to R\Gamma_X$ induced from the counit of the adjunction $f_! \dashv f^!$ (recall that $f$ is finite \'{e}tale, so $f^! = f^*$ and $f_! = f_*$), which again is compatible with base-change to $\Spec \overline{E}$. The third part follows immediately from the fact that the Hochschild--Serre spectral is the Grothendieck spectral sequence obtained from $R\Gamma(G_E, -) \circ R\Gamma_{\overline{X}} = R\Gamma_X$, where $G_E$ denotes the absolute Galois group of $E$ and $R\Gamma(G_E, -)$ is the derived functor computing continuous group cohomology (it is the special case of continuous \'{e}tale cohomology for the variety $\Spec E$).
\end{proof}

\begin{remark} \label{RemSpecCaseHSFunc}
A special case of this functoriality is as follows. Let $F/E$ be a finite Galois extension of fields (with $F \subset \overline{E}$) and set $Y = X \times_{\Spec E} \Spec F$. Then we have 
\[
\opn{H}_{\et}^i\left(Y \times_{\Spec E} \Spec \overline{E}, \mathscr{F} \right) = \opn{Hom}_{\Lambda}\left( \Lambda[\Gal(F/E)], \opn{H}_{\et}^i\left(\overline{X}, \mathscr{F} \right) \right).
\]
Let $p \colon Y \to X$ denote the natural map. By Shapiro's lemma, one has morphisms of spectral sequences:
\[
\begin{tikzcd}
{\opn{H}^i\left(E, \opn{H}^j_{\et}\left( \overline{X}, \mathscr{F} \right) \right)} \arrow[d, "\opn{res}"'] \arrow[r, Rightarrow] & {\opn{H}^{i+j}_{\et}\left(X, \mathscr{F} \right)} \arrow[d, "p^*"] &  & {\opn{H}^i\left(F, \opn{H}^j_{\et}\left( \overline{X}, \mathscr{F} \right) \right)} \arrow[d, "\opn{cores}"'] \arrow[r, Rightarrow] & {\opn{H}^{i+j}_{\et}\left(Y, \mathscr{F} \right)} \arrow[d, "p_*"] \\
{\opn{H}^i\left(F, \opn{H}^j_{\et}\left( \overline{X}, \mathscr{F} \right) \right)} \arrow[r, Rightarrow]                         & {\opn{H}^{i+j}_{\et}\left(Y, \mathscr{F} \right)}                  &  & {\opn{H}^i\left(E, \opn{H}^j_{\et}\left( \overline{X}, \mathscr{F} \right) \right)} \arrow[r, Rightarrow]                           & {\opn{H}^{i+j}_{\et}\left(X, \mathscr{F} \right)}                 
\end{tikzcd}
\]
where, by abuse of notation we have written $\mathscr{F}$ for $p^*\mathscr{F}$, and $\opn{res}$ and $\opn{cores}$ denote restriction and corestriction respectively.
\end{remark}

The functoriality properties in Proposition \ref{FuncPropsOfHSseq} play an important role in our discussion of cohomology functors in \S \ref{VerticalNormRelations}. Furthermore in \emph{loc.cit.}, we also need to localise the Hochschild--Serre spectral sequence at a suitable maximal ideal arising from correspondences on the variety to produce ``cohomologically trivial'' elements, which we will now explain in general. The following definition is inspired by \cite[\S 2.3]{LiuCubic}:

\begin{definition} \label{DefOFEtCorXF}
Let $X$ denote a finite-type, separated scheme over $\Spec E$ and let $\mathscr{F}$ be a lisse \'{e}tale $\Lambda$-sheaf on $X$. An \emph{\'{e}tale correspondence} on $X$ is a tuple $(X', f, g, \phi)$ where $(X', f, g)$ refers to a diagram
\[
X \xleftarrow{f} X' \xrightarrow{g} X
\]
with $f$ and $g$ finite \'{e}tale morphisms, and $\phi \colon f^* \mathscr{F} \to g^* \mathscr{F}$ is a morphism of sheaves. The composition of two such correspondences is defined to be $(X_3, f_3, g_3, \phi_3) = (X_2, f_2, g_2, \phi_2) \circ (X_1, f_1, g_1, \phi_1)$ where $(X_3, f_3, g_3)$ is defined via the diagram 
\[
\begin{tikzcd}
  &                                          & X_3 \arrow[ld, "p"'] \arrow[rd, "q"] \arrow[lldd, "f_3"', bend right] \arrow[rrdd, "g_3", bend left] &                                          &   \\
  & X_1 \arrow[ld, "f_1"'] \arrow[rd, "g_1"] &                                                                                                      & X_2 \arrow[rd, "g_2"] \arrow[ld, "f_2"'] &   \\
X &                                          & X                                                                                                    &                                          & X
\end{tikzcd}
\]
with the middle square Cartesian, and $\phi_3$ is given by the composition:
\[
f_3^*\mathscr{F} = p^*f_1^* \mathscr{F} \xrightarrow{p^*\phi_1} p^*g_1^*\mathscr{F} = q^*f_2^* \mathscr{F} \xrightarrow{q^*\phi_2} q^*g_2^*\mathscr{F} = g_3^*\mathscr{F} .
\]
We denote the category of all such correspondences by $\opn{\acute{E}tCor}(X, \mathscr{F})$ which forms a monoidal category with composition as above and unit elements corresponding to those correspondences $(X', f, g, \phi)$ where $f$, $g$ and $\phi$ are isomorphisms.

If $T = (X', f, g, \phi)$ is an \'{e}tale correspondence on $X$, then one obtains an endomorphism of $\opn{H}^i_{\et}\left(X, \mathscr{F} \right)$ (which we also denote by $T$) given by the composition:
\[
\opn{H}^i_{\et}\left(X, \mathscr{F} \right) \xrightarrow{f^*} \opn{H}^i_{\et}\left(X', f^*\mathscr{F} \right) \xrightarrow{\phi} \opn{H}^i_{\et}\left(X', g^*\mathscr{F} \right) \xrightarrow{g_*} \opn{H}^i_{\et}\left(X, \mathscr{F} \right)
\]
where the last map is induced from the counit of the adjunction $g_! \dashv g^!$ (recall $g^! = g^*$ and $g_* = g_!$). We have a similar endomorphism on $\opn{H}^i_{\et}\left(\overline{X}, \mathscr{F} \right)$. Note that if $T_1$ and $T_2$ are two such correspondences, then the composition of $T_1 \in \opn{End}(\opn{H}^i_{\et}\left(X, \mathscr{F} \right))$ followed by $T_2 \in \opn{End}(\opn{H}^i_{\et}\left(X, \mathscr{F} \right))$ is the endomorphism induced by $T_2 \circ T_1$ (this is the opposite convention to \cite[\S 2.3]{LiuCubic}).
\end{definition}

Consider the monoid $\mbb{N} = \{0, 1, 2, \dots \}$ (under addition) which we regard as a discrete monoidal category, and suppose we have a monoidal functor $\mbb{N} \to \opn{\acute{E}tCor}(X, \mathscr{F})$. Set $R = \Lambda[\mbb{N}]$ which is identified with the polynomial algebra over $\Lambda$ in one variable. Then the cohomology groups $\opn{H}^i_{\et}\left(X, \mathscr{F}\right)$ and $\opn{H}^i_{\et}\left(\overline{X}, \mathscr{F}\right)$ carry the structure of an $R$-module via the monoidal functor above. Let $\ide{m} \subset R$ be a prime ideal. Then by the functoriality properties in Proposition \ref{FuncPropsOfHSseq} and the fact that localisation is exact, we obtain a localised Hochschild--Serre spectral sequence:
\[
E_{2, \ide{m}}^{i, j} \colon \opn{H}^i\left(E, \opn{H}^j_{\et}\left(\overline{X}, \mathscr{F} \right) \right)_{\ide{m}} \Rightarrow \opn{H}^{i+j}_{\et}\left(X, \mathscr{F} \right)_{\ide{m}} .
\]
We denote by $\opn{H}^{i}_{\et}\left(X, \mathscr{F} \right)_{\ide{m}, 0}$ the kernel of the natural base-change map $\opn{H}^{i}_{\et}\left(X, \mathscr{F} \right)_{\ide{m}} \to \opn{H}^0\left(E, \opn{H}^i_{\et}\left(\overline{X}, \mathscr{F} \right)\right)_{\ide{m}}$, which we refer to as the subspace of \emph{cohomologically trivial} elements. We have a similar definition for the non-localised Hochschild--Serre spectral sequence.

\begin{definition}[Abel--Jacobi maps] \label{AJetDef}
Let $\opn{AJ}_{\et}$ and $\opn{AJ}_{\et, \ide{m}}$ denote the edge maps 
\begin{eqnarray}
\opn{AJ}_{\et} \colon \opn{H}^i_{\et}\left(X, \mathscr{F} \right)_0 & \to & \opn{H}^1\left(E, \opn{H}^{i-1}_{\et}\left(\overline{X}, \mathscr{F} \right) \right) \nonumber \\
\opn{AJ}_{\et, \ide{m}} \colon \opn{H}^i_{\et}\left(X, \mathscr{F} \right)_{\ide{m}, 0} & \to & \opn{H}^1\left(E, \opn{H}^{i-1}_{\et}\left(\overline{X}, \mathscr{F} \right) \right)_{\ide{m}} \to \opn{H}^1\left(E, \opn{H}^{i-1}_{\et}\left(\overline{X}, \mathscr{F} \right)_{\ide{m}} \right) \label{LocAJDef}
\end{eqnarray}
arising from the Hochschild--Serre and localised Hochschild--Serre spectral sequences respectively, where the last map in (\ref{LocAJDef}) is the natural one.
\end{definition}

The Abel--Jacobi maps satisfy certain compatibility properties with respect to the scheme and the sheaf. To be able to formulate this precisely, we introduce the following definition:

\begin{definition} \label{DefOfCompatUnderPi}
Let $\pi \colon Y \to X$ be a finite \'{e}tale morphism of finite-type, separated schemes over $\Spec E$ and let $T = (X', f, g, \phi)$ and $U = (Y', a, b, \psi)$ be correspondences in $\opn{\acute{E}tCor}(X, \mathscr{F})$ and $\opn{\acute{E}tCor}(Y, \pi^*\mathscr{F})$ respectively. We say that $T$ and $U$ are compatible under $\pi$ if there exists a finite \'{e}tale morphism $\sigma \colon Y' \to X'$ such that the diagram
\[
\begin{tikzcd}
Y \arrow[d, "\pi"'] & Y' \arrow[d, "\sigma"] \arrow[l, "a"'] \arrow[r, "b"] & Y \arrow[d, "\pi"] \\
X                   & X' \arrow[l, "f"'] \arrow[r, "g"]                     & X                 
\end{tikzcd}
\]
is commutative with Cartesian squares, and $\psi = \sigma^*\phi$. If this is the case, we write $U = \pi^* T$. A lengthy computation involving several Cartesian squares shows that this property is preserved under composition, i.e. if $U_1 = \pi^*T_1$ and $U_2 = \pi^* T_2$ then $U_2 \circ U_1 = \pi^*(T_2 \circ T_1)$.

If $F \colon \mbb{N} \to \opn{\acute{E}tCor}(X, \mathscr{F})$ and $G \colon \mbb{N} \to \opn{\acute{E}tCor}(Y, \pi^*\mathscr{F})$ are monoidal functors, then we say that $F$ and $G$ are compatible under $\pi$ if $G(n) = \pi^*F(n)$ for all $n \in \mbb{N}$. By compatibility with composition, this is equivalent to requiring $G(1) = \pi^*F(1)$.
\end{definition}

\begin{proposition} \label{PropFuncPropsForAJet}
Suppose that $\pi \colon Y \to X$ is a finite \'{e}tale morphism of finite-type, separated schemes over $\Spec E$ and $\mathscr{F}$ is a lisse \'{e}tale $\Lambda$-sheaf on $X$. 
\begin{enumerate}
    \item Suppose $T$ and $U$ are correspondences in $\opn{\acute{E}tCor}(X, \mathscr{F})$ and $\opn{\acute{E}tCor}(Y, \pi^*\mathscr{F})$ respectively, which are compatible under $\pi$. Then
    \[
    \pi^* \circ T = U \circ \pi^* \quad \quad \quad \pi_* \circ U = T \circ \pi_*
    \]
    where, as usual, $T$ and $U$ denote the induced endomorphisms on cohomology.
    \item Suppose that $F \colon \mbb{N} \to \opn{\acute{E}tCor}(X, \mathscr{F})$ and $G \colon \mbb{N} \to \opn{\acute{E}tCor}(Y, \pi^*\mathscr{F})$ are monoidal functors which are compatible under $\pi$. Let $\ide{m} \subset R = \Lambda[\mbb{N}]$ be a prime ideal. Then we have:
    \[
    \pi^* \circ \opn{AJ}_{\et, \ide{m}} = \opn{AJ}_{\et, \ide{m}} \circ \; \pi^* \quad \quad \quad \pi_* \circ \opn{AJ}_{\et, \ide{m}} = \opn{AJ}_{\et, \ide{m}} \circ \; \pi_*
    \]
    where pullbacks/pushforwards induce maps between the localisations $(...)_{\ide{m}}$ by part (1).
\end{enumerate}
\end{proposition}
\begin{proof}
Throughout the course of this proof, we will continually use the fact that pullbacks and pushforwards commute for Cartesian squares of finite \'{e}tale morphisms. For the first part, write $T=(X', f, g, \phi)$ and $U = (Y', a, b, \psi)$ and recall that the endomorphisms $T$ and $U$ are given by the formulae $T = g_* \circ \phi \circ f^*$ and $U = b_* \circ \psi \circ a^*$. Then 
\[
\pi^* \circ T = \pi^* \circ g_* \circ \phi \circ f^* = b_* \circ \sigma^* \circ \phi \circ f^* = b_* \circ \psi \circ \sigma^* \circ f^* = b_* \circ \psi \circ a^* \circ \pi^* = U \circ \pi^*
\]
where we have used that $\psi$ is the pullback of $\phi$ along $\sigma$. The proof for pushforwards is similar.

The second part follows from the functoriality properties in Proposition \ref{FuncPropsOfHSseq} and the fact that $\pi^*$ and $\pi_*$ are $R$-linear, by part (1).
\end{proof}


\section{Shimura--Deligne  varieties}  \label{PureAppendix}
 
In this article, we have worked with varieties that are strictly speaking not Shimura varieties in the sense of \cite{DeligneSVs}. Specifically, axiom 2.1.1.3 of \emph{op.cit.}, which we will refer to as (SD3), fails for the morphism $ h \colon \mathbb{S} \to \Hb $ considered in \S \ref{TheShimuraData} (since $ \Hb^{\mathrm{ad}} $ has a compact $ \QQ $-simple factor $ \mathrm{SU}(n) $). More generally, for a Shimura datum arising from PEL moduli problems, (SD3) usually does not hold, e.g. see \cite[\S 1.1]{Morel}, or \cite[Remark 2.5.8]{Lan-Toroid}. The primary reason for imposing it is that it allows one to apply strong approximation, which is used in describing the reciprocity law on the geometric  connected  components \cite[\S 3.4]{DeligneTS}.  Another     application is in reducing the problem of the existence of canonical models to the case of connected Shimura varieties \cite[\S 2.7]{DeligneSVs}. Assuming (SD3)  however  excludes the case of so-called Shimura sets, for example the Gross curve in \cite[3.1]{Wei-Zhang}, which  are instrumental in applications to Euler systems.      

The purpose of this appendix is to record results that continue to hold for datum that do not necessarily satisfy (SD3). We make no claim of originality here  as   most proofs carry over verbatim. The proofs that we have chosen to include are primarily for purposes of exposition of the originals. We shall freely use results from \cite{DeligneTS} when they do not invoke the hypothesis in \S 2.1 where (SD3) assumed. We note that a more general framework for Shimura varieties (which in particular does not require assumption (SD3)) has also been proposed in \cite{SemplinerTaylor}.      

\begin{notation}
We fix an algebraic closure $ \CC $ of $  \RR $, and take $  \overline{\QQ} $ to be the algebraic closure of $ \QQ $ in $ \CC $. We denote the Deligne torus by $  \delT = \Res_{\CC/\RR}  \GG_{m} $ and let $ w \colon \GG_{m, \RR} \to  \delT  $ be defined via the \emph{inverse} of the inclusion $ \RR^ { \times  }    \hookrightarrow  \CC ^ { \times } $. 
We fix the identification $  \delT_{\CC} \cong \GG_{m} \times \GG_{m} $ such that the inclusion $ \delT(\RR) \to \delT (\CC) $ is given by $ z \mapsto (z, \overline{z} ) $, and we take $ \mu \colon \GG_{m,\CC} \to \delT_{\CC} $ to be the cocharacter $ z  \mapsto  (z, 1) $.  For an  algebraic  group $ \Gb $, we denote by $ \mathbf{Z}_{\Gb} $ its centre, $  \Gb ^{\mathrm{der}} $ its derived group, and $ \Gb^{\mathrm{ad}} $ its quotient by $  \mathbf{Z}  _{  \Gb }  $. A superscript ``$+$'' (e.g. $ \Gb (\RR) ^{+}$) denotes the connected component of the identity in the analytic topology.  
\end{notation}

\subsection{Preliminaries} 

\begin{definition}   \label{SD}            
A \emph{Shimura--Deligne datum} is a pair $  (\Gb, X) $  consisting of  a  connected   reductive algebraic group $ \Gb $ over $ \QQ $ and a $ \Gb(\RR)$-conjugacy class $ X $ of homomorphisms $ h \colon \mathbb{S}   \to  \Gb_{\RR} $ satisfying
\begin{enumerate}
    \item [(SD1)] For all $ h \in X $, the Hodge  bigrading  of the complex vector space $ \mathrm{Lie}(\Gb)_{\mathbb{C}}    $ under the adjoint action of $  \mathbb{S} _ { \CC }  $ is of type $ \left \{ (-1,1), (0,0), (1,-1) \right \}  $.  In particular, the cocharacter $   h \circ w : \GG_{m} \to \Gb_{\RR}  $ is central and independent of $ h $. 
    \item[(SD2)]  For any $ h \in X $,  $ \opn{ad} \left(h(\sqrt{-1})\right) \colon  \Gb _ {\RR}  \to \Gb _{ \RR }  $ (which is an involution by (SD1)) is a Cartan involution of $ \Gb^{\mathrm{der}}_{\RR} $, i.e. the real Lie group 
    \[
    \left \{ g \in \Gb^{\mathrm{der}}(\mathbb{C}) \, | \,   h(\sqrt{-1}) \overline{g} h(\sqrt{-1})^{-1} = g    \right \}  
    \]
    is compact. 
\end{enumerate}
A  \emph{morphism} $  (\Gb_{1} , X_{1} ) \to  (\Gb_{2}, X_{2}) $ of Shimura--Deligne datum is a homomorphism $ u \colon \Gb_{1} \to \Gb_{2} $ such that $ u(X_{1} ) \subset X_{2}    $. We say that such a morphism is \emph{injective} if $ u $ is.   
\end{definition}

\begin{remark} 
These are the axioms (1.5.1), (1.5.2), (1.5.3) of \cite{DeligneTS}, or (2.1.1.1), (2.1.1.2) of \cite{DeligneSVs}.  We  have  borrowed the   above terminology from \cite{Ngo-Genestier}.    
\end{remark} 

\begin{remark}  It suffices to verify the axiom for one $ h \in X $, and we may write $ (\Gb, h) $ for the Shimura--Deligne datum. Notice  also  that  since  $ \Gb^{\mathrm{der}} \to \Gb ^{  \mathrm{ad} } $ is a (central) isogeny, (SD2) is equivalent to requiring  that $ \opn{ad} \left( h ( \sqrt{-1} ) \right) $ induces a Cartan involution of $ \Gb  ^ { \mathrm{ad} }_{\mbb{R}} $.
\end{remark}   

Fix $ h_{0} \in X  $ and let $ K_{ \infty  } $ be the centraliser of $ h_{0} $ in $    \Gb(\RR) $, so that $ X =   \Gb ( \RR )  / K_{\infty} $. Then $ K_{\infty} $ contains the centre of $ \Gb(\RR) $ and $ \mathrm{Lie}(K_{\infty})_{\CC} $ coincides with $ \mathrm{Lie}(\Gb)^{(0,0)}  $. Since $   h_{0}(\sqrt{-1}) $ acts as $ -1 $ on $ \mathrm{Lie}(\Gb)/ \mathrm{Lie}(K_{\infty}) $, we  see that $ h_{0} $ is central if and only if $ h_{0} ( \sqrt{-1} ) $ is. Since Cartan involutions are unique up to conjugacy, this is equivalent  to $ \Gb ^ { \mathrm{der}} (\RR) $ being compact.  Such a  pair will be referred to as a \emph{trivial Shimura--Deligne datum}. 

Let $ X ^{0} $ be the connected component of $ X $ containing $ h_{0} $. By  \cite[Corollary 1.1.17]{DeligneSVs}, $ X ^ { 0  }    $ is either a singleton (which happens if and only if $(\Gb, h_{ 0 }) $ is trivial) or  a Hermitian symmetric domain. More  precisely, if  $ \Gb  ^ { \mathrm{ad} } _{ \RR }   =  \Gb_{1} \times \ldots   \Gb _{k}   $ is  the decomposition into $ \RR $-simple factors,  then $  X ^ { 0 }  = X_{1} \times \cdots  \times  X_{k}  $ where each $ X_{i} $ is a quotient of $  \Gb_{i}(\RR)  ^ { +  }    $ by  a  maximal  compact subgroup.    

\begin{lemma} \label{CptIntLemma}
The Lie group $ K _{ \infty }  ' =   K_{\infty }  \cap  \Gb ^ { \mathrm{der}} ( \RR ) ^{+} $ is connected, and  $ K _ {\infty } =  \mathbf{Z}_{\mbf{G}} (  \RR)  \cdot   K_{\infty}   ' $.  
\end{lemma}

\begin{proof}  
As $ K_{\infty}'$ is a maximal compact subgroup in a connected Lie group, it is connected. Let $ \Gb ^ { * } $ denote the compact real form of $ \Gb ^{  \mathrm { \mathrm{ad} } }  _ { \RR }    $  defined by  $ h( \sqrt{-1} ) $. Let $ h_{0}'  \colon \delT  \to \Gb ^ { \mathrm{ad}} $ be the   induced  map, and  let $ \mathbf{C} $ be the centraliser of $ h_{0}'$ in $ \Gb ^  \mathrm{ad} _{\RR} $. Then $ \mathbf{C}  $ is a Cartan subgroup of $ \Gb ^ {\mathrm{ad}} $, and is therefore (Zariski) connected. Moreover, $  \mathbf{C} ( \RR ) $ is a closed    subgroup of $    \Gb ^ {  * } ( \RR )  $, so $  \mathbf{C} $ is $ \RR $-anisotropic  and hence     $  \mathbf{C}  (\RR )$ is  connected by \cite[14.3]{Borel-Tits}. As $ K_{\infty} $ lands in $   \mathbf{C} ( \RR ) \subset  \Gb ^ { \mathrm{ad}} ( \RR )^{+} $ and $ \Gb ^ { \mathrm{der}} ( \RR )^{+} $ surjects onto $  \Gb ^ {\mathrm{ad}   } ( \RR)  ^ { +  }  $, the second  claim  follows.  
\end{proof}

\begin{definition}[{\cite[\S 0.1]{PinkThesis}}] \label{DefinitionOfSufficientlySmall}
Let $g \in \mbf{G}(\mbb{A}_f)$ and let $(g_p)_p \in \opn{GL}_n(\mbb{A}_f)$ denote its image under some faithful representation $\mbf{G} \hookrightarrow \opn{GL}_{n, \mbb{Q}}$. We say that $g$ is neat if 
\[
\bigcap_p \left( \overline{\mbb{Q}}^{\times} \cap \Gamma_p \right)_{\mathrm{tors}} = \{1\}
\]
where $\Gamma_p$ is the subgroup of $\Qpb^{\times}$ generated by the eigenvalues of $g_p$ (and the intersection is independent of the embedding $\overline{\mbb{Q}} \hookrightarrow \Qpb$). A compact open subgroup $K \subset \mbf{G}(\mbb{A}_f)$ is said to be \emph{neat} or \emph{sufficiently small} if all of its elements are neat. In this case, $\Gamma \defeq K \cap \mbf{G}(\mbb{Q})$ is a neat subgroup in the sense that, for every $g \in \Gamma$, the subgroup of $\overline{\mbb{Q}}^{\times}$ generated by the eigenvalues of $g$ is torsion-free. In particular, $\Gamma$ is torsion-free.
\end{definition}

\begin{definition}  
Given a Shimura--Deligne datum $ (\Gb, X ) $, and a compact open subgroup  $ K \subset  \Gb(\Ab_{f} ) $, we define the \emph{Shimura--Deligne variety} to be the double quotient 
\[
{\Gb (\QQ ) \backslash     [ X   \times (  \Gb ( \Ab_{f} ) / K )]}
\]
which we denote by  $ \Sh_{\Gb}(X,K)(\mathbb{C})$ or just $ \Sh_{\Gb}(K)(\mathbb{C}) $.
\end{definition}    

For a neat compact open subgroup $K \subset \mbf{G}(\mbb{A}_f)$, arguing as in \cite[\S 2.5]{Lan-Toroid} (or  \cite[Lemma 4.6.1]{Ngo-Genestier}), $ \Sh_{\Gb}(K)   (   \mathbb{C}  ) $  is a disjoint union of quotients of Hermitian symmetric domains/finite discrete sets by torsion-free arithmetic subgroups,  and  therefore  are the complex points of a  normal quasi-projective  $\mathbb{C}$-variety $ \Sh_{\Gb}(K)_{\mathbb{C} }  $  by the theorem of Baily--Borel. By \cite[Proposition 3.3(b)]{PinkThesis}, these varieties are also smooth. As $ K $ gets smaller, these varieties form a projective system whose limit is a quasi-compact separated scheme carrying a continuous action of $ \Gb(\Ab_{f} )   $ \cite[\S 1.8]{DeligneTS}. We shall denote this scheme by $ \Sh_{\Gb}(X)_{\mathbb{C}} $ or just $ \Sh_{\Gb, \mathbb{C} } $.

Let $ u \colon (\Gb_{1 }  , X_{1} ) \to  ( \Gb_{2} , X_{2}  ) $  be a morphism of Shimura--Deligne data. For compact open subgroups $ K_{i} \subset \Gb_{i } ( \Ab_{f} )   $  such  that  $  u ( K_{1}) \subset K_{2}  $, we have an induced $ \mathbb{C} $-morphism of corresponding $ \mathbb{C} $-varieties by Borel's Theorem (see \cite[\S 1.14]{DeligneTS}). If one starts with  a closed immersion of reductive groups, this morphism is  finite unramified for sufficiently small compatible compact opens, and factors as a composition of a closed  immersion  followed by a finite \'{e}tale morphism \cite[Proposition 1.15]{DeligneTS}.

\subsection{Canonical models}    

Let $ \Gb $ be a reductive $ \QQ $-group. For any $ \QQ $-algebra $ R $, the group $ \Gb ( R  )$ acts on the left on the space $  \Hom_{R}(\GG_{m,R} ,   \Gb_{R} )$ of algebraic group homomorphisms over $ R $, by conjugation on the target. Let $Y = Y _ { \Gb  }$ be (the fppf sheafification of)   the  functor 
\[
\QQ\text{-}\mathrm{algebras} \to  \mathbf{Sets}  \quad  \quad   R \mapsto \Gb(R) \backslash \Hom_{R}(\GG_{m,R} , \Gb_{R})   . 
\]
If $ F / \QQ $ is a finite Galois extension over which $\Gb$ splits, then restricted to $ F $-algebras, $ Y $ is  a  constant  functor, hence representable as $ \bigsqcup \Spec F $. By Galois descent, one sees  that it is representable by an \'{e}tale $ \QQ $-scheme. 

Suppose moreover that  $ (\Gb, X)  $ is a Shimura--Deligne datum. The cocharacters $ h_{\CC} \circ \mu \colon \GG_{m, \CC} \to  \Gb_{\CC} $ for $ h \in  X $ lie in the same $ \Gb(\CC)$-conjugacy class (as $ h $ lies in a single $\Gb(\RR)$-conjugacy class). Therefore, one obtains a geometric point $ \mu_{X}  \in     Y(\CC)  =  Y ( \Qbar )   \subset Y  _ { \overline{  \QQ  } }   $.        

\begin{definition} 
The \emph{reflex field} $ E(\Gb,X) $ of a Shimura--Deligne datum $ (\Gb, X) $ is the field of definition of the point $ \mu_{X} \in  Y_{\Gb}(\overline{ \QQ })$. 
\end{definition}

\begin{example}  \label{ReflexFieldUnitaryExample}
Let $ \Gb  = \mathrm{GU}(p,q)  $ be the unitary group defined in \S \ref{TheGroups} using  the imaginary quadratic field $ E $ and let $ h \colon \delT  \to  \Gb_{\RR} $ be the map $ z \mapsto (z, \ldots, z , \bar{z} , \ldots , \bar{z} ) $ where there are $ p $ copies of $ z $, $ q $ copies of $ \bar{z} $, and $p+q = n$. Then $ (\Gb, \left\{h \right\} ) $ is a Shimura--Deligne datum. The cocharacter $ \mu_{h} $   associated with $ h $ over $ \CC $ is given  by    $  \mu _ { h  } \colon  \GG  _  { m  }  \to   \GG_{m}  \times  \GL_{n, \CC },   \, \,   z  \mapsto   \left ( z ,  \mathrm{diag}(  z ,  , \ldots ,    z ,  1 ,  \ldots  1  )   \right               )   $,
which  is defined over $ E $. If $ \sigma \in  \Gal  ( E / \QQ ) $ denotes complex conjugation, then
$ \sigma (  \mu _ { h } ) \colon z   \mapsto  \left(  z  ,   \mathrm{diag} ( 1,  \ldots,  1 , z  ,   \ldots  , z  ) \right) $.  These are in two different conjugacy classes if and only if $ p \neq q $, i.e. the reflex field is $ E $ if $ p \neq q $ and $ \QQ $ otherwise.    
\end{example}

For the notion of \emph{models} of $ \Sh_{\Gb, \mathbb{C} } $ over subfields $ E \subset  \mathbb{C}   $,   we  refer the  reader to \cite[\S 3]   {DeligneTS}. If a model exists, we denote it by $ \Sh_{\Gb, E} $.   As  in \cite[\S 11.4]{PinkThesis}, one can define a  model of $ \Sh_{\Gb, \CC} $ over its reflex field when $ \Gb $ is a torus (and $ X = \left \{ h \right \} $ is a singleton).   
Moreover, if $ (\Gb_{1} , X_{1} ) \to  (\Gb_{2}, X_{2} )$ is  a   morphism, then $ E(\Gb_{1}, X_{1})  \supset E (\Gb_{2}, X_{2}) $ because one obtains a morphism $ Y_{\Gb_{1}} \to Y_{\Gb_{2}} $ with $ \mu_{X_{1}} $ mapping to $  \mu_{X_{2} } $. This motivates the following definition.

\begin{definition} 
A \emph{canonical model} of $ \Sh_{\Gb,\CC} $ is a model $ \Sh_{\Gb,E} $ defined over the reflex field $ E  =  E (\Gb, X) $ such  that    for every Shimura--Deligne datum $  ( \Tb, X ' )$ with reflex field $ E' $, where $ \Tb $ is a torus, and any injective morphism $ (\Tb, X ' )  \to  (\Gb, X  ) $, the induced map 
\[
\Sh_{ \Tb,  \CC } \to \Sh_{\Gb, \CC} 
\]
is the pullback of a morphism $\Sh_{\Tb,  E'  } \to \Sh_{\Gb, E   } \times_{ \Spec E}  \Spec E'$.    
\end{definition}    

\begin{lemma}    \label{Special Points}   
Let $ (\Gb,  X ) $ be a Shimura--Deligne datum and $  E =  E(\Gb,X) $ its reflex field. Let $ S $ be the  collection of all subfields $  F \subset \CC $ that arise as follows: there exists an injective  morphism of Shimura--Deligne datum $ (\Tb,  X' ) \to  (\Gb, X) $ such that $ F = E( \Tb, X'   ) $.   Then $ S $ is non-empty and      $ \bigcap _ { F  \in  S  }  F  =  E     $.         
\end{lemma}     

This result is \cite[Theorem 5.1]{DeligneTS} and is the key input of the next theorem. While (SD3) is a running hypothesis in \S 5 of \emph{op.cit.}, the proof of this result does not need it. As this might not be immediately obvious, we provide an  exposition of the key steps. 

\begin{proof} 
Let $ Y_{\Gb} $ be the  $ \QQ  $-scheme as above and let $ Y _{0}  \cong  \Spec  E $ be the finite \'{e}tale $ \QQ $-subscheme whose $\overline{\mbb{Q}}$-points consist of the Galois translates of $ \mu_{X} \in Y_{ \Gb}(\overline{\mbb{Q}})$. Let $ V $  be   the    $ \QQ  $-scheme  of  regular elements in $ \mathrm{Lie}(\Gb) $, i.e. points in $V(R)$ are elements $ v_{R}  \in  \mathrm{Lie}(\Gb)_{R} $  whose centraliser is a maximal torus $ \Tb_{v_{R}} $ in $ \Gb_{R}   $. Let $ W $  be (the fppf sheafification of) the functor that associates to each $ \QQ $-algebra $ R   $ the set of all  triplets $  (\Tb_{R} ,  v_{R}  , \lambda_{R}) $ where 
\begin{itemize} 
\item $ \Tb_{R} $ is a (fibrewise) maximal torus in  $ \Gb_{R} $. 
\item $ v_{R}  \in  \mathrm{Lie} (\Tb)_{R}  $  is a regular element  of  $  \mathrm{Lie}(\Gb)_{R} $, i.e. the centraliser of $ v $ in $\Gb_{R}$ is $ \Tb_{R} $. 
\item $ \lambda_{R} \colon \GG_{m,R} \to \Tb_{R} $ is a cocharacter such that the class of the induced morphism $  \GG_{m,R} \to \Gb_{R} $ is in $ Y_{0,R} $. 
\end{itemize}
Then $ W _ { \overline{ \QQ} } \to V_ { \overline{ \QQ} } $ has constant finite fibres. By Galois descent, $ W $ is representable by a finite-type $ \QQ $-scheme admitting a finite \'{e}tale surjective map $ f \colon W \to V  $ and a map $ p \colon  W  \to Y_{0} $. This makes $ W $ a $ E $-scheme. 
\begin{equation}  \label{WVDiagram}  
\begin{tikzcd}   W  \arrow[r, "f"]    \arrow[d, "p"]  &  V  \arrow[d]  \\
               Y_{0}   \arrow[r]  &  \Spec \QQ 
\end{tikzcd}    
\end{equation}
For all $ v \in V(\RR) $, $ w \in W(\CC) $ with $ f(w) = v $ and $ \lambda_{w} \colon  \GG_{m, \mbb{C}} \to \Tb_{v,\CC} $ the cocharacter part of $ w $, there is a unique $ h_{w} \colon  \delT  \to  \Tb_{\RR} $ such that $ h_{w, \CC}  \circ   \mu     = \lambda_{w} $;  on $ \RR$-points, it is given by 
\[
h_{ w  }(z )  =  \lambda_{w} (z)  \cdot \overline{ \lambda_{w}   (  z  )    }  ,  \quad  \quad      z \in   \delT(\RR) =  \CC ^{\times} .
\]
We note that if $ v \in V ( \QQ ) $ is such that 
\begin{itemize}   
\item  the $ \QQ $-scheme $  Z_{v}  : =    f^{-1}(v) $ is the spectrum of a field $ E_{v} $,
\item  for some $  w  \in  Z_{v} ( \CC  )  $, $ h _{w} \in  X $
\end{itemize}
then $ E_{v} \in S  $.  Indeed, the second point means that there is an injective morphism $ (\Tb_{v} , \left \{h_{w} \right \} )  \hookrightarrow  (\Gb, X) $ of Shimura--Deligne datum, and  the first  guarantees  that the cocharacter $  \lambda_{w} =  \mu \circ h_{w,\CC}  $ associated with the datum $ ( \Tb_{v},  \left \{ h  \right \}  )   $ has field of definition $ E_{v} $.   The goal therefore is to show that there are many such $ v  $.  \\

\noindent \textit{Step 1.} $ W $ is geometrically irreducible as an $ E $-scheme.  \\
\\
Let  $W'$ be the geometric fibre of $ p $ over $ \mu_{X} $   and $ C $ the scheme over $ \overline{\QQ} $ of cocharacters of $ \Gb_{\overline{\QQ}} $ whose conjugacy class is $ \mu_{X} $. Let $ q \colon W' \to C $ be the natural map. Then $ C $, being a quotient of $\Gb_ { \overline { \QQ } } $,    is irreducible and the fibers of $ q $ are isomorphic. Let $ W''  \subset   W'    $ be the fibre of $ q $ over a given $ \lambda \colon \GG_{m}  \to  \Gb_{\overline{\QQ} }  \in C ( \overline { \QQ } ) $ and $ D $   the scheme of all maximal tori of $ \Gb _ { \overline {\QQ } }  $ that contain the image of $ \lambda  $. Let  $ r \colon W ''  \to D $ be the natural  map. Any maximal torus coming from $ D $ centralizes $ \lambda ( \GG_{m} ) $ and therefore must be contained in the centraliser $ Z_{\Gb _ { \overline {  {  \QQ  } }  } } ( \lambda(\GG_{m}) ) $. Now, $ Z_{\Gb _ { \overline { \QQ } }   } ( \lambda  ( \GG_{m} )  )  $ is connected by \cite[Theorem 17.38]{Milne-Algebraic} and $ D $ is a quotient of $ Z_{\mbf{G}_{ \overline{\QQ} }} ( \lambda ( \GG_{m}  ) )$  by \cite[Theorem  17.10]{Milne-Algebraic}. Therefore, $ D $ is irreducible. As $  W'' \to D $ is open in the vector bundle over $ D $ associated with the  Lie  algebra of the universal torus over $ D $, it has geometrically irreducible fibers over $ D $, and is therefore irreducible too.  Collecting these statements together, we see that the fibers of $ q $ are irreducible, whence $ W ' $ is irreducible. As the geometric fibres of $ p $ are isomorphic and $ Y_{0} $ is irreducible, the claim follows.   \\    

\noindent  \textit{Step 2.}   In the analytic  topology of $  V (  \RR  )   $, there is a  non-empty open subset $ U $  such that for all $ v \in U $, $  \Tb_{v} $ contains the image of some $ h \in X $.\\
\\
We let $ U  \subset V(\RR) $ denote the subset of  all  $ v \in  V (\RR ) $ such that $  (  \Tb_{v}/   \mathbf{Z}_{\mbf{G}, \RR}  ) (\RR)   $ is compact. Then $ U $ is open since the association $ v \mapsto \Tb_{v}/\mathbf{Z}_{\mbf{G}, \RR} $ is continuous and the set of anisotropic (equivalently compact) tori is an open subspace\footnote{The space of conjugacy classes of maximal tori in $ \Gb^{\mathrm{ad}}(\RR) $ form finitely many connected components, each of which is open, and any two in the same component are conjugate by $ \Gb ^ {\mathrm{ad}} (\RR) ^{+}    $.  See  the  footnote in   \cite[p. 117]{Milne}.} of the moduli space of all maximal  tori of $ \Gb ^{\mathrm{ad}}_{\RR} $. Moreover, $ U $ is non-empty as follows: for  any $ h  \in X $, the map $ h^  {\mathrm{ad} } :  \delT  \to  \Gb ^ {  \mathrm{ad} } _ { \RR }  $ factors  via $ h ^ { \mathrm{ad} } :  \U_{1} \to \Gb^{\mathrm{ad}}_{\RR} $. Hence any maximal torus $\mbf{T}'$ of $ \Gb ^{\mathrm{ad}}(\RR) $ containing the image of $ h^{\mathrm{ad}}$ is contained in the centraliser $ \mathrm{Stab} _ { \Gb } ( h  ^ { \mathrm{ad} }   )    $, and is  therefore  compact. Any regular element in the Lie algebra of the inverse image in $ \Gb $ of $ \Tb' $  is then the desired element.    It now  remains to show that for \emph{all}   $ v \in U  $, $ \Tb_{v} $ contains the image of some $ h \in X $. To this end, fix   $ h_{0} \in X $ and let $ K_{\infty} $ be the stabiliser of $ h_{0} $ in $ \Gb(\RR) $.  Then   $ K _ { \infty } '   =   K_{\infty}  \cap  \Gb^{\mathrm{der}} (\RR) ^{+} $ is a maximal compact Lie  subgroup  of   $ \Gb^{\mathrm{der}}(\RR)^{+} $, and  $  K_{\infty} / \mathbf{Z}_{\mbf{G}}(\RR) $ is a maximal compact subgroup  of  $\Gb ^ { \mathrm{ad} } ( \RR ) ^  {+ } $ by Lemma \ref{CptIntLemma}.  As any two maximal compact subgroups   in  $ \Gb ^ {  \mathrm{ad} } ( \RR  ) ^ { + } $  are conjugate and $  \Tb_{v}/ \mathbf{Z}_{\mbf{G}, \RR}  ( \RR  )   $ is compact, there exists $ g \in  \Gb^{\mathrm{der} }( \RR ) ^ { + } $ such that $ g  \Tb_{v}  (\RR)    g^{-1}  \subseteq  K_{\infty} $. Since   $ g  \Tb_v (\RR) g ^{-1} $ is a  maximal torus in  $ K _ {\infty  } $, it contains the centre of $ K_{\infty} $, and in particular the image of $ h_{0} $.   Therefore, $ \Tb_{v} $ contains the image of $ g  ^{-1}  h_{0}  g   $.   \\    

\noindent  \textit{Step 3.}  For any finite extension  $ E' $ of $ E $, there exists $ v \in  V( \QQ) \cap U $ such that  $ f^{-1} (v) $ is the spectrum of a field linearly  disjoint  from  $ E' $.    \\
\\
This is (a slightly general form of) Hilbert's irreducibility theorem applied to the diagram in (\ref{WVDiagram}), and is proved in \cite[Lemma 5.1.3]{DeligneTS}. 
\end{proof}    

\begin{remark}  
In fact, the proof shows that there are two extensions in $ S $ whose intersection is $ E $. 
\end{remark} 

\begin{theorem}[Functoriality]  
Let $( \Gb_{1} , X_{1} ) \to  (\Gb_{2} , X_{2} )  $ be a morphism of Shimura--Deligne datum and, for $i=1, 2$, let $ E_{i}$ be the reflex field of $ (\Gb_{i}, X_{i} ) $. Suppose $ \Sh_{\Gb_{i} , E_{i} } $ is a canonical model over $ E_{i} $ of $ \Sh_{\Gb_{i} ,\CC} $. Then the morphism 
\[
\Sh_{\Gb_{1}, \CC}  \to  \Sh_{\Gb _{2} , \CC }  
\]
is the pullback of a morphism  $ \Sh_{\Gb _{1} , E_{1} } \to  \Sh_{\Gb_{2} , E_{2} } \times_{ \Spec E_{2} }  \Spec   E_{1}  $.
\end{theorem}    
\begin{proof} 
This is \cite[Corollary 5.4] {DeligneTS} whose proof relies on Propositions 5.2, 5.3, and Theorem 5.1 in \emph{op.cit}. Proposition 5.2 of \emph{op.cit.} holds in our case as we still have real approximation, and Lemma 5.3 is a general fact about descent data of schemes (see the explanation in \cite[Lemma 11.8]{PinkThesis} and the reference therein). The key input of Theorem 5.1 is that the intersection of the reflex fields of tori admitting a map to $ (\Gb_{1}, X_{1}) $ is $ E _ { 1  }    $ (without assuming (SD3)), and this was shown in Lemma \ref{Special Points} above.
\end{proof}    

\begin{corollary}[Uniqueness] \label{UniquenessCM}
Canonical models are unique up to a unique isomorphism.
\end{corollary}

\begin{proof} 
If $ \Sh_{\Gb,E} $ and $ \Sh_{\Gb,E}' $ are two canonical models over the reflex field $ E $ of a datum $ (\Gb, X )$, then the identity map on $ \Sh_{\Gb,\CC} $ is the pullback of a morphism $ \Sh_{\Gb,E}' \to \Sh_{\Gb,E} $.
\end{proof}

\begin{corollary}[Existence criterion] \label{ExistenceCriterion}
Let $ (\Gb, X)  \to  (\Gb' , X' ) $ be an injective morphism of Shimura--Deligne datum. If $ (\Gb' , X') $ admits a canonical model, then so does $ (\Gb, X) $. 
\end{corollary}           

\begin{proof}  
This is \cite[Corollary 5.7]{DeligneTS} and the proof carries over verbatim.       
\end{proof}    

\begin{definition}    
Let $ (\Gb , X ) $ be a Shimura--Deligne datum. In addition to the axioms introduced in Definition \ref{SD}, some additional axioms are often imposed which we list here: 
\begin{enumerate}
    \item [(SD3)] $\Gb^{\mathrm{ad}}(\RR) $ has no $ \QQ $-simple factors that are $ \RR $-anisotropic.  
    \item[(SD4)] The \emph{weight morphism}  $ w_{X} \defeq h \circ w \colon \GG_{m , \RR } \to \Gb_{\RR} $, a priori defined over $ \RR $, is defined over $ \QQ $.
    \item[(SD5)]  $ \Zb _{\Gb} ( \QQ ) $ is discrete in $ \Zb _{\Gb} ( \Ab_{f} ) $. 
\end{enumerate}  
\end{definition}  

\begin{remark} 
When $ (\Gb,X) $ satisfies (SD3), we arrive at the common definition of \emph{Shimura datum} in the literature. We note that (SD5) is equivalent to the statement that  the maximal anisotropic  $ \QQ $-subtorus of $ \Zb_{\Gb} $ remains anisotropic (equivalently, compact) over $ \RR $, i.e. $ \Zb_{\Gb} $ is an almost direct product of a split and a compact type torus over $ \QQ   $  \cite[Theorem 5.36]{Milne}.  (SD5) implies (SD4) (see \cite[\S 5.4]{Pink1992}) and is therefore  a practical means to check the latter.
\end{remark}

One particularly useful application of (SD5) is the following:

\begin{lemma} \label{LemSD5index}
Let $(\mbf{G}, X_{\mbf{G}})$ be a Shimura--Deligne datum which satisfies (SD5). Let $L \subset K \subset \mbf{G}(\mbb{A}_f)$ be sufficiently small compact open subgroups (Definition \ref{DefinitionOfSufficientlySmall}) such that $L$ is normal in $K$. Then the map of $\mbb{C}$-varieties 
\[
\opn{Sh}_{\mbf{G}}(L)_{\mbb{C}} \twoheadrightarrow \opn{Sh}_{\mbf{G}}(K)_{\mbb{C}}
\]
is Galois with Galois group $K/L$.
\end{lemma}
\begin{proof}
By \cite[Theorem 5.28]{Milne} and axiom (SD5), one has 
\[
S \defeq \opn{Sh}_{\mbf{G}}(\mbb{C}) = \mbf{G}(\mbb{Q}) \backslash X_{\mbf{G}} \times \mbf{G}(\mbb{A}_f) 
\]
and $\opn{Sh}_{\mbf{G}}(L)(\mbb{C}) = S/L$, $\opn{Sh}_{\mbf{G}}(K)(\mbb{C}) = S/K$. Since $K$ is sufficiently small, the action of $K$ on $S$ is free. Indeed, the stabiliser $\opn{Stab}_K(s)$ of a point $s \in S$ under the action of $K$, satisfies
\[
g\opn{Stab}_K(s)g^{-1} = \opn{Stab}_{\mbf{G}(\mbb{R})}(x) \cap \mbf{G}(\mbb{Q}) \cap g K g^{-1}
\]
where we have written $s = [x, g]$ for $x \in X_{\mbf{G}}$ and $g \in \mbf{G}(\mbb{A}_f)$. Since $\opn{Stab}_{\mbf{G}(\mbb{R})}(x)$ is compact-modulo-centre and $\Gamma \defeq \mbf{G}(\mbb{Q}) \cap g K g^{-1}$ is a discrete subgroup of $\mbf{G}(\mbb{R})$, for any $\gamma \in g\opn{Stab}_K(s)g^{-1}$ there exists an integer $n > 0$ such that 
\[
\gamma^n \in \mbf{Z}_{\mbf{G}}(\mbb{R}) \cap \Gamma = \mbf{Z}_{\mbf{G}}(\mbb{Q}) \cap \Gamma \subset \mbf{Z}_{\mbf{G}}(\mbb{Q}) \cap gKg^{-1}.
\]
Since $\mbf{Z}_{\mbf{G}}(\mbb{Q})$ is discrete in $\mbf{Z}_{\mbf{G}}(\mbb{A}_f)$ this implies that $\mbf{Z}_{\mbf{G}}(\mbb{Q}) \cap gKg^{-1}$ is finite, and so every element of $g\opn{Stab}_K(s)g^{-1}$ has finite-order. But $K$ is neat, so $\Gamma$ is torsion-free, which implies that $g\opn{Stab}_K(s)g^{-1}$ (and hence $\opn{Stab}_K(s)$) must be trivial. 

Similarly, the action of $L$ on $S$ is free. This implies that the map $S/L \twoheadrightarrow S/K$ is Galois and has degree $[K:L]$ as required. 
\end{proof}

\subsection{PEL-type Shimura--Deligne varieties}   
For the notion of (semisimple) PEL-datum, we refer the reader to  \cite[Definition 7.2]{Torzewski2019}. Here, we simply recall that a PEL datum is a tuple $ ( B ,  * , V , \langle \cdot , \cdot  \rangle , h  )  $ where 
\begin{itemize} 
\item $ B $ is a finite-dimensional semisimple $ \QQ $-algebra
\item $ * $ is a positive anti-involution of $ B $
\item $ V $ is  a finite-dimensional $ B $-module
\item $ \langle \cdot,  \cdot  \rangle  \colon V \times V \to \QQ $ a non-degenerate alternating (i.e. skew Hermitian) pairing 
\item $ h \colon  \CC  \to \mathrm{End} _{B_{\RR} } V_{\RR} $ is an $ \RR $-algebra homomorphism
\end{itemize}
satisfying the conditions in \emph{loc.cit.}. Given such a PEL datum as above, we can associate to it an algebraic group $ \Gb $ (over $\mbb{Q}$) whose $ R $-points equal  
\[
\Gb ( R) = \left \{   \mathrm{Aut}_{B_{R}}(V_{R}) \, | \, \exists \mu(g) \in R^{\times} \text{ such that } \langle g u , gv  \rangle  =  \mu(g)  \langle u , v \rangle  \right \} . 
\]
The group $ \Gb $ is reductive and $ \mu \colon \Gb \to \GG_{m} $ is a homomorphism, which we call the \emph{similitude factor}. We let $ \Gb_{1} $ denote the kernel of $ \mu $. Then $ \Gb_{1,  \RR} $ splits into a product of unitary (A), symplectic (C) and orthogonal factors (D) \cite[Lemma 7.5 (i)]{Torzewski2019}. In particular, if $ B $ is simple then only one type can occur. The $ \RR $-algebra homomorphism $ h \colon \CC \to \mathrm{End}_{B_{\RR}}(V_{\RR} ) $ gives rise to a morphism $ \delT \to \Gb_{\mbb{R}} $, which we still denote by $ h $.    

\begin{lemma}  
If $ \Gb_{1} $ has no factors of type $ (D) $, then the pair $ (\Gb, h) $ is a Shimura--Deligne datum. 
In addition, it satisfies (SD4). 
\end{lemma}    

\begin{proof}  
This is \cite[Lemma 7.5 (ii)]{Torzewski2019}, with the only modification being that we include non-central factors of compact type.
The second statement follows by  Lemma 7.5 (iii) of \emph{op.cit}.
\end{proof}

\begin{definition} \label{PELDatumDef}  
A \emph{PEL-type Shimura--Deligne datum} is a pair $ (\Gb, h ) $ that arises from a choice of a PEL datum with no orthogonal factors as above.
\end{definition}

To any PEL-datum as above, and for any compact open subgroup $K \subset \mbf{G}(\mbb{A}_f)$, we have an associated moduli problem $\invs{M}_K$ over the reflex field which classifies abelian varieties with corresponding PEL structure as in \cite[\S 1.2]{Lan-Toroid}. If $K$ is \emph{neat}, then $\invs{M}_K$ is representable by a smooth quasi-projective scheme.
 
\begin{remark} Here, we are assuming that such a \emph{rational} PEL-datum arises from a choice of \emph{integral} PEL-datum as in \cite[\S 1.1]{Lan-Toroid}. As we are only interested in moduli problems over reflex fields in the sequel, the choice of an integral PEL datum does not matter by Corollary 1.4.3.7 of \cite{LanArithmetic}.
\end{remark} 

\begin{lemma}   \label{Hasse}       Let $ (\Gb, X) $ be a PEL-type Shimura--Deligne datum arising from $  ( B  , * ,  V , \langle  \cdot , \cdot  \rangle,  h)  $.  Set 
\[
\ker ^{1}  (\QQ, \Gb )  \defeq    \opn{ker}\left( \opn{H}^1(\mbb{Q}, \mbf{G}) \to \bigoplus_{v \leq \infty} \opn{H}^1(\mbb{Q}_v, \mbf{G}) \right) .  
\]
Then $  \ker ^{1} ( \Gb, \QQ ) $ is finite and classifies  isomorphism classes of skew-Hermitian $ B $-modules $ (V',  \langle \cdot,  \cdot  \rangle ' ) $ such that the $ B_{\Ab} $-module $ (V'_{\Ab},  \langle \cdot, \cdot  \rangle' ) $ is  isomorphic to $ (V_{\Ab} , \langle \cdot, \cdot   \rangle  ) $.
\end{lemma} 
\begin{proof}  That  $ \ker ^{1} ( \QQ, \Gb ) $ is finite follows from \cite[\S 7]{Borel-Serre}.    By \cite[Proposition 3.50]{Milne-Algebraic}),  $ \cohom{1}{} (\QQ, \Gb) $ (resp. $ \cohom{1}{}(\QQ_{v}, \Gb_{v}) $) classifies isomorphism classes of $ \Gb $-torsors over $ \QQ $ (resp. $\Gb_{v} $ torsors over $ \QQ_{v} $), and so $ \ker ^{1}(\QQ, \Gb) $ classifies isomorphism classes of $ \Gb $ torsors over $ \QQ $ that are locally trivial. Now $ \Gb $-torsors are just symplectic $ B $-modules $ V' $ that are isomorphic to $ V $ over $ \overline{\QQ} $.  
\end{proof}     

\begin{proposition} \label{ComplexModuliProp}
Let $(\mbf{G}, X)$ be a PEL-type Shimura--Deligne datum associated with the tuple $(B, *, V, \langle \cdot, \cdot \rangle, h)$, and fix an integral PEL-datum satisfying \cite[Condition 1.2.5]{Lan-Toroid}. For any neat compact open subgroup $K \subset \mbf{G}(\mbb{A}_f)$, one has an open and closed embedding of $\mbb{C}$-varieties
\begin{equation} \label{ComplexModuliEmbedding}
\opn{Sh}_{\mbf{G}, \mbb{C}}(K) \hookrightarrow \invs{M}_{K, \mbb{C}}.  
\end{equation}
If the group $\mbf{G}$ satisfies the Hasse principle, i.e. $ \ker ^{1} ( \QQ, \Gb )  = 0 $, 
then the embedding in (\ref{ComplexModuliEmbedding}) is an isomorphism.
\end{proposition} 
\begin{proof}
The embedding in (\ref{ComplexModuliEmbedding}) follows from \cite[Lemma 2.5.6]{Lan-Toroid} and the paragraph following its proof. More precisely, it is shown that $\opn{Sh}_{\mbf{G}}(K)(\mbb{C})$ is isomorphic to the locus in $\invs{M}_K(\mbb{C})$ parameterising all PEL abelian varieties whose first (rational) homology is isomorphic to $V$ (as polarised symplectic spaces with $B$-structure). Note that for any point in $\invs{M}_K(\mbb{C})$, the first homology (over $\mbb{A}$) of the associated abelian variety is always isomorphic to $V \otimes_{\mbb{Q}} \mbb{A}$ (as polarised symplectic spaces with $B$-structure), by definition, since $(\mbf{G}, X)$ is a PEL-type Shimura--Deligne datum associated with the tuple $(B, *, V, \langle \cdot, \cdot \rangle, h)$. So to show (\ref{ComplexModuliEmbedding}) is an isomorphism, it is enough to show that there is a unique polarised symplectic $B$-module $W$ such that $W \otimes_{\mbb{Q}} \mbb{A} \cong V \otimes_{\mbb{Q}} \mbb{A}$. This follows by Lemma \ref{Hasse}.      
\end{proof}    

\begin{remark}  The discussion above can also be found in \cite[\S 8]{Kottwitz} under the assumption that $ B $ is simple.
\end{remark}    
\begin{remark}   \label{Hassefortorus}    The computation of $ \ker^{1}(\QQ, \Gb) $ can be reduced to $ \ker^{1}(\QQ, \Gb/\Gb^{\mathrm{der}}) $ by \cite[Proposition 25.71]{Milne-Algebraic}, as $ \Gb^{\mathrm{der}} $ is always simply connected for PEL-type Shimura Deligne datum (\cite[Proposition 8.7]{Milne}).
\end{remark}

\begin{corollary}
Keeping the same notation as in Proposition \ref{ComplexModuliProp}, suppose that $\mbf{G}$ satisfies the Hasse principle. Then $\opn{Sh}_{\mbf{G}, \mbb{C}}(K)$ has a canonical model isomorphic to $\invs{M}_{K, E}$, where $E = E(\mbf{G}, X)$ is the reflex field. 
\end{corollary}
\begin{proof} 
Any PEL-type Shimura--Deligne datum has an embedding into the standard Siegel Shimura datum, so $\opn{Sh}_{\mbf{G}, \mbb{C}}(K)$ has a unique canonical model by Corollaries \ref{ExistenceCriterion} and \ref{UniquenessCM}. The fact that $\invs{M}_{K, E}$ is a canonical model follows from \cite[Proposition 14.12]{Milne} with the identity map (the proof of this proposition is purely a statement about CM abelian varieties). 
\end{proof}

\begin{example} \label{UnitaryPELModEx}
Let $(\mathscr{G}, \ide{X}) \in \{(\mbf{H}, X_{\mbf{H}}), (\mbf{G}, X_{\mbf{G}}) \}$ be the Shimura--Deligne datum introduced in section \ref{TheShimuraData}. Then $(\mathscr{G}, \ide{X})$ is of PEL-type and satisfies (SD5) and \cite[Condition 1.2.5]{Lan-Toroid}. Indeed
\begin{itemize}
    \item If $\mathscr{G} = \mbf{G}$, then the semisimple algebra is just $B = E$ and $V$ is the $2n$-dimensional Hermitian space with Hermitian form corresponding to the matrix $J_{1, 2n-1}$ (see section \ref{TheGroups}). In particular, this arises from an integral PEL-datum with semisimple algebra $\ordd_E$.
    \item If $\mathscr{G} = \mbf{H}$, then we take $B = E \times E$ and consider the product of Hermitian spaces $V = W_1 \oplus W_2$ where $W_1$ (resp. $W_2$) has Hermitian form given by the matrix $J_{1, n-1}$ (resp. $J_{0, n}$). As above, this arises from an integral PEL-datum with semisimple algebra $\ordd_E \times \ordd_E$. 
\end{itemize}
In addition to this, the group $\mbf{G}$ satisfies the Hasse principle, for the following reason. By Remark \ref{Hassefortorus}, it is enough to deduce it for the the maximal abelian quotient of $\mathscr{G}$, denoted $\mbf{T}_{\mathscr{G}}$. In either case, $\mbf{T}_{\mathscr{G}}$ is a product of factors $\opn{U}(1)$, $\opn{GU}(1)$ or $\mbb{G}_m$. The Hasse principle is known to hold for these three groups; for $ U(1) $, see  (\cite[\S 7]{Kottwitz}), while for $ \GU(1) $, $ \GG_{m} $, it follows from Hilbert's Theorem 90.      
\end{example}


\section{Ancona's construction for Shimura--Deligne varieties} \label{AnconaAppendix}

In this appendix, we document certain results, particularly the functoriality of motivic lifts, of \cite{Ancona2015} and \cite{Torzewski2019} that hold in the absence of (SD3). The techniques used by these authors involve mixed Shimura varieties, which are a  generalisation of the usual (pure) Shimura varieties. The definition of a mixed Shimura datum includes a counterpart of axiom (SD3); axiom (vii) of \cite[Definition 2.1]{PinkThesis}), and as in the case of pure Shimura data, this axiom is usually invoked when strong approximation is needed (for example, when describing the connected components of mixed Shimura varieties in \cite[Proposition 3.9]{PinkThesis}). It also plays a key role in reducing many statements to ones involving pure Shimura data. By replacing this condition with an alternative assumption (Assumption \ref{OneDimAssump}) however, many proofs in \cite{PinkThesis} carry over verbatim (building on the results in Appendix \ref{PureAppendix}). 

We have not attempted to be thorough here -- instead, we content ourselves with a summary of the general results we need from \cite{PinkThesis}, and only use them in the restricted setting of \S  \ref{Uni-Extensions} that suffices for the purposes of this article. We continue with the notation of Appendix \ref{PureAppendix} until \S \ref{LiftsCanConst}, after which we specialise to the particular groups defined in \S \ref{TheGroups}.

\subsection{Mixed Shimura--Deligne data}   \label{MixedSD}     
Consider the following collection of data
\begin{itemize}
    \item $\mathbf{P} $ a connected linear algebraic group over $ \QQ $ \item $\mathbf{W} $ the unipotent radical of $ \mathbf{P} $
    \item  $ \mathbf{U} $ a subgroup of $ \mathbf{W} $ over $ \QQ $ that is  normal in $ \mathbf{P} $.
    \item  $ \mathcal{X} $ a left homogenous space under the group $ \mathbf{P}(\RR) \cdot \mathbf{U}(\CC) \subset \mathbf{P}(\CC)$,
    \item   $ h \colon \mathcal{X}  \to \Hom(\delT_{\CC}, \mathbf{P}_{\CC}) $ a $  \mathbf{P}(\RR) \cdot \mathbf{U}(\CC) $-equivariant map.
\end{itemize} 
Set $ \mathbf{V} = \mathbf{W} / \mathbf{U} $, $ \Gb = \mathbf{P}/\mathbf{W} $ and $ \pi \colon \mathbf{P} \to \mathbf{G} $, $ \pi' \colon \mathbf{P} \to \mathbf{P}/\mathbf{U} $ the natural projections. Then such a collection is said to be \emph{mixed Shimura--Deligne datum} if it satisfies axioms (i)--(viii) of \cite[Definition 2.1]{PinkThesis}, except possibly (vii). If these axioms are satisfied, the data is determined by the triple $ (\mathbf{P}, \mathcal{X} , h) $ since $ \mathbf{U} $ is characterised by its action on $ \mathrm{Lie}(\mathbf{P})$. We suppress the dependency on $ h $ when a choice has been made, and denote such a datum by $ (\mathbf{P}, \mathcal{X}) $ only.  

A \emph{morphism} of mixed Shimura--Deligne datum $ (\mathbf{P}_{1},\mathcal{X}_{1}, h_{1}) \to (\mathbf{P}_{2}, \mathcal{X}_{2}, h_{2})  $ is a morphism $ \phi \colon \mathbf{P}_{1} \to \mathbf{P}_{2} $ and a $ \mathbf{P}_{1}(\RR) \cdot  \mathbf{U}_{1}(\CC) $-equivariant map $ \Psi \colon  \mathcal{X}_{1} \to \mathcal{X}_{2} $ satisfying the commutativity property of \cite[Definition 2.3]{PinkThesis}. We call a morphism \emph{injective} if both $ \phi $ and $ \Psi $ are.

Given a mixed Shimura--Deligne datum $ (\mathbf{P}, \mathcal{X}, h) $ and a compact open subgroup $ K \subset \mathbf{P}(\Ab_{f}) $, we define the corresponding \emph{mixed Shimura--Deligne variety} to be the double coset 
\[
\Sh_{\mathbf{P}}(\mathcal{X},K)(\CC) \defeq \mathbf{P}(\QQ) \backslash \left [ \mathcal{X}  \times  \mathbf{P}(\Ab_{f})/K \right ] .
\]
As in \cite[Proposition 2.19]{PinkThesis}, every connected component of $ \mathcal{X} $ is a holomorphic complex vector bundle on a Hermitian symmetric domain, or simply a complex vector space. Therefore, if $K$ is neat, then $\Sh_{\mbf{P}}(\mathcal{X}, K)(\mbb{C})$ inherits the structure of a complex analytic variety. A morphism of mixed Shimura--Deligne data induces a morphism on the corresponding mixed Shimura--Deligne varieties when the compact open subgroups are chosen as in \cite[Proposition 3.8]{PinkThesis}.

\subsection{Hodge structures} 

Let $(\Gb, X)$ be a Shimura--Deligne datum as in Definition \ref{SD}, and let $ F $ be a number field. Let $\opn{Rep}_F(\Gb)$ denote the category of all finite-dimensional algebraic representations $ (\rho,  \mathbf{V}) $ of $\Gb_{F}$ and set $ V = \mathbf{V}(F) $, the underlying $ F $-vector space. Such a representation can equivalently be viewed as an algebraic representation of $ \Gb $ defined over $ \QQ $ of dimension $ \dim_{F}(V) \cdot [F : \QQ ] $, together with a $ \QQ $-algebra homomorphism  $ F  \hookrightarrow  \mathrm{End}_{\Gb(\QQ)}(V) $. 

For any $(\rho, \mathbf{V}) \in \opn{Rep}_F(\Gb)$ and $ h  \in  X $, we obtain a Hodge structure on $ V \otimes_{\mbb{Q}} \mbb{C}$ via the map $\rho \circ h$. In particular, $ V \otimes_{\QQ} \CC $ decomposes as a direct sum of $\mbb{C}$-subspaces $ V \otimes_{\QQ} \CC = \bigoplus V^{p_i, q_i}$ where $\mbb{C}^{\times} \subset \mbb{S}_{\mbb{C}}$ acts on $V^{p_i, q_i}$ via the character $z \mapsto z^{-p_i}\bar{z}^{-q_i}$, with $p_i, q_i \in \mbb{Z}$ -- the corresponding Hodge bigrading of the subspace $V^{p_i, q_i}$ is given by the pair $(p_i, q_i)$, and its weight is defined to be $ p_{i} + q_{i} $. The \emph{Hodge type} of $ (\rho, \mathbf{V}) $ is the collection of all such pairs that appear in this decomposition. If $p_i + q_i = n$ for all $i$, then we say that $ \mathbf{V} $ is pure of weight $n$. In this case, for any $h \in X$, the weight homomorphism $\rho \circ h \circ w \colon \GG_{m, \mbb{R}} \to \opn{GL}(V \otimes_{\mbb{Q}} \mbb{R})$ is equal to the character $ \lambda \to \lambda^{n} $ (recall our conventions on $w$ at the start of Appendix \ref{PureAppendix}).  

\begin{definition} We let $ \mathrm{Rep}_{F}(\Gb)^{\mathrm{AV}} $ denote the full sub-category of $ \mathrm{Rep}_{F}(\Gb) $ whose objects have Hodge type contained in $ \left \{ (-1,0) , (0,-1) \right\} $. 
\end{definition} 

\subsection{Unipotent extensions}   \label{Uni-Extensions} 
We shall primarily be concerned with a specific sub-class of mixed Shimura--Deligne datum, namely those which are unipotent extensions and of a prescribed Hodge type.  

\begin{definition} \label{MixedUESDDef}
Let $(\Gb, X)$ be a Shimura--Deligne datum satisfying (SD5). For a representation $\mathbf{V}$ in $\opn{Rep}_F(\mathbf{G})^{\mathrm{AV}}$, consider the pair $(\mathbf{P}, \mathcal{X})$, where
\begin{itemize}
    \item $ \mathbf{P} =  \Res_{F/\QQ} ( \mathbf{V} ) \rtimes \mathbf{G}$ is the unipotent extension of $\Gb$, with natural map $ \pi \colon \mathbf{P} \to  \Gb$
    \item $ \mathcal{X} $  is the subset of all $t \in \opn{Hom}(\mbb{S}_{\mbb{C}}, \mathbf{P}_{\mbb{C}})$ which are defined over $\mbb{R}$ and satisfy $ \pi  \circ t \in X _{\mbb{C}} = \left \{ h_{\CC} : h \in X  \right \}  $.
\end{itemize}
Then the pair $(\mathbf{P}, \mathcal{X})$ satisfies axioms (i)--(viii) in \cite[2.1]{PinkThesis}, excluding axiom (vii) (which is axiom (SD3)) and therefore determines a mixed Shimura--Deligne datum. We refer to such a datum as a \emph{unipotent extension} of the pure datum $(\Gb,X) $.  
\end{definition}

Under an additional assumption below, we will explain how one can associate mixed Shimura--Deligne varieties to the unipotent extensions in Definition \ref{MixedUESDDef} which satisfy good functoriality properties in both $(\Gb, X)$ and $\mbf{V}$. Essentially, all the results in \cite{PinkThesis} that we will need hold by replacing (SD3) with the following assumption. Let $\mbb{S}^1 \subset \mbb{S}$ denote the circle group.

\begin{assumption}[c.f. {\cite[1.12]{PinkThesis}}] \label{OneDimAssump}
Let $(\Gb, X)$ be a Shimura--Deligne datum satisfying (SD5) and let $(\mbf{P}, \mathcal{X})$ be a unipotent extension as in Definition \ref{MixedUESDDef}. Let $\mbf{G}_1$ be a normal subgroup of $\Gb$ which contains the image of $h(\mbb{S}^1)$, for any $h \in X$. Let $(\rho, \mbf{M})$ be an algebraic representation of $\mbf{P}$ (over $\mbb{Q}$) that is pure of weight $n$, i.e. for any $h \in \mathcal{X}$, the action of $\rho \circ h \circ w(z)$ on $\mbf{M}_{\mbb{C}}$ is given by multiplication by $z^{n}$. We assume that there exists
\begin{itemize}
    \item A \emph{one-dimensional} algebraic representation $\mbf{N}$ of $\mbf{P}$, defined over $\mbb{Q}$, which factors through $\Gb/\Gb_1$ and is pure of Hodge type $(n, n)$.
    \item A $\mbf{P}$-equivariant non-degenerate pairing $\Psi \colon \mbf{M} \otimes_{\mbb{Q}} \mbf{M} \to \mbf{N}$, and 
    \item For every $h \in \mathcal{X}$, a morphism of rational Hodge structures $\lambda_h \colon \mbf{N} \to \mbb{Q}(-n)$, such that $\lambda_h \circ \Psi$ is a polarisation for the Hodge structure on $\mbf{M}$ defined by $h$. 
\end{itemize}
\end{assumption}

\begin{remark} \label{ClarificationOnAssumption}
Assumption \ref{OneDimAssump} is an assumption on $(\mbf{P}, \invs{X}, \rho, \mbf{M})$. By abuse of language, we will say that Assumption \ref{OneDimAssump} holds for $(\mbf{P}, \invs{X})$ if it holds for $(\mbf{P}, \invs{X}, \rho, \mbf{M})$, for every pure algebraic representation $(\rho, \mbf{M})$ of $\mbf{P}$. 
\end{remark}

\begin{remark}
If $(\Gb, X)$ is a Shimura--Deligne datum satisfying (SD3) and (SD5) then Assumption \ref{OneDimAssump} holds for irreducible $\mbf{M}$. If, in addition to this, $\opn{dim}\left( \Gb / \Gb_1 \right) \leq 1$, then Assumption \ref{OneDimAssump} holds for all $\mbf{M}$ (see \cite[1.13]{PinkThesis}).
\end{remark}

In the absence of (SD3), Assumption \ref{OneDimAssump} provides an alternative way to deduce \cite[Lemma 5.5]{Torzewski2019}. In particular, it holds for Shimura--Deligne data of PEL-type satisfying (SD5).

\begin{lemma} \label{UnitaryMixed}
Let $(\Gb, X)$ be a PEL-type Shimura--Deligne datum as in Definition \ref{PELDatumDef} that satisfies (SD5). Then Assumption \ref{OneDimAssump} holds for any unipotent extension of $(\Gb, X)$ as in Definition \ref{MixedUESDDef}. In fact, one can take $\mbf{N}$ to be a power of the similitude character.
\end{lemma}
\begin{proof} 
Let $(\mbf{P}, \invs{X})$ be any unipotent extension, as in Definition \ref{MixedUESDDef}, and take any $t \in \invs{X}$ and $h \in X$ such that $\pi \circ t = h_{\mbb{C}}$. Take $\Gb_1$ to be the kernel of the similitude character. Then this is a normal subgroup of $\Gb$ which contains $h(\mbb{S}^1)$, so satisfies the conditions in Assumption \ref{OneDimAssump}. Furthermore, we have $\Gb/\Gb_1 \cong \mbb{G}_m$.

The homomorphism $t \circ w \colon \mbb{G}_{m, \mbb{C}} \to \mbf{P}_{\mbb{C}}$ defines a weight filtration $W_\bullet$ on the category $\opn{Rep}_{\mbb{C}}(\mbf{P})$ as in the end of \S 1 in \cite{MilneCanonicalMixed} (note that we are using ascending, rather than descending, filtrations -- see Remark 1.8 in \emph{loc.cit.}). By the assumptions on $(\mbf{P}, \invs{X})$, the algebraic group $\mbf{P}_{\mbb{C}}$ stabilises the filtration $W_\bullet$ (see \cite[Proposition 1.4]{PinkThesis}). By \cite[Proposition 1.7]{MilneCanonicalMixed}, this implies that $W_0\mbf{P}_{\mbb{C}} = \mbf{P}_{\mbb{C}}$ and $W_{-1}\mbf{P}_{\mbb{C}} = \opn{Res}_{F/\mbb{Q}}(\mbf{V})_{\mbb{C}}$. As a consequence, we see that any pure algebraic representation of $\mbf{P}$ must factor through $\mbf{G}$. 

Combining this with the fact that the pairing $\Psi$ is required to be $\mbf{P}$-equivariant, it is enough to verify that Assumption \ref{OneDimAssump} holds in the case $(\mathbf{P}, \mathcal{X}) = (\mathbf{G}, X)$, which we place ourselves in for the remainder of this proof. It is also enough to show that for any irreducible representation $\mathbf{M}$ that is pure of weight $n$, the assumption is satisfied with $\mbf{N}$ equal to the $n$-th power of the similitude character. Indeed, the category $\opn{Rep}_{\mbb{Q}}(\Gb)$ is semisimple, so we can easily extend the result to arbitrary pure representations after showing this. 

Let $\mathbf{M}$ be an irreducible algebraic representation of $\Gb$. Any such representation is still irreducible after restricting to $\mathbf{G}_1$, and since the category $\opn{Rep}_{\mbb{Q}}(\mbf{G}_1)$ is semisimple, we must have 
\[
\opn{dim}_{\mbb{Q}} \left( (\mathbf{M} \otimes_{\mbb{Q}} \mathbf{M})_{\mbf{G}_1} \right) \leq 1
\]
where $(-)_{\Gb_1}$ denotes coinvariants by $\Gb_1$. Furthermore this dimension is equal to $1$ precisely when $\mathbf{M}|_{\Gb_1} \cong \mathbf{M}^*|_{\Gb_1}$, by Schur's lemma. Another way of saying this is that there exists an integer $r$ such that
\[
\mathbf{M}^* \cong \mathbf{M} \otimes \mu^r
\]
where $\mu$ is the similitude character of $\mathbf{G}$. But the character $\mu$ gives rise to a Hodge structure that is pure of weight $-2r$, so the above isomorphism implies that $r = n$. The rest of the lemma now follows from the proof of \cite[1.12]{PinkThesis} (using the fact that conjugation by $h(i)$ is a Cartan involution for $\mathbf{G}_{1, \mbb{R}}$).
\end{proof}

\subsection{Summary of results on mixed Shimura--Deligne varieties} \label{SummarySDResults}

Let $(\Gb, X)$ be a Shimura--Deligne datum that satisfies (SD5) and Assumption \ref{OneDimAssump} for any unipotent extension $(\mathbf{P}, \invs{X})$ as in Definition \ref{MixedUESDDef} (see Remark \ref{ClarificationOnAssumption}). This assumption will allow us to apply the majority of the results in \cite[\S 1--\S 3, \S 11]{PinkThesis} and \cite{Torzewski2019}, which we summarise below:
\begin{enumerate}
    \item Let $\mathbf{V}$ be an algebraic representation of $\mathbf{G}_F$ in $\opn{Rep}_F(\Gb)^{\mathrm{AV}}$ which we view as an algebraic representation of $\Gb$ with an $F$-structure, and let $(\mathbf{P} = \mathrm{Res}_{F/\QQ}(\mathbf{V}) \rtimes \mbf{G}, \invs{X})$ denote the unipotent extension as in Definition \ref{MixedUESDDef}. For a neat compact open subgroup $K' \subset \mbf{P}(\mbb{A}_f)$, we set
    \[
    \opn{Sh}_{\mbf{P}}(K')(\mbb{C}) \defeq \mbf{P}(\mbb{Q}) \backslash \left[ \invs{X} \times \mbf{P}(\mbb{A}_f)/K' \right]
    \]
    which we view with the quotient topology induced from $\invs{X}$. By \cite[3.2--3.3]{PinkThesis} and Baily--Borel this set carries the structure of a complex algebraic variety, which we call the associated mixed Shimura--Deligne variety. Such an example of a neat compact open subgroup is as follows. Let $K \subset \mbf{G}(\mbb{A}_f)$ be a neat compact open subgroup, and let $L \subset \mbf{V}(\mbb{A}_f)$ denote a (full rank) $\widehat{\mbb{Z}}$-lattice that is stable under $K$. By \cite[Lemma 5.4]{Torzewski2019}, $L \rtimes K$ is a neat compact open subgroup of $\mbf{P} \defeq \opn{Res}_{F/\mbb{Q}}(\mbf{V}) \rtimes \mbf{G}$. 
    \item Let $K', L' \subset \mbf{P}(\mbb{A}_f)$ be neat compact open subgroups satisfying $\sigma^{-1}L' \sigma \subset K'$ for some $\sigma \in \mbf{P}(\mbb{A}_f)$. Then we have a finite \'{e}tale map
    \[
    \opn{Sh}_{\mbf{P}}(L')(\mbb{C}) \xrightarrow{[\sigma]} \opn{Sh}_{\mbf{P}}(K')(\mbb{C})
    \]
    constructed in exactly the same way as for (pure) Shimura--Deligne varieties (see \cite[3.4]{PinkThesis}). We can also define morphisms of mixed Shimura--Deligne datum in the usual way, and this defines morphisms between the associated Shimura--Deligne varieties (see \cite[3.8]{PinkThesis}). 
    \item For $\mbf{V}$ in $\opn{Rep}_F(\mbf{G})^{\mathrm{AV}}$ the mixed Shimura--Deligne variety $\opn{Sh}_{\mbf{P}}(L \rtimes K)(\mbb{C})$ has the structure of an abelian scheme over $\opn{Sh}_{\mbf{G}}(K)(\mbb{C})$ (c.f. \cite[3.22]{PinkThesis} -- we use Assumption \ref{OneDimAssump} here). In particular, these abelian schemes satisfy functoriality properties with respect to the datum $(\mbf{G}, X)$ and the representation $\mbf{V}$ (see \cite[Lemma 5.7]{Torzewski2019}). 
    \item There is a natural way to define reflex fields and canonical models associated to mixed Shimura--Deligne data (see \cite[\S 11]{PinkThesis}). In our setting, the reflex field $E$ of any unipotent extension $(\mbf{P}, \invs{X})$ as in Definition \ref{MixedUESDDef} is equal to the reflex field of $(\mbf{G}, X)$ (11.2 in \emph{op.cit.}). Suppose that $\opn{Sh}_{\mbf{G}}(K)$ admits a canonical model over $E$ (which is unique by Corollary \ref{UniquenessCM}), then the results of \S 11 and the reduction lemma (Lemma 2.26) in \emph{op.cit.} imply that $\opn{Sh}_{\mbf{P}}(K')$ admits a unique canonical model over $E$.
    
    Furthermore, the morphisms discussed in (2) descend to morphisms between the canonical models, as well as the functoriality properties in (3) (as long as the pure Shimura--Deligne data admit canonical models). Note that we are permitted to apply the reduction lemma in \emph{op.cit.} by Assumption \ref{OneDimAssump}. 
\end{enumerate}

\begin{corollary} \label{CorollaryOneDimAssumption}
Take $\mathscr{G} \in \{\mbf{G}, \mbf{H}, \mbf{T}, \Gt \}$ to be any of the groups defined in section \ref{TheGroups}, and ${X}$ the associated symmetric space. Then $(\mathscr{G}, {X})$ is a Shimura--Deligne datum which satisfies (SD5) and Assumption \ref{OneDimAssump} for any unipotent extension $(\mathscr{P}, \invs{X})$ of $(\mathscr{G}, {X})$ as in Definition \ref{MixedUESDDef} (including, of course, the datum $(\mathscr{G}, {X})$ itself). In particular, (1)--(4) above hold for any such $(\mathscr{P}, \invs{X})$.
\end{corollary}
\begin{proof}
The only potentially problematic group is $\mathscr{G} = \mbf{H}$, but we have shown that Assumption \ref{OneDimAssump} is satisfied in this case (see Lemma \ref{UnitaryMixed}). The existence of a canonical model for $(\mbf{H}, X_{\mbf{H}})$ follows from Corollary \ref{ExistenceCriterion} (the other groups gives rise to Shimura data in the usual sense).  
\end{proof}

\subsection{Lifts of the canonical construction} \label{LiftsCanConst}

We now put ourselves in the situation of Corollary \ref{CorollaryOneDimAssumption}, so $(\mathscr{G}, {X}) \in \{(\mbf{G}, X_{\mbf{G}}), (\mbf{H}, X_{\mbf{H}}), (\mbf{T}, X_{\mbf{T}}), (\Gt, X_{\Gt}) \}$. Let $p$ be a prime that splits in $E/\mbb{Q}$ and recall that we have fixed an embedding $E \hookrightarrow \Qpb$, which distinguishes a prime $\ide{P}$ of $E$ lying above $p$ satisfying $E_{\ide{P}} \cong \mbb{Q}_p$. By the general procedure described in section \ref{TheCanonicalConstructionEtaleCoh}, for any neat compact open subgroup $K \subset \mathscr{G}(\mbb{A}_f)$ we have a $\mbb{Q}_p$-linear tensor functor
\[
\mu_{\mathscr{G}, K} \colon \opn{Rep}_{\mbb{Q}_p}(\mathscr{G}) \to \opn{\acute{E}t}\left( \opn{Sh}_{\mathscr{G}}(K) \right)_{\mbb{Q}_p}
\]
from the category of finite-dimensional algebraic representations of $\mathscr{G}_{\mbb{Q}_p}$ to \'{e}tale $\mbb{Q}_p$-sheaves on $\opn{Sh}_{\mathscr{G}}(K)$. 

\begin{theorem}
Let $\mathscr{G} \in \{ \mbf{G}, \mbf{H}, \mbf{T}, \Gt \}$ as above. Then the results of \cite[\S 10]{Torzewski2019}, excluding Lemma 10.6, hold in this setting. If $\mathscr{G} \in \{ \mbf{G}, \mbf{H} \}$, then Lemma 10.6 does hold, in addition to the results of \S 8--\S 9 of \emph{op.cit.} where Betti cohomology is replaced with $p$-adic cohomology and the functoriality statements are with respect to the embedding $\mbf{H} \hookrightarrow \mbf{G}$ (as in section \ref{TheGroups}).
\end{theorem}
\begin{proof}
We will justify that all the proofs hold in our setting, even though we have not assumed (SD3). Note that (SD5) and Assumption \ref{OneDimAssump} hold for (any unipotent extension of) the Shimura--Deligne data that we are considering, and that the morphism $\mbf{H} \hookrightarrow \mbf{G}$ is admissible in the sense of \cite[Definition 9.1]{Torzewski2019}. The key references for the proofs are \cite{PinkThesis}, \cite{Pink1992} and \cite[\S I--II]{Wilde}. We justify the use of the results in \cite{Wilde} (the remaining results from \cite{PinkThesis} and \cite{Pink1992} are justified in section \ref{SummarySDResults}).

As we are working in the special case of mixed Shimura--Deligne datum in Definition \ref{MixedUESDDef}, as remarked at the end of page 10 in \cite[\S II]{Wilde}, one can check that we are in the situation of \cite[\S I.3]{Wilde} (except for geometric connectedness). Also \cite[3.13]{PinkThesis} is valid in our setting, so Lemma 1.6 in \cite[\S II]{Wilde} holds (the fibres of the map $\opn{Sh}_{\mathscr{P}}(L \rtimes K)_{\overline{E}} \to \opn{Sh}_{\mathscr{G}}(K)_{\overline{E}}$ are ``unipotent $K(\pi, 1)$s''). These observations mean that \cite[\S II.4]{Wilde} is valid in our setting. Indeed, we first check that all the cited results in this section hold:
\begin{itemize}
    \item The reference \cite[Proposition 3.3.3]{Pink1992}. It is an easy check that this proof does not depend on the axiom (SD3).
    \item Theorem 4.3 in \cite[\S II]{Wilde} (which is a consequence of Propositions 5.5.4, 5.8.2 and 5.6.1 in \cite{Pink1992}). For part (a) there is nothing to check, because any torus satisfies (SD3). For parts (b) and (c), the proofs in \cite[\S 5.6]{Pink1992} still hold verbatim (we do not need to consider models and compactifications). The arguments involving special points are also valid in our setting, because the results in \cite[\S 11]{PinkThesis} are valid by replacing any referenced result in \cite{DeligneTS} with the appropriate analogue in Appendix \ref{PureAppendix}. 
    \item The results in \cite[\S I]{Wilde} are valid by the above discussion. 
\end{itemize}
Then one can check that none of the remaining arguments in \cite[\S II.4]{Wilde} rely on the axiom (SD3). We note that the opposite convention of left/right actions for the functor $\mu_{\mathscr{G}}$ is used in \emph{loc.cit.}, however this does not affect the validity of the results. 

One can now check that the arguments in \cite{Torzewski2019} carry over into our situation. Furthermore, Ancona's construction is valid in our setting, as the results in \cite{Ancona2015} do not depend on (SD3) (in fact the majority of the paper doesn't even involve Shimura varieties). 
\end{proof}

\begin{remark}
Some of the arguments in \cite{Torzewski2019} do involve passing to connected components of the mixed Shimura varieties. However, this is only ever used in an abstract way; an explicit description of the connected components is not needed, which would require (SD3).
\end{remark}


\newcommand{\etalchar}[1]{$^{#1}$}
\renewcommand{\MR}[1]{}
\providecommand{\bysame}{\leavevmode\hbox to3em{\hrulefill}\thinspace}
\providecommand{\MR}{\relax\ifhmode\unskip\space\fi MR }
\providecommand{\MRhref}[2]{%
  \href{http://www.ams.org/mathscinet-getitem?mr=#1}{#2}
}
\providecommand{\href}[2]{#2}


\Addresses

\end{document}